\documentclass[11pt]{article}
\usepackage[a4paper,vmargin={3.5cm,3.5cm},hmargin={2.5cm,2.5cm}]{geometry}
\usepackage[T1]{fontenc}
\usepackage[utf8]{inputenc}
\usepackage{ stmaryrd }
\usepackage{mathtools}
\usepackage{graphicx}
\usepackage{framed}
\usepackage[normalem]{ulem}
\usepackage{amsmath,amsthm}
\usepackage{physics}
\usepackage{tikz}
\usepackage{mathdots}
\usepackage{yhmath}
\usepackage{cancel}
\usepackage{color}
\usepackage{siunitx}
\usepackage{array}
\usepackage{multirow}
\DeclareMathOperator{\lex}{lex}
\DeclareMathOperator*{\argmax}{argmax}

\DeclareMathOperator*{\argmaxxlex}{\overset{[2],\lex}{\argmax}}
\DeclareMathOperator*{\argmaxlex}{\overset{\lex}{\argmax}}
\DeclareMathOperator*{\maxlex}{\overset{\lex}{\max}}
\DeclareMathOperator*{\maxxlex}{\overset{[2],\lex}{\max}}

\usepackage{amsthm}
\usepackage{amssymb}
\usepackage{gensymb}
\usepackage{tabularx}
\usepackage{extarrows}
\usepackage{booktabs}
\usetikzlibrary{fadings}
\usetikzlibrary{patterns}
\usetikzlibrary{shadows.blur}
\usetikzlibrary{shapes}
\usepackage{bbm}
\usepackage{mathtools}
\usepackage{forest}
\usepackage{thmtools}
\usepackage{thm-restate}
\usepackage{enumitem}
\usepackage{hyperref}

\usepackage{cleveref}

\declaretheorem[name=Theorem,numberwithin=section]{theorem}
\newtheorem{coro}{Corollary}[section]
\newtheorem{prop}{Proposition}[section]
\newtheorem{lemma}[prop]{Lemma}

\newtheorem{definition}[prop]{Definition}
\newtheorem{conj}{Conjecture}
\forestset{multiple directions/.style={for tree={#1}, phantom, for relative level=1{no edge, delay={!c.content/.pgfmath=content("!u")}, before computing xy={l=0,s=0}}},
    multiple directions/.default={},
    grow subtree/.style={for tree={grow=#1}}, 
    grow' subtree/.style={for tree={grow'=#1}}}
\usepackage{tikz}
\usepackage{capt-of}
\usepackage{tikz-cd}
\newcommand{\eps}{\varepsilon}
\usepackage[maxbibnames=99]{biblatex}
\bibliography{References.bib}
\DeclareMathOperator{\perfE}{perf_E}
\DeclareMathOperator{\perf}{perf}
\DeclareMathOperator{\perfV}{perf_V}
\begin{document}
\title{Optimal matching under size priority}
\author{Nathanaël Enriquez, Mike Liu, Laurent Ménard and Vianney Perchet}

\theoremstyle{remark}
\newtheorem{remark}{Remark}

\vfill

\maketitle

\vfill

\begin{abstract}
Past studies on the local limit of maximal weight matchings in edge-weighted large random graphs rely fundamentally on the assumption that the weights are atomless, which ensures that the maximal weight matching is unique. This excludes de facto maximal size matchings that correspond to equal edge-weights.
In this work, we overcome this difficulty by assigning i.i.d.~atomless weights to edges and choosing the maximal size matching that maximises the weight. We call these doubly constrained matchings \emph{optimal matchings}.
The natural generalisation of optimal matchings for infinite unimodular random graphs are unimodular matchings of maximal density at the root that maximise the expected weight at the root when it is matched. 

For unimodular Bienaymé-Galton-Watson (UBGW) trees and for a broad class of weight distributions, we show existence and uniqueness in law of such matchings. We also prove that if a sequence of finite random weighted graphs converges locally to an UBGW tree with i.i.d.~weights, then there exists a sequence of matchings on the finite graphs that converges locally to the optimal matching on the limiting tree.
Finally, we identify a regime, depending only on the offspring distribution of the limiting tree, in which correlations between edge states in the optimal matching decay exponentially with their graph distance. In this regime, we strengthen the previous convergence to the convergence of optimal matchings of the finite graphs.
As a by-product, we can explicitly compute the asymptotic densities of edges that belong to all maximal-density matchings, and of edges that belong to none.
\end{abstract}

\vfill

\paragraph{Keywords:} Optimal matchings, sparse random graphs, unimodularity, local convergence, belief propagation, renormalisation.

\paragraph{Mathematics Subjects Classification:} 05C70, 05C82, 60C05, 60K35.

\vfill

\newpage

\tableofcontents

\newpage

\section{Introduction and main results}

A matching of a graph $G$ is a collection of edges of $G$ with no common vertices. A matching $M$ of finite graph $G$ is said to be \textit{maximal} if it maximizes the cardinality of $M$ among the set of matchings of $G$.
The study of maximal matchings on sparse graphs dates back to the foundational paper by Karp and Sipser~\cite{Karp} where they provide the asymptotic size of maximal matchings on sparse Erdős–Rényi graphs $G(n,\frac{c}{n})$ as $n \rightarrow +\infty$ for fixed $c>0$. More recently, Bordenave, Lelarge and Salez~\cite{bordenave2012matchings} established an asymptotic formula for the cardinality of maximal matchings on sparse graphs that converge locally to a locally finite tree $\mathbb T$. These results do not study the geometry of maximal matchings for a good reason: There are indeed typically an exponential (in the size of the graph) number of maximal matchings. This suggests that there are many possible geometries for the local limits of such matchings.

In a previous article by the same authors~\cite{enriquez2024optimalunimodularmatching} the geometry of matchings maximising the sum of its edge weights with no consideration for the size is studied in depth. Namely, the maximal weight matching converges locally to a unimodular matching whose distribution is explicitly described. The method, originally devised by Aldous~\cite{aldous2000zeta2,Aldous1992AsymptoticsIT}, relies on using a message passing algorithm that finds such matchings on trees.

All these previous works rely fundamentally on the assumption that the weights are atomless, which ensures that the maximal weight matching is unique. This excludes de facto the study of maximal matchings.
In this work, we overcome this difficulty by assigning i.i.d.~atomless weights to edges and choosing the maximal matching that maximises the weight. We call these doubly constrained matchings \emph{optimal} matchings.

\subsection{Optimal unimodular matchings}
Informally (see Section~\ref{sec:def} for precise definitions), a random matching on the unimodular tree $\mathbb T$ rooted at a distinguished oriented Vertex $o \in V(\mathbb{T})$ is unimodular if its law is invariant by re-rooting. If we assign  i.i.d. weights $w=(w(e))_{e\in E(\mathbb T)}$ with law $\omega$ to the edges of $\mathbb T$, we say that a unimodular matching $\mathbb M$ of the weighted tree is optimal if the quantity 
\begin{equation}
    \label{eq:perfE}
    \perfE(\mathbb{T},o,w,\mathbb{M})=\left( \mathbb{P}\left( \exists v \sim o, \{o,v\} \in \mathbb{M}\right),\mathbb{E}\left[ \sum_{v\sim o}w(o,v)\mathbbm{1}_{\{o,v\} \in \mathbb{M}}\right]\right)
\end{equation}
is maximal in \emph{lexicographical order} among all unimodular matchings of $(\mathbb T, o, w)$. Note that for finite graphs, when $o$ is chosen uniformly among the vertices, this performance is maximal when the matching is maximal in size and maximal in total weight under this constraint.

Before stating our main result, we need to define more precisely Unimodular Bienaymé Galton Watson random trees (UBGW). Let $\pi$ be a distribution on non-negative integers with finite expectation and generating function $\phi$. We denote by $\hat{\pi}$ the size-biased version of $\pi$, whose generating function is $\hat \phi$, defined by $\hat \phi(x) = \frac{\phi'(x)}{\phi'(1)}$. We say that a rooted tree $(\mathbb{T},o)$ is a UBGW with offspring distribution $\pi$ if the law of the tree can be obtained as follows. Draw a sequence of independent vertex-rooted independent Bienaymé-Galton-Watson trees with offspring distribution $\hat{\pi}$ (that is, each vertex of these trees have an independent number of children distributed according to $\hat \pi$). Independently draw a random number $N$ with distribution $\pi$.  The tree $(\mathbb{T},o)$ is then obtained by connecting the root of each of the $N$ first Bienaymé-Galton-Watson trees to an additional vertex that we call $o$.

Our first theorem states the existence and uniqueness in law of unimodular matchings on $\mathbb{T}$ that are optimal in the sense of performance~\eqref{eq:perfE}:

\begin{theorem}\label{maintheorem}
    Let $\pi$ be a  reproduction law and $\omega$ a distribution on $\mathbb{R}$ such that:
    \begin{itemize}
        \item both $\pi$ and $\omega$ have finite expectation,
        \item the law $\omega$ is atomless and admits a density on some open interval,
        \item denoting by $\phi$ the generating function of $\pi$ and setting $\hat{\phi}= \frac{\phi'}{\phi'(1)}$, the function
        \[t \mapsto \hat{\phi}(1-\hat{\phi}(1-t))\]
        has a finite number of fixed points.
    \end{itemize}
    Then there exists a unique (in law) unimodular optimal matching $(\mathbb{T},o,\mathbb{M}_{\mathrm{opt}})$ whose marginal tree $(\mathbb{T},{o})$ is a Bienaymé-Galton-Watson tree with reproduction law $\pi$ and i.i.d.~weights of law $\omega$. Furthermore the distribution of $(\mathbb{T},{o},\mathbb{M}_{\mathrm{opt}}(\mathbb{T}))$ is described in Proposition~\ref{prop:Zconstruction}.
\end{theorem}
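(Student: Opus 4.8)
The plan is to reduce the statement to a \emph{recursive distributional equation} (RDE) on the Galton--Watson tree, following the objective method of Aldous~\cite{aldous2000zeta2} and the companion paper~\cite{enriquez2024optimalunimodularmatching}, but with a \emph{two-layer} message that reflects the lexicographic nature of $\perfE$: a combinatorial ``status'' layer governing the size/density, and a real ``weight'' layer on top of it. First I would treat finite weighted trees. Since $\omega$ is atomless, any two distinct finite edge-sets have a.s.\ distinct total weights, so the finitely many maximum-size matchings of a finite tree have pairwise distinct weights and the optimal matching is a.s.\ \emph{unique}; moreover it is computable by a leaves-to-root dynamic program. Rooting the tree and writing $T_{c\to p}$ for the subtree above a child $c$ of $p$, one keeps for each oriented edge $c\to p$ the pair of lexicographic values $(\mathrm{size},\mathrm{weight})$ of the best matching of $T_{c\to p}$ in which $c$ is respectively left unmatched or matched inside $T_{c\to p}$; at $p$ these are combined by the local rule ``match $p$ to the child maximising $(1,w(p,c))$ plus the ``$c$ unmatched'' value, versus leaving $p$ free''. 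This identifies both the message space and the local update map $\Psi$. On the UBGW tree the subtrees hanging from the children of a fixed vertex are i.i.d., so a consistent family of messages must have a law $\mathcal L$ solving $M\overset{d}{=}\Psi\big((w_i,M_i)_{1\le i\le k}\big)$ with $k\sim\hat\pi$, the $w_i$ i.i.d.\ $\sim\omega$ and the $M_i$ i.i.d.\ $\sim\mathcal L$, all independent; existence of a solution is obtained by truncating $\mathbb T$ at depth $n$, running the exact dynamic program with the two extreme boundary conditions at depth $n$ (depth-$n$ vertices ``forced matched'' / ``forced free''), and sending $n\to\infty$, using that $\Psi$ is monotone for the natural stochastic order on messages.

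Next I would solve and identify this fixed point, layer by layer. The status layer is the Karp--Sipser trichotomy (is the far endpoint essential inside its subtree), already understood through~\cite{Karp,bordenave2012matchings}: it satisfies a scalar recursion whose iteration map is built from $\hat\phi$ and complementation, and the hypothesis that $t\mapsto\hat\phi(1-\hat\phi(1-t))$ has finitely many fixed points is exactly what prevents oscillation of the two monotone boundary iterations and forces them to the same limit, thereby pinning down the status marginal uniquely and selecting the relevant Karp--Sipser fixed point. Conditionally on this skeleton, the weight layer satisfies a further RDE; here the assumption that $\omega$ is atomless with a density on an open interval enters twice: it makes the relevant maximisers a.s.\ unique (no ambiguous ties), and it guarantees that the weight-message law is \emph{uniquely} determined (the density forces the weight-layer fixed point to be attracting, ruling out a nontrivial family of stationary laws), while $\mathbb E|\omega|<\infty$ prevents mass escaping to $\pm\infty$. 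The resulting law is the object that Proposition~\ref{prop:Zconstruction} records as $Z$.

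Given the unique stationary law, I would decode a matching on $\mathbb T$ by letting each vertex inspect its incident messages and local weights and choose its partner; a.s.\ well-definedness of this choice again uses atomlessness, and that the decoded object is a \emph{unimodular matching} follows from the locality of the decoding together with the mass-transport principle on the UBGW tree. By construction its $\perfE$ equals the $n\to\infty$ limit of the finite-tree optima. To conclude optimality and uniqueness in law, I would compare an arbitrary unimodular matching $\mathbb M$ to the decoded one: a local exchange/augmenting-path modification that strictly improves the root's state can, by unimodularity, be ``transported'' over the tree into a genuine increase of $\perfE$ (in the first coordinate if it changes the root's matched-status, otherwise in the second coordinate, where atomlessness makes any nontrivial weight swap strict), so no unimodular matching beats the decoded one, and any one achieving the optimal $\perfE$ must agree with it a.s.

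The main obstacle is the combination of these last two points: showing that the finitely-many-fixed-points hypothesis suffices to collapse the two extreme boundary iterations to a single limit \emph{for the full two-layer message}, status and weight jointly (and not merely for the status skeleton, where it is classical), and then lifting the finite-tree optimality of the dynamic program to a variational statement over \emph{all} unimodular matchings via mass transport. Running the entire RDE apparatus twice --- once for the combinatorial skeleton, once for the weights conditioned on it --- and controlling precisely how the density assumption on $\omega$ forces uniqueness of the weight-layer fixed point while tying in with the lexicographic exchange argument is where essentially all of the difficulty lies.
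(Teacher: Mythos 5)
Your overall architecture --- a two-layer message (status plus weight) satisfying a lexicographic RDE, construction of the matching by a local decision rule, unimodularity via mass transport, and optimality via an exchange argument --- is the same as the paper's. However, two of your key steps fail as stated, and they are precisely the steps carrying the difficulty.

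First, your mechanism for pinning down the message law is wrong. You claim that the hypothesis that $t\mapsto\hat\phi(1-\hat\phi(1-t))$ has finitely many fixed points ``prevents oscillation of the two monotone boundary iterations and forces them to the same limit'', and that the density of $\omega$ then makes the weight-layer fixed point attracting. Neither holds in general: for Poisson offspring with parameter $c>e$ the map has exactly three fixed points, yet the iterations from the two extreme boundary conditions converge to the two \emph{distinct} extreme fixed points $\underline\gamma\neq\overline\gamma$; the collapse you invoke is exactly the subcritical condition~\eqref{eq:subcriticalcond} (Lemma~\ref{lem:subcritical}), which is strictly stronger than the finiteness hypothesis of Theorem~\ref{maintheorem}. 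Likewise, no contraction property of the weight layer is available outside the subcritical regime; in the paper, uniqueness of the message distribution (Theorem~\ref{uniqueness}) is not an ingredient of Theorem~\ref{maintheorem} but a \emph{consequence} of it, obtained by reconstructing $\zeta'$ from conditional matching probabilities. The paper's proof of Theorem~\ref{maintheorem} deliberately avoids any uniqueness statement for the RDE: it constructs \emph{some} solution $\mathcal H$ by renormalising the $\eps$-weighted maximum-weight messages as $\eps\to0$ (Lemma~\ref{prop:distributionzoom}, via Helly selection), builds the matching from it (Proposition~\ref{prop:Zconstruction}), and shows this matching is optimal regardless of which solution was used.

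Second, your uniqueness argument (``a local exchange improving the root can be transported into an increase of the performance'') only yields optimality; on an infinite tree it does not yield uniqueness, because a second optimal matching can differ from $\mathbb M_{\mathcal H}$ along an infinite alternating path with zero net change of performance, so no local improvement is ever produced. This is the genuinely hard half of the theorem (Proposition~\ref{prop:uniqueness}): the paper shows that disagreement at the root forces the competing matching to follow the \emph{second-largest} decision rule along an infinite alternating path, then excludes this by a mass-transport identity for these paths (Lemma~\ref{lem:doublyinfinitepath}) combined with a positive-probability stopping estimate (Lemma~\ref{lem:Mhstop}), which is where the assumption that $\mathrm{supp}(\omega)$ contains an interval is actually used. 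Your proposal contains neither of these ideas, so the uniqueness part of Theorem~\ref{maintheorem} remains unproved in your plan, and the existence part rests on a fixed-point-collapse claim that is false in the supercritical regime covered by the theorem.
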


\begin{remark} \label{rem:exceptions}
    The third assumption about $\hat \phi$ induces notable exclusions. Indeed, if one assumes that $\hat \phi$ is analytic at $1$ (this includes in particular laws with exponential moments), then the function $\hat\phi ( 1- \ \hat{\phi} (1-x))$ has infinitely many fixed points iff it is equal to $x$ on the whole interval $[0,1]$. The family of geometric laws
    \[
    \hat \phi (x) = \frac{px}{1-(1-p)x}
    \]
    falls into that exclusion. The family given by $\hat \phi (x) = 1-(1-x^k)^{1/k}$ for $k \in \mathbb N^*$ is another example. All such exceptions are also critical for assumption \eqref{eq:subcriticalcond} introduced below. We will say a bit more on these special cases in Remark~\ref{rem:BLS}.
\end{remark}

Our next main result states that if a sequence of random weighted random graphs converges in the Benjamini-Schramm sense to a unimodular Bienaymé-Galton-Watson tree, then this optimal matching is the limit of a sequence of matchings on the finite graphs. This includes classical models of random graphs such as Erd\H{o}s-R\'enyi random graphs, regular random graphs or configuration models that all converge to unimodular versions of Bienaymé-Galton-Watson random trees. See for instance the books by van der Hofstad~\cite{vanderHofstad1,vanderHofstadBook2}, the article by Benjamini, Lyons and Schramm~\cite{BENJAMINI_LYONS_SCHRAMM_2015}, or the monograph by Aldous and Lyons~\cite{aldous2018processes}, and references therein.

\begin{theorem}\label{maintheorem2}
    Let $G_n=(V_n,E_n)$ be a sequence of random graphs and choose a root $\overset{\rightarrow}{o}_n$ uniformly among the vertices.
    Suppose that the sequence of rooted weighted random graphs $(G_n,o_n)$ converges locally to an (vertex-rooted) Unimodular Bienaymé-Galton-Watson (UBGW) tree $(\mathbb{T},o)$ with reproduction law $\pi$. Assume further that the ratio $\frac{2|E_n|}{|V_n|}$ converges in probability to the expectation of $\pi$ and that the hypothesis of Theorem~\ref{maintheorem} holds on the UBGW tree. Now assign i.i.d weights $(w_n(e))_{e \in E(G_n)}$ of distribution $\omega$ on the edges of $G_n$. We have
    \begin{enumerate}
        \item There exists a sequence $(M_n)_n$ of matchings of $G_n$ such that $(G_n,o_n,M_n)$ converges locally to $(\mathbb{T},{o},\mathbb{M}_{\mathrm{opt}})$ given by Theorem~\ref{maintheorem}.
        In particular, we have that:

         \[
        \lim_{n \to \infty} 
        \left(  \mathbb{P}\left( \exists v \sim o_n, \{o_n,v\} \in M_n\right),\mathbb{E}\left[ \sum_{v\sim o_n}w_n(o_n,v)\mathbbm{1}_{\{o_n,v\} \in M_n}\right]\right) = \perfE(\mathbb{T},o,\mathbb{M}_{\mathrm{opt}}),
        \]
        
        \item Reciprocally, for any sequence $(M_n)_n$ of matchings of $G_n$, take $(a,b)$ to be any subsequential limit of
        \[ \left(  \mathbb{P}\left( \exists v \sim o_n, \{o_n,v\} \in M_n\right),\mathbb{E}\left[ \sum_{v\sim o_n}w_n(o_n,v)\mathbbm{1}_{\{o_n,v\} \in M_n}\right]\right) ,\]
        then we must have that 
        \[ (a,b)\overset{\mathrm{lex}}{\leq} \perfE(\mathbb{T},o,\mathbb{M}_{\mathrm{opt}}).
        \]
    \end{enumerate}

\end{theorem}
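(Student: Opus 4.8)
The plan is to exploit the fact that local weak convergence of $(G_n, o_n)$ to the UBGW tree, together with the edge-density condition $\frac{2|E_n|}{|V_n|} \to \mathbb{E}[\pi]$, gives us unimodularity in the limit via the standard involution-invariance/mass-transport argument (as in Aldous–Lyons and Benjamini–Lyons–Schramm). I would first set up the space of rooted weighted graphs decorated by a $\{0,1\}$-marking of edges (the matching indicator), equipped with the local topology, and note that this space is Polish and that the set of markings that are matchings is closed. For part (2), the idea is to take any sequence $(M_n)$ of matchings; by tightness (the degree at the root is tight since $\mathbb{E}[\pi]<\infty$ and $\omega$ has finite expectation controls the weight functional) we can pass to a subsequence along which $(G_n, o_n, w_n, M_n)$ converges in law to some $(\mathbb{T}, o, w, \mathbb{M}_\infty)$. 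The limit $\mathbb{M}_\infty$ is a matching (closedness), and it is unimodular (the uniform root on $G_n$ plus the edge-density hypothesis pass the involution invariance to the limit). Since $\perfE$ is, up to the subtlety below, continuous under local convergence, we get $(a,b) = \perfE(\mathbb{T}, o, w, \mathbb{M}_\infty) \overset{\mathrm{lex}}{\le} \perfE(\mathbb{T}, o, w, \mathbb{M}_{\mathrm{opt}})$ by the maximality (optimality) established in Theorem~\ref{maintheorem}. For part (1), I would build $M_n$ by running the message-passing / belief-propagation recursion that produces $\mathbb{M}_{\mathrm{opt}}$ on the limiting tree, truncated to a depth $R_n \to \infty$ slowly enough, on the local neighbourhoods of $G_n$; one then checks that on the ball of radius $R$ around $o_n$ the finite-graph output agrees with the tree output with probability tending to $1$, which forces local convergence of $(G_n, o_n, M_n)$ to $(\mathbb{T}, o, \mathbb{M}_{\mathrm{opt}})$.

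The first genuine obstacle is that the first coordinate of $\perfE$, namely $\mathbb{P}(o \text{ is matched})$, is an expectation of a bounded (hence nice) functional, but the second coordinate $\mathbb{E}[\sum_{v\sim o} w(o,v)\mathbbm{1}_{\{o,v\}\in\mathbb{M}}]$ involves the unbounded weight. Convergence in distribution of the decorated rooted graphs does not automatically give convergence of this expectation; one needs uniform integrability of $w_n(o_n, v_n) \mathbbm{1}_{\{o_n,v_n\}\in M_n}$ where $v_n$ is the matched neighbour (if any). This should follow because the matched edge is unique, so this random variable is dominated by a single draw from $\omega$ plus controllable fluctuations coming from the local structure, but making this rigorous — especially showing the finite-graph weights don't concentrate mass at large values in a way that survives the limit — is where the finite-expectation hypothesis on $\omega$ is really used, and it requires care.

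The second, and I expect the main, obstacle is the truncation argument in part (1): the belief-propagation fixed point describing $\mathbb{M}_{\mathrm{opt}}$ is defined via a recursion whose convergence/stability is only guaranteed under the third hypothesis of Theorem~\ref{maintheorem} (finitely many fixed points of $t \mapsto \hat\phi(1-\hat\phi(1-t))$). On a finite graph with cycles, the recursion has no canonical "leaves" to initialise from, so one must argue that initialising arbitrarily at depth $R_n$ and iterating inward gives, with high probability, the correct matching decision at $o_n$ — i.e. that the influence of the boundary initialisation decays. This is a finite-range-of-dependence statement that is only cleanly available in the "subcritical" regime (condition~\eqref{eq:subcriticalcond}), which is exactly why the full convergence of $M_n$ (not just existence of a good sequence) is deferred to that regime later in the paper. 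For Theorem~\ref{maintheorem2}(1) as stated, the resolution is to not insist on the exact optimal matching at every scale but rather to use a diagonal/compactness argument: among all achievable subsequential limits of matchings on $G_n$, part (2) shows none beats $\perfE(\mathbb{T},o,\mathbb{M}_{\mathrm{opt}})$, and one then shows this bound is attained by exhibiting a sequence realising it — either via the truncated recursion analysed directly on the (locally tree-like) balls, or by an abstract selection argument combining uniqueness in law from Theorem~\ref{maintheorem} with the portmanteau theorem. I would try the direct truncated-recursion route first, keeping $R_n$ growing slowly relative to the girth lower bounds implied by local convergence, and fall back on the abstract argument if the boundary-effect estimates prove too delicate outside the subcritical regime.
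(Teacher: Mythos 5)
Your treatment of part (2) is essentially the paper's argument: tightness of $(G_n,o_n,w_n,M_n)$ for the local topology, passage to a unimodular matched subsequential limit, continuity of the performance coordinates as expectations of $1$-local functions, and the optimality of $\mathbb{M}_{\mathrm{opt}}$ from Theorem~\ref{maintheorem}. The uniform-integrability issue you flag for the second coordinate is real and is precisely how the paper closes that step: it truncates the weight at level $x$, uses monotone convergence in $x$, and invokes uniform integrability of $w_n(o_n)$ (coming from the finite expectation of $\omega$) to interchange the limits in $n$ and $x$. So part (2) of your proposal is sound and takes the same route.

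Part (1), however, has a genuine gap. The theorem asks you to \emph{exhibit} a sequence of matchings $M_n$ on $G_n$ converging locally to $(\mathbb{T},o,\mathbb{M}_{\mathrm{opt}})$, with no subcriticality assumption. Your primary route (run the message-passing recursion on $G_n$ with an arbitrary initialisation at depth $R_n$ and argue the boundary influence is forgotten) requires exactly the correlation-decay property that is only established under condition~\eqref{eq:subcriticalcond}, i.e.\ it proves Theorem~\ref{th:reciproquegamarnik}, not Theorem~\ref{maintheorem2}(1); you acknowledge this yourself. Your fallback — ``an abstract selection argument combining uniqueness in law with the portmanteau theorem'' — does not fill the hole: part (2) only says no sequence of matchings beats $\perfE(\mathbb{T},o,\mathbb{M}_{\mathrm{opt}})$, and uniqueness in law on the tree says nothing about what is \emph{achievable} on the finite graphs; neither produces matchings on $G_n$ attaining the bound. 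The missing ingredient is a transfer (approximation) theorem: given the unimodular matching $\mathbb{M}_{\mathrm{opt}}$ on the limiting tree, one can construct matchings $\tilde M_n$ on $G_n$ whose local statistics — in particular the expectations of the local functions $f(T,M)=\mathbbm{1}_{o\in M}$ and $g(T,M)=w(o)\mathbbm{1}_{o\in M}$ — approximate those of $\mathbb{M}_{\mathrm{opt}}$ with vanishing error. This is what the paper imports from its companion paper (\cite[Propositions~4.2 and 4.10]{enriquez2024optimalunimodularmatching}); it works without any decay-of-boundary-influence estimate because $\tilde M_n$ is not required to be optimal on $G_n$, only to mimic the limiting law locally. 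Once such $\tilde M_n$ exist, their performance is asymptotically optimal, and then part (2) together with the uniqueness statement of Theorem~\ref{maintheorem} forces every subsequential local limit of $(G_n,o_n,\tilde M_n)$ to be $(\mathbb{T},o,w,\mathbb{M}_{\mathrm{opt}})$, giving the convergence. Without this construction (or a substitute for it), your proof of part (1) does not go through outside the subcritical regime.
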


\begin{remark}
  Let us stress that, in the first statement of Theorem~\ref{maintheorem2}, the matching $M_n$ is not necessarily a maximal matching of $G_n$, but only asymptotically maximal. An interesting and delicate question is then the existence of a sequence of maximal matchings of the graphs $G_n$ that also have asymptotically optimal performance. This question will be completely answered in a regime that we call subcritical in the next section. We conjecture that it holds in general.
\end{remark}

\begin{remark} \label{rem:BLS}
    Bordenave, Lelarge and Salez proved in~\cite{salez2011cavity} that, if $G_n$ is a sequence of \textbf{deterministic} graphs that converges locally in distribution when rooted uniformly, then there is almost sure convergence of the normalised size of the maximum size matching. 
    In the case of configuration models, which converge almost surely to UBGW trees under uniform rooting (see e.g. \cite{BordenaveLN}),
    the asymptotic size of maximal matchings on $G_n$ converges almost surely to \[2-\max_{u \in [0,1]}F_\pi(u)\]
    with
    \[ F_\pi: x \mapsto \phi(1-x)+\phi(1-\hat{\phi}(1-x))+x\hat{\phi}(1-x)\phi'(1). \]
    The uniqueness of $M_{\mathrm{opt}} (\mathbb T)$ established in Theorem~\ref{maintheorem} ensures that this quantity is equal to $\mathbb P \left( \exists v \sim o, \{o,v\} \in \mathbb{M}_{\mathrm{opt}} \right)$.
    The first statement of the Theorem contains a weaker convergence in expectation when the graphs $G_n$ are \textbf{random} and converge in distribution. 
    Note that this result also applies for the exceptions of Remark~\ref{rem:exceptions}, and we can observe that, in these cases, maximal matchings are in fact asymptotically perfect matchings.
\end{remark}

\begin{remark} \label{rem:uniformmatch}
A classical way to study maximum size matchings on a finite graph is to introduce a temperature parameter $\beta$ and a Gibbs distribution on matchings of the form $\mu(M) \propto e^{-\beta |E(M)| }$ (see~\cite{bordenave2012matchings}). 
In the zero temperature limit, namely letting $\beta \to +\infty$, we recover a uniformly sampled maximum size matching. 
    We stress that our theorems are not about such measures. In fact, we will show in Proposition~\ref{prop:pasuniforme} that, in some strong sense, uniform maximum size matchings on UBGW trees are fundamentally different from maximum weight maximum size matchings on UBGW trees.
\end{remark}

    \subsection{The subcritical regime} 
    
    Our next statement gives a positive answer to the convergence question in a regime that we call \textit{local} or \textit{subcritical} for reasons that we will explain after the theorem. For a rooted graph $(G,o)$ we will write $B_H(G,o)$ to be the subgraph of $G$ induced by vertices with graph distance at most $H$ from $o$.
     
\begin{theorem}\label{th:reciproquegamarnik}
    Under the same notation and hypothesis as Theorem~\ref{maintheorem2}, assume furthermore that for any random variable $X$ taking values in $[0,1]$, we have that
    \begin{equation}\label{eq:subcriticalcond}
        \mathbb{E}\left[ \hat{\phi}'(1-X)\right] \hat{\phi}'\left(1-\mathbb{E}\left[\hat{\phi}(1-X)\right]\right) < 1.
    \end{equation}

    Also assume that $(G_n,o_n)$ converges locally in total variation to $(\mathbb{T},o)$, in the sense that for every $\varepsilon>0,H>0$, there exists a rank $N_{\eps,H}$ such that for any $n \geq N_{\eps,H}$, there is a coupling $(\tilde{G}_n,\tilde{o}_n,\tilde{\mathbb{T}},\tilde{o})$  between $(G_n,o_n)$ and $(\mathbb{T},o)$ such that 
    \begin{equation}\label{eq:TVlocal}
        \mathbb{P}\left(B_H(\tilde{G}_n,\tilde{o}_n)=B_H(\tilde{\mathbb{T}},\tilde{o})\right) > 1-\varepsilon.
    \end{equation}

    For any $G_n$, define $M_{\mathrm{opt}}(G_n)$ to be the maximal weight maximum size matching on $G_n$, then we have that 
    \[ (G_n,o_n,M_{\mathrm{opt}}(G_n)) \longrightarrow (\mathbb{T},o,\mathbb{M}_{\mathrm{opt}})  \]
    where the convergence is understood in the sense of local convergence in total variation.
\end{theorem}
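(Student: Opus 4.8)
The point is to upgrade the convergence of a well-chosen sequence of matchings, provided by Theorem~\ref{maintheorem2}, to the convergence of the canonical matching $M_{\mathrm{opt}}(G_n)$ itself, and in the stronger total-variation sense. The mechanism is spatial mixing: I will show that, in the subcritical regime, the restriction of $M_{\mathrm{opt}}(G_n)$ to a ball $B_{H_0}(G_n,o_n)$ is, outside an event of small probability, a deterministic function of the larger ball $B_H(G_n,o_n)$ for a suitable $H=H(H_0)$ --- in particular it does not feel anything outside $B_H$. Granting this, the theorem follows quickly: given $\varepsilon>0$ and $H_0$, pick $H$ so that the exceptional probability is below $\varepsilon/3$, then use hypothesis~\eqref{eq:TVlocal} to couple $G_n$ and $\mathbb T$ with $B_H(\tilde G_n,\tilde o_n)=B_H(\tilde{\mathbb T},\tilde o)$ --- a tree --- with probability $>1-\varepsilon/3$ for $n$ large; on the intersection of these events both $M_{\mathrm{opt}}(\tilde G_n)\cap E(B_{H_0})$ and $\mathbb M_{\mathrm{opt}}(\tilde{\mathbb T})\cap E(B_{H_0})$ are determined by the same ball $B_H$ through the same rule and hence coincide, and the total exceptional probability is below $\varepsilon$, which is exactly local convergence in total variation.

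\textbf{Reduction to a boundary-conditioned tree.} To make sense of ``function of $B_H$'', note that when $B_H(G_n,o_n)$ is a tree $T$, the rest of $G_n$ influences $M_{\mathrm{opt}}(G_n)\cap E(T)$ only through the set $S\subseteq\partial B_H$ of boundary vertices matched to the outside: indeed $M_{\mathrm{opt}}(G_n)\cap E(T)$ is exactly the maximum-size --- then maximum-weight --- matching of $T$ among those avoiding $S$, since any size improvement (an augmenting path in $T$ disjoint from $S$) or weight improvement (a weight-increasing alternating swap in $T$ disjoint from $S$) would improve $M_{\mathrm{opt}}(G_n)$ globally. The unimodular matching $\mathbb M_{\mathrm{opt}}$ admits the same local description on $\mathbb T$, with $S$ read off the (almost surely unique) configuration beyond radius $H$. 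This constrained optimal matching of $T$ is computed by the belief-propagation sweep from $\partial B_H$ towards $o_n$ underlying Theorem~\ref{maintheorem} and Proposition~\ref{prop:Zconstruction}: each oriented edge carries a message with a discrete component --- the maximum-size type of the subtree below (matched in all, some, or no maximum matchings, in the spirit of Karp--Sipser~\cite{Karp}) --- and a real component recording the weight cost of forbidding an edge below. It then suffices to show that the message arriving in $B_{H_0}$ is, with probability tending to $1$ as $H-H_0\to\infty$, independent of the boundary set $S$ and equal to the fixed-point message used to build $\mathbb M_{\mathrm{opt}}$.

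\textbf{Where subcriticality enters.} This insensitivity to $S$ is the correlation-decay statement announced in the introduction (a form of endogeny for the associated recursive tree process), and it is exactly what condition~\eqref{eq:subcriticalcond} encodes: $\mathbb E[\hat\phi'(1-X)]\,\hat\phi'(1-\mathbb E[\hat\phi(1-X)])$ is the contraction factor of the two-level map by which a perturbation of the messages at depth $d$ propagates up to depth $d-2$, and the finiteness of the fixed-point set of $t\mapsto\hat\phi(1-\hat\phi(1-t))$ guarantees that the discrete component of the message locks onto an isolated stable value rather than drifting along a continuum of marginally stable ones. I would prove that there exist $\rho<1$ and $C<\infty$ such that, running on a single realisation of $B_H(\mathbb T,o)$ the sweep from an arbitrary boundary set $S$ and the sweep from the fixed-point messages, the two message fields agree on every edge of $B_{H_0}$ outside an event of probability at most $C\rho^{\,H-H_0}$; the proof couples the two sweeps level by level, propagates the bounded initial discrepancy at depth $H$ through the contraction, and localises the residual disagreements to the ``near-critical'' vertices, whose density decays geometrically towards the root --- here the assumptions that $\omega$ is atomless with a density on an interval and that $\pi$ has finite mean keep the law of the messages regular enough for near-criticality to be asymptotically negligible.

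\textbf{Main obstacle.} The crux, and the only genuinely new difficulty compared with the maximum-weight analysis of~\cite{enriquez2024optimalunimodularmatching}, is the discrete component of the messages. It cannot be handled by a plain Lipschitz estimate: changing one boundary vertex can flip a vertex's maximum-size type by an $O(1)$ amount, and since the maximum-size constraint is global (augmenting paths may a priori be long) such flips can in principle cascade inward. The substance of~\eqref{eq:subcriticalcond} is that they do not --- one must show that the cascade of type-flips triggered by a change of $S$ is, with overwhelming probability, confined to a thin layer near $\partial B_H$, by combining the contraction of the two-level map with the exponential rarity of near-critical vertices on the UBGW tree, and then check that this confinement survives the coupling with the real (weight) component riding on top of the discrete skeleton. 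Making this confinement quantitative is where the bulk of the work will lie.
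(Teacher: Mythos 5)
Your overall skeleton is the same as the paper's: encode the influence of the exterior of a tree-like ball by the set $S$ of boundary vertices matched outside (this is exactly Lemma~\ref{lem:matchingbord}, with $S$ encoded by boundary messages $(0,0)$ or $(1,+\infty)$), then argue that under \eqref{eq:subcriticalcond} the messages forget the boundary between radius $H$ and radius $H_0$, and finally transfer through the total-variation coupling \eqref{eq:TVlocal}. The final assembly step you sketch is essentially the paper's.

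However, the heart of the theorem --- the uniform-in-$S$ correlation decay --- is not proved in your proposal, and the route you sketch for it is not the one that works. You propose to couple the two sweeps ``level by level'', propagate a discrepancy through the contraction, and confine the resulting cascade of flips of the discrete component to a thin layer of ``near-critical'' vertices of geometrically decaying density; you yourself concede that making this confinement quantitative is ``where the bulk of the work will lie'', i.e.\ the cascade problem you identify as the main obstacle is left open. The paper avoids this problem entirely by a different, and crucial, pair of ideas that are absent from your write-up: (i) the message recursion is \emph{deterministically monotone} in the boundary data (decreasing over one level, increasing over two), so that for a \emph{fixed} realisation of the weighted ball, the sweep from an arbitrary boundary set $S$ is sandwiched, in lexicographic order, between the two extremal sweeps started from all-$(0,0)$ and all-$(1,+\infty)$ boundary data --- no cascade analysis and no control of ``near-critical vertices'' is needed, since it suffices to show the two extremal sweeps agree; and (ii) the two extremal sweeps converge to a common limit because the two-step distributional operator $F^{\circ 2}$ acting on pairs of distribution functions is a contraction for the sup distance, with contraction constant exactly $\rho_\pi=\sup_X\mathbb{E}[\hat\phi'(1-X)]\,\hat\phi'\bigl(1-\mathbb{E}[\hat\phi(1-X)]\bigr)<1$ from \eqref{eq:subcriticalcond}; combined with the almost-sure monotone convergence of the extremal sweeps in $H$, this forces their limits to coincide (Lemma~\ref{lem:subcritical}, quantified in Theorem~\ref{th:decay}). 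Without the monotone sandwich your plan has to control how a boundary perturbation of the discrete component propagates inward --- precisely the step you flag as unresolved --- so as it stands the proposal contains a genuine gap at its central lemma. A secondary, fixable imprecision: you assert that the restriction of $M_{\mathrm{opt}}(G_n)$ to $B_{H_0}$ is, outside a small event, a \emph{deterministic} function of $B_H$; the paper only establishes (and only needs) that the boundary messages at radius $H_0$ are close in total variation to the i.i.d.\ stationary family, uniformly over the boundary condition, which is what makes the comparison with $\mathbb{M}_{\mathrm{opt}}$ on the tree go through.
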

\begin{remark}
    Note that condition~\eqref{eq:subcriticalcond} does not depend on the weight distribution and depends only on the reproduction law of the tree. We do not know if this condition is sharp and it is likely that the true condition depends on both $\pi$ and $\omega$.
\end{remark}

We call a reproduction satisfying condition~\eqref{eq:subcriticalcond} \emph{subcritical}.
For Erd\H{o}s-R\'enyi random graphs $G(n,\tfrac{c}{n})$, the sub-criticality condition is equivalent to $c<e$. This $e$-threshold is the object of several phase transitions for the Erd\H{o}s-R\'enyi model; see, for instance, the works of Karp-Sipser~\cite{Karp} for a geometric phenomenon or Coste-Salez~\cite{CS} for a spectral phenomenon. More related to our work, Gamarnik-Nowicki-Swirszcz~\cite{gamarnik2003maximum} proved that, for i.i.d.~exponential weights, independent sets can be approximated by local functions when $c<e$ and maximum weight matchings for all $c$. Theorem~\ref{th:reciproquegamarnik} is in fact a consequence of a generalisation of this locality property for optimal matchings and Bienaymé-Galton-Watson trees that we state now. See also the recent paper by Lam and Sen~\cite{correlationarnab} where they establish similar results for exponential weights and for graphs with degrees bounded by $3$, which remarkably include non tree-like graphs such as the hexagonal lattice. 
\begin{theorem} \label{th:decay}
    Let $(\mathbb{T},\overset{\rightarrow}{o})$ be an edge-rooted unimodular Bienaymé-Galton-Watson tree with reproduction law $\pi$.
    Set $\hat{\phi}$ to be the generating function of $\hat{\pi}$. Assume that $\hat{\phi}$ verifies condition~\eqref{eq:subcriticalcond}. Then there exists a sequence of determinist  functions $F_H$ such that:
    \begin{itemize}
        \item Each function $F_H$ is $H-$local
        \item Set \[ \rho_{\pi}:=\sup_{X \text{ r.v. in } [0,1]}\mathbb{E}\left[ \hat{\phi}'(1-X)\right] \hat{\phi}'\left(1-\mathbb{E}\left[\hat{\phi}(1-X)\right]\right)<1. \] Then there exists $C>0$ such that, for every $H >0$,
        \[\mathbb{P}\left((\mathbb{T},o,  \mathbb{M}_{\mathrm{opt}})\simeq F_H(\mathbb{T},o)\right)>1-C\rho_{\pi}^H.\]
    \end{itemize}
\end{theorem}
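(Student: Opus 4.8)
The plan is to realise both $\mathbb{M}_{\mathrm{opt}}$ and a family of $H$-local approximations through the same belief-propagation recursion, and then to show that a perturbation imposed on the boundary at depth $H$ reaches the root only with probability exponentially small in $H$. Recall from Proposition~\ref{prop:Zconstruction} that $\mathbb{M}_{\mathrm{opt}}$ is encoded by messages $m_{v\to u}$ carried by the oriented edges of $\mathbb{T}$, valued in a partially ordered space $\mathcal{S}$ with least and greatest elements, satisfying a recursion $m_{v\to u}=\mathcal{R}\big((m_{c\to v},w(c,v))_{c}\big)$ in which $c$ runs over the neighbours of $v$ other than $u$ and $\mathcal{R}$ is monotone in each of its message arguments, and that the state of an edge $\{u,v\}$ in $\mathbb{M}_{\mathrm{opt}}$ is a measurable function $\theta(m_{u\to v},m_{v\to u},w(u,v))$ which is continuous outside a null set of weight configurations. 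For an edge $e$ at depth $k<H$, iterating $\mathcal{R}$ upwards from depth $H$ with the least (resp.\ greatest) element of $\mathcal{S}$ frozen on every message entering at depth $H$ produces values $m_e^{H,-}\preceq m_e^{H,+}$ that are monotone in $H$ and sandwich the true message $m_e$; because the optimal matching --- hence, through the construction of Proposition~\ref{prop:Zconstruction}, the message field itself --- is unique in law by Theorem~\ref{maintheorem}, these bounds share the common limit $m_e$ as $H\to\infty$.

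Given this, I would take $F_H(\mathbb{T},o)$ to be the decorated rooted graph obtained by running $\mathcal{R}$ inside $B_H(\mathbb{T},o)$ with the least element of $\mathcal{S}$ frozen on all messages entering from depth $H$, and then reading off through $\theta$ the states of the edges in a fixed finite neighbourhood of the root; this is a deterministic $H$-local function. By the sandwich, and discarding the finitely many ``ambiguous'' weight values using the atomlessness of $\omega$ as in the proof of Theorem~\ref{maintheorem}, $F_H(\mathbb{T},o)$ coincides with $(\mathbb{T},o,\mathbb{M}_{\mathrm{opt}})$ on that neighbourhood whenever $m_e^{H,+}=m_e^{H,-}$ for every edge $e$ of it. It therefore suffices to bound the probability that the random set $\mathcal{D}_H:=\{e\in E(B_H(\mathbb{T},o)) : m_e^{H,+}\neq m_e^{H,-}\}$ reaches a fixed depth, i.e.\ contains an edge at graph distance at most $r$ from the root, by $C_r\,\rho_\pi^{\,H}$.

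The heart of the matter is the propagation step for $\mathcal{D}_H$. By monotonicity of $\mathcal{R}$, an edge $e$ at depth $k$ can belong to $\mathcal{D}_H$ only if some child edge of $e$ belongs to $\mathcal{D}_H$ and the discrepancy is transmitted through $\mathcal{R}$ at the endpoint of $e$; hence, explored from depth $H$ towards the root, $\mathcal{D}_H$ is dominated by a branching-type process whose survival down to depth $r$ we must estimate. The crucial feature, familiar from Karp--Sipser theory, is that the relevant message ``types'' alternate with parity, so that no single generation is contracting in general (indeed $\hat{\phi}'(1)$ may exceed $1$ or be infinite), whereas \emph{two consecutive generations} are: conditioning on the local structure, if each child edge of a vertex carries a discrepancy with conditional probability $x\in[0,1]$, then the expected number of children transmitting one to the vertex is of order $\hat{\phi}'(1-x)$, and after one further generation one picks up the factor $\hat{\phi}'\big(1-\mathbb{E}[\hat{\phi}(1-X)]\big)$, the random variable $X$ being precisely the conditional discrepancy probability of a child. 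The product of the two factors is bounded by $\mathbb{E}[\hat{\phi}'(1-X)]\,\hat{\phi}'(1-\mathbb{E}[\hat{\phi}(1-X)])\le\rho_\pi<1$ by hypothesis~\eqref{eq:subcriticalcond}; iterating this two-generation contraction over the $\Theta(H)$ generations separating the root from depth $H$, and summing over the boundedly many edges involved for a fixed target depth $r$, yields a bound on $\mathbb{P}(\mathcal{D}_H\text{ reaches depth }r)$ that is exponentially small in $H$ with a base that is a fixed power of $\rho_\pi$, which is the estimate of Theorem~\ref{th:decay}. The weight coordinates of the messages do not change the rate: once the ``size-type'' coordinates have frozen (which is exactly what the above controls), the residual weight messages solve an Aldous-type recursion that contracts for \emph{any} offspring law --- the same mechanism by which maximum-weight matchings are always local --- so their perturbations are absorbed into the same geometric bound.

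The main obstacle is to make the two-generation recursion \emph{close} with the sharp constant rather than a spurious $\hat{\phi}'(1)^2$: one must show that a vertex with many children is correspondingly unlikely to transmit a discrepancy --- its outgoing message lying, with high probability, in a region of $\mathcal{S}$ that $\theta$ renders insensitive to what happens below --- so that ``expected number of children times per-child discrepancy probability'' can legitimately be replaced by the expression in~\eqref{eq:subcriticalcond} evaluated at the correct random $X$. This requires a precise understanding of the order structure of $\mathcal{S}$ and of which message pairs force, forbid, or leave undetermined an edge of $\mathbb{M}_{\mathrm{opt}}$, together with an inductive control of the \emph{law} of the conditional discrepancy probabilities so that they may be fed into~\eqref{eq:subcriticalcond}. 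A subordinate but necessary technical point is the rigorous construction of the monotone sandwich on the infinite tree and the identification of its limit with the unique message field of Proposition~\ref{prop:Zconstruction}, which is where Theorem~\ref{maintheorem} is used.
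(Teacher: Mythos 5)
Your set-up coincides with the first half of the paper's argument: the monotone sandwich between the two extremal boundary conditions (all $(0,0)$ versus all $(1,+\infty)$ frozen on $\partial B_H$), the parity-wise monotonicity in $H$, the identification of the common limit with the unique message field, and the definition of $F_H$ by running the recursion inside $B_H$ with a frozen boundary. The gap is in the quantitative step, which is the entire content of the theorem. You propose to dominate the set $\mathcal{D}_H$ of edges where the two extremal message fields differ \emph{exactly} by a branching-type process whose two-generation mean is $\rho_\pi$, but you yourself flag as ``the main obstacle'' the proof that the per-vertex transmission factor is $\hat{\phi}'(1-x)$ evaluated at the correct random $x$ rather than a spurious $\hat{\phi}'(1)$; this pivotality computation requires knowing the joint law of the sibling messages conditionally on the discrepancy structure (the transmission events are not independent across children and are correlated with the message values), and no inductive control of these conditional laws is given. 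This is not a routine technicality that can be deferred: it is exactly where condition~\eqref{eq:subcriticalcond} has to enter, and without it the argument only yields a rate involving $\hat{\phi}'(1)$, which may be $\geq 1$ or infinite. Note also that exact coalescence of the $Z$-coordinates is a stronger event than what the theorem needs: a $Z$-discrepancy transmitted through the $\maxlex$ recursion keeps its magnitude unless clipping at $(0,0)$ or an argmax switch occurs, so it is not clear that $\mathbb{P}(\mathcal{D}_H \text{ reaches a fixed depth})$ decays at rate $\rho_\pi^H$ at all.

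The paper's route (proof of Lemma~\ref{lem:subcritical}, of which Theorem~\ref{th:decay} is the quantified version) avoids the discrepancy-percolation picture entirely: it works at the level of the \emph{laws} of the extremal messages, encoding them as pairs of distribution functions $(f_0,f_1)$ and showing, via a mean-value inequality in which the random variable of condition~\eqref{eq:subcriticalcond} appears as $X(W)=\mathbb{E}_{W'}[f_0(W'-W+t)]$, that the two-step operator $F^{\circ 2}$ is a contraction of ratio $\rho_\pi$ for the sup (total variation) distance~\eqref{def:TVfunc}. Monotonicity of the decision rule then converts closeness of the extremal laws into agreement of the matching near the root with probability $1-C\rho_\pi^H$, without requiring exact equality of messages; and because the contraction acts on the full pair $(f_0,f_1)$, the macroscopic $i$ and microscopic $Z$ components are handled simultaneously. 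In particular your auxiliary claim that, once the size-type coordinates have frozen, the residual weight recursion ``contracts for any offspring law'' is both unsupported (it is not established in this paper or in \cite{enriquez2024optimalunimodularmatching}) and unnecessary in the paper's scheme. As written, your proposal identifies the right objects but leaves the decisive estimate unproved.
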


For a sequence of finite weighted graphs $G_n$, this gives an annealed exponential decay for the correlation between the states of two edges relatively to their distance. 
In particular, this implies asymptotic decorrelation of the state of two uniformly picked edges in $G_n$:

\begin{coro}\label{coro:indepasympt}
    Let $(\mathbb{T},o)$ be as in Theorem~\ref{th:decay} and let $(G_n,o_n)_{n\geq 0}$ be a sequence uniformly rooted finite graphs such that $(G_n,o_n)$ converges locally to $(\mathbb{T},o)$. Assume in addition that if $o_n'$ is a uniform edge chosen independently from $o_n$, then $(G_n,o_n')$ converges locally to an independent copy of $(\mathbb{T},o)$. 
    Then $(G_n,M_{\mathrm{opt}}(G_n),o_n)$ and $(G_n,M_{\mathrm{opt}}(G_n),o_n')$
    converge to two independent copies 
    of $(\mathbb{T},o,\mathbb{M}_{\mathrm{opt}})$.
\end{coro}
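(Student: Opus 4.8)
The plan is to reduce the statement to the locality property of Theorem~\ref{th:decay} transferred to the finite graphs, and then to exploit the hypothesis on $o_n'$, which I read as asserting that $(G_n,o_n,o_n')$ converges locally (weakly) to $(\mathbb{T},o)\sqcup(\mathbb{T}',o')$ with the two components independent.

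\emph{Step 1: finite-graph locality.} The key input is the finite-graph version of Theorem~\ref{th:decay} — the ``annealed'' decay mentioned in the remark following it, and also the core of the proof of Theorem~\ref{th:reciproquegamarnik}: with the same $H$-local functions $F_H$ and the same constant $C$, and writing $o_n$ for the uniform edge-root, for each $H$ there is $N(H)$ so that for all $n\ge N(H)$,
\[
\mathbb{P}\big((G_n,o_n,M_{\mathrm{opt}}(G_n))\simeq F_H(G_n,o_n)\big)\;\ge\;1-C\rho_\pi^{H},
\]
and the same with $o_n'$ in place of $o_n$ (same law). I would prove this by rerunning the renormalisation/stability argument behind Theorem~\ref{th:decay} on $G_n$: the statement ``perturbing the graph outside the $H$-ball of an edge leaves $M_{\mathrm{opt}}$ unchanged on a smaller ball, except on an event of probability $O(\rho_\pi^{H})$'' only uses that the ambient graph is locally tree-like, which holds for $G_n$ with high probability by local convergence to the UBGW tree; crucially it does not need the total-variation hypothesis of Theorem~\ref{th:reciproquegamarnik}.

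\emph{Step 2: asymptotic independence of the localised matchings.} Fix $H$, let $r$ be at most the radius on which $F_H$ determines the matching, and write $B_r,B_r'$ for the $r$-balls around $o_n,o_n'$. Since $|E_n|\to\infty$ and the $2H$-balls are tight in law, $d_{G_n}(o_n,o_n')>2H$ with probability $1-o_n(1)$, and then the two $H$-balls are disjoint; by the joint local convergence, $(F_H(G_n,o_n),F_H(G_n,o_n'))$ restricted to $B_r,B_r'$ therefore converges in distribution to a pair of independent copies of $(B_r(\mathbb{T},o),F_H(\mathbb{T},o))$ (recall $F_H$ is a bounded function of a finite object). Denote the latter joint law by $\nu_\infty$.

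\emph{Step 3: conclusion.} For fixed $r$, let $\mu_n$ be the law of $(B_r,M_{\mathrm{opt}}(G_n)\cap B_r,B_r',M_{\mathrm{opt}}(G_n)\cap B_r')$, let $\nu_n$ be the same with $F_H$ substituted for $M_{\mathrm{opt}}(G_n)$ on each ball, and let $\mu_\infty$ be the law of two independent copies of $(B_r(\mathbb{T},o),\mathbb{M}_{\mathrm{opt}}\cap B_r)$; all of $\mu_n,\nu_n,\nu_\infty,\mu_\infty$ live on the same finite set of configurations. A union bound over the two roots in Step~1 gives $d_{\mathrm{TV}}(\mu_n,\nu_n)\le 2C\rho_\pi^{H}$ for $n$ large; Step~2 gives $d_{\mathrm{TV}}(\nu_n,\nu_\infty)\to 0$; and Theorem~\ref{th:decay} applied to each of the two independent factors gives $d_{\mathrm{TV}}(\nu_\infty,\mu_\infty)\le 2C\rho_\pi^{H}$. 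Hence $\limsup_n d_{\mathrm{TV}}(\mu_n,\mu_\infty)\le 4C\rho_\pi^{H}$ for every $H$; letting $H\to\infty$ gives $\mu_n\to\mu_\infty$, and since this holds for all $r$ the claimed joint local convergence to two independent copies of $(\mathbb{T},o,\mathbb{M}_{\mathrm{opt}})$ follows. The main obstacle is Step~1 — transporting the message-passing/renormalisation argument of Theorem~\ref{th:decay} to the finite graphs with the exceptional probabilities still $O(\rho_\pi^{H})$ uniformly in $n$, using only weak local convergence; Steps~2 and~3 are then routine estimates with total-variation distances on finite spaces.
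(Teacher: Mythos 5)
Your argument is essentially the paper's own: the published proof likewise deduces that the two roots are far apart with high probability, localises $M_{\mathrm{opt}}(G_n)$ around each root by redoing the proof of Theorem~\ref{th:reciproquegamarnik} (Lemma~\ref{lem:matchingbord} plus the contraction of Lemma~\ref{lem:subcritical}) with two $H_0$-balls, and concludes from the geometric decay of Theorem~\ref{th:decay} combined with the assumed joint convergence to independent copies. Two minor cautions: in your Step~1, tree-likeness of the balls alone is not enough — the uniform $C\rho_\pi^H$ bound comes from a contraction acting on the UBGW law of the ball, so you still need the coupling of ball laws as in~\eqref{eq:TVlocal} (which your hypotheses do furnish here, the ball structure being discrete and the weights i.i.d.), and the divergence of $d(o_n,o_n')$ is better derived from the independence hypothesis itself, as the paper does, rather than from an unstated $|E_n|\to\infty$.
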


\begin{remark}
    The condition is satisfied for configuration models (see subsection~\ref{subsec:UBGW} for a definition). It is also not empty: One can consider for $c<1$, $G_n$ to be $n$ copies of the same Bienaymé-Galton-Watson tree with a Poisson reproduction law of parameter $c$. It is clear that $\pi$ satisfies condition~\eqref{eq:subcriticalcond}, $(G_n,o_n)$ converges locally to $(\mathbb{T},o)$ but two uniformly chosen vertices will see the same copy of the tree.
\end{remark}

\subsection{On mandatory and blocking edges}

We present now an application of our framework to the geometry  of the set of maximal matchings under a condition slightly weaker than~\eqref{eq:subcriticalcond}. Under this condition, there is a local convergence in total variation of the set of edges belonging to {\it all} the maximal matchings, that we call {\it mandatory edges}. Similarly, there is a local convergence of the set of edges belonging to {\it no} maximal matching, that we call {\it blocking edges}. Furthermore, the local limit will be explicitly described in a later section.

\begin{theorem}\label{thm:mandatory}
    Consider a sequence of \textbf{unweighted} random finite graphs $(G_n,o_n)$ that locally converge in total variation sense to a unimodular unweighted Bienaymé-Galton-Watson tree with reproduction law $\pi$.
    Assume that the generating function $\phi$ of $\pi$ verifies that 
    \begin{equation}
    \exists! \, t \in (0,1): \,
    t = \hat{\phi}(1-\hat{\phi}(1-t)).
    \label{eq:uniquefixedpoint} \tag{\ref{eq:subcriticalcond}'}
    \end{equation}
     Define $\mathcal{M}_{n,\max}$ as the set of maximum size matchings in $(G_n)$. Then the sequence
    \[ \left(G_n,o_n, \left( \mathbbm{1}_{e \in \bigcap_{M \in \mathcal{M}_{n,\max}}M}  \right)_{e\in E(\mathbb T)} , \left( \mathbbm{1}_{e \in \bigcap_{M \in \mathcal{M}_{n,\max}} M^{\complement}} \right)_{e\in E(\mathbb T)}\right)   \]
    converges locally in the total variation sense towards some limit $ (\mathbb T, o, (\mathrm{m}(e))_{e\in E(\mathbb T)}, (\mathrm{b}(e))_{e\in E(\mathbb T}))$. Furthermore, this limit can be approximated by local functions in the sense that there exists a sequence of explicit local functions $A_H$ defined on rooted graphs such that
    \[
    \left( \mathbb T, o, (\mathrm{m}(e))_{e\in E(\mathbb T)}, (\mathrm{b}(e))_{e\in E(\mathbb T)} \right) = \lim_{H \rightarrow \infty} A_H(\mathbb{T},o).
    \]
\end{theorem}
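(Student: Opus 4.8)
The plan is to reduce the statement about mandatory and blocking edges to the already-established machinery for optimal matchings, by exploiting a classical fact: an edge $e$ belongs to every maximum-size matching of $G$ (respectively, to no maximum-size matching) if and only if perturbing the weight on $e$ alone changes (respectively, does not change) whether $e$ is used in the optimal matching, for weights chosen in the right way. Concretely, I would first recall the Gallai--Edmonds structure: the status of an edge relative to the family $\mathcal M_{\max}$ is determined by whether its endpoints lie in the "always-matched" part $D$ of the Gallai--Edmonds decomposition. I would then argue that this status is itself a \emph{local} quantity in the appropriate sense on UBGW trees under condition~\eqref{eq:uniquefixedpoint}, by running the same belief-propagation / message-passing fixed-point analysis used for Theorem~\ref{maintheorem} but with a degenerate (all-equal) weight distribution, and tracking not a single optimal configuration but the set of all optimal ones.

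The key steps, in order, would be: (1) Reformulate $\mathrm m(e)$ and $\mathrm b(e)$ via messages. On a finite tree (or locally tree-like graph), define for each oriented edge $(u,v)$ a message recording whether, in the subtree "behind" $v$, vertex $v$ is matched in every / some / no maximum-size matching; show these satisfy a recursion analogous to the one in Section~\ref{sec:def}, and that $\mathrm m(e), \mathrm b(e)$ are read off from the two messages across $e$. (2) Show the recursion has a unique attracting fixed point on the UBGW tree precisely under~\eqref{eq:uniquefixedpoint}; here the map $t \mapsto \hat\phi(1-\hat\phi(1-t))$ and its unique fixed point in $(0,1)$ govern the probability that a size-biased subtree fails to cover its root, which is exactly the quantity controlling the trichotomy always/some/never-matched. (3) Deduce that the message at depth $H$ computed from $B_H(\mathbb T, o)$ converges, as $H \to \infty$, almost surely and in total variation to the true message, with the depth-$H$ truncation defining the local functions $A_H$. (4) Transfer to the finite graphs $G_n$ using the local-convergence-in-total-variation hypothesis exactly as in the proof of Theorem~\ref{th:reciproquegamarnik}: couple $B_H(G_n,o_n)$ with $B_H(\mathbb T, o)$, note $A_H$ depends only on this ball, and control the discrepancy between $A_H$ and the true $\mathcal M_{n,\max}$-status by a cycle-free / unique-augmenting-path argument valid with probability tending to $1$ (this is where we need the finite graph to look like a tree up to depth $H$, and where the absence of a unique optimum for equal weights forces us to work with the whole set $\mathcal M_{n,\max}$ rather than a single matching).

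The main obstacle I expect is step (4) combined with the non-uniqueness inherent to unweighted maximum matchings: unlike the atomless-weight case, the depth-$H$ truncated decision for an edge can in principle be overturned by an augmenting path entering from outside $B_H$, and one must show this happens with probability at most $C\rho_\pi^H$-like smallness. I would handle this by the renormalisation/correlation-decay estimate underlying Theorem~\ref{th:decay}: condition~\eqref{eq:uniquefixedpoint} is exactly the criticality threshold at which the relevant contraction coefficient $\rho_\pi$ equals $1$, so under the stated (slightly weaker) hypothesis one still gets that the influence of the boundary of $B_H$ on the status of a fixed central edge decays, and the set of "undecided after depth $H$" edges has density $o(1)$. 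A secondary technical point is verifying that the Gallai--Edmonds status is genuinely continuous for the local topology enriched with the two indicator fields, i.e.\ that the limiting object $(\mathbb T, o, \mathrm m, \mathrm b)$ is well-defined and unimodular; this follows from the uniqueness in law in Theorem~\ref{maintheorem} applied with a sequence of atomless weight laws degenerating to a point mass, together with a monotonicity argument showing $\mathbbm 1_{e \in \bigcap M}$ and $\mathbbm 1_{e \in \bigcap M^\complement}$ are the increasing/decreasing limits of the corresponding optimal-matching indicators.
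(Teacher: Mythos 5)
Your tree-side analysis is essentially the paper's: the binary ``exposable in some maximum matching of the subtree'' message is exactly the macroscopic marginal $i$ of the paper, its distributional recursion is the iteration of $t\mapsto\hat\phi(1-t)$, and under \eqref{eq:uniquefixedpoint} the two extremal boundary conditions converge to the same limit (this is Lemma~\ref{lem:subcriticalproj} and Proposition~\ref{prop:ilocal}), which is what defines the local functions $A_H$. Two points in your plan, however, are genuine gaps. First, you claim that \eqref{eq:uniquefixedpoint} yields a contraction with rate $\rho_\pi$ and hence $C\rho_\pi^H$-type decay of the boundary influence. That is not available: $\rho_\pi<1$ is condition \eqref{eq:subcriticalcond}, which is strictly stronger than \eqref{eq:uniquefixedpoint}; under \eqref{eq:uniquefixedpoint} alone one only gets (non-quantitative) uniform convergence of the monotone iteration $F^{\circ 2H}$ on $[0,1]$ to its unique fixed point, and this is all the paper uses. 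The error is repairable, but as written your step (4) leans on an estimate you do not have.

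Second, and more seriously, your transfer to $G_n$ is underspecified precisely where the set-valued nature of $\mathcal M_{n,\max}$ bites. You need (a) a way to encode the influence of $G_n\setminus B_{H_0+H}$ on the status of edges deep inside the ball as a boundary condition for which a deterministic monotone sandwich between the two extremal conditions holds, and (b) a mechanism to show that in the tie case (one endpoint message $0$, the other $1$, i.e.\ $i(\overset{\rightarrow}{e})+i(\overset{\leftarrow}{e})=1$) the edge is neither mandatory nor blocking \emph{on the finite graph}. The paper obtains (a) by routing through weighted optimal matchings: Lemma~\ref{lem:iidmax} identifies $\bigcap_{M\in\mathcal M_{n,\max}}M$ and $\bigcap M^{\complement}$ with the edges lying in $M_{\mathrm{opt}}(w)$ for every, respectively no, choice of weights, after which Lemma~\ref{lem:matchingbord} turns the exterior into $(0,0)/(1,+\infty)$ boundary messages; and it obtains (b) by an explicit local weight surgery (Lemma~\ref{lem:chirurgieloc}) producing one weight family that forces $e$ into the optimum and another that excludes it. Your substitute --- ``a cycle-free / unique-augmenting-path argument'' and an appeal to Gallai--Edmonds --- does not by itself deliver either ingredient: augmenting-path considerations do not give the uniform-over-boundary-conditions decoupling, and nothing in your outline resolves the tie case into ``some maximum matching contains $e$ and some does not'' on $G_n$. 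Likewise, your closing claim that the limit object follows from Theorem~\ref{maintheorem} with weight laws degenerating to a point mass via a monotone limit of optimal-matching indicators is not substantiated; the mandatory/blocking indicators are not monotone limits of single-matching indicators in any evident sense, and the paper instead identifies them directly through the trichotomy in $i(\overset{\rightarrow}{e})+i(\overset{\leftarrow}{e})$.
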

\begin{remark}
    Condition \eqref{eq:subcriticalcond} implies Condition \eqref{eq:uniquefixedpoint}. Indeed, by considering in condition \eqref{eq:subcriticalcond} constant random variables  $X$, we get that the derivative of $\hat{\phi}(1-\hat{\phi}(1-t))$ is smaller than 1 on the whole interval $[0,1]$.
\end{remark}
We leave the explicit form of the functions $A_H$ to Section~\ref{sec:subcritical}. This allows us to give a law of large numbers on the number of mandatory and blocking edges in $(G_n)$. 
\begin{coro} \label{coro:ERmb}
    Under the same condition and notation of Theorem~\ref{thm:mandatory}, we have that
    \begin{align}
        &\frac{1}{|E(G_n)|}\sum_{e \in E(G_n)} \mathbbm{1}_{e \in \bigcap_{M \in M_{n,\max}}M} \overset{\mathbb{P}}{\longrightarrow} \gamma^2, \\
        &\frac{1}{|E(G_n)|} \sum_{e \in E(G_n)} \mathbbm{1}_{e \in \bigcap_{M \in M_{n,\max}}M^{\complement}} \overset{\mathbb{P}}{\longrightarrow} (1-\gamma)^2.
    \end{align}
    where the convergence holds in probability and $\gamma$ is the unique fixed point of $t \mapsto \hat{\phi}(1-t)$.
\end{coro}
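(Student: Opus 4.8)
The plan is to reduce the statement to a computation of the expected number of mandatory, resp.\ blocking, edges at the root of $\mathbb T$, and then to feed this into the local convergence in total variation supplied by Theorem~\ref{thm:mandatory}. Throughout write $d_{\mathrm m}(v):=\sum_{e\ni v}\mathrm m(e)$ and $d_{\mathrm b}(v):=\sum_{e\ni v}\mathrm b(e)$ for the number of mandatory, resp.\ blocking, edges incident to a vertex $v$.

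First I would use the handshake identity: for every $n$,
\[
\frac{1}{|E(G_n)|}\sum_{e\in E(G_n)}\mathbbm 1_{e\in\bigcap_{M\in\mathcal M_{n,\max}}M}
=\frac{|V(G_n)|}{2|E(G_n)|}\cdot\frac{1}{|V(G_n)|}\sum_{v\in V(G_n)}d_{\mathrm m}(v),
\]
and the same with $d_{\mathrm b}$. By hypothesis $\tfrac{2|E(G_n)|}{|V(G_n)|}\to\mathbb E[\pi]=\phi'(1)$, and by Theorem~\ref{thm:mandatory} the marks $(\mathrm m(e),\mathrm b(e))$ are approximable in total variation by the local functions $A_H$; hence the local convergence in total variation gives $\tfrac1{|V(G_n)|}\sum_v d_{\mathrm m}(v)\to\mathbb E[d_{\mathrm m}(o)]$ on $\mathbb T$, and likewise for $d_{\mathrm b}$. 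Turning this convergence of expectations into convergence in probability uses the concentration of such empirical edge-averages that is available for the models under consideration (configuration models, Erd\H{o}s--R\'enyi graphs), exactly as for Corollary~\ref{coro:indepasympt}; this is the only non-routine step, and I discuss it at the end. It then remains to evaluate $\mathbb E[d_{\mathrm m}(o)]$ and $\mathbb E[d_{\mathrm b}(o)]$ on $\mathbb T$ and divide by $\phi'(1)$.

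For the tree computation I would invoke the (Gallai--Edmonds type) description of mandatory and blocking edges on a tree that already underlies Theorem~\ref{thm:mandatory}. Deleting an edge $e=\{u,v\}$ splits the tree into the two components $T_u\ni u$ and $T_v\ni v$; call $u$ \emph{exposable} in $T_u$ if some maximum-size matching of $T_u$ misses $u$. Starting from $\nu(T)=\nu(T_u)+\nu(T_v)+\mathbbm 1[u\text{ exposable}]\,\mathbbm 1[v\text{ exposable}]$ (with $\nu$ the matching number) one checks that $e$ is mandatory iff both endpoints are exposable in their components, and that $e$ is blocking iff neither endpoint is exposable; one also checks that a vertex is exposable in its subtree iff all of its children are non-exposable. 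Consequently, on a Galton--Watson tree with offspring $\hat\pi$ the probability $\gamma$ that the root is exposable satisfies $\gamma=\hat\phi(1-\gamma)$, which is exactly the fixed point in the statement (the map $t\mapsto\hat\phi(1-t)$ is strictly decreasing, so its fixed point is unique). In $(\mathbb T,o)$ the root has $N\sim\pi$ children carrying independent $\hat\pi$-Galton--Watson subtrees, each child being exposable in its subtree independently with probability $\gamma$. A one-line case analysis then gives $d_{\mathrm m}(o)=\mathbbm 1[\text{exactly one child of }o\text{ is exposable}]$, so $\mathbb E[d_{\mathrm m}(o)]=\gamma\,\phi'(1-\gamma)=\gamma^2\phi'(1)$ using $\hat\phi(1-\gamma)=\gamma$; dividing by $\phi'(1)$ gives $\gamma^2$. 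Similarly $d_{\mathrm b}(o)$ equals the number of non-exposable children of $o$ when $o$ has at least one exposable child, and $0$ otherwise, so $\mathbb E[d_{\mathrm b}(o)]=(1-\gamma)\phi'(1)-(1-\gamma)\phi'(1-\gamma)=(1-\gamma)^2\phi'(1)$, which divided by $\phi'(1)$ gives $(1-\gamma)^2$.

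The expectation computation is elementary; the genuinely delicate point is the passage from single-root local convergence in total variation to the law-of-large-numbers statement. This requires the concentration of empirical edge-averages built into the models at hand (e.g.\ a bounded-difference estimate on the configuration-model pairing, or the independent-two-roots hypothesis of Corollary~\ref{coro:indepasympt}); without such an input the empirical density need not converge to a constant, as the example of $n$ identical copies of a random tree already shows.
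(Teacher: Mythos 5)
Your identification of the limiting constants is correct and is in substance the same computation the paper intends: your notion of an endpoint being ``exposable'' in its component is exactly the combinatorial meaning of the macroscopic message $i$ of Proposition~\ref{prop:ilocal} ($i=0$ at a vertex iff all its children carry $i=1$, i.e.\ iff the vertex is exposable), your fixed point $\gamma=\hat\phi(1-\gamma)$ is the one appearing in the statement, and ``mandatory iff both endpoints exposable, blocking iff neither'' is the criterion $i(\overset{\rightarrow}{e})+i(\overset{\leftarrow}{e})<1$, resp.\ $>1$, established in the proof of Theorem~\ref{thm:mandatory}. Your vertex-rooted bookkeeping ($d_{\mathrm m}(o)$, $d_{\mathrm b}(o)$, handshake, division by $\phi'(1)$) is correct, though it is more laborious than the direct edge-rooted reading: at a uniform edge the two messages across the edge are independent with law $\gamma\delta_0+(1-\gamma)\delta_1$, so the limiting mandatory and blocking probabilities are $\gamma^2$ and $(1-\gamma)^2$ immediately. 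Two small points: the ratio $2|E_n|/|V_n|\to\phi'(1)$ that your handshake step needs is a hypothesis of Theorem~\ref{maintheorem2}, not literally of Theorem~\ref{thm:mandatory}, so you are importing it tacitly; and ``exposable'' is not defined on the infinite tree by maximum matchings, only through the depth-$H$ recursion, so the clean way to phrase your computation is via the unique message family $i$ (equivalently the limit of the local functions $A_H$), which is exactly what the paper's subcritical analysis supplies.

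The genuine gap is the conclusion itself: the corollary asserts convergence \emph{in probability}, and your argument only delivers convergence of the expectations. You reduce the empirical average to $\mathbb E[d_{\mathrm m}(o)]$ via single-root local convergence and then write that the upgrade to an LLN ``uses the concentration of such empirical edge-averages that is available for the models under consideration'', which you neither prove nor derive from the stated hypotheses — indeed you point out yourself that it can fail. This variance step is the actual content of the corollary and it is available from the paper's machinery: Theorem~\ref{thm:mandatory} shows that the mandatory/blocking indicators of an edge are, up to an error tending to $0$ uniformly, $H$-local functions $A_H$ of the unweighted graph (point-to-set decay of the $i$-messages, Lemma~\ref{lem:subcriticalproj}), so a second-moment computation with two independent uniform roots — exactly the argument of Corollary~\ref{coro:indepasympt} — shows that the empirical average of these indicators has vanishing variance and hence concentrates at the value computed at the root of $\mathbb{T}$. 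To make your proposal a proof you must carry out this two-root/second-moment argument (and state the decorrelation hypothesis on pairs of roots it requires, which is the same one the paper invokes for Corollary~\ref{coro:indepasympt}), rather than leaving the concentration as an external, model-specific input.
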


We conjecture that an extension of this result holds in expectation for generic offspring distributions, including laws that do not verify condition~\ref{eq:subcriticalcond} or condition~\ref{eq:uniquefixedpoint}. See Conjecture~\ref{conj:ER}, Conjecture~\ref{conj:desirables} and Conjecture~\ref{conj:desirablesgeom}.

\subsection{A glimpse at our approach: Belief propagation and renormalisation}
Maximum weight matchings were studied in our previous article \cite{enriquez2024optimalunimodularmatching} with an approach based on belief propagation. Informally, directed message variables $\mathbf{Z}(u,v)_{(u,v) \in \overset{\rightarrow}{E}(\mathbb{T})}$ on the weighted tree $(\mathbb{T},o,w)$ satisfying
\[ \mathbf{Z}(u,v)=\max\left(0,\max_{\substack{u' \sim v \\ u' \neq u}} \left( w(v,u')-\mathbf{Z}(v,u') \right)\right)  \] 
were defined and studied. In particular, it was shown in Theorem~1 of \cite{enriquez2024optimalunimodularmatching} that there is a unique stationary distribution for such messages and that they correspond to the unique (in distribution) unimodular matching $(\mathbb{T},o,\mathbb{M}_{\max})$ on the tree that maximizes
\[
\mathbb E \left[ w(o) \mathbbm{1}_{o \in \mathbb M_{\mathrm{max}}} \right].
\]
The matching $\mathbb M_{\mathrm{max}}$ is constructed by the messages via the decision rule
\[
\{u,v\} \in \mathbb{M}_{\max} \Leftrightarrow \mathbf{Z}(u,v) + \mathbf{Z}(v,u) < w(u,v).
\]
See Subsection~\ref{subsec:premierpapier} for a more detailed presentation.

The main idea behind this paper is to introduce a renormalisation procedure on these messages when the weights are of the form $1+\eps w$. This defines a tree, messages and matching that we call $(\mathbb{T},o,w,\mathbb{M}_{\max}^{(\eps)})$ As $\eps \rightarrow 0$, we find that the corresponding messages $\mathbf{Z}^{\eps}$ can be split into a macroscopic component $x(u,v)$ and a microscopic component $Z(u,v)$, such that the messages $(x,Z)$ verify a two dimensional equivalent to the previous recursion:
\[(x,Z)(u,v)=\maxlex \left( (0,0), \maxlex_{ \substack{u' \sim v \\ u' \neq u}} \left( (1,w(v,u'))-(x(v,u'),Z(v,u')) \right) \right),   \]
where $\mathrm{lex}$ stands for the lexicographical order. The decision rule becomes
\begin{equation*}
    (u,v) \in \mathbb{M}_{\mathrm{opt}} \Leftrightarrow (x,Z)(u,v)+(x,Z)(v,u) \overset{\mathrm{\lex}}{<} (1,w(u,v)).
\end{equation*}
A detailed heuristic of this renormalisation procedure is given in Section~\ref{sec:heuristic}.
We will show in Section~\ref{sec:marginali} that under general conditions, it is possible to determine the explicit distribution of the macroscopic component and that there is a unique solution to the corresponding recursive distributional equation.

\bigskip

A by-product of our approach is that the maximum weight matching on the tree converges in law to the optimal matching:

\begin{theorem}\label{thm:convergenceZ}
    Let $(\mathbb{T},o,w)$ be a Unimodular Bienaymé-Galton-Watson tree with reproduction law $\pi$ and i.i.d. weights of law $\omega$.
    Assume that $\pi$ and $\omega$ have finite expectation, and that the generating function $\phi$ of $\pi$ is such that the function
    \[x \in [0,1] \mapsto \phi(1-x) + \phi(1-\hat\phi(1-x)) + \phi'(1) x \hat\phi(1-x)\]
    has at most two global maxima.
    For any $\eps>0$, denote by $(\mathbb{T},o,w, \mathbb{M}_{\max}^{(\eps)})$ the unique (in law) unimodular matching maximising
    \[  \mathbb{E}\left[ \sum_{v\sim o}(1+ \varepsilon w(o,v)) \mathbbm{1}_{ \{o,v\} \in \mathbb{M}_{\max}^{(\eps)} }\right],\]
    and by $(\mathbb{T},o,w,\mathbb{M}_{\mathrm{opt}})$ the unique (in law) optimal unimodular matching 
     given by Theorem~\ref{maintheorem}. We have the following weak local convergence
    \[(\mathbb{T},o,w, \mathbb{M}_{\max}^{(\eps)}) \underset{\eps \rightarrow 0}{\longrightarrow} (\mathbb{T},o,w, \mathbb{M}_{\mathrm{opt}}).\]
\end{theorem}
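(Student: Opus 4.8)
The plan is to sidestep any direct analysis of the messages and instead combine a compactness argument with the variational characterisations of the two matchings. For a unimodular matching $\mathbb{M}$ of the (fixed) weighted tree $(\mathbb{T},o,w)$, write
\[
p(\mathbb{M}) = \mathbb{P}\!\left(\exists v \sim o,\ \{o,v\}\in\mathbb{M}\right),
\qquad
W(\mathbb{M}) = \mathbb{E}\!\left[\sum_{v\sim o} w(o,v)\,\mathbbm{1}_{\{o,v\}\in\mathbb{M}}\right].
\]
Since at most one edge incident to $o$ lies in $\mathbb{M}$, the quantity defining $\mathbb{M}_{\max}^{(\eps)}$ is exactly $p(\mathbb{M})+\eps W(\mathbb{M})$, while $\mathbb{M}_{\mathrm{opt}}$ maximises $(p(\mathbb{M}),W(\mathbb{M}))$ in lexicographic order, both over all unimodular matchings whose marginal is the UBGW tree with i.i.d.\ weights $\omega$. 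I will use that $\mathbb{M}_{\max}^{(\eps)}$ is well defined and unique in law (from \cite{enriquez2024optimalunimodularmatching}, since $1+\eps w$ is atomless for $\eps>0$) and that $\mathbb{M}_{\mathrm{opt}}$ is well defined and unique in law under the present hypotheses (Theorem~\ref{maintheorem}, Proposition~\ref{prop:Zconstruction}). Because the underlying marked object is the single law $(\mathbb{T},o,w)$ decorated by one $\{0,1\}$-mark per edge, the family $\{(\mathbb{T},o,w,\mathbb{M}_{\max}^{(\eps)})\}_{\eps>0}$ is automatically tight for weak local convergence, so it is enough to show that every subsequential limit, along some $\eps_k\downarrow 0$, equals $(\mathbb{T},o,w,\mathbb{M}_{\mathrm{opt}})$ in law.

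So let $\mathbb{M}_0$ be a subsequential limit along $\eps_k\downarrow 0$. Being a weak local limit of unimodular matchings, $\mathbb{M}_0$ is again a unimodular matching, as both properties are closed under weak local limits. Next, $p$ and $W$ pass to the limit along the subsequence: $p$ is the expectation of the indicator of the $1$-local, a.s.\ continuous event ``$o$ is matched'', hence is continuous; $W$ is the expectation of the $1$-local a.s.\ continuous function $\sum_{v\sim o}w(o,v)\mathbbm{1}_{\{o,v\}\in\mathbb{M}}$, dominated uniformly in $\eps$ by $\sum_{v\sim o}|w(o,v)|$, a random variable whose law does not depend on $\eps$ and whose mean is finite because $\pi$ and $\omega$ have finite expectation; by uniform integrability, $W(\mathbb{M}_0)=\lim_k W(\mathbb{M}_{\max}^{(\eps_k)})$. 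Writing $p_{\max}:=p(\mathbb{M}_{\mathrm{opt}})=\max_{\mathbb{M}}p(\mathbb{M})$ and $C_W$ for the above bound on $|W|$, the optimality inequality $p(\mathbb{M}_{\max}^{(\eps)})+\eps W(\mathbb{M}_{\max}^{(\eps)})\ge p(\mathbb{M}_{\mathrm{opt}})+\eps W(\mathbb{M}_{\mathrm{opt}})$ together with $|W|\le C_W$ gives $p_{\max}\ge p(\mathbb{M}_{\max}^{(\eps)})\ge p_{\max}-2C_W\eps$, so $p(\mathbb{M}_0)=p_{\max}$. Moreover, for any unimodular matching $\mathbb{M}$ with $p(\mathbb{M})=p_{\max}$, the inequality $p(\mathbb{M}_{\max}^{(\eps)})+\eps W(\mathbb{M}_{\max}^{(\eps)})\ge p_{\max}+\eps W(\mathbb{M})$ combined with $p(\mathbb{M}_{\max}^{(\eps)})\le p_{\max}$ yields $W(\mathbb{M}_{\max}^{(\eps)})\ge W(\mathbb{M})$ for every $\eps>0$, hence $W(\mathbb{M}_0)\ge W(\mathbb{M})$. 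Thus $\mathbb{M}_0$ has maximal density and maximises $W$ among maximal-density unimodular matchings, i.e.\ $(p(\mathbb{M}_0),W(\mathbb{M}_0))$ is the lexicographic maximum of the performance; so $\mathbb{M}_0$ is an optimal matching and, by uniqueness (Theorem~\ref{maintheorem}), $\mathbb{M}_0=\mathbb{M}_{\mathrm{opt}}$ in law. Since all subsequential limits agree, the whole family converges, which is the claim.

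Along this route there is no serious obstacle: everything rests on the already-established existence and uniqueness statements and on a routine uniform-integrability upgrade of the portmanteau theorem, licensed by the fact that the marginal $(\mathbb{T},o,w)$ — and in particular the dominating variable $\sum_{v\sim o}|w(o,v)|$ — is the same for every $\eps$. A second, more hands-on approach, closer to the renormalisation philosophy of the paper, would instead prove directly that the stationary messages $\mathbf{Z}^{\eps}$ of $(\mathbb{T},o,1+\eps w)$ satisfy, jointly in law over any finite set of oriented edges, $\mathbf{Z}^{\eps}(u,v)=x(u,v)+\eps Z(u,v)+o(\eps)$ with $(x,Z)$ the stationary solution of the lexicographic recursion, and then transfer the convergence to the matchings through the lexicographic decision rules; there the genuine difficulty is to control the error term uniformly — to show that at stationarity $\mathbf{Z}^{\eps}$ concentrates within $O(\eps)$ of its macroscopic values and that the lexicographic comparisons do not degenerate into ties in the limit — which is exactly what the analysis of Sections~\ref{sec:heuristic}--\ref{sec:marginali} is designed to handle, and where the hypothesis that $x\mapsto\phi(1-x)+\phi(1-\hat{\phi}(1-x))+\phi'(1)x\hat{\phi}(1-x)$ has at most two global maxima enters.
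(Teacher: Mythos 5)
Your argument is correct, but it is a genuinely different route from the paper's. You work entirely at the level of performances: tightness of the decorated family (the weighted-tree marginal being fixed and the decoration $\{0,1\}$-valued), closedness of unimodular matchings under local weak limits, passage to the limit of the density (a bounded $1$-local functional) and of the weight (a $1$-local functional dominated by $\sum_{v\sim o}|w(o,v)|$, whose law is independent of $\eps$ and integrable since $\pi$ and $\omega$ are), and then the two-line variational comparison $p(\mathbb{M}_{\max}^{(\eps)})+\eps W(\mathbb{M}_{\max}^{(\eps)})\ge p(\mathbb{M})+\eps W(\mathbb{M})$ which forces every subsequential limit to achieve the lexicographic maximum of $(p,W)$; uniqueness from Theorem~\ref{maintheorem} then identifies the limit. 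The paper instead proves the convergence of the \emph{messages}: it shows (Lemma~\ref{lem:Zconvergerenorm}) that the rescaled stationary messages $\bigl(\tfrac{\mathbf{Z}^{\eps}}{\eps},\tfrac{\mathbf{Z}^{\eps}-1/2}{\eps},\tfrac{\mathbf{Z}^{\eps}-1}{\eps}\bigr)$ converge to $\zeta_{\mathcal H}$ — this is where Theorem~\ref{uniqueness}, hence the hypothesis on $\argmax F_\pi$ and the support condition on $\omega$, is used — then passes the boundary i.i.d.\ structure and the recursion to the limit on each ball and transfers the decision rule, handling the boundary case through atomlessness. What each approach buys: yours is shorter and softer, needs no analysis of the renormalised RDE beyond what Theorem~\ref{maintheorem} already provides, and only uses the hypotheses implicitly presupposed by the statement (well-posedness of $\mathbb{M}_{\max}^{(\eps)}$ and $\mathbb{M}_{\mathrm{opt}}$); the paper's proof is heavier but yields strictly more, namely the joint convergence of the decorated tree $(\mathbb{T},o,w,\mathbf Z^{(\eps)})$ to $(\mathbb{T},o,w,(i,Z))$, which identifies the limit constructively and is the form of the statement reused in the subcritical analysis (Corollary~\ref{coro:convergenceepsilonn}). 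The technical points you treat briskly (that subsequential limits are supported on $\{0,1\}$-decorations forming a matching, a.s.\ continuity of the $1$-local functionals on that subspace, and the uniform-integrability upgrade) are routine and of exactly the same nature as the manipulations in the paper's proof of Theorem~\ref{maintheorem2}, so I see no gap.
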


\bigskip

Combining Theorem~\ref{th:reciproquegamarnik} and Theorem~\ref{thm:convergenceZ}, we get the following corollary:

\begin{coro}\label{coro:convergenceepsilonn}
    Let $(G_n,w_n)$ be a sequence of weighted finite graphs.
    Under assumptions and notations of Theorem~\ref{th:reciproquegamarnik} and Theorem~\ref{thm:convergenceZ}, define $M_{\max}^{(\eps)}(G_n)$ to be the matching on $G_n$ maximising the sum of $1+w_n$ on its edges, then
    \[  (G_n,o_n, M_{\max}^{(\eps)}(G_n)  \]
    converges locally weakly as $n \to +\infty, \eps \to 0$ to
    \[ \left( \mathbb{T},o,\mathbb{M}_{\mathrm{opt}}(G_n) \right) . \]
\end{coro}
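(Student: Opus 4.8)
The plan is to deduce the corollary by composing two convergences --- first letting $n\to\infty$ with $\eps$ frozen, then letting $\eps\to0$ on the tree --- and gluing them by a routine metrisation argument. First I would fix $\eps>0$ and observe that $M_{\max}^{(\eps)}(G_n)$ is, by definition, a maximum weight matching of $G_n$ for the i.i.d.\ edge weights $1+\eps w_n(e)$, whose common law is a shift and dilation of $\omega$ and is therefore again atomless with a finite first moment. The convergence theorem for maximum weight matchings --- established in \cite{enriquez2024optimalunimodularmatching}, and in the present subcritical setting also obtainable by running the argument behind Theorem~\ref{th:reciproquegamarnik} on this singly-weighted problem --- applies under exactly the hypotheses already in force here (local convergence of $(G_n,o_n)$ to the UBGW tree and $\tfrac{2|E_n|}{|V_n|}\to\mathbb{E}[\pi]$), and yields
\[(G_n,o_n,M_{\max}^{(\eps)}(G_n))\ \underset{n\to\infty}{\longrightarrow}\ (\mathbb{T},o,\mathbb{M}_{\max}^{(\eps)})\]
in the local weak sense, where $\mathbb{M}_{\max}^{(\eps)}$ is the unimodular maximum weight matching of $(\mathbb{T},o)$ with weights $1+\eps w$, i.e.\ exactly the matching named in Theorem~\ref{thm:convergenceZ}. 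The one thing I would check carefully here is that this convergence is insensitive to an affine change of the weight law --- it is, atomlessness and a finite first moment being all that it asks of the weights. (In the subcritical regime there is also a shortcut: for a fixed finite graph, as soon as $\eps$ is below a threshold $\eps^{\ast}(G_n)>0$ depending on $G_n$ and on $\max_e|w_n(e)|$, the sum $\sum_{e\in M}(1+\eps w_n(e))$ is maximised exactly at $M_{\mathrm{opt}}(G_n)$; so along any $\eps_n\to0$ with $\eps_n<\eps^{\ast}(G_n)$ one may invoke Theorem~\ref{th:reciproquegamarnik} directly and even upgrade the conclusion to local convergence in total variation.)

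Next I would let $\eps\to0$: Theorem~\ref{thm:convergenceZ}, whose hypothesis on the number of global maxima is part of our standing assumptions, gives $(\mathbb{T},o,\mathbb{M}_{\max}^{(\eps)})\to(\mathbb{T},o,\mathbb{M}_{\mathrm{opt}})$ in the local weak sense. To splice the two steps together, fix a metric $d$ metrising local weak convergence of random rooted graphs carrying $\mathbb{R}\times\{0,1\}$ edge marks (weight and matching indicator), which form a Polish space, and write $\mu_{n,\eps}$, $\nu_\eps$, $\nu$ for the laws of $(G_n,o_n,M_{\max}^{(\eps)}(G_n))$, $(\mathbb{T},o,\mathbb{M}_{\max}^{(\eps)})$, $(\mathbb{T},o,\mathbb{M}_{\mathrm{opt}})$ respectively. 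By the triangle inequality $d(\mu_{n,\eps},\nu)\le d(\mu_{n,\eps},\nu_\eps)+d(\nu_\eps,\nu)$: given $\delta>0$, the second term is $<\delta/2$ once $\eps\le\eps_0(\delta)$ by Theorem~\ref{thm:convergenceZ}, and then the first term is $<\delta/2$ once $n\ge N_\eps$ by the first step. This is the asserted convergence as $n\to\infty$ and then $\eps\to0$, and a diagonal extraction produces a single sequence $\eps_n\downarrow0$ along which $(G_n,o_n,M_{\max}^{(\eps_n)}(G_n))\to(\mathbb{T},o,\mathbb{M}_{\mathrm{opt}})$.

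The argument is largely bookkeeping, and I expect the only genuinely nontrivial point to be the verification in the first step that the maximum weight matching convergence really covers the translated law $1+\eps\omega$. The conceptual subtlety worth flagging is that the two limits are iterated rather than joint at an arbitrary rate: a fast-decreasing $\eps_n$ makes $M_{\max}^{(\eps_n)}(G_n)$ coincide with $M_{\mathrm{opt}}(G_n)$ via the threshold shortcut, whereas a slow-decreasing $\eps_n$ makes it a genuine maximum weight matching of a distinct weighted graph; only the common limit $(\mathbb{T},o,\mathbb{M}_{\mathrm{opt}})$ is seen in both regimes, which is why the statement is best read as $\lim_{\eps\to0}\lim_{n\to\infty}$ (equivalently, along a sufficiently slowly decreasing sequence $\eps_n$).
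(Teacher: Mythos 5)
Your two-step composition (first $n\to\infty$ at fixed $\eps$ via the maximum-weight-matching convergence, then $\eps\to0$ on the tree via Theorem~\ref{thm:convergenceZ}, glued by a metrisation and triangle-inequality argument) is sound as far as it goes, but it only proves the iterated limit $\lim_{\eps\to 0}\lim_{n\to\infty}$ together with convergence along certain diagonal sequences --- and you flag this yourself. That is weaker than what the corollary asserts: as Figure~\ref{fig:diagram} makes explicit, the dashed diagonal arrow is the joint limit $(n,\eps)\to(+\infty,0)$, i.e.\ convergence along arbitrary sequences $\eps_n\to0$. Your fallback for fast $\eps_n$ does not fill the gap: the threshold below which $M_{\max}^{(\eps)}(G_n)=M_{\mathrm{opt}}(G_n)$ is random, of order $1/\sum_{e}|w_n(e)|$, hence typically of order $1/|E_n|$, so it only covers sequences with $\eps_n |E_n|\to 0$ in probability; rates that are too fast for the diagonal extraction but too slow for this threshold are not addressed, and without some uniformity in $\eps$ of the $n$-limit the triangle inequality cannot reach them.

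The missing idea is exactly the one the paper's proof consists of: uniformity in $\eps$ of the subcritical contraction. The proof of Theorem~\ref{th:reciproquegamarnik} rests on Lemma~\ref{lem:subcritical}, whose contraction coefficient $\rho_\pi$ comes from condition~\eqref{eq:subcriticalcond} and therefore depends only on $\pi$, not on the weight law; in particular it is the same for every law $1+\eps\,\omega$, $\eps>0$. Re-running that proof simultaneously for the whole family $\{M_{\max}^{(\eps)}(G_n)\}_{\eps>0}$ (this is what the paper does) gives, for each $H_0$ and $\delta$, a depth $H$ and a rank $N$ \emph{independent of $\eps$} such that for all $n\ge N$ and all $\eps>0$ the ball $B_{H_0}\big(G_n,o_n,M_{\max}^{(\eps)}(G_n)\big)$ is $\delta$-close in total variation to $B_{H_0}\big(\mathbb{T},o,\mathbb{M}_{\max}^{(\eps)}\big)$; combined with Theorem~\ref{thm:convergenceZ} for the $\eps\to0$ step, this closes the joint limit. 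If you add this uniform-in-$\eps$ statement (a one-line observation about $\rho_\pi$, plus the remark that your ``run Theorem~\ref{th:reciproquegamarnik} on the singly-weighted problem'' step is the same proof for every $\eps$ at once), your splicing argument becomes a proof of the corollary as intended; as written, it establishes only the weaker iterated statement.
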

\bigskip

Our results are summarised in Figure~\ref{fig:diagram}

\begin{figure} \label{fig:diagram}
\begin{center}
\begin{tikzcd}[row sep=large, column sep=large]
\left(G_n,o_n,M^{(\eps)}_{\max}(G_n)\right) \arrow[rrr, "\text{\cite[Theorem 2]{enriquez2024optimalunimodularmatching}}","n \to +\infty"' ] \arrow[dd," \eps \to 0"'] \arrow[rrrdd, dashed, "{\begin{matrix} \text{Corollary~\ref{coro:convergenceepsilonn}} \\ (n,\varepsilon) \to (+\infty, 0) \end{matrix}}" description] &  &  & \left(\mathbb{T},o,\mathbb{M}_{\max}^{(\eps)} \right) \arrow[dd, "\text{Theorem~\ref{thm:convergenceZ}}"," \eps \to 0"'] \\
                                       &  &  &              \\
\left(G_n,o_n, M_{\mathrm{opt}}(G_n) \right) \arrow[rrr, dashed, "\text{Theorem~\ref{th:reciproquegamarnik}} ","n \to +\infty"']                          &  &  & \left(\mathbb{T},o,\mathbb{M}_{\mathrm{opt}}\right)          
\end{tikzcd}
\end{center}
\caption{Plain arrows are valid under general assumptions and dashed arrows are proved in the subcritical regime.}
\end{figure}
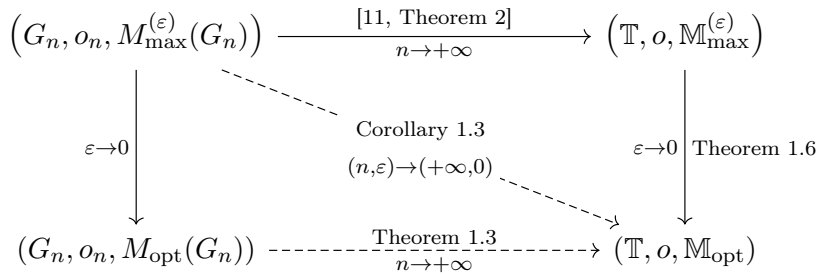

\subsection{Organisation of the paper}
Section~\ref{sec:def} presents the precise definitions and important notions used in the paper. In particular, we give definitions for unimodular graphs, local convergence, and optimal matchings.

As already mentioned, Section~\ref{sec:heuristic} gives detailed heuristics for our approach in the special case of UBGW trees. It also ends with an informal computation of the Karp-Sipser formula for the size of maximal matchings from the renormalised messages in the case of Poisson offspring distribution.

Section~\ref{sec:infinitematching} formally builds the joint distribution $\left(\mathbb{T},o,w,(x,Z)(u,v)_{(u,v) \in \overset{\rightarrow}{E}}\right)$ and shows that the matching defined by $\big\{\{u,v\}, (x,Z)(u,v)+(x,Z)(v,u)  \overset{\lex}{<} (1,w(u,v))\big\}$
defines the unique distribution of an optimal unimodular matching in $(\mathbb{T},\overset{\rightarrow}{o},w)$.
It concludes with the proofs of Theorem~\ref{maintheorem} and Theorem~\ref{maintheorem2} by combining the results of the previous sections.

Section~\ref{sec:marginali} carries out the computations necessary to determine the distribution of the first marginal of $(x,Z)$. 
We then prove that the distribution of the optimal matching characterises the distribution of the message variables. Combined with Theorem~\ref{maintheorem}, this allows one to prove the uniqueness in distribution of the messages (see Theorem~\ref{uniqueness}). The section concludes with the proof of Theorem~\ref{thm:convergenceZ}.

Section~\ref{sec:subcritical} studies the subcritical regime corresponding to assumption~\eqref{eq:subcriticalcond}. It contains the proofs of Theorem~\ref{th:reciproquegamarnik}, Theorem~\ref{th:decay} and Theorem~\ref{thm:mandatory}. The methods used to treat this regime are quite different than in the general case and this section can be read independently of Section~\ref{sec:infinitematching} and Section~\ref{sec:marginali}. The section concludes with the proof of Corollary~\ref{coro:convergenceepsilonn}.

Section~\ref{sec:final} discusses independent related questions and conjectures arising from this paper. It starts with a generalisation of our framework to minimal cost maximal gains matchings. We then show that as announced in Remark~\ref{rem:uniformmatch}, the optimal matchings we introduce are fundamentally different than uniformly sampled maximal size matchings. Finally, we discuss conjectures on mandatory and blocking edges related to Theorem~\ref{thm:mandatory} and Corollary~\ref{coro:ERmb}.

Finally, Section~\ref{sec:Appendix} contains the proof some technical lemmas.

\paragraph{Acknowledgements}
We thank Sébastien Martineau and James Martin for stimulating discussions during this work. We also thank Andrei Alpeev for discussions around Proposition~\ref{prop:russe}.

\section{Unimodular matched graphs}\label{sec:def}

This section is essentially drawn from \cite{enriquez2024optimalunimodularmatching} since we work in the same setting. 

We will work on rooted weighted graphs defined as follows.
\begin{definition}
     Let $G=(V,E)$ be a (locally finite) graph.
     We call \emph{vertex-rooted weighted graph} (resp. \emph{edge-rooted weighted graph}), the triplet $(G,o,w)$ , where $o \in V$ (resp. $o\in \overset{\rightarrow}{E}$) and $w$ is a function from $E$ to $\mathbb{R}$.
\end{definition}
\begin{remark}
We will use the notation $o$ for both vertex roots and directed edge roots. However, we will specify $\overset{\rightarrow}{o}$ or $o=(o_-,o_+)$ when distinguishing between vertex and edge roots will be important.
\end{remark}

It will be useful to add graph decorations, namely functions that map  $\overset{\rightarrow }{E}$ to $\mathbb{R}$.
\begin{definition} \label{def:decograph}
    Let $(G,o,w)$ be a vertex-rooted (resp.\ edge-rooted) weighted graph. Fix $I$ an integer and $(f_i)_{i \in \{1,...,I\}}$ some functions from $\overset{\rightarrow}{E}$ to $\overline{\mathbb{R}}$. We then say that
    $(G,o,w,(f_i)_{i \in \{1,...,I\}})$ is a \emph{decorated} vertex-rooted (resp.\ edge-rooted) weighted graph.  If $f_i$ is symmetrical then we will identify it with a map from $E$ to $\overline{\mathbb{R}}$.
\end{definition}

Since we are not interested in the labels of the vertices, we will work up to graph isomorphism.
We will say that $(G,o,w,(f_i)_{i \in \{1,...,I\}}) \simeq (G',o',w',(f_i')_{i \in \{1,...,I\}})$ if there exists some one-to-one function $g$ called graph isomorphism from $V$ to $V'$ such that if $g(o)=o'$ and for all $(u,v) \in V$, $w'(g(u),g(v))=w(u,v)$ and for all $ i \in \{1,...,I\}$, $f'_i(g(u),g(v))=f_i(u,v)$ .

\begin{definition}
Let $\mathcal{G}^{\star}$ be the space of locally finite decorated vertex-rooted weighted graphs up to isomorphism.
We will write $\mathcal{L}(\mathcal{G}^{\star})$ for the space of laws on this space. Similarly, we denote by  $\hat{\mathcal{G}}^{\star}$ and $\mathcal{L}(\hat{\mathcal{G}}^{\star})$ the corresponding edge-rooted space and laws.
\end{definition}
When we do not need to keep track of the weights, root, or some decorations of the graphs, we will denote elements of 
$\mathcal{G}^{\star}$ indifferently by $G$, $(G,o)$, $(G,o,w)$, $(G,o,w,(f_i)_{i \in \{1,...,I\}})$, ..., keeping only the quantities we are currently interested in.
\bigskip
\begin{remark}
    To be completely rigorous, the space $\mathcal{L}(\mathcal{G}^{\star})$ depends on the value $I$ and the topology that we put on the decorations $(f_i)_{i \in I}$. We will choose topology of uniform convergence for the decorations $f_i: \overset{\rightarrow}{E} \mapsto \overline{\mathbb{R}}$. Since $I$ is always uniformly bounded in this paper, we will simply take the product topology of $(f_i)_{i \in I}$.
\end{remark}
The topology of local convergence on $\mathcal{G}^{\star}$ and $\hat{\mathcal{G}^{\star}}$ was first introduced by Benjamini-Schramm~\cite{BenjaminiSchramm1} and by Aldous-Steele \cite{Aldous2004}, whose  precise definitions follow. We denote by $d_{gr}$ the graph distance between vertices of a graph. For any vertex-rooted decorated weighted graph $(G=(V,E),o,w,(f_i)_{i \in I})$ and $H>0$, the $H$-neighbourhood $B_H((G,o,w,(f_i)_{i \in \{1,...,I\}}))$ of $o$ in the graph is the vertex-rooted decorated weighted graph $(G_H,o,w_H,(f_{i,H})_{i \in \{1,...,I\}})$ with $G_H :=(V_H,E_H)$ such that
\begin{align*}
V_H &=\{v \in V, d_{gr}(o,v)\leq H\},\\
E_H &=\{ (u,v) \in E, (u,v)\in V_H^2 \},
\end{align*}
$w_H$ are the weights of edges in $E_H$ and, for all $i \in \{1,...,I\}$, the decoration $f_{i,H}$ is the decoration $f_{i}$ restricted to $\overset{\rightarrow}{E}_H$.

Let $(G,o,w,(f_{i})_{i \in \{1,...,I\}})$ and $(G',o',w',(f_i')_{i \in \{1,...,I\}}$ be two vertex-rooted decorated weighted graphs. Let $H \geq 0$ be the largest integer such that there exists a graph isomorphism $g$ from  $B_H(G,o)$ to $ B_H (G',o')$ such that $\| w'_H\circ g - w_H \|_{\infty} \leq \frac{1}{H}$ and $\| f'_{i,H} \circ g - f_{i,H} \|_{\infty} \leq \frac{1}{H}$ for all $i \in \{1, \ldots , I \}$. We set
\begin{equation} \label{eq:dloc}
   d_{\mathrm{loc}} \left( (G,o,w,(f_{i})_{i \in \{1,...,I\}}) , (G',o',w',(f_i')_{i \in \{1,...,I\}} \right) := \frac{1}{1+H}.
\end{equation}
The function $ d_{\mathrm{loc}}$ is a distance on $\mathcal G^\star$ and the space $(\mathcal{G}^{\star},d_\mathrm
{loc})$ is a Polish space. The topology induced by  $ d_{\mathrm{loc}}$ is called the local topology on $\mathcal{G}^{\star}$. Weak convergence on $\mathcal{L}(\mathcal{G}^{\star})$ for the local topology is called local weak convergence.
Similar definitions can be given for the edge-rooted versions.

\bigskip

We end this section with the definition of local and weakly local functions on the space of rooted graphs.

\begin{definition}\label{def:locfunc}
    Let $(X,d)$ be a metric space and $H\geq0$, we say that a map $F:\mathcal{G}^{\star} \mapsto (X,d)$ is $H-$local if $B_H(G,o) = B_H(G',o')$ implies $F(G,o)=F(G',o')$. We will also say that a function $F$ is local if there exists $H\geq0$ such that $F$ is $H-$local.
    
    We say that $F:\mathcal{G}^{\star}\mapsto (X,d)$ is weakly local if it is limit of local functions for the topology of simple convergence.
\end{definition}

\subsection{Vertex-rooted and edge-rooted unimodularity}

We now introduce the notion of unimodularity for (possibly decorated) random graphs (recall Definition~\ref{def:decograph}, in the following all graphs can be decorated).
To this end, we define the space of doubly rooted graphs up to isomorphism $\mathcal{G}^{\star\star}$ similarly as before, but with two distinguished roots. It will be simpler to give separately specific definitions for vertex-rooted and edge-rooted graphs.
 We refer to Aldous-Lyons \cite{aldous2018processes} for a comprehensive exposition on the topic.

\begin{definition}[Vertex-rooted unimodularity]
We say that a probability measure $\mu$ on decorated vertex-rooted graphs is unimodular if the following statement holds for every measurable $f : \mathcal{G}^{\star\star} \mapsto \mathbb{R}_+$:
     
    \[
    \int_{\mathcal G^{\star}} \left( \sum_{v \in V(G)} f(G,o,v)  \right) \, 
    \mathrm{d} \mu (G,o)
     = \int_{\mathcal G^{\star}} \left( \sum_{v \in V(G)} f(G,v,o)  \right) \, 
    \mathrm{d} \mu (G,o)   .\]
    The subspace of unimodular laws on vertex rooted-graph will be noted $\mathcal{L}_U(\mathcal{G}^{\star})$.
\end{definition}
The definition can be written alternatively as 
\[ \mathbb{E}_{(\mathbb{G},\mathbf{o}) \sim \mu  } \left[\sum_{v \in V(G)} f(\mathbb{G},\mathbf{o},v)\right]=\mathbb{E}_{(\mathbb{G},\mathbf{o}) \sim \mu }\left[\sum_{v \in V(G)} f(\mathbb{G},v,\mathbf{o})\right].    \]
For edge-rooted graphs, we will use the following definition: 
\begin{definition}[Edge-rooted unimodularity]
    Let $\hat{\mu}$ be a probability measure on decorated edge-rooted graphs.
    Let $(\mathbb{G},(\mathbf{o}_-,\mathbf{o}_+))$ be a random edge-weighted decorated graph with law $\hat \mu$. Let $\overset{\rightarrow}{e}_1$ be a uniformly picked directed edge of the form $(\mathbf{o}_+,\mathbf{v})$ for $\mathbf{v} \neq \mathbf{o}_-$. 
    We say that $\hat{\mu}$ is:
    \begin{itemize}
    \item stationary if $(\mathbb{G},(\mathbf{o}_-,\mathbf{o}_+))\overset{\mathcal{L}}{=}(\mathbb{G},\overset{\rightarrow}{e}_1)$
    \item revertible if $(\mathbb{G},(\mathbf{o}_-,\mathbf{o}_+))\overset{\mathcal{L}}{=}(\mathbb{G},(\mathbf{o}_+,\mathbf{o}_-))$ 
    \item unimodular if it is both revertible and stationary.
    \end{itemize}
    The subspace of unimodular laws on edge-rooted graphs will be noted $\mathcal{L}_U(\hat{\mathcal{G}}^{\star})$
\end{definition}

To simplify notation, we will say that a random rooted graph (as a random variable) is unimodular when its corresponding law is. 

\begin{remark}
    Fix $G$ a finite deterministic graph, and let $\mathbf o$ be a random vertex (resp.  $\overset{\rightarrow}{\mathbf o}$ an oriented edge). It is straightforward to check that $(G,\mathbf o)$ (resp. $(G,\overset{\rightarrow}{\mathbf o})$) is vertex-rooted unimodular (resp. edge-rooted unimodular) iff $\mathbf o$ is a uniform vertex (resp. $\overset{\rightarrow}{\mathbf o}$ is a uniform oriented edge). Hence, unimodular graphs can be viewed as generalisations of uniformly rooted graphs, which is one of the fundamental findings of \cite{BenjaminiSchramm1}. 
\end{remark}

We now present the classical transformation that maps a unimodular vertex-rooted graph to a unimodular edge-rooted graph. Heuristically, to transform a graph rooted at a uniform vertex into a graph rooted at a uniformly oriented edge, one has to pick an oriented edge starting at its root vertex, but this induces a bias by the degree of the root vertex. Indeed, a directed edge $(u,v)$ will be less likely to be the new root the greater the degree of $u$ is. This bias has to be taken into account to conserve unimodularity. This is done in the following transformations on the laws.

\begin{definition} \label{def:R}
    Take $\mu_v \in \mathcal{L}(\mathcal{G}^{\star})$ such that $0<m := \int_{\mathcal G^{\star}} \deg (o) \, \mathrm{d} \mu_v(G,o) < \infty$. Let $R(\mu_v) \in \mathcal{L}(\hat{\mathcal{G}}^{\star})$  be the unique measure such that,
    for every $f$ measurable from $\hat{\mathcal{G}}^{\star}$ to $\mathbb{R}_+$,
\[
\int_{\hat{\mathcal{G}}^{\star}}f(G,(o_-,o_+)) \, \mathrm{d} R(\mu_v)(G,(o_-,o_+)) =
\int_{\mathcal{G}*} 
 \frac{1}{m} 
\sum_{u \sim o} f(G,o,u) \, \mathrm{d} \mu_v(G,o).
\]
\end{definition}
\begin{remark}
The operator $R$ is the composition of two transformations. The first operator consists of choosing a uniformly oriented edge started at the root vertex, giving a measure $R_1(\mu_v) \in \mathcal{L}(\hat{\mathcal{G}}^{\star})$:
\[
\int_{\hat{\mathcal{G}}^{\star}} 
f(G,(o_-,o_+))\, \mathrm{d} R_1(\mu_v)(G,(o_-,o_+)))= 
\int_{\mathcal{G}*} 
 \frac{1}{\deg(o)} 
\sum_{u \sim o} f(\textbf{}G,o,u) \, \mathrm{d} \mu_v(G,o).
\]
The second step is then to cancel the bias by the degree of the vertex, giving $R(\mu_v) = R_2 \circ R_1 (\mu_v) \in \mathcal{L}(\hat{\mathcal{G}}^{\star})$:
    \[ \int_{\hat{\mathcal{G}}^{\star}}f(G,(o_-,o_+))\mathrm{d}R_2 \circ R_1 (\mu_v) (G,(o_-,o_+)) = \int_{\hat{\mathcal{G}}^{\star}} \frac{\deg(o_-)}{m} f(G,(o_-,o_+)) \, \mathrm{d}R_1 (\mu_v)(G,(o_-,o_+)).\]
    
Equivalently, to obtain an edge-rooted version of a vertex rooted random graph, one can consider the size-biased version of the original measure $\mathrm{d}\mu_v(G,o)$ (namely sample a graph with measure
$\frac{\deg(o)}{m}\mathrm{d}\mu_v(G,o)$) and then from this sampled graph, starting from the root, select an edge uniformly at random to get the edge measure $dR(\mu_v)$.
\end{remark}

\bigskip

The following proposition links vertex unimodularity and edge unimodularity.
\begin{prop}[Theorem 4.1 in \cite{aldous2018processes}]
Let $\mu_v \in \mathcal{L}(\mathcal{G}^{\star})$ be such that $0<\int_{\mathcal G^{\star}} \deg (o) \, \mathrm{d} \mu_v(G,o) < \infty$.
Then the measure $\mu_v$ is unimodular if and only if $R(\mu_v)$ is unimodular.
\end{prop}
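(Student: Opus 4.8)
The plan is to restate both conditions as versions of the \emph{mass-transport principle} (MTP). Unimodularity of $\mu_v$ is, by definition, the MTP: $\mathbb E_{\mu_v}[\sum_v f(G,o,v)]=\mathbb E_{\mu_v}[\sum_v f(G,v,o)]$ for every measurable $f:\mathcal G^{\star\star}\to\mathbb R_+$. I will show that unimodularity of $R(\mu_v)$ is equivalent to the \emph{restriction} of the MTP to nearest-neighbour transport functions, i.e. to those $f$ with $f(G,x,y)=0$ unless $x\sim y$; call this property (nnMTP). Granting this, the theorem reduces to: (nnMTP) is trivially implied by the full MTP (this gives $\mu_v$ unimodular $\Rightarrow R(\mu_v)$ unimodular), and conversely (nnMTP) implies the full MTP (this gives the other direction).

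To prove the reformulation I would first unwind the operator $R$. For $g:\hat{\mathcal G}^\star\to\mathbb R_+$, writing $\tilde g(G,x,y):=g(G,(x,y))$ for $x\sim y$, the definition of $R(\mu_v)$ reads $\mathbb E_{R(\mu_v)}[g(\mathbb G,(o_-,o_+))]=\tfrac1m\,\mathbb E_{\mu_v}[\sum_{u\sim o}\tilde g(G,o,u)]$. Applying this identity both to $g$ and to the test function $(G,(a,b))\mapsto g(G,(b,a))$ shows that reversibility of $R(\mu_v)$ is \emph{exactly} the equality $\mathbb E_{\mu_v}[\sum_{u\sim o}\tilde g(G,o,u)]=\mathbb E_{\mu_v}[\sum_{u\sim o}\tilde g(G,u,o)]$ for all $g$, which is (nnMTP). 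For stationarity I would expand the non-backtracking step $\overset{\rightarrow}{e}_1$ and apply (nnMTP) to the nearest-neighbour function $\psi(G,x,y)=\mathbbm 1_{x\sim y}\tfrac1{\deg(x)-1}\sum_{v\sim x,\,v\ne y}\tilde h(G,x,v)$ (with the usual leaf convention when $\deg(x)=1$); the telescoping identity $\sum_{u\sim o}\sum_{v\sim o,\,v\ne u}\tilde h(G,o,v)=(\deg(o)-1)\sum_{v\sim o}\tilde h(G,o,v)$ makes the weights $\tfrac1{\deg-1}$ cancel and yields stationarity. Thus (nnMTP) $\Leftrightarrow$ reversibility of $R(\mu_v)$ and (nnMTP) $\Rightarrow$ stationarity of $R(\mu_v)$, so (nnMTP) $\Leftrightarrow$ $R(\mu_v)$ unimodular, and the implication $\mu_v$ unimodular $\Rightarrow R(\mu_v)$ unimodular is immediate.

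The substance is the converse: deducing the full MTP from (nnMTP). I would argue by induction on the graph distance. Since any $f\ge0$ is the monotone limit of its restrictions to pairs $(x,y)$ with $d_{gr}(x,y)\le k$, it suffices to prove the MTP for $f$ supported on pairs at distance exactly $k$, for each $k\ge0$; the cases $k=0$ (trivial) and $k=1$ ((nnMTP)) start the induction. For the step, given $f$ supported at distance $k+1$, I would push its mass one geodesic-step: for adjacent $x,z$ set $g(G,x,z)=\sum_{y:\,d_{gr}(x,y)=k+1,\ d_{gr}(z,y)=k}f(G,x,y)/N(x,y)$ with $N(x,y)=\#\{z'\sim x:\ d_{gr}(z',y)=k\}$, and $h(G,z,y)=\mathbbm 1_{d_{gr}(z,y)=k}\sum_{x\sim z,\,d_{gr}(x,y)=k+1}f(G,x,y)/N(x,y)$. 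Then $g$ is supported on adjacent pairs and has out-mass $\sum_z g(G,x,z)=\sum_y f(G,x,y)$ at every $x$; $h$ is supported at distance $k$ and its out-mass at $z$ equals the in-mass of $g$ at $z$; and, using the definition of $N$, the in-mass of $h$ at any vertex equals the in-mass of $f$ there. Applying (nnMTP) to $g$ and the induction hypothesis to $h$ chains ``$\mathbb E[\text{out-mass of }f]=\mathbb E[\text{out-mass of }g]=\mathbb E[\text{in-mass of }g]=\mathbb E[\text{out-mass of }h]=\mathbb E[\text{in-mass of }h]=\mathbb E[\text{in-mass of }f]$'', which is the MTP for $f$; monotone convergence then delivers it for all $f\ge0$. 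All the functions introduced depend only on $f$ and on graph distances, hence are measurable for the local topology, and every quantity is $[0,\infty]$-valued, so no integrability concern arises.

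The main obstacle is precisely this bootstrap. Nearest-neighbour transport does not automatically extend to arbitrary transport, because geodesics between two vertices are not unique; the pushing map must therefore be set up symmetrically — the same weight $1/N(x,y)$ appears on the $g$-side and the $h$-side — so that the in/out-mass accounting matches exactly, and checking the two telescoping identities (out-mass of $g$ equals out-mass of $f$, in-mass of $h$ equals in-mass of $f$) is the delicate point. The treatment of degree-one vertices in the definition of $\overset{\rightarrow}{e}_1$ is a minor matter, handled by the standard convention that the non-backtracking step backtracks at a leaf, and it does not affect the argument.
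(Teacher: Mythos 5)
Your proof is correct, but note that the paper gives no argument for this statement: the proposition is imported wholesale from Aldous and Lyons (Theorem 4.1 in \cite{aldous2018processes}), so your write-up is in effect a self-contained re-proof of the cited theorem rather than a variant of anything in the paper. The routine half is your translation step: unwinding $R$ does show that revertibility of $R(\mu_v)$ is precisely the mass-transport principle restricted to adjacent pairs (involution invariance), and your function $\psi$ with the weight $\tfrac{1}{\deg-1}$ and the telescoping identity correctly derives stationarity from that same restricted principle, so edge-rooted unimodularity of $R(\mu_v)$ is equivalent to (nnMTP). The substantive half, (nnMTP) implies the full MTP, is exactly the content of the Aldous--Lyons theorem; your induction on the distance of the support, with the symmetric geodesic-splitting weight $1/N(x,y)$, is a clean and correct way to do it: the three accounting identities (out-mass of $g$ equals out-mass of $f$, in-mass of $g$ equals out-mass of $h$, in-mass of $h$ equals in-mass of $f$) all check, $N(x,y)\geq 1$ whenever $d_{gr}(x,y)=k+1$, the functions involved are isomorphism-invariant and measurable on locally finite graphs, and everything is nonnegative so the final summation over $k$ is licit. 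Two caveats are worth recording. First, the reduction to supports at finite distance uses that rooted graphs are connected (the standing convention behind $d_{\mathrm{loc}}$, which identifies a graph with the component of its root); this is genuinely needed and not cosmetic, since for disconnected graphs the nearest-neighbour principle does not imply the full one — root the disjoint union of a triangle and a single edge uniformly on the triangle only: $R(\mu_v)$ is then unimodular, yet the full MTP fails for the transport $f(G,x,y)=\mathbbm{1}_{\deg(x)=2,\deg(y)=1}$. Second, the paper's definition of $\overset{\rightarrow}{e}_1$ is indeed silent when $\deg(o_+)=1$, so your backtracking convention must be adopted explicitly, as you note; with it, your computation of stationarity still goes through. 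What your route buys is a from-first-principles proof; what the citation buys is brevity and the full marked-network generality of \cite{aldous2018processes}.
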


\subsection{Unimodular Bienaymé-Galton-Watson trees}\label{subsec:UBGW}
In this subsection, we introduce Unimodular Bienaymé-Galton-Watson trees (UBGW) along with models of random graphs that converge locally in law to these trees. We will present both the vertex and the edge-rooted point of view, without weights. In either case, the weighted version with weight law $\omega$ corresponds to drawing $(w(e))_{e \in E}$ independently of law $\omega$.

Let $\pi$ be a probability measure on $\mathbb{Z}_+ = \{0,1, ... \}$ with finite expectation $m > 0$. Let $\hat{\pi}$ be the size-biased version of $\pi$, that is, $\forall k \geq 0$, $\hat{\pi} (k) =\frac{k}{m} \pi (k)$.

In the introduction, we defined the edge-rooted Unimodular Bienaymé Galton Watson Tree with reproduction law $\pi$. Let us recall the construction: take two independent copies of Bienaymé-Galton-Watson trees with offspring distribution  $\hat{\pi}$ with respective root vertex $o_-$ and $o_+$, and connect their roots by the oriented edge $(o_-,o_+)$. The resulting random tree $(\hat{\mathbb T}, (o_-,o_+))$ is an edge-rooted unimodular random graph. See Figure~\ref{fig:EUBGW} for an illustration.
\tikzset{every picture/.style={line width=0.75pt}} 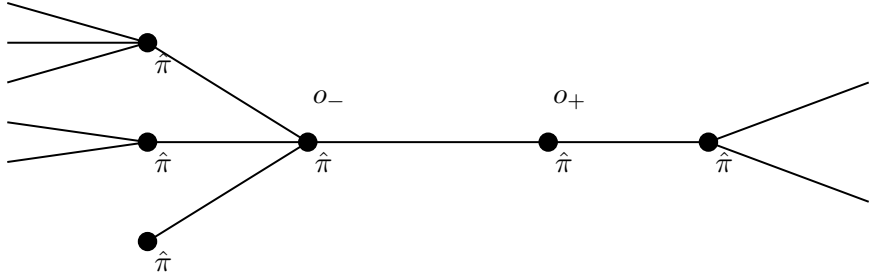
\begin{figure}[!t]
\centering
\begin{tikzpicture}[x=0.75pt,y=0.75pt,yscale=-1,xscale=1]
\draw    (250,140) -- (370,140) ;
\draw    (170,90) -- (250,140) ;
\draw    (170,140) -- (250,140) ;
\draw    (250,140) -- (170,190) ;
\draw    (370,140) -- (450,140) ;
\draw    (530,110) -- (450,140) ;
\draw    (530,170) -- (450,140) ;
\draw  [fill={rgb, 255:red, 0; green, 0; blue, 0 }  ,fill opacity=1 ] (245.65,140) .. controls (245.65,137.6) and (247.6,135.65) .. (250,135.65) .. controls (252.4,135.65) and (254.35,137.6) .. (254.35,140) .. controls (254.35,142.4) and (252.4,144.35) .. (250,144.35) .. controls (247.6,144.35) and (245.65,142.4) .. (245.65,140) -- cycle ;
\draw    (100,70) -- (170,90) ;
\draw    (100,110) -- (170,90) ;
\draw    (100,90) -- (170,90) ;
\draw    (100,130) -- (170,140) ;
\draw    (100,150) -- (170,140) ;
\draw  [fill={rgb, 255:red, 0; green, 0; blue, 0 }  ,fill opacity=1 ] (365.65,140) .. controls (365.65,137.6) and (367.6,135.65) .. (370,135.65) .. controls (372.4,135.65) and (374.35,137.6) .. (374.35,140) .. controls (374.35,142.4) and (372.4,144.35) .. (370,144.35) .. controls (367.6,144.35) and (365.65,142.4) .. (365.65,140) -- cycle ;
\draw  [fill={rgb, 255:red, 0; green, 0; blue, 0 }  ,fill opacity=1 ] (165.65,190) .. controls (165.65,187.6) and (167.6,185.65) .. (170,185.65) .. controls (172.4,185.65) and (174.35,187.6) .. (174.35,190) .. controls (174.35,192.4) and (172.4,194.35) .. (170,194.35) .. controls (167.6,194.35) and (165.65,192.4) .. (165.65,190) -- cycle ;
\draw  [fill={rgb, 255:red, 0; green, 0; blue, 0 }  ,fill opacity=1 ] (165.65,140) .. controls (165.65,137.6) and (167.6,135.65) .. (170,135.65) .. controls (172.4,135.65) and (174.35,137.6) .. (174.35,140) .. controls (174.35,142.4) and (172.4,144.35) .. (170,144.35) .. controls (167.6,144.35) and (165.65,142.4) .. (165.65,140) -- cycle ;
\draw  [fill={rgb, 255:red, 0; green, 0; blue, 0 }  ,fill opacity=1 ] (165.65,90) .. controls (165.65,87.6) and (167.6,85.65) .. (170,85.65) .. controls (172.4,85.65) and (174.35,87.6) .. (174.35,90) .. controls (174.35,92.4) and (172.4,94.35) .. (170,94.35) .. controls (167.6,94.35) and (165.65,92.4) .. (165.65,90) -- cycle ;
\draw  [fill={rgb, 255:red, 0; green, 0; blue, 0 }  ,fill opacity=1 ] (445.65,140) .. controls (445.65,137.6) and (447.6,135.65) .. (450,135.65) .. controls (452.4,135.65) and (454.35,137.6) .. (454.35,140) .. controls (454.35,142.4) and (452.4,144.35) .. (450,144.35) .. controls (447.6,144.35) and (445.65,142.4) .. (445.65,140) -- cycle ;

\draw (251,112.4) node [anchor=north west][inner sep=0.75pt]    {$o_{-}$};
\draw (371,112.4) node [anchor=north west][inner sep=0.75pt]    {$o_{+}$};
\draw (172,193.4) node [anchor=north west][inner sep=0.75pt]    {$\hat{\pi }$};
\draw (172,143.4) node [anchor=north west][inner sep=0.75pt]    {$\hat{\pi }$};
\draw (172,93.4) node [anchor=north west][inner sep=0.75pt]    {$\hat{\pi }$};
\draw (252,143.4) node [anchor=north west][inner sep=0.75pt]    {$\hat{\pi }$};
\draw (372,143.4) node [anchor=north west][inner sep=0.75pt]    {$\hat{\pi }$};
\draw (452,143.4) node [anchor=north west][inner sep=0.75pt]    {$\hat{\pi }$};
\end{tikzpicture}
\caption{A 2-Neighbourhood of an edge-rooted UBGW tree with the law of the number of children drawn on every vertex.} \label{fig:EUBGW}
\end{figure}

The vertex-rooted UBGW tree is the random tree $\mathbb T$ with the following law:
    \begin{itemize}
        \item The number of children of vertices of $\mathbb T$ are all independent.
        \item The number of children of the root $o$ is distributed according to $\pi$.
        \item Every non-root vertex has a number of children distributed according to $\hat \pi$, the sized biased version of $\pi$.
    \end{itemize}
The random tree $(\mathbb T, o)$ is a vertex-rooted unimodular random graph. See Figure~\ref{fig:VUBGW} for an illustration.
\tikzset{every picture/.style={line width=0.75pt}} 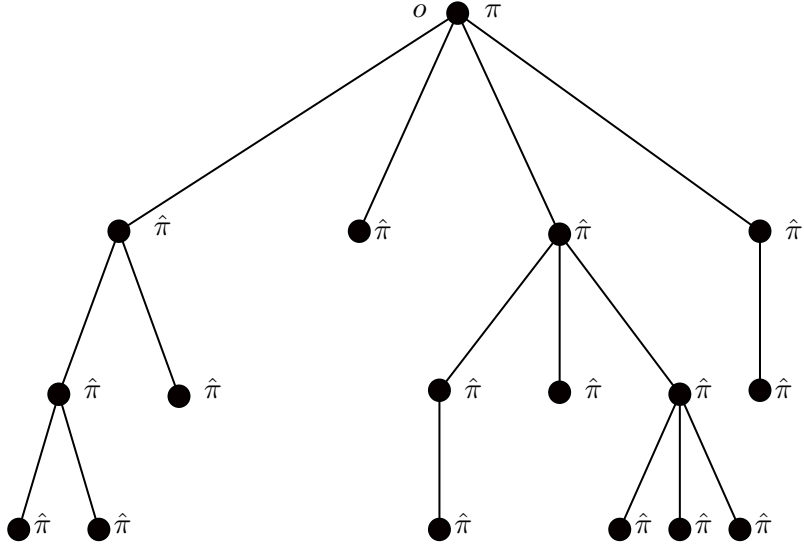
\begin{figure}[!ht]
\centering
\begin{tikzpicture}[x=0.75pt,y=0.75pt,yscale=-1,xscale=1]
\draw    (299.1,0.3) -- (130,110) ;
\draw    (299.1,0.3) -- (250,110) ;
\draw    (299.1,0.3) -- (350,111.5) ;
\draw    (299.1,0.3) -- (450,110) ;
\draw    (130,110) -- (100,192) ;
\draw    (130,110) -- (160,193) ;
\draw    (350,111.5) -- (290,190) ;
\draw    (350,111.5) -- (350,191) ;
\draw    (350,111.5) -- (410,192) ;
\draw    (450,190) -- (450,110) ;
\draw    (100,192) -- (80,260) ;
\draw    (100,192) -- (120,260) ;
\draw    (290,190) -- (290,260) ;
\draw    (410,192) -- (380,260) ;
\draw    (410,192) -- (410,260) ;
\draw    (410,192) -- (440,260) ;
\draw  [fill={rgb, 255:red, 7; green, 0; blue, 0 }  ,fill opacity=1 ] (124.77,110) .. controls (124.77,107.02) and (127.11,104.6) .. (130,104.6) .. controls (132.89,104.6) and (135.23,107.02) .. (135.23,110) .. controls (135.23,112.98) and (132.89,115.4) .. (130,115.4) .. controls (127.11,115.4) and (124.77,112.98) .. (124.77,110) -- cycle ;
\draw  [fill={rgb, 255:red, 7; green, 0; blue, 0 }  ,fill opacity=1 ] (293.88,0.3) .. controls (293.88,-2.68) and (296.21,-5.1) .. (299.1,-5.1) .. controls (301.99,-5.1) and (304.33,-2.68) .. (304.33,0.3) .. controls (304.33,3.28) and (301.99,5.7) .. (299.1,5.7) .. controls (296.21,5.7) and (293.88,3.28) .. (293.88,0.3) -- cycle ;
\draw  [fill={rgb, 255:red, 7; green, 0; blue, 0 }  ,fill opacity=1 ] (244.77,110) .. controls (244.77,107.02) and (247.11,104.6) .. (250,104.6) .. controls (252.89,104.6) and (255.23,107.02) .. (255.23,110) .. controls (255.23,112.98) and (252.89,115.4) .. (250,115.4) .. controls (247.11,115.4) and (244.77,112.98) .. (244.77,110) -- cycle ;
\draw  [fill={rgb, 255:red, 7; green, 0; blue, 0 }  ,fill opacity=1 ] (344.77,111.5) .. controls (344.77,108.52) and (347.11,106.1) .. (350,106.1) .. controls (352.89,106.1) and (355.23,108.52) .. (355.23,111.5) .. controls (355.23,114.48) and (352.89,116.9) .. (350,116.9) .. controls (347.11,116.9) and (344.77,114.48) .. (344.77,111.5) -- cycle ;
\draw  [fill={rgb, 255:red, 7; green, 0; blue, 0 }  ,fill opacity=1 ] (444.77,110) .. controls (444.77,107.02) and (447.11,104.6) .. (450,104.6) .. controls (452.89,104.6) and (455.23,107.02) .. (455.23,110) .. controls (455.23,112.98) and (452.89,115.4) .. (450,115.4) .. controls (447.11,115.4) and (444.77,112.98) .. (444.77,110) -- cycle ;
\draw  [fill={rgb, 255:red, 7; green, 0; blue, 0 }  ,fill opacity=1 ] (94.77,192) .. controls (94.77,189.02) and (97.11,186.6) .. (100,186.6) .. controls (102.89,186.6) and (105.23,189.02) .. (105.23,192) .. controls (105.23,194.98) and (102.89,197.4) .. (100,197.4) .. controls (97.11,197.4) and (94.77,194.98) .. (94.77,192) -- cycle ;
\draw  [fill={rgb, 255:red, 7; green, 0; blue, 0 }  ,fill opacity=1 ] (74.77,260) .. controls (74.77,257.02) and (77.11,254.6) .. (80,254.6) .. controls (82.89,254.6) and (85.23,257.02) .. (85.23,260) .. controls (85.23,262.98) and (82.89,265.4) .. (80,265.4) .. controls (77.11,265.4) and (74.77,262.98) .. (74.77,260) -- cycle ;
\draw  [fill={rgb, 255:red, 7; green, 0; blue, 0 }  ,fill opacity=1 ] (114.77,260) .. controls (114.77,257.02) and (117.11,254.6) .. (120,254.6) .. controls (122.89,254.6) and (125.23,257.02) .. (125.23,260) .. controls (125.23,262.98) and (122.89,265.4) .. (120,265.4) .. controls (117.11,265.4) and (114.77,262.98) .. (114.77,260) -- cycle ;
\draw  [fill={rgb, 255:red, 7; green, 0; blue, 0 }  ,fill opacity=1 ] (154.77,193) .. controls (154.77,190.02) and (157.11,187.6) .. (160,187.6) .. controls (162.89,187.6) and (165.23,190.02) .. (165.23,193) .. controls (165.23,195.98) and (162.89,198.4) .. (160,198.4) .. controls (157.11,198.4) and (154.77,195.98) .. (154.77,193) -- cycle ;
\draw  [fill={rgb, 255:red, 7; green, 0; blue, 0 }  ,fill opacity=1 ] (344.77,191) .. controls (344.77,188.02) and (347.11,185.6) .. (350,185.6) .. controls (352.89,185.6) and (355.23,188.02) .. (355.23,191) .. controls (355.23,193.98) and (352.89,196.4) .. (350,196.4) .. controls (347.11,196.4) and (344.77,193.98) .. (344.77,191) -- cycle ;
\draw  [fill={rgb, 255:red, 7; green, 0; blue, 0 }  ,fill opacity=1 ] (284.77,190) .. controls (284.77,187.02) and (287.11,184.6) .. (290,184.6) .. controls (292.89,184.6) and (295.23,187.02) .. (295.23,190) .. controls (295.23,192.98) and (292.89,195.4) .. (290,195.4) .. controls (287.11,195.4) and (284.77,192.98) .. (284.77,190) -- cycle ;
\draw  [fill={rgb, 255:red, 7; green, 0; blue, 0 }  ,fill opacity=1 ] (404.77,192) .. controls (404.77,189.02) and (407.11,186.6) .. (410,186.6) .. controls (412.89,186.6) and (415.23,189.02) .. (415.23,192) .. controls (415.23,194.98) and (412.89,197.4) .. (410,197.4) .. controls (407.11,197.4) and (404.77,194.98) .. (404.77,192) -- cycle ;
\draw  [fill={rgb, 255:red, 7; green, 0; blue, 0 }  ,fill opacity=1 ] (284.77,260) .. controls (284.77,257.02) and (287.11,254.6) .. (290,254.6) .. controls (292.89,254.6) and (295.23,257.02) .. (295.23,260) .. controls (295.23,262.98) and (292.89,265.4) .. (290,265.4) .. controls (287.11,265.4) and (284.77,262.98) .. (284.77,260) -- cycle ;
\draw  [fill={rgb, 255:red, 7; green, 0; blue, 0 }  ,fill opacity=1 ] (374.77,260) .. controls (374.77,257.02) and (377.11,254.6) .. (380,254.6) .. controls (382.89,254.6) and (385.23,257.02) .. (385.23,260) .. controls (385.23,262.98) and (382.89,265.4) .. (380,265.4) .. controls (377.11,265.4) and (374.77,262.98) .. (374.77,260) -- cycle ;
\draw  [fill={rgb, 255:red, 7; green, 0; blue, 0 }  ,fill opacity=1 ] (404.77,260) .. controls (404.77,257.02) and (407.11,254.6) .. (410,254.6) .. controls (412.89,254.6) and (415.23,257.02) .. (415.23,260) .. controls (415.23,262.98) and (412.89,265.4) .. (410,265.4) .. controls (407.11,265.4) and (404.77,262.98) .. (404.77,260) -- cycle ;
\draw  [fill={rgb, 255:red, 7; green, 0; blue, 0 }  ,fill opacity=1 ] (434.77,260) .. controls (434.77,257.02) and (437.11,254.6) .. (440,254.6) .. controls (442.89,254.6) and (445.23,257.02) .. (445.23,260) .. controls (445.23,262.98) and (442.89,265.4) .. (440,265.4) .. controls (437.11,265.4) and (434.77,262.98) .. (434.77,260) -- cycle ;
\draw  [fill={rgb, 255:red, 7; green, 0; blue, 0 }  ,fill opacity=1 ] (444.77,190) .. controls (444.77,187.02) and (447.11,184.6) .. (450,184.6) .. controls (452.89,184.6) and (455.23,187.02) .. (455.23,190) .. controls (455.23,192.98) and (452.89,195.4) .. (450,195.4) .. controls (447.11,195.4) and (444.77,192.98) .. (444.77,190) -- cycle ;
\draw (311,-5.3) node [anchor=north west][inner sep=0.75pt]    {$\pi $};
\draw (461,102.4) node [anchor=north west][inner sep=0.75pt]    {$\hat{\pi }$};
\draw (416,182.4) node [anchor=north west][inner sep=0.75pt]    {$\hat{\pi }$};
\draw (275,-5.6) node [anchor=north west][inner sep=0.75pt]    {$o$};
\draw (356,102.4) node [anchor=north west][inner sep=0.75pt]    {$\hat{\pi }$};
\draw (146,100.4) node [anchor=north west][inner sep=0.75pt]    {$\hat{\pi }$};
\draw (301,182.4) node [anchor=north west][inner sep=0.75pt]    {$\hat{\pi }$};
\draw (111,182.4) node [anchor=north west][inner sep=0.75pt]    {$\hat{\pi }$};
\draw (171,182.4) node [anchor=north west][inner sep=0.75pt]    {$\hat{\pi }$};
\draw (361,182.4) node [anchor=north west][inner sep=0.75pt]    {$\hat{\pi }$};
\draw (256,102.4) node [anchor=north west][inner sep=0.75pt]    {$\hat{\pi }$};
\draw (456,182.4) node [anchor=north west][inner sep=0.75pt]    {$\hat{\pi }$};
\draw (86,250.4) node [anchor=north west][inner sep=0.75pt]    {$\hat{\pi }$};
\draw (126,250.4) node [anchor=north west][inner sep=0.75pt]    {$\hat{\pi }$};
\draw (296,250.4) node [anchor=north west][inner sep=0.75pt]    {$\hat{\pi }$};
\draw (386,250.4) node [anchor=north west][inner sep=0.75pt]    {$\hat{\pi }$};
\draw (416,250.4) node [anchor=north west][inner sep=0.75pt]    {$\hat{\pi }$};
\draw (446,250.4) node [anchor=north west][inner sep=0.75pt]    {$\hat{\pi }$};
\end{tikzpicture}
\caption{A $3-$Neighbourhood of a vertex-rooted UBGW tree with the law of the number of children drawn on every vertex.} \label{fig:VUBGW}
\end{figure}

\bigskip

The most classical examples of random graphs converging to UBGW trees we consider are sparse Erdős–Rényi  and configuration models:
\begin{itemize}
    \item \textbf{Sparse Erdős–Rényi:} Introduced in the celebrated paper of Erdős and Rényi \cite{ErdosRenyi}, for $c>0$ and $N\geq 1$, the random graph $\mathcal G (N,\frac{c}{N})$ is defined on the vertex set $\{1,\ldots, N \}$ with independent edges between vertices with probability $\frac{c}{N}$. Once uniformly rooted, these graphs converge locally when $N$ goes to $\infty$, to a UBGW tree with reproduction law Poisson with parameter $c$.
    \item \textbf{Configuration model:} This model was introduced by Bollob\'as in 1980 \cite{B1980} and can be defined as follows.
Let $N\geq 1$ be an integer and let $d_1, \ldots, d_N \in \mathbb{Z}_+$ be such that $d_1 + \cdots +d_N$ is even. We interpret $d_i$ as a number of half-edges attached to vertex $i$. Then, the configuration model associated to the sequence $(d_i)_{1 \leq i \leq N}$ is the random multi-graph with vertex set $\{1, \ldots ,N\}$ obtained by a uniform matching of these half-edges. If $d_1 + \cdots + d_N$ is odd, we change $d_N$ into $d_N+1$ and do the same construction. Now, let $\mathbf{d}^{(N)}$ be a sequence of random variables defined on the same probability space $(\Omega,\mathcal{F},\mathbb{P})$ such that for every $N\geq 1$, $\mathbf{d}^ {(N)} = (d_1^{(N)}, \ldots, d_N^{(N)}) \in \mathbb{Z}_+^N$. Furthermore, suppose that there exists $\pi$ a probability measure on $\mathbb{Z}_+$ with finite first moment such that
\[  \forall k \geq 0, \quad \frac{1}{N} \sum\limits_{ j=1}^N \mathbbm{1}_{ d_j^{(N)} = k}  \underset{ N \rightarrow +\infty}{ \longrightarrow} \pi( \{k\}).   \]
The sequence of random configuration graphs associated to the $\mathbf{d}^{(N)}$ has asymptotically a positive probability to be simple. In addition, this sequence of random graphs, when uniformly rooted, converges locally in law to the UBGW random tree with offspring distribution $\pi$, see \cite{vanderHofstadBook2} for more details.
\end{itemize}

\subsection{Matchings, optimality and unimodularity}
We start with the definition of matchings on a graph.
\begin{definition}
 For any weighted graph $G=((V,E),w)$, a matching $M=(V,E')$ on $G$ is a subgraph of $G$ such that $E' \subset E$ and  every vertex of $V$ belongs to at most one edge of $E'$. A matched graph is a pair $(G,M)$, where $M$ is a matching on $G$.
\end{definition}
We can extend the notion of unimodularity for random graphs to random matched graphs by transforming a matching into a canonical decoration. If $(G,M)=((V,E),(V,E'))$  is a matched graph then we can define the associated decoration $\mathbbm{1}_{M}:E\to\{0,1\}$ with  $\mathbbm{1}_{M}(u,v)=1$ if and only if $(u,v) \in E'$.
\begin{definition}
Let $(\mathbb{G},\mathbb M,\mathbf{o})$ be a random matched rooted graph. 
    We say that $(\mathbb{G},\mathbb{M},\mathbf{o})$ is unimodular iff $(\mathbb{G},\mathbf{o},\mathbbm{1}_{\mathbb{M}})$ is unimodular.
\end{definition}
The correspondence between $(\mathbb{G},\mathbb{M},\mathbf{o})$ and $(\mathbb{G},\mathbf{o},\mathbbm{1}_{\mathbb{M}})$ gives a representation of the space of unimodular laws on decorated matched graphs as a subspace of unimodular laws on decorated graphs $\mathcal{L}_{U,\mathcal{M}}(\mathcal{G}^{\star}) \subseteq \mathcal{L}_{U}(\mathcal{G}^{\star})$.
\begin{definition}
The subspace $\mathcal{L}_{U,\mathcal{M}}(\mathcal{G}^{\star}) \subseteq \mathcal{L}_{U}(\mathcal{G}^{\star})$ is defined as the set of 
elements $\mu$ of $\mathcal{L}_{U}(\mathcal{G}^{\star})$ such that for $(G,o,w,(f_i)_{i \in \{1,..,I\}}) \sim \mu$, we have that $f_I(u,v)$ maps $E$ to $\{0,1\}$ and the subset $\{(u,v), f_I(u,v)=1 \}$ almost surely induces a matching on $G$ in the sense that
\begin{align*}
    \mathbb{P}\left( \exists u,u',v \in V, u \neq u' \text{ and } f(v,u)=f(v,u')=1\right)=0.
\end{align*}
Similarly we can define the edge rooted version $\mathcal{L}_{U,\mathcal{M}}(\hat{\mathcal{G}}^{\star})$.
\end{definition}

We recall that the central object of interest is optimal matching, which corresponds to the classical notion of maximal weight matching in finite graphs.
When $G$ is infinite, this optimality is ill-defined. However, in the case of a unimodular random weighted graph, since the root edge is informally a typical edge, we can define optimality via its expected weight when it belongs to the matching. This leads to the following definition:
\begin{definition}
    Let $(\mathbb{G}, \mathbf o, \mathbb M )$ be a unimodular random matched rooted graph, we define the performance of $(\mathbb{G},\mathbf o,\mathbb{M})$ as:
    \begin{align*}
     \perfE(\mathbb{G},\mathbf o, \mathbb{M}) &:= \bigg(\mathbb{P}((\mathbf{o}_-,\mathbf{o_+})\in \mathbb{M}), \mathbb{E}\left[ w((\mathbf{o}_-,\mathbf{o}_+))\mathbbm{1}_{(\mathbf{o}_-,\mathbf{o}_+) \in \mathbb{M}} \right]\bigg) \text{ in the edge-rooted setting,}  \\
    \perfV(\mathbb{G},\mathbf o,\mathbb{M}) &:=\bigg(\mathbb{P}( \mathbf{o} \text{ is matched by } \mathbb{M}) ,\mathbb{E}\left[ \sum_{v \sim o}w(\mathbf{o},v)\mathbbm{1}_{(\mathbf{o},v) \in \mathbb{M}} \right] \bigg)\text{in the vertex-rooted setting.} 
    \end{align*}
\end{definition}

When the context is clear, we will shorten the notation to $\perf (\mathbb{M})$. 
By extension, since those quantities only depend on the law of $(\mathbb{G},\mathbf o,\mathbb{M})$, we will freely use the same notation $\perfE(\mu_{e}):=\perfE(\mathbb{G},\mathbf{o},\mathbb{M})$ for $(\mathbb{G},\mathbf{o},\mathbbm{1}_\mathbb{M}) \sim \mu_{e}$ and $\perfV(\mu_{v}):=\perfV(\mathbb{G},\mathbf{o},\mathbb{M})$ for $(\mathbb{G},\mathbf{o},\mathbbm{1}_\mathbb{M}) \sim \mu_{v}$ where $\mu_{e}\in \mathcal{L}_{U,\mathcal{M}}(\hat{\mathcal{G}}^{\star})$ and $\mu_{v} \in \mathcal{L}_{U,\mathcal{M}}(\mathcal{G}^{\star})$.

\bigskip

Let $(\mathbb G, \mathbf o)$ be a (undecorated) unimodular vertex-rooted graph. We say that $(\mathbb{G}',\mathbf{o}',\mathbb{M})$ is optimal if it is a unimodular matched vertex-rooted graph such that $\perfV((\mathbb{G}',\mathbf{o}'),\mathbb{M})$ is lexicographically maximal among all unimodular rooted matched random graphs $(\mathbb{G}',\mathbf{o}',\mathbb{M}')$ such that $(\mathbb{G}',\mathbf{o}')$ has the same law as $(\mathbb{G},\mathbf{o})$.
In the edge-rooted setting, optimality is defined similarly.
The next proposition shows that the operator $R$, introduced in Definition~\ref{def:R}, preserves optimality.

\begin{prop}\label{prop:chgmtpdv}
    Let $\mu_{v} \in \mathcal{L}_{U,\mathcal{M}}(\mathcal{G}^{\star})$, assume $m = \int_{\mathcal G^{\star}} \deg (o) \, \mathrm{d} \mu_v(G,o) < \infty$, then:
    \[  \perfV(\mu_{v}) =m\perfE(R(\mu_{v})).  \]
\end{prop}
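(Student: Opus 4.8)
The plan is to obtain the identity coordinate by coordinate, by unfolding the defining relation of the operator $R$ (Definition~\ref{def:R}) against two well-chosen test functions on $\hat{\mathcal G}^\star$. Throughout, the one structural input is that a matching has at most one edge incident to any given vertex, so that for every vertex-rooted matched graph $(G,o,w,\mathbbm 1_{M})$ and every vertex $o$,
\[
\sum_{u\sim o}\mathbbm 1_{(o,u)\in M}\in\{0,1\},\qquad\text{and this sum equals }1\iff o\text{ is matched by }M.
\]

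For the first coordinate, I would apply the defining identity of $R(\mu_v)$ to the nonnegative test function $f\big(G,(o_-,o_+)\big)=\mathbbm 1_{(o_-,o_+)\in M}$. The left-hand side is $\mathbb P_{R(\mu_v)}\big((\mathbf o_-,\mathbf o_+)\in\mathbb M\big)$, while the right-hand side equals $\tfrac1m\int_{\mathcal G^\star}\sum_{u\sim o}\mathbbm 1_{(o,u)\in M}\,\mathrm d\mu_v=\tfrac1m\,\mathbb P_{\mu_v}(\mathbf o\text{ matched})$ by the displayed observation. Multiplying by $m$ yields the equality of the first coordinates of $\perfV(\mu_v)$ and $m\,\perfE(R(\mu_v))$.

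For the second coordinate, the natural test function $w(o_-,o_+)\mathbbm 1_{(o_-,o_+)\in M}$ need not be nonnegative, so I would first write $w=w^+-w^-$ and apply the defining relation of $R$ to each of $w^{\pm}(o_-,o_+)\mathbbm 1_{(o_-,o_+)\in M}$ separately (each being nonnegative and measurable). Recombining, and again using that $\sum_{u\sim o}$ has at most one nonzero term, one obtains
\[
\mathbb E_{R(\mu_v)}\!\left[w(\mathbf o_-,\mathbf o_+)\mathbbm 1_{(\mathbf o_-,\mathbf o_+)\in\mathbb M}\right]
=\frac1m\,\mathbb E_{\mu_v}\!\left[\sum_{v\sim o}w(o,v)\mathbbm 1_{(o,v)\in M}\right],
\]
which is exactly $\tfrac1m$ times the second coordinate of $\perfV(\mu_v)$.

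The only point requiring a little care — and the closest thing to an obstacle — is that the second coordinate may be infinite; this is harmless, since the identity above holds in $\overline{\mathbb R}$ once it is justified separately for $w^+$ and $w^-$ by monotone convergence, and the lexicographic comparison of performances is unaffected. Note that unimodularity of $\mu_v$ plays no role in this computation: it is only used, via the preceding proposition, to guarantee that $R(\mu_v)$ is itself unimodular so that $\perfE$ is defined on it. This completes the proof.
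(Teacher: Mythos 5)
Your argument is correct: unfolding the defining relation of $R$ (Definition~\ref{def:R}) against the test functions $\mathbbm 1_{(o_-,o_+)\in M}$ and $w^{\pm}(o_-,o_+)\mathbbm 1_{(o_-,o_+)\in M}$, and using that at most one edge of $M$ is incident to the root, is precisely the computation the paper has in mind when it omits the proof as being ``nearly identical'' to \cite[Proposition~2.12]{enriquez2024optimalunimodularmatching}. Your side remarks — the splitting $w=w^+-w^-$ to handle possibly negative weights and the observation that unimodularity of $\mu_v$ is not needed for the identity itself — are accurate and do not change the route.
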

The proof of this proposition is nearly identical with the proof of \cite[Proposition~2.12]{enriquez2024optimalunimodularmatching} with bidimensional weights $(1,w)$ instead of unidimensional weights $w$ so we omit it here.
\begin{remark}
If the graph is finite and the root is chosen uniformly either among the vertices or among the directed edges, then $\perfV$ is simply the average contribution per vertex, and $\perfE$ is the average contribution per directed edge. 
It is then clear that the two quantities are proportional. The proposition shows that it generalises to unimodular matched graphs. 
\end{remark}
We state a useful property of unimodular graphs: events that have probability zero (resp. almost sure) at the root have probability zero (resp. almost sure) everywhere.
\begin{prop}[Lemma 2.3 in \cite{aldous2018processes}]
    Let $(\mathbf{G},\mathbf{o})$ be a unimodular graph, let $H>0$ and $f$ be a non-negative H-local function on $\mathcal{G}^{\star}$.
    Assume that 
    \[ \mathbb{E}\left[ f(\mathbf{G},\mathbf{o})\right]=0 .\]
    Then almost surely, for all $v \in \mathbf{V}$ 
    \[ f(\mathbf{G},v)=0. \]
\end{prop}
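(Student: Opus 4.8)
The plan is to deduce this directly from the mass-transport principle, i.e.\ from the defining identity of vertex-rooted unimodularity, applied to a distance cut-off test function that transports the ``bad'' vertices back onto the root. First, since $f$ is non-negative and $\mathbb{E}[f(\mathbf{G},\mathbf{o})]=0$, we have $f(\mathbf{G},\mathbf{o})=0$ almost surely, equivalently $\mathbb{P}(f(\mathbf{G},\mathbf{o})>0)=0$; the whole point is to upgrade this statement about the root to a statement about every vertex.

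Fix an integer $R\geq 0$ and define $F_R:\mathcal{G}^{\star\star}\to\mathbb{R}_+$ by
\[
F_R(G,u,v) := \mathbbm{1}_{\{f(G,v)>0\}}\,\mathbbm{1}_{\{d_{gr}(u,v)\leq R\}}.
\]
This is non-negative and measurable, and because $f$ is $H$-local the value $f(G,v)$ depends only on $B_H(G,v)$, so $F_R$ genuinely descends to the doubly-rooted isomorphism class $(G,u,v)$ and is thus a legitimate input to the mass-transport principle. Applying unimodularity of $(\mathbf{G},\mathbf{o})$ to $F_R$ yields
\[
\mathbb{E}\Big[\sum_{v\in V(\mathbf{G})} F_R(\mathbf{G},\mathbf{o},v)\Big] = \mathbb{E}\Big[\sum_{v\in V(\mathbf{G})} F_R(\mathbf{G},v,\mathbf{o})\Big].
\]
On the right-hand side, $\sum_{v} F_R(\mathbf{G},v,\mathbf{o}) = \mathbbm{1}_{\{f(\mathbf{G},\mathbf{o})>0\}}\cdot\big|\{v:\ d_{gr}(v,\mathbf{o})\leq R\}\big|$, which is $0$ almost surely since $f(\mathbf{G},\mathbf{o})=0$ a.s.\ (the ball is finite as $\mathbf{G}$ is locally finite, so there is no integrability issue). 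Hence the right-hand side equals $0$, and therefore so does the left-hand side, which is the expectation of the non-negative integer-valued random variable $\big|\{v:\ d_{gr}(\mathbf{o},v)\leq R,\ f(\mathbf{G},v)>0\}\big|$. Consequently, almost surely there is no vertex within graph-distance $R$ of $\mathbf{o}$ at which $f$ is positive.

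To finish, take the countable union over $R\in\mathbb{N}$: almost surely $f(\mathbf{G},v)=0$ for every $v\in V(\mathbf{G})$, as claimed. There is no substantial obstacle here; the only two points deserving a word of care are that $F_R$ descends to doubly-rooted isomorphism classes (this is precisely where the $H$-locality of $f$ is used), and that the right-hand side of the transport identity vanishes pointwise \emph{before} integrating, so that no moment assumption on $f$ or on the degree of $\mathbf{o}$ is needed.
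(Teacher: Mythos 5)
Your proof is correct: it is precisely the standard mass-transport argument behind the cited result (Lemma 2.3, ``everything shows at the root'', in Aldous--Lyons), which the paper invokes without reproving. The cut-off by $d_{gr}(u,v)\leq R$ followed by the countable union over $R$ correctly avoids any integrability issue, so nothing is missing.
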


\section{Heuristics}\label{sec:heuristic}
In this section, we give more details about the heuristics behind our approach. We will look at a UBGW tree $(\mathbb{T},\overset{\rightarrow}{o})$ some reproduction law $\pi$ satisfying that
\begin{enumerate}
    \item The map $t \mapsto \hat{\phi}(1-\hat{\phi}(1-t))$ has finite amount of fixed points.
    \item The tree has leaves, which is equivalent to $\hat{\phi}(0)>0$.
\end{enumerate}
We will then try to find the "maximum size, maximum weight matching", in this order, on the tree.
The main idea is to see what happens when we consider the maximum weight matching, with no size constraints, when edge weights are of the form $w^\eps(u,v)=1+\eps w(u,v)$ for $(u,v) \in E$ for i.i.d.~continuous variables $w(u,v)$ and $\varepsilon \to 0$. The study of maximum weight matchings was the topic of our previous article \cite{enriquez2024optimalunimodularmatching} and we will first describe this setting in the next section as it is instrumental to our approach.

\subsection{Unimodular optimal weight matchings}
\label{subsec:premierpapier}
This section is dedicated to the intuition behind message-passing algorithms for maximum weight matchings with no size constraint.
We start by discussing the simpler setting of finite trees. We are looking for a dynamic program that builds the maximum matching.

\begin{figure}[t!]
\centering
\tikzset{every picture/.style={line width=0.75pt}} 

\begin{tikzpicture}[x=0.75pt,y=0.75pt,yscale=-1,xscale=1]

\draw    (170,161) -- (290,161) ;
\draw    (90,111) -- (170,161) ;
\draw    (90,161) -- (170,161) ;
\draw    (170,161) -- (90,211) ;
\draw    (290,161) -- (370,161) ;
\draw    (450,131) -- (370,161) ;
\draw    (450,191) -- (370,161) ;
\draw  [fill={rgb, 255:red, 0; green, 0; blue, 0 }  ,fill opacity=1 ] (165.65,161) .. controls (165.65,158.6) and (167.6,156.65) .. (170,156.65) .. controls (172.4,156.65) and (174.35,158.6) .. (174.35,161) .. controls (174.35,163.4) and (172.4,165.35) .. (170,165.35) .. controls (167.6,165.35) and (165.65,163.4) .. (165.65,161) -- cycle ;
\draw    (20,91) -- (90,111) ;
\draw    (20,131) -- (90,111) ;
\draw    (20,111) -- (90,111) ;
\draw    (20,151) -- (90,161) ;
\draw    (20,171) -- (90,161) ;
\draw  [fill={rgb, 255:red, 0; green, 0; blue, 0 }  ,fill opacity=1 ] (285.65,161) .. controls (285.65,158.6) and (287.6,156.65) .. (290,156.65) .. controls (292.4,156.65) and (294.35,158.6) .. (294.35,161) .. controls (294.35,163.4) and (292.4,165.35) .. (290,165.35) .. controls (287.6,165.35) and (285.65,163.4) .. (285.65,161) -- cycle ;
\draw  [fill={rgb, 255:red, 0; green, 0; blue, 0 }  ,fill opacity=1 ] (85.65,211) .. controls (85.65,208.6) and (87.6,206.65) .. (90,206.65) .. controls (92.4,206.65) and (94.35,208.6) .. (94.35,211) .. controls (94.35,213.4) and (92.4,215.35) .. (90,215.35) .. controls (87.6,215.35) and (85.65,213.4) .. (85.65,211) -- cycle ;
\draw  [fill={rgb, 255:red, 0; green, 0; blue, 0 }  ,fill opacity=1 ] (85.65,161) .. controls (85.65,158.6) and (87.6,156.65) .. (90,156.65) .. controls (92.4,156.65) and (94.35,158.6) .. (94.35,161) .. controls (94.35,163.4) and (92.4,165.35) .. (90,165.35) .. controls (87.6,165.35) and (85.65,163.4) .. (85.65,161) -- cycle ;
\draw  [fill={rgb, 255:red, 0; green, 0; blue, 0 }  ,fill opacity=1 ] (85.65,111) .. controls (85.65,108.6) and (87.6,106.65) .. (90,106.65) .. controls (92.4,106.65) and (94.35,108.6) .. (94.35,111) .. controls (94.35,113.4) and (92.4,115.35) .. (90,115.35) .. controls (87.6,115.35) and (85.65,113.4) .. (85.65,111) -- cycle ;
\draw  [fill={rgb, 255:red, 0; green, 0; blue, 0 }  ,fill opacity=1 ] (365.65,161) .. controls (365.65,158.6) and (367.6,156.65) .. (370,156.65) .. controls (372.4,156.65) and (374.35,158.6) .. (374.35,161) .. controls (374.35,163.4) and (372.4,165.35) .. (370,165.35) .. controls (367.6,165.35) and (365.65,163.4) .. (365.65,161) -- cycle ;
\draw  [draw opacity=0] (31.64,259.84) .. controls (117.13,256.93) and (185,213.34) .. (185,160) .. controls (185,106.32) and (116.26,62.51) .. (29.99,60.1) -- (22.5,160) -- cycle ; \draw  [color={rgb, 255:red, 208; green, 2; blue, 27 }  ,draw opacity=1 ] (31.64,259.84) .. controls (117.13,256.93) and (185,213.34) .. (185,160) .. controls (185,106.32) and (116.26,62.51) .. (29.99,60.1) ;  
\draw  [draw opacity=0] (30.04,248.19) .. controls (87.11,239.84) and (130.05,203.49) .. (130.03,159.93) .. controls (130.02,116.36) and (87.02,80.03) .. (29.92,71.74) -- (5.03,159.98) -- cycle ; \draw  [color={rgb, 255:red, 126; green, 211; blue, 33 }  ,draw opacity=1 ] (30.04,248.19) .. controls (87.11,239.84) and (130.05,203.49) .. (130.03,159.93) .. controls (130.02,116.36) and (87.02,80.03) .. (29.92,71.74) ;  
\draw  [draw opacity=0] (438.13,249.68) .. controls (433.8,249.89) and (429.42,250) .. (425,250) .. controls (339.4,250) and (270,209.71) .. (270,160) .. controls (270,110.29) and (339.4,70) .. (425,70) .. controls (427.64,70) and (430.26,70.04) .. (432.86,70.11) -- (425,160) -- cycle ; \draw  [color={rgb, 255:red, 144; green, 19; blue, 254 }  ,draw opacity=1 ] (438.13,249.68) .. controls (433.8,249.89) and (429.42,250) .. (425,250) .. controls (339.4,250) and (270,209.71) .. (270,160) .. controls (270,110.29) and (339.4,70) .. (425,70) .. controls (427.64,70) and (430.26,70.04) .. (432.86,70.11) ;  
\draw  [draw opacity=0] (434.75,229.82) .. controls (375.09,227.25) and (328,196.97) .. (328,160) .. controls (328,122.96) and (375.27,92.63) .. (435.1,90.16) -- (443,160) -- cycle ; \draw  [color={rgb, 255:red, 74; green, 144; blue, 226 }  ,draw opacity=1 ] (434.75,229.82) .. controls (375.09,227.25) and (328,196.97) .. (328,160) .. controls (328,122.96) and (375.27,92.63) .. (435.1,90.16) ;  

\draw (171,133.4) node [anchor=north west][inner sep=0.75pt]    {$u$};
\draw (291,133.4) node [anchor=north west][inner sep=0.75pt]    {$v$};
\draw (209,132.4) node [anchor=north west][inner sep=0.75pt]    {$w( u,v)$};
\draw (161,282.4) node [anchor=north west][inner sep=0.75pt]    {$ \begin{array}{l}
\mathbf{Z}( u,v) =
OPT(\textcolor[rgb]{0.56,0.07,1}{T_{(u,v)}}) -OPT(\textcolor[rgb]{0.29,0.56,0.89}{T_{(u,v)} \setminus \{v\}}) .
\end{array}$};
\draw (311,52.4) node [anchor=north west][inner sep=0.75pt]  [color={rgb, 255:red, 144; green, 19; blue, 254 }  ,opacity=1 ]  {$T_{(u,v)}$};
\draw (13,192.4) node [anchor=north west][inner sep=0.75pt]  [color={rgb, 255:red, 126; green, 211; blue, 33 }  ,opacity=1 ]  {$T_{(v,u)} \setminus \{u\}$};
\draw (129,52.4) node [anchor=north west][inner sep=0.75pt]  [color={rgb, 255:red, 208; green, 2; blue, 27 }  ,opacity=1 ]  {$T_{(v,u)}$};
\draw (371,192.4) node [anchor=north west][inner sep=0.75pt]  [color={rgb, 255:red, 74; green, 144; blue, 226 }  ,opacity=1 ]  {$\textcolor[rgb]{0.29,0.56,0.89}{T_{(u,v)} \setminus \{v\}}$};
\draw (161,332.4) node [anchor=north west][inner sep=0.75pt]    {$ \begin{array}{l}
\mathbf{Z}( v,u) = OPT(\textcolor[rgb]{0.82,0.01,0.11}{T_{(v,u)}}) -OPT(\textcolor[rgb]{0.49,0.83,0.13}{T_{(v,u)} \setminus \{u\}}) .
\end{array}$};
\draw (1,281.4) node [anchor=north west][inner sep=0.75pt]    {$ \begin{array}{l}
OPT( G) =\text{weight of} \\
\text{a maximum matching}\\
\text{on}\ G.
\end{array}$};

\end{tikzpicture}
\caption{Definitions of $\bf{Z}$.}\label{fig:Zdefinitions}
\end{figure}

Fix a finite weighted rooted deterministic tree $T$ with a unique optimal matching $\mathbb{M}_{\mathrm{opt}}$. Let $\{u,v\}$ be an edge of $T$, we denote by $T_{(v,u)}$ and $T_{(u,v)}$ the two connected components of $T \setminus \{u,v\}$ containing respectively $u$ and $v$. Let us start by simple but key observations illustrated in Figure~\ref{fig:Zdefinitions}:
\begin{itemize}
    \item The maximal weight of matchings of $T$ that exclude the edge $\{u,v \}$ is merely the sum of the maximal weights of matchings of $T_{(v,u)}$ and $T_{(u,v)}$, denoted $OPT(T_{(v,u)})$ and $OPT(T_{(u,v)})$.
    \item The maximal weight of matchings that include the edge $\{u,v\}$ is the sum of the weight of $\{u,v\}$ and of the maximal weights of matchings of $T_{(v,u)} \setminus \{ u \} $ and $T_{(u,v)} \setminus \{ v \}$. Note that both $T_{(v,u)} \setminus \{ u \} $ and $T_{(u,v)} \setminus \{ v \}$ consist of collections of disjoint subtrees of $T$ issued from the children of $u$ and $v$. We denote by $OPT(T_{(v,u)} \setminus \{ u\})$ and $OPT(T_{(u,v)} \setminus \{ v \})$ the relevant maximal weights.
\end{itemize}
From this discussion, one can see that the edge $\{ u,v \}$ is in the optimal matching of $T$ iff
\[
w(u,v) > OPT(T_{(v,u)}) + OPT(T_{(u,v)}) - \Big(OPT(T_{(v,u)} \setminus \{u\}) + OPT(T_{(u,v)} \setminus \{v\}) \Big).
\]
It will be instrumental to isolate quantities depending only on $T_{(v,u)}$ and $T_{(u,v)}$ in the previous display. This leads us to introduce the following quantities
\begin{align*}
\mathbf{Z}(u,v)&= OPT( T_{(u,v)})-OPT(T_{(u,v)} \setminus \{v\}), \\
\mathbf{Z}(v,u)&=OPT(T_{(v,u)} )- OPT( T_{(v,u)} \setminus \{u\}), 
\end{align*}
and the criterion of $(u,v) \in \mathbb{M}_{\mathrm{opt}}$ is simply 
\begin{equation}\label{eq:defmatching}
 w(u,v)> \mathbf{Z}(u,v)+\mathbf{Z}(v,u) .
\end{equation}
Note that the variable $\mathbf{Z}(u,v)$ has a neat interpretation  in terms of the matchings of $T_{(u,v)}$. Indeed, it is the marginal gain between allowing $v$ to be matched or not.

\bigskip

The variables $\mathbf{Z}$ have the nice property of satisfying a recursive equation. We describe this recursion for $\mathbf{Z}(u,v)$ and $T_{(u,v)}$. See Figure~\ref{fig:Zrecursion} for an illustration.

Listing $v_1,...v_k$ the children of $v$ in $T_{(u,v)}$, assume that the maximum matching of $T_{(u,v)}$ matches $v$ with $v_i$. 
In that situation, the maximum matching of $T_{(u,v)}$ and the maximum matching of $T_{(u,v)} \setminus \{v\}$ coincide on the subtrees $T_{(v,v_j)}$ for $j \neq i$.
On $T_{(v,v_i)}$, this maximum matching has matched $v_i$ with $v$, so $v_i$ is not matched to other vertices. Therefore, our maximum matching on $T_{(u,v)}$ restricted to $T_{(v,v_i)}$ is the union of $\{v,v_i\}$ with the maximum matching of
$T_{(v,v_i)} \setminus \{v_i\}$. In that case, the weight of the maximal matching is given by:
\[
w(v,v_i) + OPT(T_{(v,v_i)} \setminus \{v_i\}) + \sum_{j \neq i} OPT(T_{(v,v_j)}).
\]

On the other hand, if a vertex $v$ is not matched in the maximum matching of $T_{(u,v)}$, then the later is also the maximum matching in $T_{(u,v)} \setminus \{v\}$. Thus, inside each sub-tree $T_{(v,v_i)}$, it coincides with the maximal matching of $T_{(v,v_i)}$.
In that case, the weight of the maximal matching is given by:
\[ \sum_{j} OPT(T_{(v,v_j)}).  \]

Putting all the different cases together, we have the identity
\[
OPT(T_{(u,v)}) = \max \left\{ \sum_{j} OPT(T_{(v,v_j)}) \, , \max_{i\in \{ 1, \ldots , k \} } \left\{ w(v,v_i) + OPT(T_{(v,v_i)} \setminus \{v_i\}) + \sum_{j \neq i} OPT(T_{(v,v_j)}) \right\} \right\}.
\]
Recalling the definition of $Z(u,v)$, we get:
\begin{align*}
\mathbf{Z}(u,v)&= 
OPT(T_{(u,v)})
-\sum_{j} OPT(T_{(v,v_j)}) \\
&= \max\left\{0, \max_{i\in \{ 1, \ldots , k \} } \left\{w(v,v_i)-(OPT(T_{(v,v_i)})-OPT(T_{(v,v_i)} \setminus \{v_i\}) \right\} \right\} \\
&= \max\left\{0,\max_{i\in \{ 1, \ldots , k \} } w(v,v_i)-\mathbf{Z}(v,v_i)\right\}.
\end{align*}

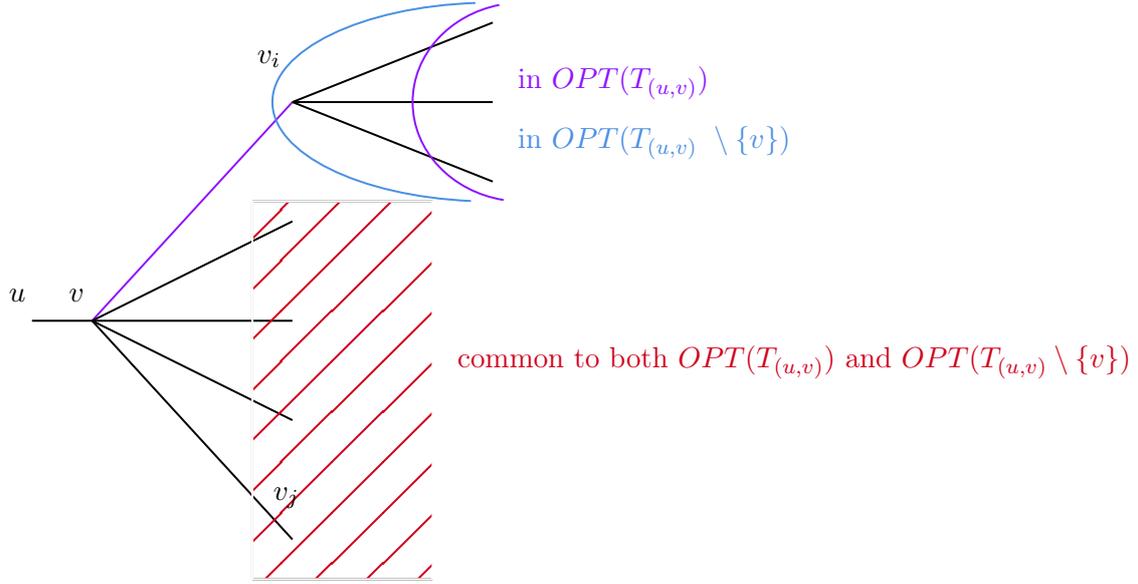
\begin{figure}[t!]
\tikzset{
pattern size/.store in=\mcSize, 
pattern size = 5pt,
pattern thickness/.store in=\mcThickness, 
pattern thickness = 0.3pt,
pattern radius/.store in=\mcRadius, 
pattern radius = 1pt}
\makeatletter
\pgfutil@ifundefined{pgf@pattern@name@_4nnhtjt6r}{
\pgfdeclarepatternformonly[\mcThickness,\mcSize]{_4nnhtjt6r}
{\pgfqpoint{0pt}{0pt}}
{\pgfpoint{\mcSize+\mcThickness}{\mcSize+\mcThickness}}
{\pgfpoint{\mcSize}{\mcSize}}
{
\pgfsetcolor{\tikz@pattern@color}
\pgfsetlinewidth{\mcThickness}
\pgfpathmoveto{\pgfqpoint{0pt}{0pt}}
\pgfpathlineto{\pgfpoint{\mcSize+\mcThickness}{\mcSize+\mcThickness}}
\pgfusepath{stroke}
}}
\makeatother

 
\tikzset{
pattern size/.store in=\mcSize, 
pattern size = 5pt,
pattern thickness/.store in=\mcThickness, 
pattern thickness = 0.3pt,
pattern radius/.store in=\mcRadius, 
pattern radius = 1pt}
\makeatletter
\pgfutil@ifundefined{pgf@pattern@name@_ik3oy0ks0}{
\pgfdeclarepatternformonly[\mcThickness,\mcSize]{_ik3oy0ks0}
{\pgfqpoint{0pt}{0pt}}
{\pgfpoint{\mcSize+\mcThickness}{\mcSize+\mcThickness}}
{\pgfpoint{\mcSize}{\mcSize}}
{
\pgfsetcolor{\tikz@pattern@color}
\pgfsetlinewidth{\mcThickness}
\pgfpathmoveto{\pgfqpoint{0pt}{0pt}}
\pgfpathlineto{\pgfpoint{\mcSize+\mcThickness}{\mcSize+\mcThickness}}
\pgfusepath{stroke}
}}
\makeatother
\tikzset{every picture/.style={line width=0.75pt}} 

\centering
\begin{tikzpicture}[x=0.75pt,y=0.75pt,yscale=-1,xscale=1]

\draw [color={rgb, 255:red, 144; green, 19; blue, 254 }  ,draw opacity=1 ]   (30,160) -- (130,50) ;

\draw (0,160) -- (30,160);
\draw    (30,160) -- (130,110) ;
\draw    (30,160) -- (130,160) ;
\draw    (30,160) -- (130,210) ;
\draw    (30,160) -- (130,270) ;
\draw    (130,50) -- (230,10) ;
\draw    (130,50) -- (230,50) ;
\draw    (130,50) -- (230,90) ;
\draw [pattern=_4nnhtjt6r,pattern size=6pt,pattern thickness=0.75pt,pattern radius=0pt, pattern color={rgb, 255:red, 0; green, 0; blue, 0}]   (110,100) -- (110,290) ;
\draw    (110,100) -- (200,100) ;
\draw    (110,290) -- (200,290) ;
\draw  [color={rgb, 255:red, 255; green, 255; blue, 255 }  ,draw opacity=1 ][pattern=_ik3oy0ks0,pattern size=19.049999999999997pt,pattern thickness=0.75pt,pattern radius=0pt, pattern color={rgb, 255:red, 208; green, 2; blue, 27}] (110,100) -- (200,100) -- (200,290) -- (110,290) -- cycle ;
\draw  [draw opacity=0] (235.37,99.24) .. controls (209.58,95.1) and (190,74.63) .. (190,50) .. controls (190,26.04) and (208.54,6.02) .. (233.29,1.14) -- (245,50) -- cycle ; \draw  [color={rgb, 255:red, 144; green, 19; blue, 254 }  ,draw opacity=1 ] (235.37,99.24) .. controls (209.58,95.1) and (190,74.63) .. (190,50) .. controls (190,26.04) and (208.54,6.02) .. (233.29,1.14) ;  
\draw  [draw opacity=0] (219.11,99.76) .. controls (163.47,97.27) and (120,75.94) .. (120,50) .. controls (120,23.71) and (164.63,2.16) .. (221.34,0.15) -- (230,50) -- cycle ; \draw  [color={rgb, 255:red, 74; green, 144; blue, 226 }  ,draw opacity=1 ] (219.11,99.76) .. controls (163.47,97.27) and (120,75.94) .. (120,50) .. controls (120,23.71) and (164.63,2.16) .. (221.34,0.15) ;  

\draw (111,22.4) node [anchor=north west][inner sep=0.75pt]    {$v_{i}$};
\draw (17,142.4) node [anchor=north west][inner sep=0.75pt]    {$v$};
\draw (-13,142.4) node [anchor=north west][inner sep=0.75pt]    {$u$};
\draw (119,242.4) node [anchor=north west][inner sep=0.75pt]    {$v_{j}$};
\draw (211,171) node [anchor=north west][inner sep=0.75pt]   [align=left] {$\textcolor[rgb]{0.82,0.01,0.11}{{\text{common to both } OPT(T_{(u,v)}) \text{ and }  OPT(T_{(u,v)}\setminus \{ v \})}}$};
\draw (241,30.4) node [anchor=north west][inner sep=0.75pt]    {$\textcolor[rgb]{0.56,0.07,1}{\text{in } OPT( T_{(u,v)})}$};
\draw (241,60.4) node [anchor=north west][inner sep=0.75pt]    {$\textcolor[rgb]{0.29,0.56,0.89}{\text{in} \ OPT( T_{(u,v)} \ \setminus \{v\})}$};

\end{tikzpicture}
\caption{Illustration of the deduction of the recursive equation assuming $v_i$ is matched to $v$.}\label{fig:Zrecursion}
\end{figure}

Note that, since $T$ is a finite tree, it is possible to calculate $\mathbf{Z}(u,v)$ for all vertices $u \sim v \in T$ by starting when $v$ is a leaf, in which case $\mathbf{Z}(u,v) = 0$. By construction, our decision rule \eqref{eq:decisionrule} constructs the optimal matching on $T$ from the values of $\mathbf{Z}$.

In the context of an infinite tree, it may not be possible to build the messages measurably, so we look for a joint distribution $(\mathbb{T},\overset{\rightarrow}{o},\mathbf{Z})$ instead. This is done in our previous work \cite{enriquez2024optimalunimodularmatching} and the first step is to look for a stationary distribution for the law of $\mathbf{Z}$:

We look for a distribution such that for all sequence $(\mathbf{Z}_k)_{k \geq 0}
    $ i.i.d of this unknown distribution and independent of a variable $N \sim \hat{\pi}$ and of the sequence $(w_k)_{k \in \mathbb{N}}$ i.i.d of law $\omega$, the following equality in law holds:
    
    \begin{equation}\label{eq:Zlawfirstpaper}
    \mathbf{Z}_0\overset{\mathrm{(law)}}{=}\max(0,\max_{1 \leq i \leq N} (w_i-\mathbf{Z}_i)).
    \end{equation}
Theorem~3 of \cite{enriquez2024optimalunimodularmatching} states that the distributional equation~\ref{eq:Zlawfirstpaper} has a unique solution $\mu$. Given this distribution $\mu$, Proposition~3.2 of \cite{enriquez2024optimalunimodularmatching} builds a unimodular joint distribution $\left(\mathbb{T},\overset{\rightarrow}{o},w,( \mathbf{Z}(u,v))_{(u,v) \in \overset{\rightarrow}{E}} \right)$ such that :
\begin{itemize}
\item $\mathbf{Z}(\overset{\rightarrow}{o})$ has distribution $\mu$,

\item Almost surely, for any $(u,v) \in \mathbb T$ one has
\begin{equation} \label{eq:Zweightrec}
    \mathbf{Z}(u,v) = \max \left\{ 0,\max_{v' \sim v}  w(v,v')-\mathbf{Z}(v,v')\right\}.
\end{equation}
\item The rule
\begin{equation} \label{eq:ruleM}
(u,v) \in \mathbb{M}_{\mu}\Leftrightarrow w(u,v)> \mathbf{Z}(u,v)+\mathbf{Z}(v,u).
\end{equation}
defines an unimodular matching on $\mathbb T$.
\end{itemize}
The main result from \cite[Theorem~1 and Theorem~3]{enriquez2024optimalunimodularmatching} is that $(\mathbb{T},\overset{\rightarrow}{o},w,\mathbb{M}_{\mu})$ is the unique (in distribution) optimal matching on $\mathbb{T}$:
\begin{theorem} \label{th:firstpaper}
Let $(\mathbb{T},\overset{\rightarrow}{o},w)$ be a unimodular Bienaymé-Galton-Watson Tree with reproduction law $\pi$ and i.i.d weights of law $\omega$.  
Assume that $\omega$ is integrable, non atomic and that $\pi$ is integrable.
Then the law of $(\mathbb{T},\overset{\rightarrow}{o},w,\mathbb{M}_{\mu})$ is the unique unimodular distribution on matched graphs maximising $\mathbb{E}[w(o{})\mathbbm{1}_{o \in \mathbb{M}}]$.
\end{theorem}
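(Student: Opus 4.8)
The plan is to follow Aldous's objective method in three movements: (i) solve the recursive distributional equation \eqref{eq:Zlawfirstpaper} and build from its solution a unimodular joint law of tree, weights, and messages; (ii) show the associated matching $\mathbb{M}_\mu$ maximises $\mathbb{E}[w(o)\mathbbm{1}_{o\in\mathbb{M}}]$ by an augmenting‑path argument transported to the infinite tree via the mass‑transport principle; (iii) upgrade optimality to uniqueness using that $\omega$ is atomless.

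For (i), let $\Psi$ be the map on probability measures on $[0,\infty]$ sending $\nu$ to the law of $\max\big(0,\max_{1\le i\le N}(w_i-Z_i)\big)$, where $(Z_i)_{i\ge1}$ are i.i.d.\ $\sim\nu$, $N\sim\hat\pi$, $(w_i)_{i\ge1}$ i.i.d.\ $\sim\omega$, all independent. Since the $Z_i$ enter with a minus sign, $\Psi$ is antitone for stochastic domination, so $\Psi^2$ is monotone and $\Psi^2(\delta_0)\succeq\delta_0$; moreover $\Psi^{k}(\delta_0)$ is precisely the law of the message at a vertex at distance $k$ from the leaves in a depth‑$k$ truncated $\mathrm{BGW}(\hat\pi)$ tree. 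Hence the even iterates increase to some $\mu^{\mathrm{ev}}$, the odd iterates decrease to some $\mu^{\mathrm{od}}$, with $\mu^{\mathrm{ev}}\preceq\mu^{\mathrm{od}}$ a period‑two orbit of $\Psi$ sandwiching every genuine fixed point; integrability of $\pi$ and $\omega$ keeps all quantities a.s.\ finite. Proving $\mu^{\mathrm{ev}}=\mu^{\mathrm{od}}=:\mu$ is the first delicate point: one analyses the possible atoms of a $\Psi$‑fixed point (only $0$, because $\omega$ is non‑atomic) and runs a monotone coupling of the two iterations driven by common weights and offspring to close the gap. Given $\mu$, one builds $(\mathbb{T},\overset{\rightarrow}{o},w,(\mathbf{Z}(u,v)))$ by declaring the two messages along the root edge i.i.d.\ $\sim\mu$ and propagating outward via \eqref{eq:Zweightrec} (with an extra i.i.d.\ source ``at infinity''), then checks measurability, consistency, and unimodularity via the mass‑transport principle. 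That $\mathbb{M}_\mu$ of \eqref{eq:ruleM} is a.s.\ a matching follows because, were $\{v,v_1\}$ and $\{v,v_2\}$ both in $\mathbb{M}_\mu$, \eqref{eq:ruleM} together with the recursion \eqref{eq:Zweightrec} at the directed edges $(v_1,v)$ and $(v_2,v)$ would give $w(v,v_1)-\mathbf{Z}(v,v_1)>w(v,v_2)-\mathbf{Z}(v,v_2)$ and the reverse inequality simultaneously, a contradiction.

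For (ii), on a finite tree the identity \eqref{eq:defmatching} shows that the maximum‑weight matching is exactly the one produced by the decision rule and, equivalently, that $\mathbb{M}_\mu$ is \emph{locally optimal}: no alternating path can be switched to increase the total weight. Now take any competing unimodular matched graph $(\mathbb{T},\overset{\rightarrow}{o},w,\mathbb{M}')$ with the same weighted marginal. On a tree the symmetric difference $\mathbb{M}_\mu\triangle\mathbb{M}'$ is a disjoint union of finite and infinite alternating paths, and on each finite component one has $\sum_{e\in\mathbb{M}_\mu}w(e)\ge\sum_{e\in\mathbb{M}'}w(e)$ precisely by local optimality of $\mathbb{M}_\mu$ (otherwise switching would beat $\mathbb{M}_\mu$ on a large enough truncation); the infinite components are handled by finite approximation, using integrability of $\omega$ and $\pi$ for tail control. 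Summing this inequality per root edge through a mass‑transport argument then yields $\mathbb{E}[w(o)\mathbbm{1}_{o\in\mathbb{M}'}]\le\mathbb{E}[w(o)\mathbbm{1}_{o\in\mathbb{M}_\mu}]$. I expect this transfer step — turning the finite, combinatorial ``no improving augmenting path'' statement into an inequality of expectations on the infinite unimodular tree, while controlling the contribution of bi‑infinite alternating rays — to be the main obstacle; the mass‑transport principle is what makes the per‑root summation legitimate.

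For (iii), if $\mathbb{M}'$ also attains the maximum, then every path inequality in the argument above is an equality, and atomlessness of $\omega$ (no two disjoint finite edge sets have equal total weight) forces $\mathbb{M}_\mu\triangle\mathbb{M}'=\varnothing$ almost surely, i.e.\ $\mathbb{M}'=\mathbb{M}_\mu$ in law. This yields both the optimality and the uniqueness claimed in Theorem~\ref{th:firstpaper}.
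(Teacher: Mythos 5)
Your overall architecture (solve the RDE, build a unimodular joint law of tree, weights and messages, then prove optimality and uniqueness) matches the strategy behind Theorem~\ref{th:firstpaper}, and steps of type (i) are indeed how the message law is constructed (compare Proposition~\ref{prop:Zconstruction}). The genuine gap is in (ii)--(iii): you reduce everything to the symmetric difference $\mathbb{M}_\mu\triangle\mathbb{M}'$ and claim a \emph{pointwise} weight comparison on each alternating component, finite components by telescoping and ``infinite components handled by finite approximation, using integrability of $\omega$ and $\pi$ for tail control.'' That step fails: on a (doubly) infinite alternating path the telescoped inequality leaves boundary message terms $\mathbf{Z}$ at the truncation ends which do not vanish and are not controlled by integrability of $\omega$ (the weights along the path are i.i.d., not summable, and the messages themselves need not even be integrable), so the per-component inequality is simply not true pointwise and no tail estimate rescues it. The comparison only holds \emph{in expectation}, and the whole content of the paper's argument is to obtain it without ever decomposing into components: one adds self-loops, writes a single local exchange inequality along $v_{-2},v_{-1},o,v_1$ (Lemma~\ref{lem:perfS}), shows via two applications of the mass-transport principle that the two boundary message terms $Y,Y'$ have the \emph{same law}, and then invokes Proposition~\ref{prop:russe}, a lemma designed precisely to conclude $\mathbb{E}[X]\geq 0$ from $X+X'\geq Y-Y'$ with $Y\overset{\mathcal{L}}{=}Y'$ \emph{without} assuming $Y$ integrable. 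Your sketch names mass transport only to ``sum per root,'' which misses where it is actually needed (cancellation in law of the boundary terms) and misses the integrability subtlety entirely.

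The uniqueness step has the same hole. Atomlessness rules out ties between \emph{finite} disjoint edge sets, so it kills finite components of the symmetric difference, but it says nothing about infinite alternating components, which are exactly the possible equality cases. In the paper (Proposition~\ref{prop:uniqueness}), equality of performances forces the competing matching to follow the ``second largest'' decision rule $\argmaxxlex$ along an infinite alternating path, and ruling this out is a substantial argument: a rerooting identity proved by mass transport (Lemma~\ref{lem:doublyinfinitepath}) shows the doubly-infinite-path event has a stationary structure, and Lemma~\ref{lem:Mhstop} shows that at each step there is positive conditional probability of breaking the second-argmax relation, using that the support of $\omega$ contains an interval. None of this is replaced by ``atomlessness forces $\mathbb{M}_\mu\triangle\mathbb{M}'=\varnothing$.'' (A smaller caveat: your claim that the even/odd iterates of $\Psi$ merge is also only asserted, and it is a delicate point — it is exactly the statement quoted from Theorem~3 of \cite{enriquez2024optimalunimodularmatching} — but the decisive missing ideas are the ones above.)
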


\subsection{Renormalising the message variables}

We return to maximum size, maximum weight matchings. We assign weights $w^\varepsilon := 1 + \varepsilon \, w$ to edges, where the variables $w$ are i.i.d.~and have a continuous distribution to use the approach described in the previous section. Indeed, Theorem~\ref{th:firstpaper} states that the unique (in law) unimodular matching $\mathbb{M}_\eps$ on $(\mathbb{T},\overset{\rightarrow}{o},w_\eps)$ maximising $\mathbb{E}[w^\varepsilon(o)\mathbbm{1}_{o \in \mathbb{M}}]$ can be generated by variables $\mathbf{Z}^{(\eps)}$ verifying
\begin{equation}\label{eq:Zepsilonrecursion}
    \mathbf{Z}^{(\eps)}(u,v)= \max\left(0,\max_{\substack{ u' \sim v \\ u' \neq u}}\left( w^\eps(v,u')-\mathbf{Z}^{(\eps)}(v,u') \right) \right).
\end{equation}
The cumulative distribution function $h^{\eps}$ of the common distribution of the variables $\mathbf Z^{(\varepsilon)}$ satisfies
\begin{equation}\label{eq:hequation}
    \forall t \in \mathbb{R}, h^{\eps}(t)=\mathbbm{1}_{t \geq 0} \hat{\phi}(1-\mathbb{E}[h^{\eps} (W^\eps-t)]),
\end{equation}
where $W^\eps$ has the distribution of $1+\eps W$ with $W \sim \omega$ and $\hat \phi$ denotes the probability generating function of the size-biased reproduction law of the tree.
Finally, the inclusion rule would be:
\begin{equation}\label{eq:Zepsiloninclusionrule}
    (u,v) \in \mathbb{M}^\eps \Leftrightarrow w^\eps (u,v) > \mathbf{Z}^{(\eps)}(u,v)+\mathbf{Z}^{(\eps)}(v,u).
\end{equation}
If one naively took $\eps \rightarrow 0$ in the above equation, assuming $h^{\eps}(t) \rightarrow h^0(t)$, we would get:
\begin{equation}\label{eq:equationnaive}
    \forall t \in \mathbb{R} , h^0(t) = \mathbbm{1}_{t \geq 0 }\hat{\phi}(1-h^0(1-t)).
\end{equation}
which, when iterated inside $]0,1[$ to avoid the boundary conditions, gives that $h^0(t)$ has to solve:
\begin{equation}\label{eq:fixedpointpoisson}
    \forall t \in ]0,1[ , h^0(t)=\hat{\phi}(1-\hat{\phi}(1-h^0(t))).
\end{equation}
This implies that the values of $h^0$ at every point of continuity has to be a fixed point of $t \mapsto \hat{\phi}(1-\hat{\phi}(1-t))$. So $\mathbf{Z}^{(0)}$ must be a discrete variable in $[0,1]$.
In general if $t \mapsto \hat{\phi}(1-\hat{\phi}(1-t))$ has $p+1$ fixed points $l_0,...l_p$, we must have that $l_k=\hat{\phi}(1-l_{p-k})$ so we will set $l'_k=l_{p-k}$. We can then enumerate all the possible solutions for $h^0$ with the conditions we have found so far. Let $p'\leq \tfrac{p}{2}$ be a integer, $l_{k_0}<...<l_{k_{p'}}<l_{\lfloor \frac{p}{2} \rfloor}$ be fixed points. Consider $x_0=0<x_1<\cdots<x_{p'}\leq \frac{1}{2}$ be real numbers with $x_{p'}=\frac{1}{2}$ only if ${k_{p'}}=\frac{p}{2}$. The solutions are parametrised with $(l_{k_0},\cdots,l_{k_{p'}},x_0,...,x_{p'})$ as follows:
\begin{itemize}
\item If $x_p'\neq \frac{1}{2}$, setting $l_{k_{-1}}=0$ and $l'_{k_{-1}}=1$,
\[h^0(t)= \sum_{n=0}^{p'} \left( \mathbbm{1}_{t \geq x_n}\left(l_{k_n}-l_{k_{(n-1)}}\right) +\mathbbm{1}_{t \geq 1-x_n}\left(l'_{k_{n-1}}-l'_{k_n}\right) \right) . \] 
\item Otherwise, if $x_p' = \frac{1}{2}$,
\[h^0(t)= \left(\sum_{n=0}^{p'-1} \mathbbm{1}_{t \geq x_n}\left(l_{k_n}-l_{k_{(n-1)}}\right) +\mathbbm{1}_{t \geq 1-x_n}\left(l'_{k_{n-1}}-l'_{k_n}\right) \right) + \left(l_{k_{p'}}-l_{k_{p'-1}}\right)\mathbbm{1}_{t \geq \frac{1}{2}} . \] 
\end{itemize}
In essence $\mathbf{Z}^{(0)}$ is concentrated at $x_0,...x_{p'},1-x_{p'},\dots,1-x_0$ with probabilities given by the consecutive differences between the $l_{k_n}$.
 
 The decision rule \[ 1+\eps w(u,v) > \mathbf{Z}^{(\eps)}(u,v)+\mathbf{Z}^{(\eps)}(v,u) \]
 then becomes as $\eps \rightarrow 0$, \[1 \geq \mathbf{Z}^{(0)}(u,v)+\mathbf{Z^{(0)}}(v,u).\] The issue is that the event \[1 = \mathbf{Z}^{(0)}(u,v)+\mathbf{Z}^{(0)}(v,u) \] now happens with positive probability, and when this event happens, there will be multiple edges $v'$ around $u$ such that $\mathbf{Z}^{(0)}(u,v')+\mathbf{Z}^{(0)}(v',u)=1$. In this case, the rule does not define a matching anymore since multiple neighbours of $u$ can be tied at $1$ in the decision rule. 

To break those ties, we will renormalise  $h^{\eps}$ around the limiting atoms $1,x_1,\dots, x_{p'},1-x_{p'},1-x_1,\dots,1$. We will know \textit{a posteriori} that for well behaved degree distributions, the only atoms will appear at $0,\frac{1}{2},1$. In this heuristics section, for simplicity's sake, we assume as such. We refer to Figure~\ref{fig:zoom} for an illustration of this discussion. 

\begin{figure}[h]
    \centering
    \includegraphics[width=0.5\linewidth]{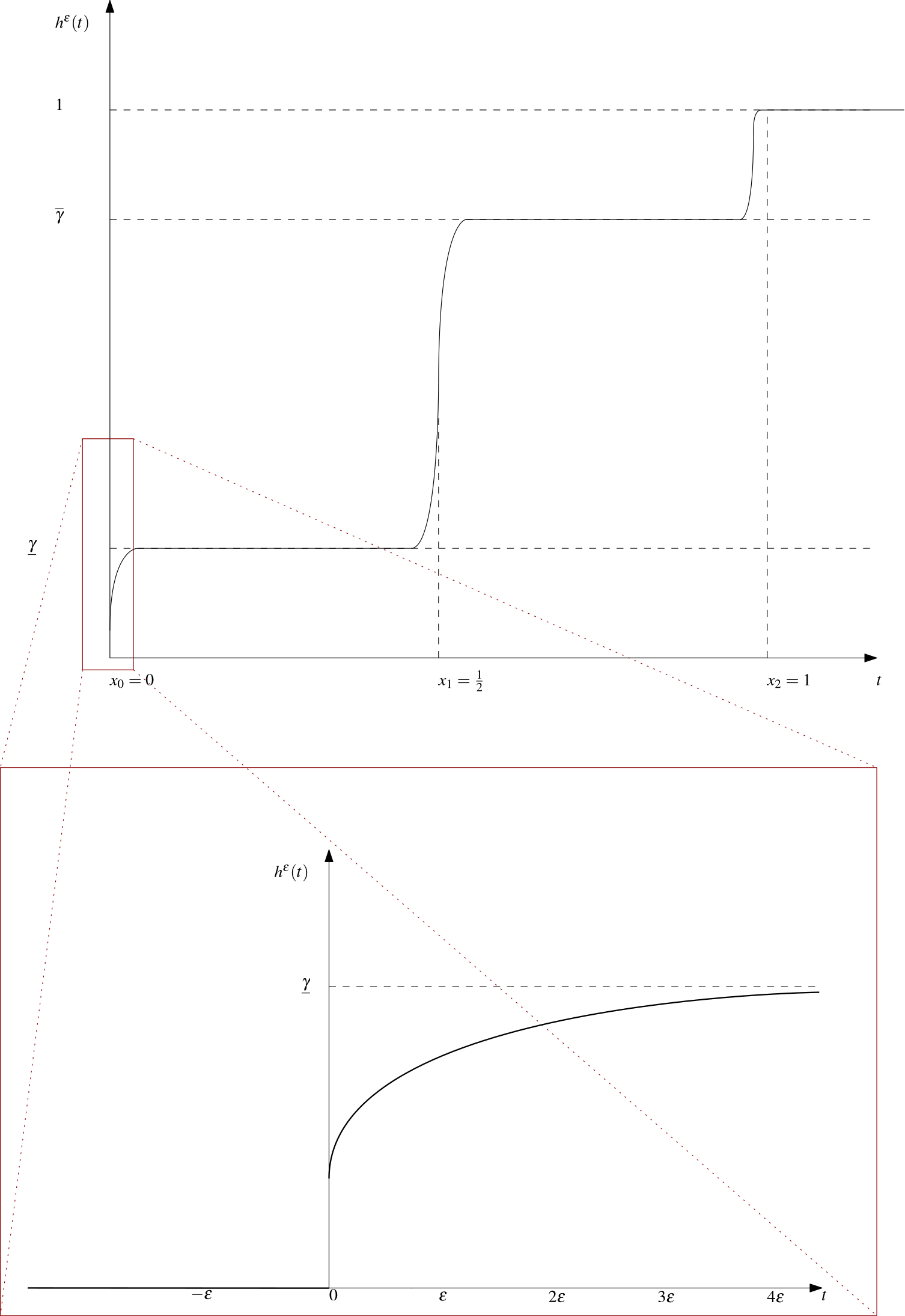}
    \caption{Illustration of the renormalisation procedure, the graph of $h^{0,\eps}$ is shown on the bottom when we divide the scale by $\eps$.}
    \label{fig:zoom}
\end{figure}

Set $h^{0,\eps}:t\mapsto h^{\eps}(\eps t)$,$h^{\frac{1}{2},\eps}: t \mapsto h^{\eps}(\frac{1}{2}+\eps t)$ and $h^{1,\eps}:t \mapsto h^{\eps}(1+\eps t)$. The system of equations on $(h^{0,\eps}$ ,$h^{\frac{1}{2},\eps}$, $h^{1, \eps})$ becomes:
\begin{equation}
\begin{aligned}\label{eq:systemezoomavantlimiteER}
    h^{0,\eps}(t)&= \mathbbm{1}_{t \geq 0} \hat{\phi}(1{-\mathbb{E}[h^{1,\eps}(W-t)]}),\\
    h^{\frac{1}{2},\eps}(t) &= \mathbbm{1}_{t \geq \frac{-1}{2\eps}}\hat{\phi}(1-\mathbb{E}[h^{\frac{1}{2},\eps}(W-t)]), \\
    h^{1,\eps}(t)&= \mathbbm{1}_{t \geq \frac{-1}{\eps}}\hat{\phi}(1-\mathbb{E}[h^{0,\eps}(W-t)]).
\end{aligned}
\end{equation}

Taking $\eps \rightarrow 0$ and assuming simple convergence of $h^{i,\eps}$ to $h_i$ for $i \in \{0, \frac{1}{2},1\}$ one obtains the limiting system on $(h^0, h^{\frac{1}{2}}, h^1)$:
\begin{equation}
\begin{aligned}\label{eq:systemzoomER}
    h^{0}(t)&= \mathbbm{1}_{t \geq 0} \hat{\phi}(1-\mathbb{E}[h^{1}(W-t)]), \\
    h^{\frac{1}{2}}(t) &= \hat{\phi}(1-\mathbb{E}[h^{\frac{1}{2}}(W-t)]) ,\\
    h^{1}(t)&= \hat{\phi}(1-\mathbb{E}[h^{0}(W-t)]).
\end{aligned}
\end{equation}
By taking $t\rightarrow \pm \infty $, we see that $(h^{0}(+\infty),h^1(-\infty))$ and $(h^{\frac{1}{2}}(-\infty), h^{\frac{1}{2}}(+\infty))$ are solutions to the system:
\begin{align*}
\begin{cases}
    x&=\hat{\phi}(1-y), \\
    y&=\hat{\phi}(1-x),
\end{cases}
\end{align*}
which is consistent with the atoms we found for $h^0$ above.
\begin{remark}
Had we not assumed concentration around $0$, $1$ and $\frac{1}{2}$, the system would write:
\begin{equation}\label{eq:systemzoomh0singular}
\left\{
\begin{aligned}
    \forall 0 < j \leq k, h_j(t)&= \hat{\phi}(1-\mathbb{E}[h_{{k-j}}(W-t)]), \\
    h_0(t)&= \mathbbm{1}_{t \geq 0} \hat{\phi}(1-\mathbb{E}[h_{k}(W-t)]),\\
    \forall 0 \leq j \leq k, \lim_{t \rightarrow -\infty} h_j(t) &< \lim_{t \rightarrow + \infty}h_j(t),  \\
    \forall 0 \leq j < k, \lim_{t \rightarrow + \infty}h_j(t) &= \lim_{t \rightarrow -\infty}h_{j+1}(t), 
\end{aligned}
\right.
\end{equation}
where the functions $h_j$ correspond to the renormalisation around $x_j$ and we added further conditions to ensure that no function $h_j$ is redundant and that all the mass is captured by the system.
    In the case where the tree has no leaves, it is possible that $x_0=0$ does not appear in the list of atoms, so there is an alternative system 
\begin{equation}\label{eq:systemzoomh1/2singular}
\left\{
\begin{aligned}
    \forall 0 \leq j \leq k, h_j(t)&= \hat{\phi}(1-\mathbb{E}[h_{{k-j}}(W-t)]), \\
    \forall 0 \leq j \leq k, \lim_{t \rightarrow -\infty} h_j(t) &< \lim_{t \rightarrow + \infty}h_j(t),  \\
    \forall 0 \leq j < k, \lim_{t \rightarrow + \infty}h_j(t) &= \lim_{t \rightarrow -\infty}h_{j+1}(t), \\
    \lim_{t \rightarrow -\infty} h_0(t)&= 0 \\
    \lim_{t \rightarrow +\infty} h_k(t)&=1.
\end{aligned}
\right.
\end{equation}\end{remark}

Now, none of the $h^i$ solution to \eqref{eq:systemzoomER} are cumulative distribution functions of real variables because they assign atoms at infinity, however the vector $(h^0,h^{\frac{1}{2}},h^1)$ can be interpreted as a cumulative distribution function of a multidimensional random variable $\mathbf{Z}$.
Formally, we can represent $\mathbf{Z}=(x,Z)$ as a variable in $\left\{ 0, \frac{1}{2}, 1 \right\} \times \mathbb{R}$ such that for all $a<b, i \in \left\{0, \frac{1}{2},1\right\}$:
\begin{align*}
    \mathbb{P}( x= i , Z \in [a,b])=h^i(b)-h^i(a),
\end{align*}
or as the equivalent $\mathbf{Z}=(Z^0,Z^{\frac{1}{2}},Z^1)$ with $Z^i$ degenerate variables in $\overline{\mathbb{R}}$ and for all $a<b$:
\begin{align*}
    \mathbb{P}( \mathbf{Z} \in ]a,b] \times \{ -\infty \} \times \{ -\infty \} )&= h^0(b)-h^0(a), \\
    \mathbb{P}( \mathbf{Z} \in \{ + \infty \} \times ]a,b] \times \{ -\infty \} )&= h^{\frac{1}{2}}(b)-h^{\frac{1}{2}}(a), \\
    \mathbb{P}( \mathbf{Z} \in \{ +\infty \} \times \{ +\infty \} \times ]a,b] )&= h^1(b)-h^1(a).
\end{align*}
The advantage of this second representation is that the marginals $Z_i$ preserve the meaning of each $h^i$, so they will be adapted when reasoning on their distribution. However, the first representation will usually be more convenient when reasoning on the relationships between variables that verify the recursion on a given tree.
Now, let us see how the recursion
\begin{equation}
    Z^{(\eps)}(u,v)= \max(0,\max_{\substack{ u' \sim v \\ u' \neq u}}\left( w^\eps(v,u')-Z^{(\eps)}(v,u') \right) )
\end{equation}
transfers to $\mathbf{Z}$. Since $w_\eps=1+\eps w$, and $0,\frac{1}{2},1$ indicates the "macroscopic" behavior of $\mathbf{Z}^\varepsilon$, we see that the corresponding system of recursions on $(\mathbb{T},o, \mathbf{Z})$ would be rewritten , using the first representation as:
\begin{equation}\label{eq:recursion(x,Z)}
    \begin{aligned}
        (x,Z)(u,v)=\maxlex\left( (0,0) , \maxlex_{\substack{ u' \sim v \\ u' \neq u}}\left( (1,w(v,u'))-(x,Z)(v,u') \right) \right)  ,
    \end{aligned}
\end{equation}
where the maximum on elements of $\mathbb{R}^2$ is taken in the lexicographic sense and the maximum of an empty list is set to be $(-\infty,-\infty)$. Using the $\mathbf{Z}=(Z^0,Z^{\frac{1}{2}},Z^1)$ representation, this tanslates into:
\begin{equation}
\begin{aligned}\label{eq:systemrecursionZoomER}
    Z^0(u,v) &=  \max\left(0,\max_{\substack{ u' \sim v \\ u' \neq u}}\left( w(v,u')-Z^1(v,u') \right)\right) ,\\
    Z^{\frac{1}{2}}(u,v) &= \max_{\substack{ u' \sim v \\ u' \neq u}}\left( w(v,u')-Z^\frac{1}{2}(v,u') \right) ,\\
    Z^1(u,v) &= \max_{\substack{ u' \sim v \\ u' \neq u}}\left( w(v,u')-Z^0(v,u') \right), 
\end{aligned}
\end{equation}
where the maximum of an empty list is $-\infty$.

Let us now turn to the inclusion rule
\begin{equation*}
    (u,v) \in \mathbb{M}^\eps \Leftrightarrow w^\eps (u,v) > Z^{(\eps)}(u,v)+Z^{(\eps)}(v,u).
\end{equation*}
Again, writing $w^\eps(u,v)=1+\varepsilon w(u,v)$, we see that it would be equivalent for the first representation:
\begin{align*}
    (u,v) \in \mathbb{M} \Leftrightarrow (x,Z)(u,v)+(x,Z)(v,u) \overset{\mathrm{\lex}}{<} (1,w(u,v))
\end{align*}
or, using the second representation:
\begin{align}\label{eq:systeminclusionZoomER}
    (u,v) \in \mathbb{M} \Leftrightarrow \exists x \in \left\{0,\frac{1}{2},1\right\} , Z^{i}(u,v)+Z^{1-x}(u,v) < w(u,v).
\end{align}

In the remainder of the paper, we see that the values of $x$, namely $0$, $\frac{1}{2}$, and $1$ do not matter. Indeed, only their ranking does. So we will instead replace $(Z^0,Z^\frac{1}{2},Z^1)$ by $(Z_0,Z_1,Z_2)$ and $(x,Z)$ with $(i,Z)$ where $i=2x$. In this case, the recursions become
\begin{equation}
    \begin{aligned}
        (i,Z)(u,v)=\max\left( (0,0) , \max_{\substack{ u' \sim v \\ u' \neq u}}\left( (2,w(v,u'))-(i,Z)(v,u') \right) \right),
    \end{aligned}
\end{equation}
and
\begin{equation}\label{eq:recursionZlist2}
\begin{aligned}
    Z_0(u,v) &=  \max\left(0,\max_{\substack{ u' \sim v \\ u' \neq u}}\left( w(v,u')-Z_2(v,u') \right)\right) \\
    Z_{1}(u,v) &= \max_{\substack{ u' \sim v \\ u' \neq u}}\left( w(v,u')-Z_1(v,u') \right) \\
    Z_2(u,v) &= \max_{\substack{ u' \sim v \\ u' \neq u}}\left( w(v,u')-Z_0(v,u') \right).
\end{aligned}
\end{equation}
The inclusion rules become
\begin{align}
    &(u,v) \in \mathbb{M} \Leftrightarrow \exists i \in \left\{0,1,2\right\} , Z_{i}(u,v)+Z_{2-i}(u,v) < w(u,v),
\end{align}
and
\begin{align}
    (u,v) \in \mathbb{M} \Leftrightarrow (i,Z)(u,v)+(i,Z)(v,u) \overset{\mathrm{\lex}}{<} (2,w(u,v)).
\end{align}
We also denote by $(h_0,h_1,h_2)$ the renamed vector $(h^0,h^{\frac{1}{2}},h^1)$. This list notation where we discard the location of the atoms is convenient in the rest of the paper as we will have to deal with an unknown number of jumps in the general case. This avoids having to remember the location of jumps which would complexify notation.

\subsection{Recovering Karp-Sipser formula}

In this subsection, we will see how to compute the known Karp-Sipser formula for maximum matchings from our renormalised message variables. We will thus assume that $\pi$ is a Poisson distribution of parameter $c>0$. We focus on the harder case $c>e$, where we will assume that $\mathbf{Z}^{0}$ concentrates around $0,\frac{1}{2}$ and $1$ with the relevant fixed points denoted by $\underline{\gamma}$ for the smallest and $\overline{\gamma}$ for the largest. The probability that a uniform edge is matched is then:
\begin{equation}
    \mathbb{P}((i_{a},Z_{a})+(i_{b},Z_{b}) \overset{\lex}{<} (1,w))
\end{equation}
with $(i_{a},Z_{a}),(i_{b},Z_{b})$ and $w$ independent.
As we said before, the representation $(Z_0,Z_1,Z_2)$ is more convenient for computing probabilities, so we can transform $(i_a,Z_a)$ into $(Z_{a,0},Z_{a,1},Z_{a,2})$ and $(i_b,Z_b)$ into $(Z_{b,0},Z_{b,1},Z_{b,2})$. In this representation, this probability becomes:
\begin{equation}
    \mathbb{P} (  Z_{a,2}+Z_{b,0} < w \text{ or }Z_{a,1}+Z_{b,1}<w \text{ or } Z_{a,0}+Z_{b,2}<w).
\end{equation}
We can then express this by integrating with respect to the value of $(Z_0^0,Z_0^1,Z_0^2)$. With a slight abuse of notation, we will write $\mathrm{d}h_i$ for the \textbf{measure} corresponding to $Z_i$ on $\mathbb{R}$ without the atoms at $\pm \infty$. This gives:
\begin{equation}\label{eq:ERtaillecalcul}
\begin{aligned}
    &\int_{-\infty}^{+\infty}   \mathbb{E}\left[h_0(W-t)\right] \mathrm{d}h_2(t) + \int_{-\infty}^{+\infty}\mathbb{E}\left[h_{1}(W-t)\right]\mathrm{d}h_{1}(t) + \int_{0}^{+\infty}\mathbb{E}\left[h_{2}(W-t)\right]\mathrm{d}h_0(t).
\end{aligned}
\end{equation}
We can rewrite Equations~\eqref{eq:systemzoomER} as:
\begin{equation*}
\begin{aligned}
    \mathbb{E}[h_{2}(W-t)]&=\mathbbm{1}_{t \geq 0} \frac{-\ln(h_{0}(t))}{c},  \\
    \mathbb{E}[h_{1}(W-t)]&=\frac{-\ln(h_{1}(t))}{c}, \\
    \mathbb{E}[h_{0}(W-t)]&=\frac{-\ln(h_{2}(t))}{c} .
\end{aligned}
\end{equation*}
Injecting these expressions into formula~\eqref{eq:ERtaillecalcul} gives:
\begin{equation}
    \int_{-\infty}^{+\infty}   \frac{-\ln(h_{2}(t))}{c} \mathrm{d}h_2(t) + \int_{-\infty}^{+\infty}\frac{-\ln(h_{1}(t))}{c}\mathrm{d}h_{1}(t) + \int_{0}^{+\infty} \frac{-\ln(h_{0}(t))}{c}\mathrm{d}h_0(t).
\end{equation}
We can now substitute $u=h_i(t)$ in each integral. We must be careful that $\mathrm{d}h_0$ has an atom at $0$ with weight $\beta:=h_0(0)$. We thus get:
\begin{align}
    &\int_{\overline{\gamma}}^{1} \frac{-\ln(u)}{c}\mathrm{d}u + \int_{\underline{\gamma}}^{\overline{\gamma}} \frac{-\ln(u)}{c}\mathrm{d}u + \int_{\beta}^{\underline{\gamma}} \frac{-\ln(u)}{c} \mathrm{d}u  + \frac{-\beta\ln(\beta)}{c} \nonumber\\
    &=\int_{\beta}^{1}\frac{-\ln(u)}{c}\mathrm{d}u-\frac{\beta\ln(\beta)}{c} \nonumber \\
    &= \frac{1-\beta}{c}. \label{eq:ERtailleatomzero}
\end{align}
So the asymptotic matching size only depends on the atom $\beta$ of $h_0$ at $0$.

It remains to express $\beta$ as a function of $\overline{\gamma}$ and $\underline{\gamma}$. To this end, we will find an equation linking these three values by considering the following probability:
\begin{equation*}
    \mathbb{P}(Z_{a,0}+Z_{b,2} < W).
\end{equation*}
We will compute this probability in two different ways to obtain the desired equation. By conditioning with respect to $Z_{b,2}$ or with respect to $Z_{a,0}$, we get the identity:
\begin{equation*}
    \int_{0}^{\infty} \mathbb{E}\left[ h_2(W-t) \right]\mathrm{d}h_0(t)=\mathbb{P}(Z_{b,2}=-\infty, Z_{a,0}<\infty)+\int_{-\infty}^{0} \mathbb{E} \left[ h_0(W-t) \right]\mathrm{d}h_2(t) .
\end{equation*}
Using the same trick as above gives the following:
\begin{align*}
    \frac{\beta\ln(\beta)}{c}-\int_{\beta}^{\underline{\gamma}}\frac{\ln(u)}{c}  = \overline{\gamma}\underline{\gamma}-  \int_{\overline{\gamma}}^{1} \frac{\ln(u)}{c} \mathrm{d}u .
\end{align*}
Computing the integrals gives
\begin{align*}
\underline{\gamma}\ln(\underline{\gamma})-\underline{\gamma} - \beta\ln(\beta)+\beta + \beta\ln(\beta)= \overline{\gamma}-1-\overline{\gamma}\ln(\overline{\gamma})- c\overline{\gamma}\underline{\gamma}.
\end{align*}
Isolating $\beta$, we end up with:
\begin{equation*}
\beta=-c\overline{\gamma}\underline{\gamma}+\overline{\gamma}+\underline{\gamma}-1-\overline{\gamma}\ln(\overline{\gamma})-\underline{\gamma}\ln(\underline{\gamma}).
\end{equation*}
Recalling that $\overline{\gamma}=e^{-c\underline{\gamma}}$ and $\underline{\gamma}=e^{-c\overline{\gamma}}$, the last two terms are equal to $c\overline{\gamma}\underline{\gamma}$ hence
\begin{equation}  \beta=c\underline{\gamma}\overline{\gamma}+\underline{\gamma}+\overline{\gamma}-1.
\end{equation}
Injecting into expression~\eqref{eq:ERtailleatomzero}, we obtain that the asymptotic probability that a uniform edge is in a maximum matching of $G_n$ is:
\begin{equation}
    \frac{1}{c}\left[ 2-\overline{\gamma}-\underline{\gamma} -c\underline{\gamma}\overline{\gamma}\right].
\end{equation}
By virtue of Proposition~\ref{prop:chgmtpdv}, we obtain the formula for the vertex size of maximum matchings after multiplying by $c$. We precisely recover Karp-Sipser's formula (see \cite{Karp}) for the vertex size of the maximum matching.

\section{Maximum size maximum weight matching on UBGW trees}\label{sec:infinitematching}
In this section, we rigorously build the maximum size maximum weight matching on unimodular BGW trees by directly constructing the variables $\mathbf{Z}$ from the previous section and proving they indeed define the unique (in law) unimodular matching that maximises the weight while maximizing the size.
We will treat the case where the BGW tree has leaves (i.e. $\hat{\phi}(0)>0$). This guarantees that we are in the case of System~\eqref{eq:systemzoomh0singular} and not of System~\eqref{eq:systemzoomh1/2singular}.
This alternative does not change much of the proofs, because we will mostly rely on the recursive equation which writes the same regardless of whether there are leaves or not.  We will thus concentrate on the $\hat{\phi}(0)>0$ case to avoid having to constantly treat two separate cases. 

\subsection{Constructing the maximum size maximum weight matching}
The goal of this section is to construct the joint law $(\mathbb{T}, (i,Z)(u,v)_{(u,v)\in\overset{\rightarrow}{E}})$ from the heuristics. For a given solution $\mathcal{H}=(k,(h_0,...,h_k))$ to System~\eqref{eq:systemzoomh0singular}, we write $\zeta_\mathcal{H}$ the distribution of a variable $\mathbf{Z}_{\mathcal{H}}=(Z_{\mathcal{H},0},....,Z_{\mathcal{H},k})$ on $\overline{\mathbb{R}}^k$ given by:
\begin{equation}\label{eq:zetatranslation}
\begin{aligned}
    \mathbb{P}( \mathbf{Z}_\mathcal{H} \in ]a,b] \times \{ -\infty \} \times \cdots \times \{ -\infty \} )&= h^0(b)-h^0(a), \\
    \mathbb{P}( \mathbf{Z}_\mathcal{H} \in \{ + \infty \} \times ]a,b] \times \cdots \times \{ -\infty \} )&= h^{1}(b)-h^{1}(a), \\
    \cdots \\
    \mathbb{P}( \mathbf{Z}_\mathcal{H} \in \{ +\infty \}  \times \cdots \times \{ +\infty \} \times ]a,b] )&= h^k(b)-h^k(a).
\end{aligned}
\end{equation}
Note that the random variable $
\mathbf{Z}_{\mathcal{H}}$ has only one non degenerate coordinate. We will denote by $i_{\mathcal{H}}$ the corresponding coordinate and by $Z_\mathcal{H}$ the associated value. Formally, this means that
\[
i_{\mathcal{H}} := \min \{ j :\, Z_{\mathcal{H},j} > - \infty \}
\text{ and }
Z_{\mathcal{H}} := Z_{\mathcal{H},i_\mathcal{H}}.
\]
This defines a bijective operator that transforms $\mathbf{Z}_\mathcal{H}$ into $(i_\mathcal{H},Z_\mathcal{H})$ whose distribution is supported by $\{0,\ldots ,k\} \times \mathbb{R}$.
We denote the distribution of this random variable by $\zeta_{\mathcal{H}}'$. It verifies
\begin{align}\label{eq:zeta'translation}
    \forall 0 \leq j \leq k, t \in \mathbb{R}, \, \mathbb{P}_{\zeta_\mathcal{H}'} \left( 
i_\mathcal{H}=j, Z_\mathcal{H} \leq t\right)=h_j(t)-\lim_{t' \rightarrow -\infty}h_j(t').
\end{align}

\bigskip

We want to prove the following proposition:
\begin{prop}\label{prop:Zconstruction}
    Let $\mathbb{T}$ be a unimodular decorated BGW tree with reproduction law $\pi$ and edge weight law $\omega$. Let $\mathcal{H}$ be a solution to the system given by System~\eqref{eq:systemzoomh0singular} with input $\pi$ and $\omega$.
    \begin{enumerate}[label=\roman*)]
        \item There exists a random decorated tree $(\mathbb{T}',((i_{\mathcal{H}},Z_{\mathcal{H}})(u,v))_{(u,v) \in \overset{\rightarrow}{E}})$ such that the marginal law of each $(i_{\mathcal{H}},Z_{\mathcal{H}})(u,v)$ is $\zeta'_{\mathcal{H}}$, the marginal law of $\mathbb{T}'$ is the law of $\mathbb{T}$ and such that for any $(u,v) \in \overset{\rightarrow}{E}$:
        \begin{equation}\label{eq:systemrecursionzoombis}
    \begin{aligned}
        (i_{\mathcal{H}},Z_{\mathcal{H}})(u,v)=\maxlex\left( (0,0) , \maxlex_{\substack{ u' \sim v \\ u' \neq u}}\left( (k,w(v,u'))-(i_{\mathcal{H}},Z_{\mathcal{H}})(v,u') \right) \right).
    \end{aligned}
\end{equation}
        \item The rule
        \begin{align}\label{eq:decisionrule}
            (u,v) \in \mathbb{M}_{\mathcal{H}} \Leftrightarrow (i_{\mathcal{H}},Z_{\mathcal{H}})(u,v)+(i_{\mathcal{H}},Z_{\mathcal{H}})(v,u)\overset{\lex}{<} (k,w(u,v)) 
        \end{align}
        defines a matching on $\mathbb{T}$. This rule is equivalent to the following vertex rule:
        $u$ is matched if and only if there exists $v\sim u$ such that
\begin{equation}\label{eq:vertexrulebis}
        \vphantom{\underset{v' \sim u}{\overset{\lex}{\argmax}}}
        (k,w(u,v))-(i_{\mathcal{H}},Z_{\mathcal{H}})(u,v) \overset{\lex}{>} (0,0),
\end{equation}
in which case $u$ is matched to the vertex $v$ defined by
\begin{equation}
        v=\underset{v' \sim u}{\overset{\lex}{\argmax}} \left( (k,w(u,v'))-(i_{\mathcal{H}},Z_{\mathcal{H}})(u,v') \right).
\end{equation}

        \item The random decorated tree $(\mathbb{T}', (i_{\mathcal{H}},Z_{\mathcal{H}})_{(u,v) \in \overset{\rightarrow}{E}})$ viewed as $\mathbb{T}'$ with an additional decoration is unimodular.
        \end{enumerate}
\end{prop}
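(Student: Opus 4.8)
The construction follows exactly that of \cite[Proposition~3.2]{enriquez2024optimalunimodularmatching}, the only substantial change being that the scalar recursion $\mathbf{Z}\mapsto\max(0,\max(w-\mathbf{Z}))$ is replaced by its two–dimensional lexicographic analogue \eqref{eq:systemrecursionzoombis}. The computational heart of the argument is therefore a recursive distributional equation: write $\Psi$ for the map sending a law $\nu$ on $\{0,\dots,k\}\times\mathbb{R}$ to the law of
\[
\maxlex\!\left((0,0),\ \maxlex_{1\le\ell\le N}\big((k,w_\ell)-(i_\ell,Z_\ell)\big)\right),
\]
where $N\sim\hat\pi$, the $(i_\ell,Z_\ell)$ are i.i.d.\ of law $\nu$, the $w_\ell$ are i.i.d.\ of law $\omega$, all independent. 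The proposition will follow from the single fact that \emph{$\zeta'_{\mathcal H}$ is a fixed point of $\Psi$}, together with two soft arguments (Kolmogorov extension for (i), and a weak–limit argument for unimodularity in (iii)).

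\emph{Step 1: $\Psi(\zeta'_{\mathcal H})=\zeta'_{\mathcal H}$.} Write $h_j(\pm\infty):=\lim_{t\to\pm\infty}h_j(t)$. Each offer equals $(k,w_\ell)-(i_\ell,Z_\ell)=(k-i_\ell,\,w_\ell-Z_\ell)$ and has first coordinate in $\{0,\dots,k\}$, so for $j\ge1$ the output lies in $\{i'=j,\,Z'\le t\}$ iff every child is \emph{compatible}, meaning its offer has first coordinate $<j$, or first coordinate $=j$ together with $w_\ell-Z_\ell\le t$, and at least one child has offer first coordinate exactly $j$. By independence of the children, using \eqref{eq:zeta'translation}, the relations $h_m(-\infty)=h_{m-1}(+\infty)$ and $h_0(-\infty)=0$ of System~\eqref{eq:systemzoomh0singular} (which give $\mathbb{P}(i>k-j)=1-h_{k-j}(+\infty)$), and the atomlessness of $\omega$ (so $h_{k-j}$ is continuous at $W-t$ for a.e.\ $W$), the per–child compatibility probability equals $1-\mathbb{E}[h_{k-j}(W-t)]$, whence
\[
\Psi(\zeta'_{\mathcal H})\big(\{i'=j,\,Z'\le t\}\big)=\hat\phi\big(1-\mathbb{E}[h_{k-j}(W-t)]\big)-\hat\phi\big(1-h_{k-j}(+\infty)\big)=h_j(t)-h_j(-\infty),
\]
which is $\zeta'_{\mathcal H}(\{i'=j,\,Z'\le t\})$ by \eqref{eq:zeta'translation}. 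For $j=0$ the output has first coordinate $0$ iff all $i_\ell=k$, in which case $Z'=\max(0,\max_\ell(w_\ell-Z_\ell))\ge0$; the same bookkeeping (using $h_k(+\infty)=1$) gives $\Psi(\zeta'_{\mathcal H})(\{i'=0,\,Z'\le t\})=\mathbbm{1}_{t\ge0}\hat\phi(1-\mathbb{E}[h_k(W-t)])=h_0(t)$. Hence $\Psi(\zeta'_{\mathcal H})=\zeta'_{\mathcal H}$.

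\emph{Step 2: proof of (i) and (ii).} Given Step~1 the joint law of $(\mathbb{T}',((i_{\mathcal H},Z_{\mathcal H})(u,v))_{(u,v)\in\overset{\rightarrow}{E}})$ is built by prescribing consistent finite–dimensional marginals exactly as in \cite[Proposition~3.2]{enriquez2024optimalunimodularmatching}: sample the UBGW tree with its i.i.d.\ weights, attach i.i.d.\ $\zeta'_{\mathcal H}$–messages on the outward boundary of a large ball, and propagate \eqref{eq:systemrecursionzoombis} inward; the fixed–point property of Step~1 is precisely the consistency needed for Kolmogorov's extension theorem. By construction the marginal of $\mathbb{T}'$ is that of $\mathbb{T}$, each message has law $\zeta'_{\mathcal H}$, and \eqref{eq:systemrecursionzoombis} holds almost surely, giving (i). For (ii), fix a vertex $u$ and set $A_v:=(k,w(u,v))-(i_{\mathcal H},Z_{\mathcal H})(u,v)$ for $v\sim u$. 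Applying \eqref{eq:systemrecursionzoombis} to the edge $(v,u)$ gives $(i_{\mathcal H},Z_{\mathcal H})(v,u)=\maxlex\big((0,0),\maxlex_{u'\sim u,\,u'\ne v}A_{u'}\big)$, so \eqref{eq:decisionrule} reads $\{u,v\}\in\mathbb{M}_{\mathcal H}$ iff $A_v$ is the \emph{strict} lexicographic maximum of $\{(0,0)\}\cup\{A_{u'}:u'\sim u\}$. A strict maximum is attained at most once, so $u$ meets at most one edge of $\mathbb{M}_{\mathcal H}$ and $\mathbb{M}_{\mathcal H}$ is a matching, and the displayed characterisation is exactly the vertex rule \eqref{eq:vertexrulebis}. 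Finally $A_v=A_{v'}$ would force $w(u,v)-w(u,v')=Z_{\mathcal H}(u,v)-Z_{\mathcal H}(u,v')$, and $A_v=(0,0)$ would force $w(u,v)=Z_{\mathcal H}(u,v)$; since $w(u,v),w(u,v')$ are i.i.d.\ of the atomless law $\omega$ and independent of the right–hand sides (measurable with respect to the disjoint subtrees $T_{(u,v)},T_{(u,v')}$), both events have probability $0$, so a.s.\ there are no ties and the $\argmaxlex$ in \eqref{eq:vertexrulebis} is well defined.

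\emph{Step 3: proof of (iii); the main obstacle.} For each $H$, let $(\mathbb{T}',(i_{\mathcal H},Z_{\mathcal H})^{(H)})$ be obtained by the same recipe but iterating the recursion only $H$ times from i.i.d.\ auxiliary $\zeta'_{\mathcal H}$–values at depth $H$ (and, say, $(0,0)$ further out); this decoration is an $H$–local function of the UBGW tree equipped with an i.i.d.\ decoration, hence a factor of a unimodular random graph, hence unimodular. By Step~1 the law of the restriction of this object to any fixed ball is independent of $H$ once $H$ is large enough, so $(\mathbb{T}',(i_{\mathcal H},Z_{\mathcal H})^{(H)})$ converges in the local weak topology to the object built in Step~2; as $\mathcal{L}_U(\mathcal{G}^{\star})$ is closed under weak convergence, the limit is unimodular. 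This last point is the genuinely delicate one: when $\hat\pi$ is supercritical the limiting messages are \emph{not} a measurable function of the tree, so unimodularity cannot be obtained by displaying the messages directly as a factor of i.i.d.; the weak–limit argument above (identical in substance to \cite[Proposition~3.2]{enriquez2024optimalunimodularmatching}) is what circumvents this. $\qed$
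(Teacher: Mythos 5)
Your Steps 1 and 2 are correct and essentially reproduce the paper's argument: Step 1 is the content of Corollary~\ref{coro:Zlawexistencebis} (the translation of System~\eqref{eq:systemzoomh0singular} into the statement that $\zeta'_{\mathcal H}$ is a fixed point of the recursive distributional equation), and the Kolmogorov-extension construction in (i) together with the reading of \eqref{eq:decisionrule} as ``$A_v$ is the strict lexicographic maximum of $\{(0,0)\}\cup\{A_{u'}\}_{u'\sim u}$'' in (ii) is exactly the paper's proof. One small imprecision: your justification of the no-ties claim via ``measurable with respect to the disjoint subtrees'' is not quite right, since in the infinite construction the messages need not be measurable functions of the tree (as you yourself observe later); what is true, and suffices, is that by construction $w(u,v)$ is independent of the incident messages, which are propagated from boundary seeds using only the other weights.

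The genuine gap is Step 3. The approximant you describe --- i.i.d.\ $\zeta'_{\mathcal H}$ marks at depth $H$ \emph{from the root}, propagated inward, $(0,0)$ beyond --- is not unimodular, and being an ``$H$-local function of the tree with an i.i.d.\ decoration'' does not make it so: a factor of i.i.d.\ requires the \emph{same} local rule applied equivariantly at every directed edge, whereas your rule depends on the distance to the root (the root edge carries an $H$-fold iterate, an edge at depth $H-1$ a one-step iterate, edges outside the ball the value $(0,0)$), so its law is not re-rooting invariant for any finite $H$. If instead you make the scheme equivariant --- compute the message at each directed edge from i.i.d.\ seeds placed at distance $H$ beyond that edge --- you do obtain a unimodular factor of i.i.d.\ for each $H$, but then the rest of the step collapses: neighbouring messages are now built from different boundary seeds, Recursion~\eqref{eq:systemrecursionzoombis} no longer holds between them at fixed horizon, the joint law in a fixed ball is no longer pinned down by Step 1 and need not be independent of $H$, and its convergence to the Kolmogorov object is precisely the locality/correlation-decay property that the paper establishes only under the subcriticality condition~\eqref{eq:subcriticalcond} (Theorem~\ref{th:decay}); it cannot be invoked in the general setting of this proposition, and your own remark that in the supercritical regime the messages are not a factor of i.i.d.\ shows this route cannot be made to work as stated. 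The paper avoids weak limits altogether and proves (iii) directly: stationarity is checked by showing that the law of the messages restricted to the $H$-neighbourhood of a uniformly chosen edge $(o_+,v)$ coincides with that around $(o_-,o_+)$, because applying Recursion~\eqref{eq:systemrecursionzoombis} at most twice to the i.i.d.\ $\zeta'_{\mathcal H}$ variables on the $(H+1)$-boundary of the original root regenerates i.i.d.\ $\zeta'_{\mathcal H}$ variables on the $H$-boundary of the shifted root, and reversibility is immediate from the symmetry of the construction. You should replace Step 3 by this direct verification (or else supply the currently missing proof that your approximants are unimodular and converge to the law built in Step 2).
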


We will first show that distributions $\zeta'_{\mathcal H}$ satisfying~\eqref{eq:zeta'translation} exist. Let $W$ be a variable of continuous law $\omega$, we seek to find  a positive integer $k$ such that a solution to the following system of increasing c\`adl\`ag functions exists:
\begin{subequations} \label{eq:system}
\begin{align}
    \forall 0 < j \leq k, h_j(t)&= \hat{\phi}(1-\mathbb{E}[h_{{k-j}}(W-t)])\label{eq:systemzoomh} \\
    h_0(t)&= \mathbbm{1}_{t \geq 0} \hat{\phi}(1-\mathbb{E}[h_{k}(W-t)])\label{eq:systemzoomhzero}\\
    \forall 0 \leq j \leq k, \lim_{t \rightarrow -\infty} h_j(t) &< \lim_{t \rightarrow + \infty}h_j(t) \label{eq:systemzoomnondegenerate} \\
    \forall 0 \leq j < k, \lim_{t \rightarrow + \infty}h_j(t) &= \lim_{t \rightarrow -\infty}h_{j+1}(t). \label{eq:systemzoomhnoatoms}
\end{align}
\end{subequations}
Conditions \eqref{eq:systemzoomh} and \eqref{eq:systemzoomhzero} translate the recursion that the renormalised variables must satisfy. Condition \eqref{eq:systemzoomnondegenerate} guarantees that $h_j$ is non-trivial and finally condition \eqref{eq:systemzoomhnoatoms} guarantees that the renormalisation procedure captures the entire mass of $Z^\eps$ as $\eps \rightarrow 0$.
The goal of the next lemma is thus to show that such a solution exists:
\begin{lemma}\label{prop:distributionzoom}
    Assume $t \mapsto \hat{\phi}(1-\hat{\phi}(1-t))$ has a finite amount $p \in \mathbb{N}$ of fixed points in $[0,1]$, then there exists $k \leq p $ such that a solution to the system \eqref{eq:system} exists.
\end{lemma}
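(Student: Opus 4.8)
The plan is to construct a solution to System~\eqref{eq:system} by taking a renormalised limit of the cumulative distribution functions $h^{\eps}$ defined in~\eqref{eq:hequation}. First I would recall that, for each $\eps>0$, Theorem~\ref{th:firstpaper} guarantees a unique stationary law for $\mathbf{Z}^{(\eps)}$, whose c\`adl\`ag CDF $h^{\eps}$ solves~\eqref{eq:hequation}. I would establish the a priori bounds $0\le h^{\eps}\le 1$, $h^{\eps}(t)=0$ for $t<0$, and $h^{\eps}(0^+)=\hat\phi(0)>0$ (using the leaves assumption), together with monotonicity in $t$; these give tightness and let me extract, via Helly's selection theorem along a subsequence $\eps_n\to 0$, a limit at every continuity point. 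The key structural input is the ``naive limit'' equation~\eqref{eq:equationnaive}, which forces every limiting continuity value to be a fixed point of $g:t\mapsto\hat\phi(1-\hat\phi(1-t))$; since $g$ has only finitely many fixed points $l_0<\dots<l_p$ by hypothesis, the limiting object is a step function with at most $p$ jumps, located at points I will call $0=x_0<x_1<\dots$. This already pins down $k\le p$.

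The second and main step is to zoom in around each jump location $x_j$ at scale $\eps$, i.e.\ to study $h^{j,\eps}(t):=h^{\eps}(x_j+\eps t)$, and show that along a further subsequence these converge (again by Helly, using the same monotonicity and boundedness) to functions $h_j$ satisfying the renormalised system. Passing to the limit in the rescaled version of~\eqref{eq:hequation} — exactly the computation sketched in~\eqref{eq:systemezoomavantlimiteER}–\eqref{eq:systemzoomER} but now carried out rigorously for a general (a priori unknown) number of jumps — yields the recursions~\eqref{eq:systemzoomh} and~\eqref{eq:systemzoomhzero}, with the index shift $j\mapsto k-j$ coming from the relation $l_k=\hat\phi(1-l_{p-k})$ between the fixed points. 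Dominated convergence is used to pass the limit through the expectation $\mathbb{E}[h^{\eps}(W^\eps-t)]$; here I would use that $W^\eps\to 1$ in probability and that $\omega$ has finite expectation, plus the uniform bound $h^\eps\le 1$, being slightly careful at the single discontinuity $t=0$ of $h_0$ (the indicator $\mathbbm{1}_{t\ge 0}$ survives because the rescaling window at $x_0=0$ only reaches $t\ge -1/\eps\to-\infty$ on one side — this is precisely why the boundary term persists only for $h_0$).

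The remaining conditions are comparatively soft. Condition~\eqref{eq:systemzoomnondegenerate} (non-degeneracy of each $h_j$) holds because a jump location $x_j$ was selected precisely where $h^\eps$ concentrated positive mass in the limit, so the rescaled limit genuinely increases; if a candidate $h_j$ turned out degenerate one simply discards that index, which only decreases $k$. Condition~\eqref{eq:systemzoomhnoatoms} (the top value of $h_j$ equals the bottom value of $h_{j+1}$) encodes that consecutive zoom windows together capture all the mass between $x_j$ and $x_{j+1}$: since the limiting step function has no mass strictly between its jump points, any mass must be accounted for by one of the microscopic windows, and matching the asymptotic plateaux gives the equality. I expect the main obstacle to be this bookkeeping in full generality: controlling that the zoom windows around distinct $x_j$'s do not overlap as $\eps\to0$, that no mass ``escapes'' between windows, and that the limiting endpoints $\lim_{t\to\pm\infty}h_j(t)$ are genuine fixed-point pairs $(x,y)$ with $x=\hat\phi(1-y)$, $y=\hat\phi(1-x)$ — i.e.\ that the glueing~\eqref{eq:systemzoomhnoatoms} is consistent all the way up the chain. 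A clean way to organise this is to first prove the single-atom case (only jumps at $0,\tfrac12,1$, as in the heuristic) and then note that the general argument is identical after relabelling, with the finiteness of the fixed-point set of $g$ bounding the number of atoms uniformly in $\eps$; the delicate estimate that no positive mass lies between consecutive rescaled windows can be extracted from the fact that on any compact interval of $(0,1)$ avoiding the $l_j$'s, iterating~\eqref{eq:hequation} twice contracts $h^\eps$ toward the fixed points, forcing $h^\eps$ there to be within $o(1)$ of a constant.
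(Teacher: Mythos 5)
Your overall strategy (Helly extraction for $h^{\eps}$, naive limit forcing values to be fixed points of $t\mapsto\hat\phi(1-\hat\phi(1-t))$, then an $\eps$-scale zoom whose limit satisfies the rescaled recursion) matches the spirit of the paper's proof, but there is a genuine gap in how you centre the zoom windows. You propose to zoom at the \emph{macroscopic jump locations} $x_j$ of the naive limit, i.e.\ to study $h^{\eps}(x_j+\eps t)$. Nothing in the hypotheses prevents the microscopic transition near $x_j$ from being centred at $x_j+c_\eps$ with $c_\eps\to0$ but $c_\eps/\eps\to\infty$ (or from being spread over a window of width intermediate between $\eps$ and $1$, say $\sqrt{\eps}$). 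In that case $h^{\eps}(x_j+\eps t)$ converges to a constant for every fixed $t$, the non-degeneracy condition \eqref{eq:systemzoomnondegenerate} fails, and your fallback of ``discarding the degenerate index'' is not harmless: the mass corresponding to the jump at $x_j$ is then captured by no window at all, so the plateau-matching condition \eqref{eq:systemzoomhnoatoms} breaks and you do not obtain a solution of \eqref{eq:system}. Your suggested repair via a contraction of the twice-iterated equation on compacts avoiding the fixed values is not available either: the lemma only assumes finitely many fixed points, not the subcriticality condition \eqref{eq:subcriticalcond}, so there is no uniform contraction rate to exploit, and in any case such an estimate would not re-locate a drifting transition centre.

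The paper avoids this problem by centring the zoom at \emph{quantiles} rather than at spatial locations: after constructing $h_0$ and $h_k$ (zooms at $0$ and $1$, which are legitimate fixed centres), it sets $\alpha_n=(h^{\eps_n})^{-1}\big(\tfrac{x_{l_0}+x_{l_0+1}}{2}\big)$, a level strictly between two consecutive fixed-point values, and studies $h^{\eps_n}(\alpha_n+\eps_n t)$ together with its partner $h^{\eps_n}(1-\alpha_n+\eps_n t)$ coupled through the recursion. This choice makes non-degeneracy automatic (the limit passes through a non-fixed-point level at $t=0$ while its limits at $\pm\infty$ must be fixed points) and yields \eqref{eq:systemzoomhnoatoms} directly, since the lower asymptotic value is squeezed between $h_0(+\infty)$ and the chosen midpoint; it also produces the pairing $j\leftrightarrow k-j$ without having to prove separately that the jump locations of the naive limit come in symmetric pairs $x_j$, $1-x_j$ with compatible microscopic recentrings. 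If you replace your fixed-location windows by these quantile-centred windows (iterated level by level, with the two termination cases when the lower and upper plateaux meet), the rest of your argument — Helly, passage to the limit in the rescaled equation by dominated convergence, and the identification of the asymptotic values as fixed points — goes through as you describe.
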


\begin{proof}
    We will prove the proposition when the weights have compact support, it is then extended to $L^1$ weights by approximation.    
    By Theorem~3 of \cite{enriquez2024optimalunimodularmatching}, we know that for any $\eps >0$ there exists a unique solution $h^{\eps}$ to Equation~\eqref{eq:hequation}:
    \begin{equation*}
    \forall t \in \mathbb{R}, h^{\eps}(t)=\mathbbm{1}_{t \geq 0} \hat{\phi}(1-\mathbb{E}[h^{\eps}(1+\eps W-t)]).
\end{equation*}
    We will construct $h_j(t)$ by considering convergent subsequences of $h^{\eps}(i_\eps+\eps t)$ for appropriately chosen sequences $i_\eps$.
    Consider $x_1<....<x_p$ the ordered list of fixed points of $t \mapsto \hat{\phi}(1-\hat{\phi}(1-t))$.
    We will recursively build $h_j$ for increasing $j$. Refer to Figure~\ref{fig:zoombis} for an illustration of this proof.

    \begin{figure}[h]
        \centering
        \includegraphics[width=0.5\linewidth]{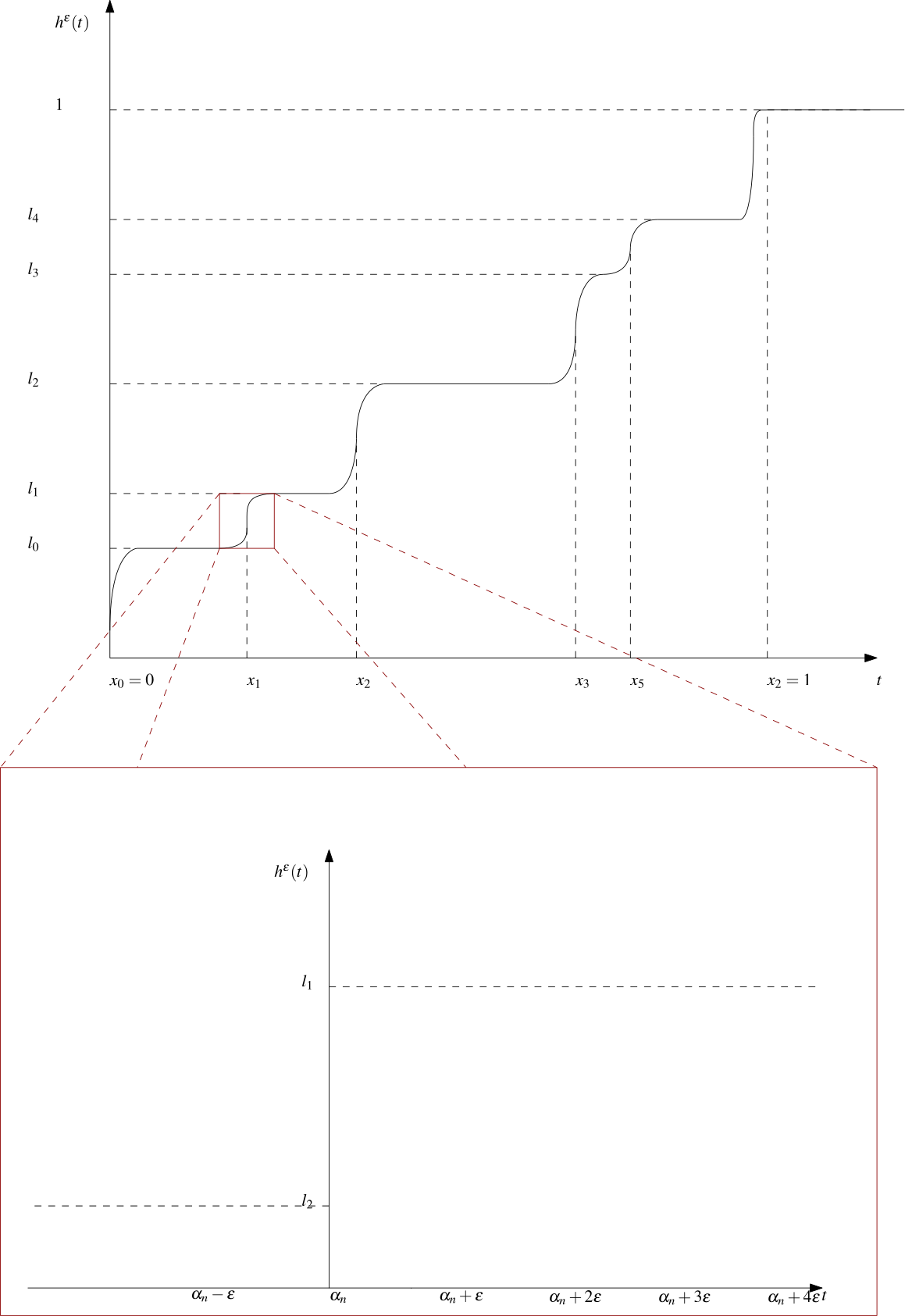}
        \caption{Illustration of the proof, we construct $h_1$ by renormalising as shown.}
        \label{fig:zoombis}
    \end{figure}
    
    First, let us consider a subsequence $\eps_n$ such that $h^{\eps_n}(0)$ converges to some $\beta$, since $\beta\geq \hat{\phi}(0)>0$, we must have that $\beta >0$.
    By dominated convergence theorem, $h^{\eps}$ is continuous on $\mathbb{R}_{+}^{*}$ for all $\eps>0$. 
    Let us consider the sequence of functions $\underline{g}^{(n)}: t\mapsto  h^{\eps_n}(\eps_n t)$ and $\overline{g}^{(n)}: t \mapsto h^{\eps_n}(1+\eps_n t)$.
    By Helly's selection Theorem there exists a subsequence of $\underline{g}^{(n)}$ that converges simply to some function that we will name $h_{0}$.
    Since $\underline{g}^{(n)}$ and $\overline{g}^{(n)}$ verify
    \begin{align*}
        \forall t \in \mathbb{R},  \overline{g}^{(n)}(t) &= \mathbbm{1}_{(1+\eps_n t) \geq 0} \hat{\phi} \left( 1- \mathbb{E}[h^{\eps_n}(1+\eps_nW-(1+\eps_n t)]\right). \\
        &= \mathbbm{1}_{(1+\eps_n t) \geq 0} \hat{\phi}\left( 1- \mathbb{E} [\underline{g}^{(n)}(W-t)] \right),
    \end{align*}
    we will set $h_{k}$ to be the corresponding limit to $\overline{g}^{(n)}$.
    Taking $n \rightarrow \infty$ in the equation above shows that $(h_0,h_k)$ verifies Equation~\eqref{eq:systemzoomhzero}.
    By construction, $h_{0}(0) \rightarrow \beta$ , taking $t \rightarrow \infty$ in Equation~\eqref{eq:systemzoomhzero}, we get that $h_{0}(+ \infty)=\hat{\phi}(1-h_{k}(-\infty))= \hat{\phi}(1-\hat{\phi}(1-h_{0}(+\infty))$.
    So $h_{0}(+\infty)$ is among the $x_l$. Since $\hat{\phi}(0)>0$, $0$ cannot be among the $x_l$ hence $\underset{t \rightarrow -\infty}{\lim}h_0(t)=0 < \underset{t \rightarrow \infty}{\lim}h_0(t)$ so Condition~\eqref{eq:systemzoomnondegenerate} is satisfied.

    Now let us build $h_{1}$ and $h_{{k-1}}$.
    Let $l_0$ such that $x_{l_0}= \underset{t \rightarrow \infty}{\lim}h_0(t)$, We then pick $\alpha_n=(h^{\eps_n})^{-1}(\frac{x_{l_0}+x_{l_0+1}}{2})$ which is well defined for $n$ large enough since $h^{\eps_n}(0) \leq  x_{l_0} < \frac{x_{l_0}+x_{l_0+1}}{2} < 1$.
    Let us consider the sequence of functions $\underline{g}^{(n)}: t\mapsto  h^{\eps_n}(\alpha_n + \eps_n t)$ and $\overline{g}^{(n)}: t \mapsto h^{\eps_n}(1-\alpha_n+\eps_n t)$.
    Once again, by Helly's Selection theorem,  we can find a subsequence of $\overline{g}^{(n)}$ that converges simply to some $h_{1}$. Since $(\underline{g}^{(n)},\overline{g}^{(n)})$ verifies:
    \begin{align*}
        \forall t \in \mathbb{R},  \underline{g}^{(n)}(t) &= \mathbbm{1}_{\alpha_n+\eps_n t \geq 0} \hat{\phi} \left( 1-\alpha_n- \mathbb{E}[h^{\eps_n}(1-\eps_n(t+W)]\right). \\
        &= \mathbbm{1}_{\alpha_n+\eps_n t \geq 0} \hat{\phi}\left( 1- \mathbb{E} [\overline{g}^{(n)}(-W-t)] \right),
    \end{align*}
    taking $n \rightarrow \infty$ shows that $\underline{g}^{(n)}$ also converges simply to some $h_{k-1}$ and the couple $(h_{1},h_{k-1})$ satisfies Equation~\eqref{eq:systemzoomh}.
    Now taking $t \rightarrow \pm \infty$ in Equation~\eqref{eq:systemzoomh} shows that $\underset{t \rightarrow \infty}{\lim}h_{1}(t)$ is once again among the $x_l$. Since this value is bigger than $h_{1}(0)=\frac{x_{l_0}+x_{l_0+1}}{2}$ by construction, it is thus strictly bigger than $g(0)$ and $\underset{t \rightarrow -\infty}{\lim}h_{1}(t)$. Condition~\eqref{eq:systemzoomnondegenerate} is satisfied.
    Finally, since we extract around different reals and that the original function is increasing, it is obvious that we have $h_1(-\infty) \geq h_{0}(+\infty)$. But since we extracted just above $x_{l_0}$ and that $h_1(-\infty)$ is among the fixed points, we must have that $h_1(-\infty)=x_{l_0}$ so condition~\ref{eq:systemzoomhnoatoms} is satisfied.
    
    Now for increasing $j$, as long as $x_{l_j}:=h_j(+\infty) < h_{k-j}(-\infty)$, set $\alpha_n=(h^{\eps_n})^{-1}\left(\frac{x_{l_j}+x_{l_{j+1}}}{2}\right)$ and repeat the first step to define $h_{j+1}$ and $h_{k-j+1}$.
    At the end, there are two cases:
    \begin{enumerate}
    \item If $h_j(+\infty) = h_{k-{j}}(-\infty)$, then it must be that $\alpha_n \rightarrow \frac{1}{2}$. Furthermore there must exist a constant $b$ such that $\alpha_n=\frac{1}{2}+c \eps + o(\eps)$ or $h_j$ and $h_{k-j}$ would be too far part for equality to hold. In this case, we can simply consider $\alpha_n=\frac{1}{2}$ instead. With this choice, we have $h_j=h_{k-j}$ by definition. We then decrease $k$ so that $k=2j$, and we are done.
    \item If $h_j(+\infty) > h_{k-j}(-\infty)$, then we can set $k=p-j$, replace $h_j$ and $h_{k-j}$ defined by their common value $h_{j}$.
    \end{enumerate}
\end{proof}

A direct consequence of Lemma~\ref{prop:distributionzoom} is the following:
\begin{coro}\label{coro:Zlawexistencebis}
    There is a distribution vector $\mathcal{H}$ such that the law $\zeta'_{\mathcal{H}}$ constructed by Equation~\eqref{eq:zeta'translation} on $\mathbb{R}^2$ satisfies the following recursive distributional equation:

    Let $(i_{\mathcal{H}}^{(m)},Z_{\mathcal{H}}^{(m)})_{m \geq 0}$ be i.i.d of law $\zeta'_{\mathcal{H}}$, $w_m$ an i.i.d sequence of law $\omega$ and $N$ of law $\hat{\pi}$ all mutually independent, Then the following equality in law holds:
    \begin{equation}\label{eq:Zlawbis}
    (i_{\mathcal{H}}^{(0)},Z_{\mathcal{H}}^{(0)}) \overset{\mathcal{L}}{=} \maxlex \left( (0,0), \maxlex_{1 \leq m \leq N}  \left( (k,w_m)-(i_{\mathcal{H}}^{(m)},Z_{\mathcal{H}} ^{(m)}) \right)  \right). 
    \end{equation}
\end{coro}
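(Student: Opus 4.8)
The plan is to take for $\mathcal{H}$ the solution $(k,(h_0,\dots,h_k))$ to System~\eqref{eq:system} produced by Lemma~\ref{prop:distributionzoom}, and then to verify directly that the associated law $\zeta'_{\mathcal{H}}$ solves the recursive distributional equation~\eqref{eq:Zlawbis}. I would do this by comparing the two sides of~\eqref{eq:Zlawbis} through their lexicographic cumulative distribution functions. Denote by $(I,\zeta)$ the right-hand side of~\eqref{eq:Zlawbis}. First, using the characterisation~\eqref{eq:zeta'translation}, condition~\eqref{eq:systemzoomhnoatoms}, and $h_0(-\infty)=0$ (which holds since $\hat\phi(0)>0$), the atoms $\mathbb{P}_{\zeta'_{\mathcal{H}}}(i_{\mathcal{H}}=j',\,Z_{\mathcal{H}}\leq t')=h_{j'}(t')-h_{j'}(-\infty)$ telescope, giving $\mathbb{P}_{\zeta'_{\mathcal{H}}}\big((i_{\mathcal{H}},Z_{\mathcal{H}})\overset{\lex}{\leq}(j,t)\big)=h_j(t)$. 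So the corollary reduces to proving
\[
\mathbb{P}\big((I,\zeta)\overset{\lex}{\leq}(j,t)\big)=h_j(t)\qquad\text{for all }j\in\{0,\dots,k\},\ t\in\mathbb{R}.
\]

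To compute the left-hand side I would condition on $N$ and on $(w_m)_{1\leq m\leq N}$ and use the mutual independence of all variables. For a single index, $(k,w_m)-(i_{\mathcal{H}}^{(m)},Z_{\mathcal{H}}^{(m)})=(k-i_{\mathcal{H}}^{(m)},\,w_m-Z_{\mathcal{H}}^{(m)})$ has first coordinate in $\{0,\dots,k\}$, so the event $\{(k-i_{\mathcal{H}}^{(m)},w_m-Z_{\mathcal{H}}^{(m)})\overset{\lex}{\leq}(j,t)\}$ is the disjoint union of $\{i_{\mathcal{H}}^{(m)}>k-j\}$ and $\{i_{\mathcal{H}}^{(m)}=k-j,\ Z_{\mathcal{H}}^{(m)}\geq w_m-t\}$; summing the corresponding masses of $\zeta'_{\mathcal{H}}$ and telescoping once more yields
\[
\mathbb{P}\big((k-i_{\mathcal{H}}^{(m)},w_m-Z_{\mathcal{H}}^{(m)})\overset{\lex}{\leq}(j,t)\,\big|\,w_m\big)=1-h_{k-j}(w_m-t)\quad\text{a.s.},
\]
where atomlessness of $\omega$ is used so that $w_m-t$ almost surely avoids the countable discontinuity set of $h_{k-j}$. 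Taking the product over $m\in\{1,\dots,N\}$, using $\mathbb{E}\big[\prod_{m=1}^N f(w_m)\big]=\hat\phi(\mathbb{E}[f(W)])$ for $N\sim\hat\pi$, and incorporating the extra entry $(0,0)$ — which forces $t\geq 0$ precisely when $j=0$ and imposes nothing when $j\geq 1$ — I get
\[
\mathbb{P}\big((I,\zeta)\overset{\lex}{\leq}(j,t)\big)=\mathbbm{1}_{(j,t)\overset{\lex}{\geq}(0,0)}\;\hat\phi\big(1-\mathbb{E}[h_{k-j}(W-t)]\big),
\]
which equals $h_j(t)$ by~\eqref{eq:systemzoomhzero} for $j=0$ and by~\eqref{eq:systemzoomh} for $j\geq 1$. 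This is the desired identity.

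This is essentially bookkeeping that mirrors the renormalised recursion~\eqref{eq:recursionZlist2}, and I do not expect a real obstacle: the existence of $\mathcal{H}$ is already granted by Lemma~\ref{prop:distributionzoom}. The only points requiring care are the two telescopings over the coordinate index — keeping track of which coordinate of $\mathbf{Z}_{\mathcal{H}}$ is the non-degenerate one — and the appeal to atomlessness of $\omega$ when passing from conditional probabilities to the clean expression $1-h_{k-j}(w_m-t)$. The degenerate case $N=0$, where the right-hand side of~\eqref{eq:Zlawbis} is $(0,0)$, is automatically consistent with the $j=0$, $t\geq 0$ boundary behaviour of $h_0$.
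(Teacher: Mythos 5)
Your proof is correct and is essentially the paper's argument: the paper states this corollary as a direct consequence of Lemma~\ref{prop:distributionzoom}, treating System~\eqref{eq:system} as the analytic encoding of the recursive distributional equation through the dictionary~\eqref{eq:zeta'translation}, and your computation (telescoping the lexicographic CDF, conditioning on $N$ and the weights, and using $\mathbb{E}\bigl[\prod_{m=1}^N f(w_m)\bigr]=\hat\phi(\mathbb{E}[f(W)])$ together with atomlessness of $\omega$) is exactly the bookkeeping the paper leaves implicit. One cosmetic remark: $h_0(-\infty)=0$ follows directly from the indicator $\mathbbm{1}_{t\geq 0}$ in~\eqref{eq:systemzoomhzero} rather than from $\hat\phi(0)>0$, but this does not affect your argument.
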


We are now ready to prove Proposition~\ref{prop:Zconstruction}:

\begin{proof}[Proof of Proposition~\ref{prop:Zconstruction}]

The proof is based on Aldous' objective method. It is the renormalised analogue of the proof of Proposition~3.2 of \cite{enriquez2024optimalunimodularmatching}.

\begin{figure}
    \centering
    \includegraphics[width=0.8\linewidth]{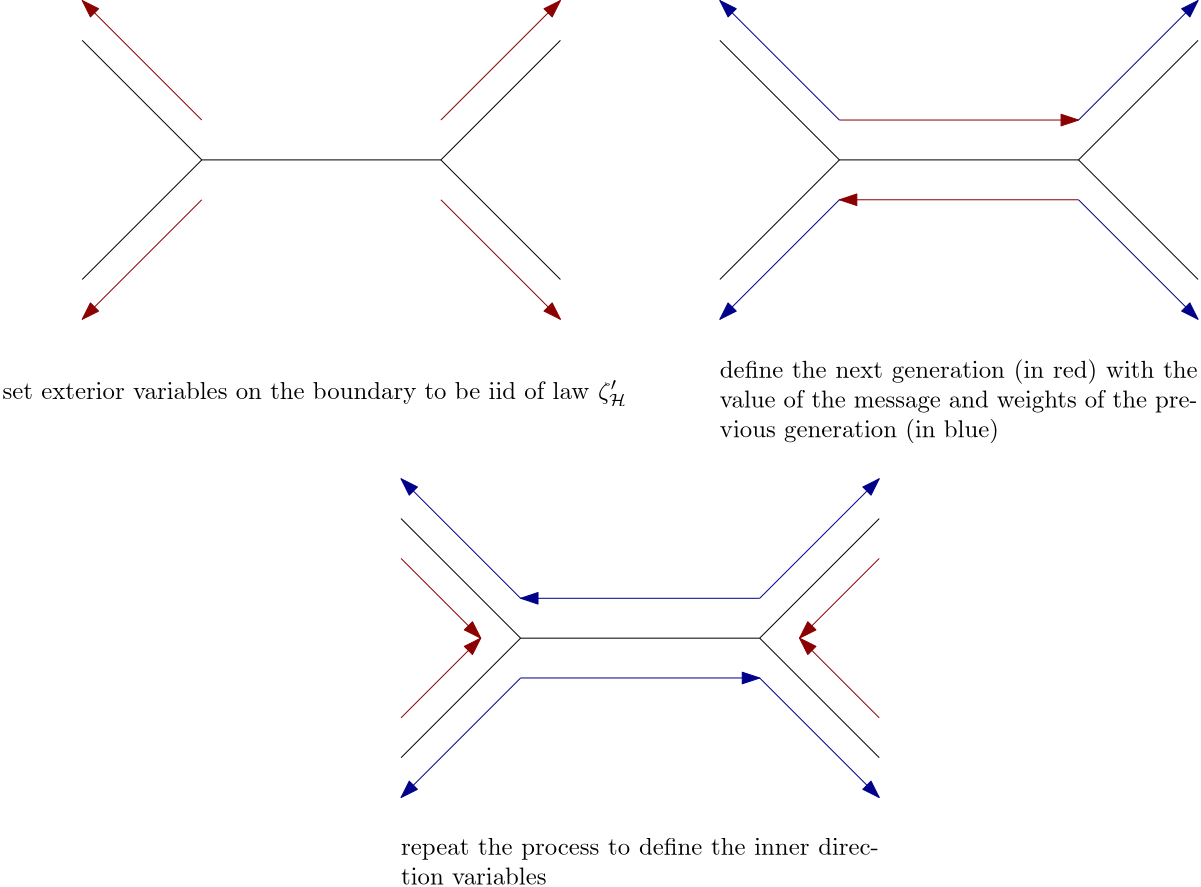}
    \caption{Illustration of the definition of the tree and messages.}
    \label{fig:kolmogorov}
\end{figure}
i)  For an illustration of this proof, we refer to Figure~\ref{fig:kolmogorov}. Let $H \in \mathbb{N}^{*}$, recall $N_H(\mathbb{T})$ the $H-$neighbourhood of the root edge of $\mathbb{T}$. 
We call the depth of a non-oriented edge $\{u,v\}$ its distance to the root edge. For directed edges, we will identify its depth with the depth of its non oriented version.
Let $E_p$ be the set of directed edges  $(u,v) \in \overset{\rightarrow}{E}$ of depth $p$ pointing away from the root. We set $(i_{\mathcal{H}},Z_{\mathcal{H}}(u,v))_{(u,v) \in E_H}$ to be independent variables with law $\zeta_{\mathcal{H}}'$ from the previous lemma. 
\bigskip

We can then use Recursion~\eqref{eq:systemrecursionzoombis}: \begin{equation*}
        (i_{\mathcal{H}},Z_{\mathcal{H}})(u,v)=\maxlex\left( (0,0),\maxlex_{\substack{u' \sim v \\ u' \neq u}} \left( (k,w(v,u'))-(i_{\mathcal{H}},Z_{\mathcal{H}})(v,u'))  \right) \right)
    \end{equation*}
    to define $((i_\mathcal{H},Z_{\mathcal{H}})(u,v))_{(u,v) \in E_{H-1}}$. 
By induction, we define every $(i_{\mathcal{H}},Z_{\mathcal{H}})(u,v)$ for $(u,v)$ pointing away from the root and then on the edge-root and its symmetric.
We can then define $(i_{\mathcal{H}},Z_{\mathcal{H}}(u,v))$ on $E_{-1}$ the set of directed edges of depth $1$ pointing towards the root, then by the same induction, we define them on all $E_{-p}$, the set of directed edges pointing towards the root of depth $p$, for $p$ running from $1$ to $H$. 

In this way, we have defined $(i_{\mathcal{H}},Z_{\mathcal{H}})(u,v)$ on $N_H(\mathbb{T})$, and we can see that because $\zeta_{\mathcal{H}}'$ is an invariant law for the RDE, the restriction of $(i_{\mathcal{H}} ,Z_{\mathcal{H}})(u,v)$ to $N_{H-1}(\mathbb{T})$ has the same law as if we defined it directly on $N_{H-1}(\mathbb{T})$. 
By Kolmogorov's extension theorem, we deduce that there exists a process $(\mathbb{T}', (i_{\mathcal{H}},Z_{\mathcal{H}}))$ such that the first marginal's law is $\mathbb{T}$ and $(i_{\mathcal{H}},Z_{\mathcal{H}})$ satisfies Recursion~\eqref{eq:systemrecursionzoombis} on $\mathbb{T}'$ with the prescribed law.

ii) For an illustration of this proof, refer to Figure~\ref{fig:vertexrule}.

\begin{figure}
    \centering
    \includegraphics[width=0.6\linewidth]{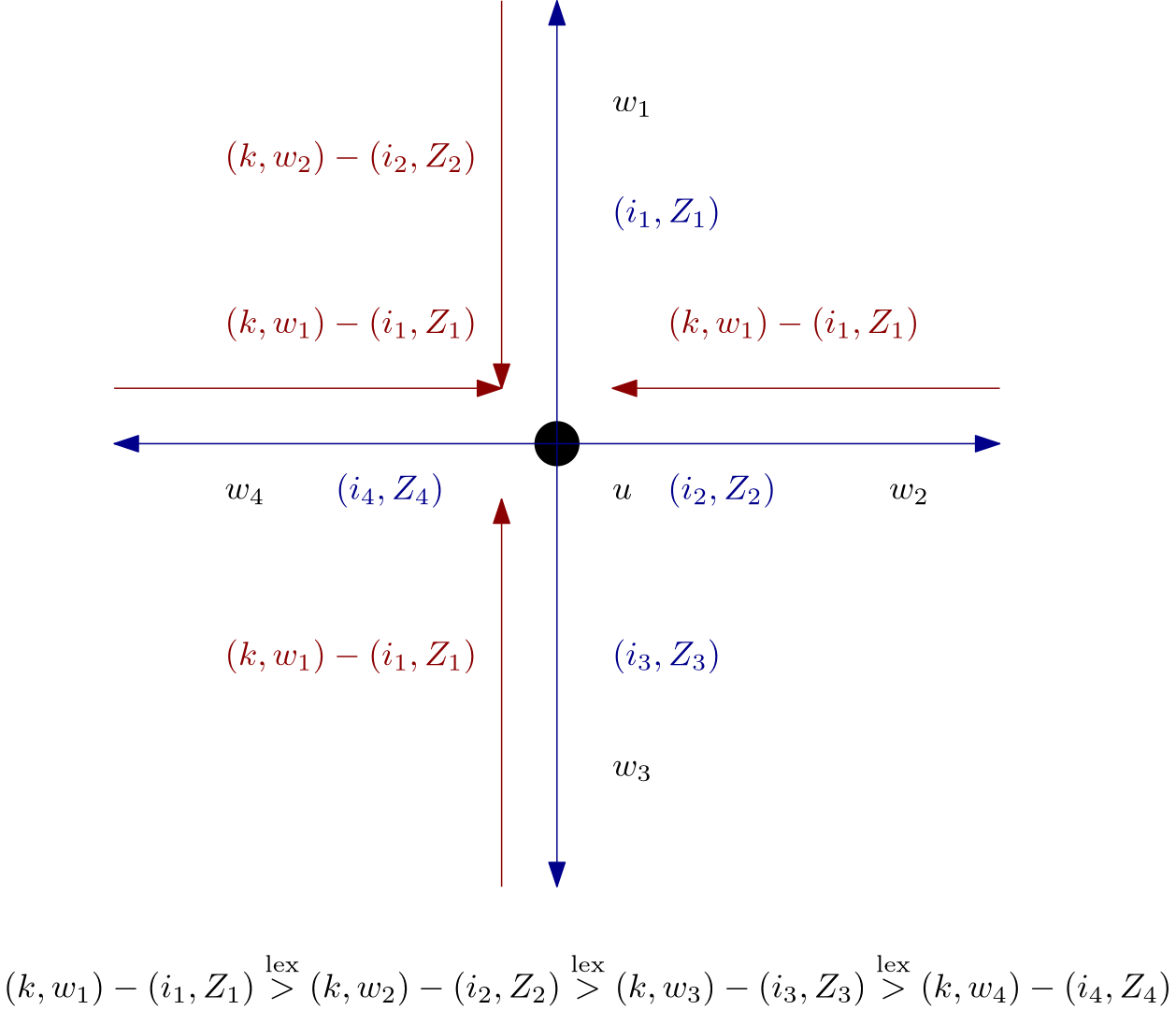}
    \caption{The ordering given at the bottom determines the values of the messages pointing towards $u$ as a function of the messages pointing away and the weights.}
    \label{fig:vertexrule}
\end{figure}

First let us prove the equivalent rule:
Take any other $v' \sim u,$ we have that
\begin{align*}
 (i_{\mathcal{H}},Z_{\mathcal{H}})(v',u)&= \maxlex \left( (0,0),\maxlex_{v'' \sim u, v'' \neq v'} (k,w(u,v''))-(i_{\mathcal{H}},Z_{\mathcal{H}})(u,v'') \right) \\
 &\overset{\lex}{\geq} (k,w(u,v'))-(i_{\mathcal{H}},Z_{\mathcal{H}})(u,v')
\end{align*} 
so no other $v' \neq u$ verifies the condition. By symmetry arguments by switching $u$ and $v$, the rule does define a matching. Furthermore, to prove the vertex rule, we have that
\begin{align*}
(i_{\mathcal{H}},Z_{\mathcal{H}})(v,u) \overset{\lex}{<} (k,w(u,v))-(i_{\mathcal{H}},Z_{\mathcal{H}})(u,v)
\end{align*}
and by applying the recursion on the left side:
\begin{align*}
    (i_{\mathcal{H}},Z_{\mathcal{H}})(v,u)&= \maxlex \left( (0,0),\maxlex_{v'' \sim u, v'' \neq v} (k,w(u,v''))-(i_{\mathcal{H}},Z_{\mathcal{H}})(u,v'') \right).
\end{align*}
Putting both together gives:
\begin{align*}
     (i_{\mathcal{H}},Z_{\mathcal{H}})(v,u)&= \maxlex \left( (0,0),\maxlex_{v'' \sim u, v'' \neq v} (k,w(u,v''))-(i_{\mathcal{H}},Z_{\mathcal{H}})(u,v'') \right)\\
     &\overset{\lex}{<}(k,w(u,v))-(i_{\mathcal{H}},Z_{\mathcal{H}})(u,v).
\end{align*}
which means exactly that \[ v=\argmaxlex_{v''\sim u} (k,w(u,v''))-(i_\mathcal{H},Z_{\mathcal{H}})(u,v'') \] and that
\[  \maxlex_{v''\sim u} (k,w(u,v''))-(i_\mathcal{H},Z_{\mathcal{H}})(u,v'') > (0,0). \]

Now if we assume that 
\[ v=\argmaxlex_{v''\sim u} (k,w(u,v''))-(i_\mathcal{H},Z_{\mathcal{H}})(u,v'') \] 
and
\[  \maxlex_{v''\sim u} (k,w(u,v''))-(i_\mathcal{H},Z_{\mathcal{H}})(u,v'') > (0,0), \]
 then we have that 
\[ (i_{\mathcal{H}},Z_{\mathcal{H}})(v,u) = \maxlex\left( (0,0),\maxlex_{v'' \sim u, v'' \neq v} (k,w(u,v''))-(i_{\mathcal{H}},Z_{\mathcal{H}})(u,v'') \right).\]
By definition, we excluded the maximum in this list so it is evaluated as the second largest among the
\[ (k,w(u,v''))-(i_{\mathcal{H}},Z_{\mathcal{H}})(u,v'')\]
over $v'' \sim u$ and $(0,0)$. Since the $w$ are non atomic, with probability one, there is no equality between the largest and second largest, hence:
\[ (i_{\mathcal{H}},Z_{\mathcal{H}})(v,u) < (k,w(u,v))-(i_{\mathcal{H}},Z_{\mathcal{H}})(u,v) \]
which is the required inequality.

iii) The goal is to show that the law of $(i_{\mathcal{H}},Z_{\mathcal{H}})$ restricted to a $H-$neighbourhood of $(o_+,v)$ with $v$ chosen uniformly among the children of $o_+$ is the same as the law of $(i_{\mathcal{H}},{Z}_{\mathcal{H}})$ restricted to the $H-$neighbourhood of $(o_-,o_+)$.
Since Recursion~\eqref{eq:systemrecursionzoombis} is preserved, one only needs to show that the exterior variables of  $(i_{\mathcal{H}},Z_{\mathcal{H}})$ on the $H-$boundary of $(o_+,v)$ are i.i.d variables of law $\zeta_{\mathcal{H}}'$.
The $H-$neighbourhood of $(o_+,v)$ is included in the $H+1-$neighbourhood of $(o_-,o_+)$ and can be computed from it. One can check by applying Recursion~\eqref{eq:systemrecursionzoombis} up to twice from the $H+1$-boundary of $(o_-,o_+)$, that we effectively recover variables $(i_{\mathcal{H}},Z_{\mathcal{H}})$ on the $H-$boundary of $(o_+,v)$ that are i.i.d of law $\zeta_{\mathcal{H}}'$.
$(\mathbb{T},(i_{\mathcal{H}},Z_{\mathcal{H}}))$ is thus stationary. 

Clearly, the law of the $(i_{\mathcal{H}},Z_{\mathcal{H}})$ are also symmetric so changing $(o_-,o_+)$ into $(o_+,o_-)$ does not change their laws, $(\mathbb{T},(i_{\mathcal{H}},Z_{\mathcal{H}}))$ is thus reversible. 
\end{proof}

\subsection{Adding Self-Loops}
In the remainder of this section, we will adopt the vertex-rooted point of view. Dealing with partial matchings rather than perfect matchings will be very cumbersome in future proofs. For example, when comparing two matchings, we would need to discuss several cases depending on whether a vertex is matched or not for the two matchings. A simple solution to deal with this is to add self-loops, in which case the graphs always have perfect matchings, and any partial matching on the graph without self-loops can be augmented into a perfect matching on the graph with self-loops. Reciprocally, a perfect matching on a graph with self-loops can be restricted to a partial matching on the corresponding graph without self-loops. 

We need to choose adequate weights for the self-loops so that a partial matching on the graph without self-loops is optimal among all partial matchings of the original graph if and only if the corresponding perfect matching on the augmented graph with self-loops is optimal among all perfect matchings of the augmented graph.

Starting with a system of variables $(i(u,v),Z(u,v))$ for $(u,v) \in \overset{\rightarrow}{E}$ that satisfies recursion \eqref{eq:systemrecursionzoombis}, we want to extend it to variables $(i(v,v),Z(v,v))$ for each self-loop so that we get a similar recursion as \eqref{eq:systemrecursionzoombis} and the decision rule \eqref{eq:decisionrule} constructs the perfect matching of the augmented graph associated to the matching defined by the original variables.

In Recursion~\eqref{eq:systemrecursionzoombis}, the variable $(k,w(u,v))$ informally represents $(1+\eps w(u,v))$ so that the first coordinate $k$ is equal to $k$ times the macroscopic component of the weight of $(u,v)$ whereas the second coordinate $w(u,v)$ is equal to the microscopic component of the weight of $(u,v)$. The definition of the weight of the self-loops will assign a non-constant macroscopic weight to self-loops, this motivates the introduction of a variable $j$ on directed edges that quantifies the macroscopic weight on edges.

Formally, we define the extension
\begin{equation} \label{eq:Tsh}
(\mathbb{T}^{s}_\mathcal{H},o,(j_{\mathcal{H}}^s,w_{\mathcal{H}}^s,(i_\mathcal{H}^s,Z_\mathcal{H}^{s}),(f_i^s)_{i \in \{0,...,I\}})=((V^{s},E^{s}),o,(j_{\mathcal{H}}^s,w^{s}_\mathcal{H}),(i_\mathcal{H}^s,Z_\mathcal{H}^{s}),(f_i^s)_{i \in \{0,....,I\}})
\end{equation}
deterministically on every outcome of $(\mathbb{T},(i_\mathcal{H},Z_\mathcal{H}))=((V,E),o,w,(i_\mathcal{H},Z_\mathcal{H}), (f_i)_{i \in \{0,...,I\}})$. 
We keep the vertex set $V^s=V$, we add self-loops to construct $E^s=E \cup \{ \{u,u\} , u \in V\}$. For $(u,v)$ such that $u \neq v$, simply keep the values:
\begin{equation}
    \begin{aligned}
        (i_\mathcal{H}^s,Z_\mathcal{H}^s)(u,v)&=(i_\mathcal{H},Z_\mathcal{H})(u,v) ,\\
        (j_{\mathcal{H}}^s(u,v),w_{\mathcal{H}}^s(u,v))&=(k,w(u,v)).
    \end{aligned}
\end{equation}

\begin{equation}\label{def:selfloop}
\begin{aligned}
 (i_{\mathcal{H}}^s(u,u),Z_{\mathcal{H}}^s(u,u)) &:= \maxlex_{\substack{ u' \sim u}}\left( (k,w(u,u'))-(i_{\mathcal{H}},Z_{\mathcal{H}})(u,u') \right) \\
 (j_{\mathcal{H}}^s(u,u),w_{\mathcal{H}}^s(u,u))&:= (i_{\mathcal{H}}^s(u,u),Z^s_{\mathcal{H}}(u,u)).
\end{aligned}
\end{equation}

The variables $(i_{\mathcal{H}}^s,Z_{\mathcal{H}}^s)$ verify Recursion~\ref{eq:systemrecursionzoombis} without the $(0,0)$, regardless of whether $u=v$ or $u \neq v$:

\begin{equation}\label{eq:recursionzoomselfloop}
    \begin{aligned}
        (i_{\mathcal{H}}^s,Z_{\mathcal{H}}^s)(u,v)= \maxlex_{\substack{ u' \sim v \\ u' \neq u}}\left( (j_{\mathcal{H}}^s,w_{\mathcal{H}}^s)(v,u')-(i_{\mathcal{H}}^s,Z_{\mathcal{H}}^s)(v,u') \right) 
    \end{aligned}
\end{equation}
Furthermore, the vertex-rule for the matching now becomes:
\begin{align}\label{eq:selflooprule}
    (u,u) \in \mathbb{M}_{\mathcal{H}}^s \Leftrightarrow  (j_{\mathcal{H}}^s,w_{\mathcal{H}}^s)(u,u) \overset{\lex}{<} (0,0).
\end{align}
\begin{align}\label{eq:vertexrule}
    (u,v) \in \mathbb{M}_{\mathcal{H}}^s \Leftrightarrow v=\underset{u' \sim v}{\argmaxlex}\left( (j_{\mathcal{H}}^s,w_{\mathcal{H}}^s)(u,u'))-(i_{\mathcal{H}}^s,Z_{\mathcal{H}}^s)(u,u') \right).
\end{align}

The same trick is used in our first paper \cite{enriquez2024optimalunimodularmatching} on maximum weight matchings. Since then, another paper treating maximum matchings on $\mathbb{Z}^d$ \cite{selfloopZd} by Kesav, Krishnan and Ray introduced the same type of quantity  as $(i_{\mathcal{H}}^s,Z_{\mathcal{H}}^s)(u,u)$, which they call \emph{flexibility}. This variable represents the gap in performance between the ability to use the vertex $u$ in the matching and forcing its absence in the matching.

\subsection{Optimality}
\begin{prop}\label{prop:optimality}
    The matching  $\mathbb{M}_{\mathcal{H}}$ defined in Proposition~\ref{prop:Zconstruction} is optimal on $\mathbb{T}$. Furthermore, for any coupling $(\mathbb{T},\mathbb{M}_{\mathcal{H}},\mathbb{M})$ with $\mathbb{M}$ an optimal matching on $\mathbb{T}$, $\mathbb{M}$ and $\mathbb{M}_{\mathcal{H}}$ share the same set of unmatched vertices.
\end{prop}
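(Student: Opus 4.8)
The plan is to transpose the optimality proof of Theorem~1 of \cite{enriquez2024optimalunimodularmatching} to the present bidimensional, lexicographically ordered setting, using the self-looped tree of \eqref{eq:Tsh}--\eqref{def:selfloop} to reduce to perfect matchings. A unimodular partial matching $\mathbb{M}$ of $\mathbb{T}$ extends canonically to a unimodular perfect matching $\mathbb{M}^s$ of $\mathbb{T}^s_{\mathcal{H}}$ by adding the self-loop at each $\mathbb{M}$-unmatched vertex, and the self-loop weights $(j^s_{\mathcal{H}},w^s_{\mathcal{H}})$ of \eqref{def:selfloop} (the self-loop at $u$ receiving the pair $\maxlex_{u'\sim u}\big((k,w(u,u'))-(i_{\mathcal{H}},Z_{\mathcal{H}})(u,u')\big)$) are chosen, as explained before \eqref{eq:Tsh}, precisely so that a partial matching of $\mathbb{T}$ is optimal for $\perfV$ if and only if its perfect extension to $\mathbb{T}^s_{\mathcal{H}}$ is optimal for $(j^s_{\mathcal{H}},w^s_{\mathcal{H}})$. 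One also records that $(k,w)$ is atomless for the lexicographic order, i.e.\ $\mathbb{P}\big((i,Z)(u,v)+(i,Z)(v,u)=(k,w(u,v))\big)=0$, because ties in the first (discrete) coordinate are resolved by the atomless law $\omega$ in the second; this is the only use of the atomlessness hypothesis, exactly as in the scalar case.

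To prove optimality I would argue as in \cite{enriquez2024optimalunimodularmatching}: the messages $(i^s_{\mathcal{H}},Z^s_{\mathcal{H}})$, which satisfy the loopless recursion \eqref{eq:recursionzoomselfloop} almost surely on $\mathbb{T}^s_{\mathcal{H}}$ by Proposition~\ref{prop:Zconstruction}, play the role of vertex potentials that are dual-feasible on every (self-)edge and complementarily slack with $\mathbb{M}^s_{\mathcal{H}}$ (tight exactly on the edges selected by rule \eqref{eq:decisionrule}). Given any competing unimodular matching $\mathbb{M}$, couple it with $\mathbb{M}_{\mathcal{H}}$ and consider the symmetric difference $\mathbb{M}^s\triangle\mathbb{M}^s_{\mathcal{H}}$, which on a tree (with self-loops serving as caps) is a disjoint union of finite alternating paths and bi-infinite alternating paths. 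Telescoping the dual-feasibility inequalities along each finite path, and writing the total over the infinite components as a mass-transport identity so that their net contribution vanishes at the root, one obtains $\perfV(\mathbb{M})\overset{\lex}{\le}\perfV(\mathbb{M}_{\mathcal{H}})$; hence $\mathbb{M}_{\mathcal{H}}$ is optimal.

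For the second statement, suppose $\mathbb{M}$ is itself optimal and coupled with $\mathbb{M}_{\mathcal{H}}$, so that $\perfV(\mathbb{M})=\perfV(\mathbb{M}_{\mathcal{H}})$ and every inequality in the chain above is an equality for $\mathbb{M}$. Equality at the step where the sum of vertex potentials is restricted to matched vertices forces every vertex with strictly positive potential --- equivalently, by the vertex rule \eqref{eq:vertexrulebis}, every vertex matched by $\mathbb{M}_{\mathcal{H}}$ --- to be matched by $\mathbb{M}$, so $\mathbb{P}(o\text{ matched by }\mathbb{M}_{\mathcal{H}},\text{ not by }\mathbb{M})=0$. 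Comparing the first coordinates of $\perfV(\mathbb{M})=\perfV(\mathbb{M}_{\mathcal{H}})$ gives $\mathbb{P}(o\text{ matched by }\mathbb{M}_{\mathcal{H}})=\mathbb{P}(o\text{ matched by }\mathbb{M})$, whence also $\mathbb{P}(o\text{ matched by }\mathbb{M},\text{ not by }\mathbb{M}_{\mathcal{H}})=0$. Since ``$o$ unmatched by $\mathbb{M}_{\mathcal{H}}$ but matched by $\mathbb{M}$'' and its mirror image are $0$-local events of probability $0$ at the root, the root zero-one law for unimodular graphs (Lemma~2.3 of \cite{aldous2018processes}) yields that almost surely $\mathbb{M}$ and $\mathbb{M}_{\mathcal{H}}$ match exactly the same set of vertices.

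The main obstacle is the treatment of the infinite alternating paths: on a finite tree the whole proposition is just correctness of the dynamic program of Section~\ref{subsec:premierpapier}, but on the infinite unimodular tree the telescoping sums along bi-infinite paths and the ``sum over all vertices of the potentials'' need not be absolutely convergent, and everything must be routed through the mass-transport principle, both to make the inequalities rigorous and to guarantee that the infinite components contribute nothing. A secondary bookkeeping burden is that every message, weight, and potential is now a pair compared lexicographically, with the size component dominating the weight component; the auxiliary variables $j^s_{\mathcal{H}}$ and the self-loop weights of \eqref{def:selfloop} are tailored to carry this two-level structure so that the scalar argument of \cite{enriquez2024optimalunimodularmatching} transfers essentially verbatim.
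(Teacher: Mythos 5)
Your overall architecture (self-loops to reduce to perfect matchings, a comparison of the two matchings around the root, mass transport, then an equality analysis plus the unimodular zero--one law for the unmatched sets) is the same as the paper's, but the step you describe as ``telescoping the dual-feasibility inequalities along each alternating path, and writing the total over the infinite components as a mass-transport identity so that their net contribution vanishes'' is exactly where the real difficulty sits, and your proposal only names it without supplying a mechanism. The paper does \emph{not} telescope along the symmetric difference at all: it performs a single local comparison around the root (the vertices $v_{-2}=n(n_{\mathcal H}(o))$, $v_{-1}=n_{\mathcal H}(o)$, $v_0=o$, $v_1=n(o)$), producing a pointwise lexicographic inequality of the form $X+X'\overset{\lex}{\geq}Y-Y'$ in which $X$ is the (integrable) difference of root contributions, $X'$ is a self-loop correction term, and $Y,Y'$ are single messages $(i^s_{\mathcal H},Z^s_{\mathcal H})(v_{-2},v_{-1})$ and $(i^s_{\mathcal H},Z^s_{\mathcal H})(v_0,v_1)$; two applications of the mass-transport principle show $Y\overset{\mathcal L}{=}Y'$. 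The crucial point, flagged explicitly in the paper, is that $Z_{\mathcal H}$ need not be integrable, so one cannot ``cancel'' $Y$ against $Y'$ by taking expectations, and no mass-transport identity by itself makes the contribution of identically distributed but possibly non-integrable boundary terms vanish. The paper closes this with Proposition~\ref{prop:russe} (built on Lemma~\ref{prop:russeuni}, proved via characteristic-function and Laplace-transform arguments): from $X+X'\overset{\lex}{\geq}Y-Y'$, $X'\overset{\lex}{\leq}(0,0)$ and $Y\overset{\mathcal L}{=}Y'$ one gets $\mathbb E[X]\overset{\lex}{\geq}(0,0)$, with equality forcing $X'=(0,0)$ a.s., \emph{without} assuming $Y$ integrable. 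Your plan, as written, would stall precisely at the point where the boundary terms of your telescoped sums (messages at the far ends of alternating paths, or along bi-infinite components) have to be discarded; mass transport applies to nonnegative functions and equates expectations of sums, but it does not justify this cancellation when the relevant variables are not integrable.

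The second half of your argument inherits the same gap: you deduce ``every vertex matched by $\mathbb M_{\mathcal H}$ is matched by $\mathbb M$'' from equality in a complementary-slackness chain that has not been established on the infinite tree. In the paper this comes out of the equality case of Proposition~\ref{prop:russe}: equality of performances forces $X'=(0,0)$ almost surely, and $X'$ is constructed from the self-loop weights exactly so that $X'=(0,0)$ rules out the event that the root is matched by one matching and not the other; unimodularity (the root zero--one law, as you correctly invoke at the end) then propagates this to all vertices. So your final step is fine, but it rests on an equality analysis whose rigorous substitute in the non-integrable setting is the missing Proposition~\ref{prop:russe}-type lemma; supplying that lemma (or an equivalent truncation/transform argument) is the essential new content needed beyond the scalar proof of the earlier paper.
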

Fix $\mathbb{M}^{s}$ any unimodular perfect matching on $\mathbb{T}^s$. Sample $(\mathbb{T}^s,\mathbb{M}^s,\mathbb{M}_{\mathcal{H}}^s)$ as in Proposition \ref{prop:Zconstruction} iii).
 Define the neighbour functions $n_{\mathcal{H}}$ such that for any $v \in V$, $(v,n_{\mathcal{H}}(v)) \in \mathbb{M}^s_{\mathcal{H}}$ and similarly $n$ such that $(v,n(v)) \in \mathbb{M}^s$, we will prove the following lemma:
\begin{restatable}{lemma}{lemperfS}\label{lem:perfS}
There exists random variables $X',Y$ and $Y'$ in $\mathbb{R}^2$ satisfying:
     \begin{align*}
          \Big( \left( \mathbbm{1}_{o \neq n_{\mathcal{H}}(o)} - \mathbbm{1}_{o \neq n(o)}  \right) \mathbbm{1}_{n(o) \neq n_{\mathcal{H}}(o)}  &,\ \left( w(o,n_{\mathcal{H}}(o))\mathbbm{1}_{o \neq n_{\mathcal{H}}(o)} - w(o,n(o))\mathbbm{1}_{o \neq n(o)} \right)\mathbbm{1}_{n(o) \neq n_{\mathcal{H}}(o)}   \Big) + X' \\
          &\overset{\lex}{\geq} Y-Y'.
     \end{align*}
     such that:
     \begin{itemize}
         \item Almost surely, $X' \overset{\lex}{\leq }(0,0)$ and $X'=(0,0)$ if and only if $n_{\mathcal{H}}(o)=n(o)=o$. 
         \item The variables $Y$ and $Y'$ have the same law.
     \end{itemize}
\end{restatable}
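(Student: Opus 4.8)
The plan is to compare the two perfect matchings $\mathbb{M}^s$ and $\mathbb{M}_{\mathcal{H}}^s$ on $\mathbb{T}^s$ through the classical symmetric-difference argument, adapted to the bidimensional lexicographic setting. The symmetric difference $\mathbb{M}^s \triangle \mathbb{M}_{\mathcal{H}}^s$ decomposes into a disjoint union of finite paths and bi-infinite paths, plus isolated vertices where the two matchings agree. Along such a path, the edges alternate between $\mathbb{M}^s$ and $\mathbb{M}_{\mathcal{H}}^s$. The key identity I want to exploit is the vertex rule \eqref{eq:vertexrule}: for the matching $\mathbb{M}_{\mathcal{H}}^s$, the edge $\{u,v\}$ incident to $u$ is in $\mathbb{M}_{\mathcal{H}}^s$ iff $v$ realises $\maxlex_{u'\sim u}\bigl((j^s_{\mathcal{H}},w^s_{\mathcal{H}})(u,u')-(i^s_{\mathcal{H}},Z^s_{\mathcal{H}})(u,u')\bigr)$. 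So for any other neighbour $v'$ of $u$, in particular for $v'=n(u)$ (the $\mathbb{M}^s$-partner of $u$), we have the inequality
\[
(j^s_{\mathcal{H}},w^s_{\mathcal{H}})(u,n(u))-(i^s_{\mathcal{H}},Z^s_{\mathcal{H}})(u,n(u)) \overset{\lex}{\leq} (j^s_{\mathcal{H}},w^s_{\mathcal{H}})(u,n_{\mathcal{H}}(u))-(i^s_{\mathcal{H}},Z^s_{\mathcal{H}})(u,n_{\mathcal{H}}(u)).
\]
Summed appropriately along a path component, the $(i^s_{\mathcal{H}},Z^s_{\mathcal{H}})$ terms telescope (each directed-edge message appears once with $+$ and once with $-$ as one walks the alternating path), leaving only a comparison between the total $(j^s_{\mathcal{H}},w^s_{\mathcal{H}})$-weight of the $\mathbb{M}_{\mathcal{H}}^s$-edges and that of the $\mathbb{M}^s$-edges on that component. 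This is exactly the mechanism that shows $\mathbb{M}_{\mathcal{H}}^s$ is a local maximiser, and in the unimodular setting the ``sum along the path'' is replaced by a mass-transport / unimodularity argument carried out at the root.

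Concretely, I would set up a mass-transport function on $\mathcal{G}^{\star\star}$ that, for each vertex $o$, sends the discrepancy term
\[
\Delta(o):=\Bigl(\bigl(\mathbbm{1}_{o\neq n_{\mathcal{H}}(o)}-\mathbbm{1}_{o\neq n(o)}\bigr)\mathbbm{1}_{n(o)\neq n_{\mathcal{H}}(o)},\ \bigl(w(o,n_{\mathcal{H}}(o))\mathbbm{1}_{o\neq n_{\mathcal{H}}(o)}-w(o,n(o))\mathbbm{1}_{o\neq n(o)}\bigr)\mathbbm{1}_{n(o)\neq n_{\mathcal{H}}(o)}\Bigr)
\]
along the path component of $o$ in $\mathbb{M}^s\triangle\mathbb{M}^s_{\mathcal{H}}$, and uses the vertex-rule inequality above to bound it. The correction term $X'$ should be built from the ``lost at infinity'' or boundary contributions: on a bi-infinite alternating path there is no boundary and the telescoping is exact, giving $X'=(0,0)$ only in the trivial case where $o$ is a fixed point of both neighbour maps (i.e.\ $o$ is self-looped in both matchings, corresponding to an unmatched vertex of $\mathbb{T}$ in both); on finite alternating paths one endpoint produces a genuinely negative lexicographic slack, which is precisely the content of $X'\overset{\lex}{\leq}(0,0)$ with equality iff $n_{\mathcal{H}}(o)=n(o)=o$. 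The variables $Y,Y'$ should be taken as the ``incoming'' and ``outgoing'' message contributions at $o$, namely something like $Y=(j^s_{\mathcal{H}},w^s_{\mathcal{H}})(n(o),o)-(i^s_{\mathcal{H}},Z^s_{\mathcal{H}})(n(o),o)$ evaluated along one orientation and $Y'$ the analogous quantity along the reverted orientation; their equality in law follows from the reversibility of $(\mathbb{T},(i^s_{\mathcal{H}},Z^s_{\mathcal{H}}))$ established in Proposition \ref{prop:Zconstruction} iii), together with stationarity (so that re-rooting at $n(o)$ does not change the law).

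The main obstacle I anticipate is bookkeeping the lexicographic order through the telescoping sum: addition does \emph{not} interact cleanly with $\overset{\lex}{\geq}$ (it is not a total order compatible with $+$ in the way $\mathbb{R}$ is), so one cannot naively add the vertex-rule inequalities. The right move is to handle the macroscopic first coordinate and the microscopic second coordinate in two stages — first argue that the size discrepancy (first coordinate) is controlled, using that $\mathbb{M}^s_{\mathcal{H}}$ corresponds to a maximum-size matching, and then, conditionally on the first coordinates being equal, run the weight comparison (second coordinate) exactly as in the maximum-weight case of \cite{enriquez2024optimalunimodularmatching}. Packaging both stages into the single vector inequality of the statement, with the auxiliary $X'$ absorbing the case distinctions (whether $o$ is matched, whether the path component is finite or bi-infinite, whether an endpoint falls on a self-loop), is where the care is needed; the unimodularity/reversibility input itself is straightforward once the transport function is correctly identified.
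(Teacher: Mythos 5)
Your plan is not the route the paper takes, and as written it has genuine gaps. The paper never decomposes $\mathbb{M}^s\triangle\mathbb{M}^s_{\mathcal{H}}$ into alternating paths and never telescopes: it makes a single local comparison at the root. On the event $n(o)\neq n_{\mathcal{H}}(o)$, setting $v_0=o$, $v_1=n(o)$, $v_{-1}=n_{\mathcal{H}}(o)$, $v_{-2}=n(n_{\mathcal{H}}(o))$, the recursion \eqref{eq:recursionzoomselfloop} gives the exact identity $(i^s_{\mathcal{H}},Z^s_{\mathcal{H}})(v_{-2},v_{-1})=(j^s_{\mathcal{H}},w^s_{\mathcal{H}})(v_{-1},v_0)-(i^s_{\mathcal{H}},Z^s_{\mathcal{H}})(v_{-1},v_0)$ (because $v_0$ is the lex-argmax at $v_{-1}$ and $v_{-2}\neq v_0$), together with the one-step inequality $(i^s_{\mathcal{H}},Z^s_{\mathcal{H}})(v_{-1},v_0)\overset{\lex}{\geq}(j^s_{\mathcal{H}},w^s_{\mathcal{H}})(v_0,v_1)-(i^s_{\mathcal{H}},Z^s_{\mathcal{H}})(v_0,v_1)$; adding these and peeling off the self-loop weights produces exactly the stated inequality, with $X'=\pm(j^s_{\mathcal{H}},w^s_{\mathcal{H}})(o,o)$ on the events where exactly one of $n(o),n_{\mathcal{H}}(o)$ equals $o$ (its sign and a.s.\ strictness come from the self-loop rule \eqref{eq:selflooprule} and atomlessness of $\omega$), and $Y=(i^s_{\mathcal{H}},Z^s_{\mathcal{H}})(v_{-2},v_{-1})\mathbbm{1}_{v_{-1}\neq v_1}$, $Y'=(i^s_{\mathcal{H}},Z^s_{\mathcal{H}})(v_0,v_1)\mathbbm{1}_{v_{-1}\neq v_1}$. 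Their equality in law is not a direct consequence of stationarity/reversibility of the message field, since the re-rooting is along the random matchings: the paper needs two separate mass-transport applications (re-rooting along $n$, then along $n_{\mathcal{H}}$, both restricted to $\{n(o)\neq n_{\mathcal{H}}(o)\}$).

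The concrete problems with your route are the following. First, telescoping along components of the symmetric difference is precisely what the message formalism is built to avoid: components can be bi-infinite alternating paths, the partial sums need not converge, and the messages need not be integrable (the paper remarks explicitly that expectations of $X',Y,Y'$ may be undefined), so ``the telescoping is exact'' on an infinite component has no meaning and cannot be repaired by taking expectations per component. Second, the lemma requires an almost-sure, root-local inequality with an $X'$ that vanishes exactly when the root has the same matched/unmatched status under both matchings; your $X'$, defined from endpoint slack of finite path components, is neither root-local nor obviously tied to that characterisation (the root can sit in the interior of a component while both $n(o)\neq o$ and $n_{\mathcal{H}}(o)\neq o$). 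Third, your two-stage coordinatewise argument is both unnecessary and circular: the lexicographic order on $\mathbb{R}^2$ is translation-invariant, so the bidimensional vertex-rule inequalities add directly (the paper does exactly this), and invoking ``$\mathbb{M}^s_{\mathcal{H}}$ is a maximum-size matching'' to control the first coordinate presupposes the optimality that this lemma, fed into Proposition~\ref{prop:russe}, is being used to establish.
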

\begin{remark}
    In the previous work \cite{enriquez2024optimalunimodularmatching}, this step was done through the expectation of the quantities above. Here there is no guarantee that $Z_{\mathcal{H}}$ is integrable so expectations of $X',Y$ and $Y'$ may not be defined.
\end{remark}

\begin{proof}[Proof of Lemma~\ref{lem:perfS}]
Let us work on the event $n_{\mathcal{H}}(o)\neq n(o)$ from now on.

Set $v_0=o$, $v_{1}=n(o)$, $v_{-1}=n_{\mathcal{H}}(o)$, $v_{-2}=n(v_{-1})=n(n_{\mathcal{H}}(o))$, see Figure~\ref{fig:optimalitydef} for an illustration. 
\begin{figure}
    \centering
    \includegraphics[width=0.5\linewidth]{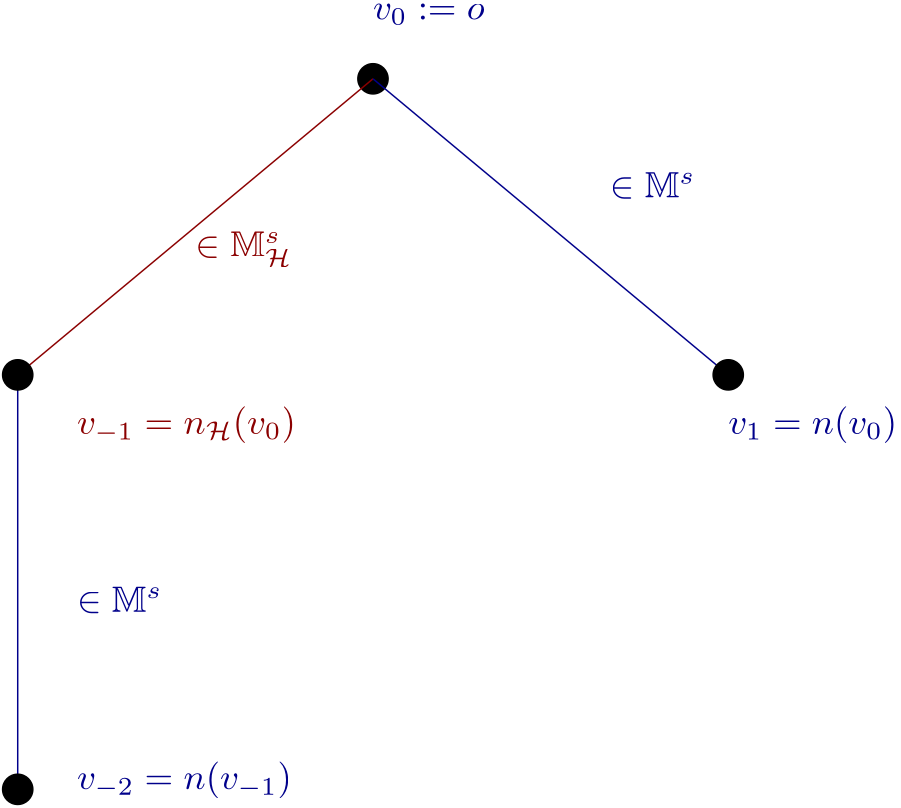}
    \caption{Definition of the vertices $v_0,v_1,v_{-1},v_{-2}$.}
    \label{fig:optimalitydef}
\end{figure}

By definition of $(i^s_{\mathcal{H}},Z^s_{\mathcal{H}})(v_{-2},v_{-1})$:
\begin{align*}
    (i^s_{\mathcal{H}},Z^s_{\mathcal{H}})(v_{-2},v_{-1})=\underset{{\substack{y \sim v_{-1}\\ y \neq v_{-2}}}}{\maxlex}\left((j_{\mathcal{H}}^s,w_{\mathcal{H}}^s)(v_{-1},y)-(i^s_{\mathcal{H}},Z^s_{\mathcal{H}})(v_{-1},y)\right).
\end{align*}
Now since $(v_{-1},v_0)\in \mathbb{M}^s_{\mathcal{H}}$, we know from Proposition~\ref{prop:Zconstruction} ii) that:
\begin{align*}
v_0=\overset{\lex}{\underset{y \sim v_{-1}}{\argmax}}\left((j_{\mathcal{H}}^s,w_{\mathcal{H}}^s)(v_{-1},y)-(i^s_{\mathcal{H}},Z^s_{\mathcal{H}})(v_{-1},y)\right) 
\end{align*}

By hypothesis, $v_{-1} \neq v_{1}$ hence by applying $n$, $n(v_{-1})=v_{-2} \neq v_0=n(v_1)$, we deduce that 
\begin{align*}
    (i^s_{\mathcal{H}},Z^s_{\mathcal{H}})(v_{-2},v_{-1})=(j_{\mathcal{H}}^s,w_{\mathcal{H}}^s)(v_{-1},v_{0})-(i^s_{\mathcal{H}},Z^s_{\mathcal{H}})(v_{-1},v_0).
\end{align*}
Now let us look at $(i^s_{\mathcal{H}},Z^s_{\mathcal{H}})(v_{-1},v_0)$:
\begin{equation}
\begin{aligned}\label{eq:aldous2}
    (i^s_{\mathcal{H}},Z^s_{\mathcal{H}})(v_{-1},v_0) &= \underset{{ y \sim v_0}}{\maxlex}\left( (j_{\mathcal{H}}^s,w_{\mathcal{H}}^s)(v_0,y)-(i^s_{\mathcal{H}},Z^s_{\mathcal{H}})(v_0,y) \right). \\
    &\geq (j_{\mathcal{H}}^s,w_{\mathcal{H}}^s)(v_0,v_1)-(i^s_{\mathcal{H}},Z^s_{\mathcal{H}})(v_0,v_1).
\end{aligned}
\end{equation}
Putting everything together:
\begin{equation}\label{eq:aldous4}
\begin{aligned}
    &\left((j_{\mathcal{H}}^s,w_{\mathcal{H}})(v_{-1},v_0)-(j_{\mathcal{H}}^s,w_{\mathcal{H}}^s)(v_{1},v_0) \right)\mathbbm{1}_{v_{-1} \neq v_1} \\
    =&\left(((i^s_{\mathcal{H}},Z^s_{\mathcal{H}})(v_{-2},v_{-1})+(i^s_{\mathcal{H}},Z^s_{\mathcal{H}})(v_{-1},v_0)-(j_{\mathcal{H}}^s,w_{\mathcal{H}}^s)(v_{1},v_0)\right)\mathbbm{1}_{v_{-1} \neq v_1} \\
    =&\bigg((i^s_{\mathcal{H}},Z^s_{\mathcal{H}})(v_{-2},v_{-1})+(i^s_{\mathcal{H}},Z^s_{\mathcal{H}})(v_0,v_1) \\
    +&\bigg[(i^s_{\mathcal{H}},Z^s_{\mathcal{H}})(v_{-1},v_0)-(i^s_{\mathcal{H}},Z^s_{\mathcal{H}})(v_0,v_1)-(j_{\mathcal{H}}^s,w_{\mathcal{H}}^s)(v_{1},v_0)\bigg]\bigg)\mathbbm{1}_{v_{-1} \neq v_1} \\
    \overset{\lex}{\geq}&\left((i^s_{\mathcal{H}},Z^s_{\mathcal{H}})(v_{-2},v_{-1}) -(i^s_{\mathcal{H}},Z^s_{\mathcal{H}})(v_{0},v_{1})\right)\mathbbm{1}_{v_{-1} \neq v_1} .
\end{aligned}
\end{equation}
Now decompose the weights depending on whether $v_{-1}=v_0$ or $v_{1}=v_0$ and use the fact that $j_{\mathcal{H}}^s(u,v)=k$  if $u \neq v$:
\begin{align*}
    &\left((j_{\mathcal{H}}^s,w_{\mathcal{H}})(v_{-1},v_0)-(j_{\mathcal{H}}^s,w_{\mathcal{H}}^s)(v_{1},v_0) \right)\mathbbm{1}_{v_{-1} \neq v_1} \\
    =&(k,w(v_{-1},v_{0}))\mathbbm{1}_{v_{-1}\neq v_{0}}-(k,w(v_1,v_{0}))\mathbbm{1}_{v_1 \neq v_{0}} \\
    +&(j_{\mathcal{H}}^s,w_{\mathcal{H}}^s)(o,o)\mathbbm{1}_{v_{-1}=v_{0}}-(j_{\mathcal{H}}^s,w_{\mathcal{H}}^s)(o,o)\mathbbm{1}_{v_{1}=v_{0}}.
\end{align*}
Rearranging the different terms, and recalling that we are working on the event $v_1 \neq v_{-1}$, we obtain:
\begin{align*}
    &\left((\mathbbm{1}_{v_{-1}\neq v_0}-\mathbbm{1}_{v_1\neq v_0}),\left(w^s(v_{-1},v_0)\mathbbm{1}_{v_{-1}\neq v_0}-w^s(v_{1},v_0)\mathbbm{1}_{v_1 \neq v_0}\right) \right)\mathbbm{1}_{v_{-1}\neq v_1} \\
    &+ \left((j_{\mathcal{H}}^s, w_{\mathcal{H}}^s)(o,o)  \mathbbm{1}_{v_0=v_{-1}, v_0 \neq v_1}-( j_{\mathcal{H}}^s, w_{\mathcal{H}}^s)(o,o)  \mathbbm{1}_{v_0=v_{1}, v_0 \neq v_{-1}} \right) \\
    &\overset{\lex}{\geq} \left( (i^s_{\mathcal{H}},Z^s_{\mathcal{H}})(v_{-2},v_{-1}) -(i^s_{\mathcal{H}},Z^s_{\mathcal{H}})(v_{0},v_{1})\right)\mathbbm{1}_{v_{-1} \neq v_1}.
\end{align*}
So we set
\begin{align*}
    X'&:=\left[ (j_{\mathcal{H}}^s, w_{\mathcal{H}}^s)(o,o)  \mathbbm{1}_{o=n_{\mathcal{H}}(o),o\neq n(o)}- (j_{\mathcal{H}}^s, w_{\mathcal{H}}^s)(o,o)  \mathbbm{1}_{o=n_{\mathcal{H}}(o),o\neq n(o)}\right], \\
    Y&:=(i^s_{\mathcal{H}},Z^s_{\mathcal{H}})(v_{-2},v_{-1})\mathbbm{1}_{v_{-1} \neq v_1} , \\
    Y'&:=(i^s_{\mathcal{H}},Z^s_{\mathcal{H}})(v_{0},v_{1})\mathbbm{1}_{v_{-1} \neq v_1} .
\end{align*}

First let us show that the condition on $X'$ is satisfied.
Recall that $n_{\mathcal{H}}(o)=o$ if and only if $(j^s(o,o),w^s(o,o)) \overset{\lex}{<} (0,0)$.
\begin{enumerate}
\item If $n_{\mathcal{H}}(o)=o$ and $n(o) \neq o$ then $X'=(j_{\mathcal{H}}^s(o,o),w_{\mathcal{H}}^s)(o,o) \overset{\lex}{<} (0,0)$ as $n_{\mathcal{H}}(o)=o$ 
\item If $n(o)=o$ and $n_{\mathcal{H}}(o) \neq o$ then $X'=-(j_{\mathcal{H}}^s,w_{\mathcal{H}}^s)(o,o) \overset{\lex}{\leq }(0,0)$ as $n_{\mathcal{H}}(o) \neq o$
\end{enumerate}
In either case, $X' \overset{\lex}{\leq }(0,0)$. Since $\omega$ is continuous, we have that $\mathbb{P}((j_{\mathcal{H}}^s,w_{\mathcal{H}}^s)(o,o)=(0,0))=0$ so the inequality in the second case can be replaced by a strict one. Then, almost surely, $X'<(0,0)$ if we are in either of the previous case, so if we don't have $n_{\mathcal{H}}(o)=n(o)=o$.
If $n_{\mathcal{H}}(o)=n(o)=o$, then $X'$ trivially evaluates to $0$, which concludes.

Now we will use unimodularity to show that $Y \overset{\mathcal{L}}{=}Y'$.
Let $B$ be a measurable subset of $\mathbb{R}$. We will show that:
\begin{align*}
    &\mathbb{P}\left( (i^s_{\mathcal{H}},Z^s_{\mathcal{H}})(v_{0},v_{1}) \in B, v_{-1} \neq v_1 \right)=\mathbb{P}\left( (i^s_{\mathcal{H}},Z^s_{\mathcal{H}})(v_{-2},v_{-1}) \in B, v_{-1} \neq v_1 \right).
\end{align*}
Let us recall the definition of $v_k$ so the desired equality rewrites as:
\begin{align}\label{eq:lemmeoptimal}
\mathbb{P}\left((i^s_{\mathcal{H}},Z^s_{\mathcal{H}})(o,n(o)) \in B, n(o)\neq n_{\mathcal{H}}(o) \right) = \mathbb{P}\left(  (i^s_{\mathcal{H}},Z^s_{\mathcal{H}})(n(n_{\mathcal{H}}(o)),n_{\mathcal{H}}(o)) \in B, n(o) \neq n_{\mathcal{H}}(o) \right).
\end{align}
Define the $(\mathbb{T}^s,\mathbb{M}^s,(i^s_{\mathcal{H}},Z^s_{\mathcal{H}}))$ measurable function $f$ on the space of doubly rooted  decorated trees:
\[f(T,M,(i^s_{\mathcal{H}},Z^s_{\mathcal{H}}),a,b)=\mathbbm{1}_{ (i^s_{\mathcal{H}},Z^s_{\mathcal{H}})(a,b)\in B, n(a)\neq n_{\mathcal{H}}(a), b=n(a)   } .  \]
Applying mass-transport principle to $f$ we get:
\[ \mathbb{E} \left[  \sum_{v\in V} f(\mathbb{T}^s,\mathbb{M}^s,(i^s_{\mathcal{H}},Z^s_{\mathcal{H}}),o,v) \right] = \mathbb{E} \left[  \sum_{v\in V} f(\mathbb{T}^s,\mathbb{M}^s,(i^s_{\mathcal{H}},Z^s_{\mathcal{H}}),v,o) \right]  \]
Computing the first expectation yields:
\begin{align*}
\mathbb{E}\left[ \sum_{v\in V}\mathbbm{1}_{(i^s_{\mathcal{H}},Z^s_{\mathcal{H}})(o,v) \in B, n(o) \neq n_{\mathcal{H}}(o), v=n(o)}  \right] 
&= \mathbb{E}\left[ \mathbbm{1}_{(i^s_{\mathcal{H}},Z^s_{\mathcal{H}})(o,n(o)) \in B, n(o) \neq n_{\mathcal{H}}(o)} \right] \\
&= \mathbb{P}\left( (i^s_{\mathcal{H}},Z^s_{\mathcal{H}})(o,n(o)) \in B, n(o) \neq n_{\mathcal{H}}(o)  \right) .
 \end{align*}
 Computing the second expectation yields:
\begin{align*}
\mathbb{E}\left[ \sum_{v\in V}\mathbbm{1}_{((i^s_{\mathcal{H}},Z^s_{\mathcal{H}})(v,o) \in B, n(v) \neq n_{\mathcal{H}}(v), o=n(v)}  \right]  
&= \mathbb{E}\left[ \mathbbm{1}_{(i^s_{\mathcal{H}},Z^s_{\mathcal{H}})(n(o),o) \in B,  n(n(o)) \neq n_{\mathcal{H}}(n(o))} \right] \\
 &= \mathbb{P}\left( (i^s_{\mathcal{H}},Z^s_{\mathcal{H}})(n(o),o) \in B , n(o) \neq n_{\mathcal{H}}(o)  \right) 
 \end{align*}
where we used that $o=n(v)$ is equivalent to $v=n(o)$ and the fact that $n(n(o))\neq n_{\mathcal{H}}(n(o))$ is equivalent to $n(o) \neq n_{\mathcal{H}}(o)$.

We now define another measurable function $f'$ on the same space:
\[f'(T,M,(i^s_{\mathcal{H}},Z^s_{\mathcal{H}}),a,b)= \mathbbm{1}_{(i^s_{\mathcal{H}},Z^s_{\mathcal{H}})(n(a),a)\in B, n(a) \neq n_{\mathcal{H}}(a), b=n_{\mathcal{H}}(a)} .\]
Once again, applying the mass-transport principle to $f'$ we get:
\[ \mathbb{E} \left[  \sum_{v\in V} f'(\mathbb{T}^s,\mathbb{M}^s,(i^s_{\mathcal{H}},Z^s_{\mathcal{H}}),o,v) \right] = \mathbb{E} \left[  \sum_{v\in V} f'(\mathbb{T}^s,\mathbb{M}^s,(i^s_{\mathcal{H}},Z^s_{\mathcal{H}}),v,o) \right]  .\]
Computing the first expectation yields:
\begin{align*}
\mathbb{E}\left[ \sum_{v\in V}\mathbbm{1}_{(i^s_{\mathcal{H}},Z^s_{\mathcal{H}})(n(o),o) \in B, n(o) \neq n_{\mathcal{H}}(o), v=n_{\mathcal{H}}(o)}  \right] &= \mathbb{E}\left[ \mathbbm{1}_{(i^s_{\mathcal{H}},Z^s_{\mathcal{H}})(n(o),o)\in B, n(o) \neq n_{\mathcal{H}}(o)} \right] \\
 &= \mathbb{P}\left( (i^s_{\mathcal{H}},Z^s_{\mathcal{H}})(n(o),o) \in B , n(o) \neq n_{\mathcal{H}}(o)  \right) .
 \end{align*}
 Computing the second expectation yields:
 \begin{align*}
\mathbb{E}\left[ \sum_{v\in V}\mathbbm{1}_{(i^s_{\mathcal{H}},Z^s_{\mathcal{H}})(n(v),v) \in B, n(v) \neq n_{\mathcal{H}}(v), o=n_{\mathcal{H}}(v)}  \right] &= \mathbb{E}\left[ \mathbbm{1}_{(i^s_{\mathcal{H}},Z^s_{\mathcal{H}})(n(n_{\mathcal{H}}(o)),n_{\mathcal{H}}(o))\in B, n(n_{\mathcal{H}}(o)) \neq n_{\mathcal{H}}(n_{\mathcal{H}}(o))} \right] \\
 &= \mathbb{P}\left( (i^s_{\mathcal{H}},Z^s_{\mathcal{H}})(n(n_{\mathcal{H}}(o)),n_{\mathcal{H}}(o)) \in B , n(o) \neq n_{\mathcal{H}}(o)  \right) .
 \end{align*}
where we used that $o=n_{\mathcal{H}}(v)$ is equivalent to $v=n_{\mathcal{H}}(o)$ and the fact that $n(n_{\mathcal{H}}(o)) \neq (n_{\mathcal{H}}(n_{\mathcal{H}}(o)))$ is equivalent to $n(o) \neq n_{\mathcal{H}}(o)$. 

In conclusion, we showed
\[ \mathbb{P}\left(  (i^s_{\mathcal{H}},Z^s_{\mathcal{H}})(o,n(o))\in B, n(o)\neq n_{\mathcal{H}}(o) \right)=\mathbb{P}\left( (i^s_{\mathcal{H}},Z^s_{\mathcal{H}})(n(o),o) \in B , n(o) \neq n_{\mathcal{H}}(o)  \right)\]
\[=\mathbb{P}\left(  (i^s_{\mathcal{H}},Z^s_{\mathcal{H}})(n(n_{\mathcal{H}}(o)),n_{\mathcal{H}}(o))\in B, n(o) \neq n_{\mathcal{H}}(o) \right),\]
yielding \eqref{eq:lemmeoptimal} and finishing the Lemma.
\end{proof}

To conclude we just need the following proposition whose proof we postpone to Section~\ref{sec:Appendix}:
\begin{restatable}{prop}{proprusse}\label{prop:russe}
    Let $X,X',Y,Y'$ be random variables in $\mathbb{R}^2$, such that each marginal of $X$ has finite expectation, such that almost surely:
    \begin{align*}
        &X+X' \overset{\lex}{\geq} Y-Y', \\
        &X' \overset{\lex}{\leq } (0,0),
    \end{align*}
    and $Y$ and $Y'$ share the same law.
    Then:
    \begin{align*}
    \mathbb{E}[X] \overset{\lex}{\geq} (0,0)
    \end{align*}
    with equality implying $X'=(0,0)$ almost surely.
\end{restatable}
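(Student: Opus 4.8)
The plan is to strip the statement down to a one-dimensional lemma that is insensitive to the (possible) non-integrability of $Y,Y'$, and then run that lemma coordinate by coordinate along the lexicographic order.

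\emph{Reduction.} Since the lexicographic order on $\mathbb{R}^2$ is a total order invariant under translation, adding $X$ to $X' \overset{\lex}{\le}(0,0)$ gives $X+X' \overset{\lex}{\le} X$, and combining with $X+X' \overset{\lex}{\ge} Y-Y'$ yields $X \overset{\lex}{\ge} Y-Y'$ almost surely. Unpacking the definition: $X_1 \ge Y_1-Y_1'$ a.s., and on the event $\{X_1=Y_1-Y_1'\}$ one has $X_2 \ge Y_2-Y_2'$ a.s. So it suffices to prove the following one-dimensional fact: if $\xi$ is an integrable real random variable and $A,B$ are real random variables with the same law (not assumed integrable) such that $\xi \ge A-B$ a.s., then $\mathbb{E}[\xi]\ge 0$, and moreover $\mathbb{E}[\xi]=0$ forces $\xi=A-B$ a.s.

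\emph{Proof of the one-dimensional fact.} Introduce the clamping maps $\psi_n(t)=\max(\min(t,n),-n)$, which are nondecreasing and $1$-Lipschitz. From $A \le \xi+B$ we get $\psi_n(\xi+B)-\psi_n(A)\ge 0$ a.s.; since $\psi_n$ is bounded and $A,B$ have the same law, $\mathbb{E}[\psi_n(A)]=\mathbb{E}[\psi_n(B)]$, so $0\le \mathbb{E}[\psi_n(\xi+B)-\psi_n(A)]=\mathbb{E}[\psi_n(\xi+B)-\psi_n(B)]$. The key point is that $\psi_n(\xi+B)-\psi_n(B)\to \xi$ a.s.\ while $|\psi_n(\xi+B)-\psi_n(B)|\le|\xi|$ by $1$-Lipschitzness, so dominated convergence gives $\mathbb{E}[\psi_n(\xi+B)-\psi_n(B)]\to\mathbb{E}[\xi]$, hence $\mathbb{E}[\xi]\ge 0$. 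For the rigidity statement, set $D:=\xi-(A-B)\ge 0$; applying Fatou to $\psi_n(\xi+B)-\psi_n(A)\to D$ gives $\mathbb{E}[D]\le\liminf_n\mathbb{E}[\psi_n(\xi+B)-\psi_n(A)]=\lim_n\mathbb{E}[\psi_n(\xi+B)-\psi_n(B)]=\mathbb{E}[\xi]$, so $\mathbb{E}[\xi]=0$ forces $D=0$ a.s.

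\emph{Assembling and the main obstacle.} Applying the fact with $(\xi,A,B)=(X_1,Y_1,Y_1')$ gives $\mathbb{E}[X_1]\ge 0$. If $\mathbb{E}[X_1]>0$, then $\mathbb{E}[X]\overset{\lex}{>}(0,0)$ and we are done. If $\mathbb{E}[X_1]=0$, the rigidity part gives $X_1=Y_1-Y_1'$ a.s., hence $X_2\ge Y_2-Y_2'$ a.s.\ by the reduction; a second application of the fact with $(X_2,Y_2,Y_2')$ gives $\mathbb{E}[X_2]\ge 0$, i.e.\ $\mathbb{E}[X]=(0,\mathbb{E}[X_2])\overset{\lex}{\ge}(0,0)$. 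If moreover $\mathbb{E}[X_2]=0$, rigidity again yields $X_2=Y_2-Y_2'$ a.s., so $X=Y-Y'$ a.s.; then $X+X'\overset{\lex}{\ge}Y-Y'=X$ together with $X+X'\overset{\lex}{\le}X$ and totality of $\overset{\lex}{\le}$ force $X+X'=X$, i.e.\ $X'=(0,0)$ a.s. The delicate point — flagged in the remark preceding the statement — is precisely that $Y,Y'$ need not be integrable, so one cannot take expectations directly in $\xi\ge A-B$; the $1$-Lipschitz truncations $\psi_n$ circumvent this by keeping all expectations finite, by turning $A\overset{\mathcal L}{=}B$ into $\mathbb{E}[\psi_n(A)]=\mathbb{E}[\psi_n(B)]$, and — being $1$-Lipschitz — by producing increments dominated by the integrable $|\xi|$, so that dominated convergence identifies the limit as $\mathbb{E}[\xi]$ while Fatou controls the possibly non-integrable defect $D$.
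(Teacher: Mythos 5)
Your proof is correct, and it reaches the conclusion by a genuinely different route for the key one-dimensional step. The overall architecture coincides with the paper's: reduce to a one-dimensional statement about an integrable variable dominating a difference of two identically distributed (possibly non-integrable) variables, then bootstrap along the lexicographic order coordinate by coordinate, treating the case $\mathbb{E}[X_1]>0$ separately and using rigidity when $\mathbb{E}[X_1]=0$ to pass to the second coordinate. Where you diverge is in the proof of the one-dimensional core. The paper first proves the standalone lemma $\mathbb{E}[(Y-Y')_+]=\mathbb{E}[(Y-Y')_-]$ for $Y\overset{\mathcal{L}}{=}Y'$ via a three-step transform argument (characteristic functions in the integrable case, a Laplace-transform contradiction in the nonnegative case, then a positive/negative-part decomposition), deduces from $X\geq Y-Y'$ that $Y-Y'$ is integrable with zero mean, and only then takes expectations. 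Your clamping maps $\psi_n$, combined with $\mathbb{E}[\psi_n(A)]=\mathbb{E}[\psi_n(B)]$, the $1$-Lipschitz domination $|\psi_n(\xi+B)-\psi_n(B)|\leq|\xi|$ and dominated convergence, give $\mathbb{E}[\xi]\geq 0$ directly, and Fatou applied to the nonnegative defect $D=\xi-(A-B)$ gives the rigidity without ever needing to establish integrability of $A-B$ beforehand; this is shorter and more elementary than the paper's transform computations. A second, smaller difference: you discard $X'$ at the outset via $X+X'\overset{\lex}{\leq}X$ and recover the conclusion $X'=(0,0)$ at the very end from $X=Y-Y'$ together with the sandwich $X\overset{\lex}{\leq}X+X'\overset{\lex}{\leq}X$ and antisymmetry, whereas the paper carries $X'$ through its one-dimensional proposition and extracts $X_1'=0$, $X_2'=0$ coordinate by coordinate. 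Both mechanisms are sound; the paper's version isolates a reusable lemma on identically distributed pairs, while yours minimizes the measure-theoretic overhead.
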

We can now turn to the proof of Proposition~\ref{prop:optimality}.
\begin{proof}[Proof of Proposition~\ref{prop:optimality}]
    Once again for $(\mathbb{T},\mathbb{M})$ a matching on $\mathbb{T}$, we can resample $\mathbb{M}_{\mathcal{H}}$ on top of $(\mathbb{T},\mathbb{M})$. We reuse the notations $n$ and $n_{\mathcal{H}}$ for the neighbour functions.
    Combining Lemma~\ref{lem:perfS}
    and Proposition~\ref{prop:russe}, we get
    \begin{align*}
    \mathbb{E}[  \left( \left( \mathbbm{1}_{o \neq n_{\mathcal{H}}(o)} - \mathbbm{1}_{o \neq n(o)}  \right) \mathbbm{1}_{n(o) \neq n_{\mathcal{H}}(o)}  ,\ \left( w(o,n_{\mathcal{H}}(o))+w(o,n(o))\right)\mathbbm{1}_{n(o) \neq n_{\mathcal{H}}(o)}   \right)] \overset{\lex}{\geq} 0 ,
    \end{align*}
    with equality if and only if $X'=(0,0)$ almost surely. By Lemma~\ref{lem:perfS}, this is equivalent to $n_{\mathcal{H}}(o)=n(o)=o$.

    But the expectation evaluates to exactly
    \begin{align*}
        \perf_V(\mathbb{M}_{\mathcal{H}})-\perf_V(\mathbb{M}).
    \end{align*}
    We have thus shown that $\mathbb{M}_{\mathcal{H}}$ is optimal, and that if $\mathbb{M}$ is also optimal, then one must have $X'=(0,0)$ and thus $n_{\mathcal{H}}(o)=n(o)=o$ which, by unimodularity, precisely means that both matchings share the same set of unmatched vertices.
\end{proof}

\subsection{Uniqueness}
In this section, we show that $(\mathbb{T},\mathbb{M}_{\mathcal{H}})$ is the unique (in law) optimal unimodular matching on $\mathbb{T}$.
\begin{prop}\label{prop:uniqueness}
    Let $\mathbb{M}$ be an optimal matching on $\mathbb{T}$ and let $(\mathbb{T},\mathbb{M},\mathbb{M}_{\mathcal{H}})$ be a coupling where $\mathbb{M}_{\mathcal{H}}$ is as in Proposition~\ref{prop:Zconstruction}.
    Then $\mathbb{M}=\mathbb{M}_{\mathcal{H}}$ almost surely.
\end{prop}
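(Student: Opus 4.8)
The plan is to upgrade Proposition~\ref{prop:optimality} — which only tells us that an optimal $\mathbb{M}$ has the same \emph{matched set} as $\mathbb{M}_{\mathcal{H}}$ — into full equality by analysing the equality case of the optimality comparison. I would work in a unimodular coupling $(\mathbb{T},\mathbb{M},\mathbb{M}_{\mathcal{H}})$ as in Proposition~\ref{prop:Zconstruction}~iii), with neighbour functions $n$ and $n_{\mathcal{H}}$. Since $\mathbb{M}$ and $\mathbb{M}_{\mathcal{H}}$ are both optimal, $\perfV(\mathbb{M})=\perfV(\mathbb{M}_{\mathcal{H}})$, so the lexicographic inequality $\mathbb{E}[X]\overset{\lex}{\geq}(0,0)$ produced by Lemma~\ref{lem:perfS} and Proposition~\ref{prop:russe} is an equality. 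The only genuine inequality used in the proof of Lemma~\ref{lem:perfS} is \eqref{eq:aldous2}, so equality forces \eqref{eq:aldous2} to be tight almost surely on the event $\{n(o)\neq n_{\mathcal{H}}(o)\}$; a secondary technical point is that Proposition~\ref{prop:russe} is only stated with conclusion ``$X'=(0,0)$ a.s.'', so one needs the slightly stronger ``$X+X'=Y-Y'$ a.s.'', which comes from running that same argument at the level of the coupling (no integrability of $Y,Y'$ needed) or from truncating $Z_{\mathcal{H}}$ and letting the truncation level diverge. By the mass-transport principle, exactly as in Proposition~\ref{prop:Zconstruction}~iii), this tightness propagates: at every vertex matched in both matchings but to different partners, the $\mathbb{M}$-partner is precisely the lexicographic runner-up among the incident quantities $(k,w(y,u))-(i_{\mathcal{H}},Z_{\mathcal{H}})(y,u)$, the $\mathbb{M}_{\mathcal{H}}$-partner being the maximiser.

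Next I would exploit this rigidity along the symmetric difference. Since $\mathbb{M}$ and $\mathbb{M}_{\mathcal{H}}$ share their matched (hence unmatched) set, $\mathbb{M}\triangle\mathbb{M}_{\mathcal{H}}$ is almost surely a disjoint union of bi-infinite alternating paths: no alternating cycles because $\mathbb{T}$ is a tree, and no finite alternating paths because an endpoint would be matched in exactly one of the two matchings. Suppose for contradiction that $\mathbb{P}(n(o)\neq n_{\mathcal{H}}(o))>0$ and fix such a path $(\dots,y_{-1},y_0,y_1,\dots)$ through $o$. Feeding the runner-up property into the recursion \eqref{eq:systemrecursionzoombis}, the two lexicographically largest values at each $y_i$ are realised by its two path-neighbours, so the forward messages obey the exact first-order identity $(i_{\mathcal{H}},Z_{\mathcal{H}})(y_i,y_{i+1})=(k,w(y_{i+1},y_{i+2}))-(i_{\mathcal{H}},Z_{\mathcal{H}})(y_{i+1},y_{i+2})$ for every $i$, and symmetrically for the backward messages. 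Iterating this identity writes the $\mathbb{R}$-component of $(i_{\mathcal{H}},Z_{\mathcal{H}})(y_{2m},y_{2m+1})$ as a centred random walk in $m$ with i.i.d.\ increments built from the edge weights, while each of these messages must stay $\overset{\lex}{\geq}(0,0)$ and must dominate all the off-path branches hanging from the path (honest UBGW subtrees); together with the prescribed marginal law $\zeta'_{\mathcal{H}}$ of the messages, these constraints cannot hold simultaneously along a doubly infinite path. This contradiction yields $\mathbb{P}(n(o)\neq n_{\mathcal{H}}(o))=0$, and one last mass-transport argument turns it into $\mathbbm{1}_{\mathbb{M}}(e)=\mathbbm{1}_{\mathbb{M}_{\mathcal{H}}}(e)$ for every edge $e$ almost surely, i.e.\ $\mathbb{M}=\mathbb{M}_{\mathcal{H}}$.

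I expect the last step — ruling out a bi-infinite alternating component of $\mathbb{M}\triangle\mathbb{M}_{\mathcal{H}}$ — to be the main obstacle. The local message identities are, by themselves, perfectly consistent with such a path, so the argument must bring in genuinely global information: that the messages have the specific distribution $\zeta'_{\mathcal{H}}$, that the off-path subtrees are bona fide UBGW trees, and that a non-degenerate centred random walk is unbounded above and below. Making ``these constraints are incompatible along a bi-infinite path'' precise — whether through an overshoot/recurrence estimate for the induced random walk or through a mass-transport count of the runner-up gaps accumulated along the path — is where the real work lies; everything upstream is a direct adaptation of the arguments already developed for Proposition~\ref{prop:optimality}.
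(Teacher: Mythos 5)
Your first half is exactly the paper's route: use the equality case of the optimality comparison (Lemma~\ref{lem:perfS} together with Proposition~\ref{prop:russe}) to force \eqref{eq:aldous2} to be tight on $\{n(o)\neq n_{\mathcal H}(o)\}$, conclude via unimodularity that at every disagreement vertex the $\mathbb M$-partner is the lexicographic runner-up of $(j^s_{\mathcal H},w^s_{\mathcal H})(u,\cdot)-(i^s_{\mathcal H},Z^s_{\mathcal H})(u,\cdot)$ while the $\mathbb M_{\mathcal H}$-partner is the maximiser, and observe that $\mathbb M\triangle\mathbb M_{\mathcal H}$ then forces a bi-infinite alternating path through $o$ with positive probability. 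Up to there you are reproducing Equations~\eqref{eq:Aldous5}--\eqref{eq:argmax2involutif} and the construction of the path $(\tilde v_p)$.

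The genuine gap is the step you yourself flag: ruling out the bi-infinite path. Your sketch — iterate the runner-up identity to write the second component of the messages along the path as a ``centred random walk with i.i.d.\ increments built from the edge weights'' and contradict nonnegativity of the messages — does not work as stated. The path is \emph{defined} through $\argmax$ and second-$\argmax$ of weight-dependent quantities, so the weights (and messages) encountered along it are size-biased by this selection and are neither independent nor distributed as $\omega$ and $\zeta'_{\mathcal H}$; there is no a priori centred random walk, and no recurrence/overshoot estimate is available without first controlling this selection bias. A second warning sign is that your argument never uses the hypothesis that $\omega$ has a density on an open interval, which is essential: the paper's contradiction comes from (i) the mass-transport identity of Lemma~\ref{lem:doublyinfinitepath}, which shows $\mathbb P(C_{-2H-2}\cap\tilde A_0)=\mathbb P(C_{-2H}\cap\tilde A_0\cap\tilde A_2)$ and hence, if the path event kept positive probability, the involutive runner-up event $\tilde A_2$ would occur with conditional probability one; (ii) the observation that, conditionally on the triplet $\bigl((i_{\mathcal H},Z_{\mathcal H})(\tilde v_2,\tilde v_3),(i_{\mathcal H},Z_{\mathcal H})(\tilde v_3,\tilde v_2),w(\tilde v_2,\tilde v_3)\bigr)$, the subtree beyond $\tilde v_3$ is a fresh UBGW tree independent of the past; and (iii) Lemma~\ref{lem:Mhstop}, which uses that the support of $\zeta'_{\mathcal H}$ has no lexicographic holes (this is where the interval of density of $\omega$ enters) to show $\overline{\tilde A_2}$ has positive conditional probability — a contradiction. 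Without an argument of this type (or a rigorous replacement for your random-walk heuristic that accounts for the selection bias), the proof is incomplete precisely at its hardest point.
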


\begin{proof}
We will show that almost surely, $\mathbb{M}^s=\mathbb{M}_{\mathcal{H}}^s$ and deduce that almost surely, $\mathbb{M}=\mathbb{M}_{\mathcal{H}}$ through the projection that forgets self-loops. In the whole proof, let $A$ be the event when $n_{\mathcal{H}}(o)\neq n(o)$ and assume $\mathbb P(A) \neq 0$.

Since $\mathbb{M}$ is optimal, we know from the previous proof that $\mathbb{M}$ and $\mathbb{M}_{\mathcal{H}}$ share the same set of unmatched vertices. 
Recalling Equation~\eqref{eq:aldous4}, we get
\begin{align*}
&\left((j_{\mathcal{H}}^s,w_{\mathcal{H}}^s)(v_{-1},v_0)-(j_{\mathcal{H}}^s,w_{\mathcal{H}}^s)(v_{1},v_0) \right)\mathbbm{1}_{v_{-1} \neq v_1} \\
-&\left((i^s_{\mathcal{H}},Z^s_{\mathcal{H}})(v_{-2},v_{-1}) -(i^s_{\mathcal{H}},Z^s_{\mathcal{H}})(v_{0},v_{1})\right)\mathbbm{1}_{v_{-1} \neq v_1} \\
\overset{\lex}{\geq}&\left((j_{\mathcal{H}}^s,w_{\mathcal{H}}^s)(v_0,v_1)-(i^s_{\mathcal{H}},Z^s_{\mathcal{H}})(v_0,v_1)-(i^s_{\mathcal{H}},Z^s_{\mathcal{H}})(v_{-1},v_0)\right)\mathbbm{1}_{v_1 \neq v_{-1}} \\
\overset{\lex}{\geq} &\,0.
\end{align*}
The previous subsection also shows that
\[
\mathbb{E}[\left((j_{\mathcal{H}}^s,w_{\mathcal{H}}^s)(v_0,v_1)-(i^s_{\mathcal{H}},Z^s_{\mathcal{H}})(v_0,v_1)-(i^s_{\mathcal{H}},Z^s_{\mathcal{H}})(v_{-1},v_0)\right)\mathbbm{1}_{v_1 \neq v_{-1}}]
\]
exists and is equal to zero. Indeed, the expectation of the difference of performances is 
\[
\perfV(\mathbb{M}_{\mathcal{H}})-\perfV(\mathbb{M})=(0,0),
\]
and we deduce that $\left((j_{\mathcal{H}}^s,w_{\mathcal{H}}^s)(v_0,v_1)-(i^s_{\mathcal{H}},Z^s_{\mathcal{H}})(v_0,v_1)-(i^s_{\mathcal{H}},Z^s_{\mathcal{H}})(v_{-1},v_0)\right)\mathbbm{1}_{v_1 \neq v_{-1}}$ is integrable and that
\begin{align*}
\mathbb{E}\left[((j_{\mathcal{H}}^s,w_{\mathcal{H}}^s)(v_0,v_1)-(i^s_{\mathcal{H}},Z^s_{\mathcal{H}})(v_0,v_1)-(i^s_{\mathcal{H}},Z^s_{\mathcal{H}})(v_{-1},v_0))\mathbbm{1}_{v_1 \neq v_{-1}} \right]=(0,0) .
\end{align*}
Since Equation~\ref{eq:aldous2} implies that this variable is almost surely bigger than $(0,0)$, we deduce that it is equal to $(0,0)$ almost surely, hence that
\begin{align}\label{eq:Aldous5}
(i^s_{\mathcal{H}},Z^s_{\mathcal{H}})(v_{-1},o) =(j_{\mathcal{H}}^s,w_{\mathcal{H}}^s)(o,v_1)-(i^s_{\mathcal{H}},Z^s_{\mathcal{H}})(o,v_1).
\end{align}
By definition of $n^s_{\mathcal{H}}$,
\begin{align}\label{eq:Aldous6}
v_{-1}=n^s_{\mathcal{H}}(v_0) = \overset{\lex}{\argmax}_{y \sim v_0 }((j_{\mathcal{H}}^s,w_{\mathcal{H}}^s)(o,y)-(i^s_{\mathcal{H}},Z^s_{\mathcal{H}})(o,y)).
\end{align}
By the recursive equation of the messages: 
\begin{align}\label{eq:Aldous7}
    (i^s_{\mathcal{H}},Z^s_{\mathcal{H}})(v_{-1},o)=\maxlex_{\substack {y \sim o  y \neq v_{-1}}}((j_{\mathcal{H}}^s,w_{\mathcal{H}}^s)(o,y)-(i^s_{\mathcal{H}},Z^s_{\mathcal{H}})(o,y))    
\end{align}
Combining Equations~\eqref{eq:Aldous5} and \eqref{eq:Aldous7} then using Equation~\eqref{eq:Aldous6}, we get that $v_1$ achieves the maximum among the list of $(j_{\mathcal{H}}^s,w_{\mathcal{H}}^s)(o,y)-(i^s_{\mathcal{H}},Z^s_{\mathcal{H}})(o,y)$ stripped of its maximum. 
In other words, $v_1=n(o)$ achieves the second maximum of $(j_{\mathcal{H}}^s,w_{\mathcal{H}}^s)(o,y)-(i^s_{\mathcal{H}},Z^s_{\mathcal{H}})(o,y)$ when $y$ spans the neighbours of $o$. We will denote this quantity by ${\underset{y \sim o}{\argmaxxlex}}( (j_{\mathcal{H}}^s,w_{\mathcal{H}}^s)(o,y)-(i^s_{\mathcal{H}},Z^s_{\mathcal{H}})(o,y))$. 
Thus:
\[\mathbb{P}\left( n(o)=\underset{y \sim o}{\argmaxlex}((j_{\mathcal{H}}^s,w_{\mathcal{H}}^s)(o,y)-(i^s_{\mathcal{H}},Z^s_{\mathcal{H}})(o,y)) \text{ or } \underset{y \sim o}{\argmaxxlex}((j_{\mathcal{H}}^s,w_{\mathcal{H}}^s)(o,y)-(i^s_{\mathcal{H}},Z^s_{\mathcal{H}})(o,y))  \right)=1. \]
By unimodularity, this must be true for any vertex $u \in V(\mathbb{T})$. In particular, we obtain that
\[ n(u)\neq n_{\mathcal{H}}(u)  \implies n(u)= \bigg(\argmaxxlex_{v \sim u} (j_{\mathcal{H}}^s,w_{\mathcal{H}}^s)(u,v)-(i_{\mathcal{H}}^s,Z_{\mathcal{H}}^s)(u,v) \bigg) . \]
In addition, when an edge of the matching is selected according to the second largest value, this rule has to be involutive, meaning:
\begin{align} \label{eq:argmax2involutif}
    n(u)\neq n_{\mathcal{H}}(u)  \implies &\left\{  n(u)= \bigg(\argmaxxlex_{v \sim u} (j_{\mathcal{H}}^s,w_{\mathcal{H}}^s)(u,v)-(i_{\mathcal{H}}^s,Z_{\mathcal{H}}^s)(u,v) \bigg) \right. \notag \\
    & \qquad  \left. \text{ and } u= \bigg(\argmaxxlex_{v \sim n(u)} (j_{\mathcal{H}}^s,w_{\mathcal{H}}^s)(n(u),v)-(i_{\mathcal{H}}^s,Z_{\mathcal{H}}^s)(n(u),v) \bigg) \right\}.
\end{align}
The remainder of the argument will be based on the fact that it is impossible to use the second largest rule to define a matching in a unimodular fashion.

Let us set
\begin{align*}
\tilde{v}_0&:=o ,\\
\tilde{v}_{1}&:=\underset{y \sim o}{\argmaxxlex}((j_{\mathcal{H}}^s,w_{\mathcal{H}}^s)(o,y)-(i^s_{\mathcal{H}},Z^s_{\mathcal{H}})(o,y)) &&\text{if }o\text{ is not isolated} ,\\
\tilde{v}_{2}&:=\underset{y \sim \tilde{v}_1}{\argmaxlex}((j_{\mathcal{H}}^s,w_{\mathcal{H}}^s)(\tilde{v}_1,y)-(i^s_{\mathcal{H}},Z^s_{\mathcal{H}})(\tilde{v}_1,y)) &&\text{if }o\text{ is not isolated} ,\\
\tilde{v}_{3}&:=\underset{y \sim \tilde{v}_2}{\argmaxxlex}((j_{\mathcal{H}}^s,w_{\mathcal{H}}^s)(\tilde{v}_2,y)-(i^s_{\mathcal{H}},Z^s_{\mathcal{H}})(\tilde{v}_2,y))&&\text{if }o\text{ is not isolated} ,\\
\tilde{v}_{-1}&:=\underset{y \sim o}{\argmaxlex}((j_{\mathcal{H}}^s,w_{\mathcal{H}}^s)(o,y)-(i^s_{\mathcal{H}},Z^s_{\mathcal{H}})(o,y)) ,\\
\tilde{v}_{-2p}&:=\underset{y \sim w_{-2p+1}}{\argmaxxlex}((j_{\mathcal{H}}^s,w_{\mathcal{H}}^s)(\tilde{v}_{-2p+1},y)-(i^s_{\mathcal{H}},Z^s_{\mathcal{H}})(\tilde{v}_{-2p+1},y))  &&\forall p \in \mathbb{N}^*,\\
\tilde{v}_{-2p-1}&:=\underset{y \sim w_{-2p}}{\argmaxlex}((j_{\mathcal{H}}^s,w_{\mathcal{H}}^s)(\tilde{v}_{-2p},y)-(i^s_{\mathcal{H}},Z^s_{\mathcal{H}})(\tilde{v}_{-2p},y))  &&\forall k \in \mathbb{N}^* .
\end{align*}
See Figure~\ref{fig:infpath} for an illustration.
The advantage of the $\tilde{v}_p$ instead of the previously used $v_p$ is that they no longer depend on $\mathbb{M}$ which we do not have much control over. Instead, they depend on the messages directly.
\begin{figure}
    \centering
    \includegraphics[width=0.6\linewidth]{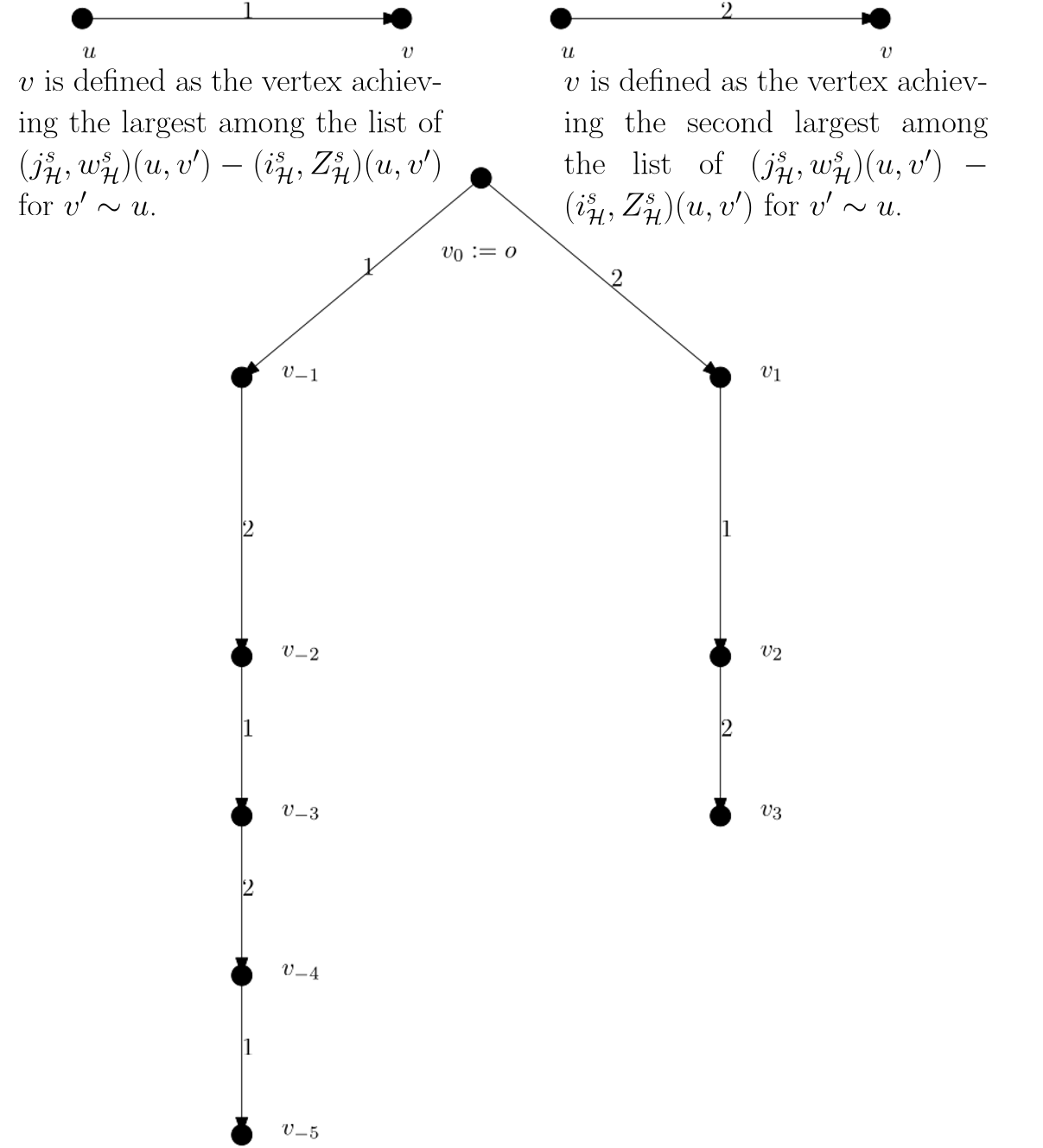}
    \caption{Definition of the path $(\tilde{v}_p)_{p\leq3}$.}
    \label{fig:infpath}
\end{figure}

We define the following events  for $p<0$ and $N < 0$
\begin{align}
A_{2p}^{(1)} &:= \left\{ \tilde{v}_{2p} \neq \tilde{v}_{2p+1} \neq \tilde{v}_{2p-1} \right\},\\
A_{2p}^{(2)} &:= \left\{  \tilde{v}_{2p+1} =\argmaxxlex_{u \sim \tilde{v}_{2p}}\bigg( (j_{\mathcal{H}}^s,w_{\mathcal{H}}^s)(\tilde{v}_{2p},u)- (i_{\mathcal{H}}^s,Z_{\mathcal{H}}^s)(\tilde{v}_{2p},u)\bigg) \right\},\\
A_{2p} &:= A_{2p}^{(1)} \cap A_{2p}^{(2)},\\
C_{N} &:=\bigcap_{N \leq 2p < 0} A_{2p} . 
\end{align}
The first event $A_{2p}^{(1)}$ corresponds to the fact that the portion of the path around $\tilde{v}_{2p}$ is injective and  $A_{2p}^{(2)}$ corresponds to \eqref{eq:argmax2involutif}. See figure~\ref{fig:uniquenessevent}.

\begin{figure}
    \centering
    \includegraphics[width=0.7\linewidth]{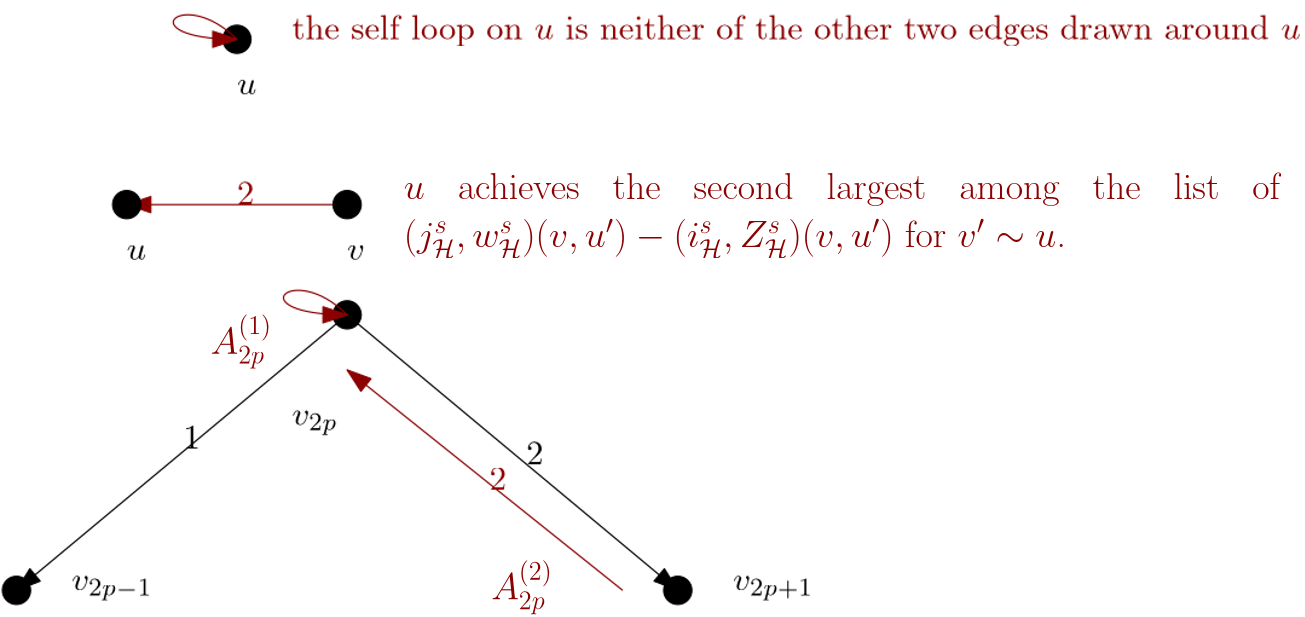}
    \caption{Definition of the event $A_{2p}$.}
    \label{fig:uniquenessevent}
\end{figure}

By construction, we have $(\tilde{v}_{-2p},\tilde{v}_{-2p-1})\in \mathbb{M}^s_{\mathcal{H}}$ for any $p\geq0$. Recall that we are interested in the event
\begin{equation}
    A := \{n_{\mathcal{H}}(o)\neq n(o)\}.
\end{equation}
Note that on the event $A$ we have
\[
\tilde{v}_{-2} = v_{-2}, \, \tilde{v}_{-1} = v_{-1}, \,\tilde{v}_{0} = v_0 \text{ and } \tilde{v}_1 = v_1.
\]
In addition we have
\[\mathbb{P}\left(\bigcup_{p \geq 0}\left\{ (\tilde{v}_{-2p-1},\tilde{v}_{-2p-2})\in \mathbb{M}\right\} \middle| A \right)=1. \]

Furthermore, by the second part of Proposition~\ref{prop:optimality}, almost surely, for any $v\in V$, $n_{\mathcal{H}}(v)=v$ if and only if $n(v)=v$. From this, we deduce that on $A$ we have $\tilde{v}_{-1}\neq \tilde{v}_0$ and, by induction, $  \forall k \in \mathbb{N}, \tilde{v}_{-k} \neq \tilde{v}_{-k-1}$ (no loops in the path) and also that $\forall k \in \mathbb{N}, \tilde{v}_{-k-2} \neq \tilde{v}_{-k}$ (the path cannot go back up). This holds similarly for $\tilde{v}_1$,$\tilde{v}_2$ and $\tilde{v}_3$. In short, we showed that $A$ is contained in the event where the path $(\tilde{v}_k)_{k\leq 3}$ is infinite and injective.

For the root and $\tilde{v}_2$, a direction reversal occurs, recall
\[ \tilde{v}_1=\argmaxxlex_{u \sim o} \bigg( (j_{\mathcal{H}}^s,w_{\mathcal{H}}^s)(o,u)- (i_{\mathcal{H}}^s,Z_{\mathcal{H}}^s)(o,u)\bigg)
.  \]
We will then set
\begin{align}
\tilde{A}_0 &:= \left\{ \tilde{v}_0=\argmaxxlex_{u \sim \tilde{v}_1} \bigg( (j_{\mathcal{H}}^s,w_{\mathcal{H}}^s)(\tilde{v}_1,u)- (i_{\mathcal{H}}^s,Z_{\mathcal{H}}^s)(\tilde{v}_1,u)\bigg), \tilde{v}_0\neq \tilde{v}_{-1}, \tilde{v}_ \neq \tilde{v}_1 
\right\} , \\
\tilde{A}_2 &:=\left\{ \tilde{v}_2=\argmaxxlex_{u \sim \tilde{v}_3} \bigg( (j_{\mathcal{H}}^s,w_{\mathcal{H}}^s)(\tilde{v}_3,u)- (i_{\mathcal{H}}^s,Z_{\mathcal{H}}^s)(\tilde{v}_3,u)\bigg) , \tilde{v}_2\neq \tilde{v}_1, \tilde{v}_3 \neq \tilde{v}_2 \right\} .
\end{align}

For every $N <0$ we have
\[ \mathbb{P}(A) \leq \mathbb{P}(\tilde A_0\cap \tilde A_2 \cap C_N) = \mathbb{P}\left( \tilde A_0 \cap \tilde A_2 \cap \bigcap_{N\leq 2p < 0} A_{2p} \right)  . \]
To conclude the argument, we want to show that the right hand side has probability zero when $N \rightarrow -\infty$. We will use the following Lemma, whose proof is postponed at the end of the section.
\begin{lemma}\label{lem:doublyinfinitepath}
Fix some $H>2$, then $\mathbb{P}(C_{-2H-2}\cap \tilde{A}_0)=\mathbb{P}(C_{-2H} \cap \tilde{A}_0 \cap \tilde{A}_2)$.
\end{lemma}

Now, if we assume that $ \lim_{N \rightarrow -\infty} \mathbb{P}(C_{N}\cap \tilde{A}_0 \cap \tilde{A_2})>0$, taking $H \rightarrow \infty$ in Lemma \ref{lem:doublyinfinitepath} yields
\begin{equation}\label{eq:infinitepath}
 \mathbb{P}\left(  \tilde{A}_{2}  \bigg| \tilde A_0 \cap \bigcap_{p' < 0} A_{2p'}  \right) =1.
\end{equation}

Assume that $\tilde{v}_2\neq \tilde{v}_3$. The only dependency between the messages and variables in the connected component $\mathbb{T}_{\tilde{v}_{3}}$ of $\tilde{v}_{3} \in \mathbb{T}\setminus \{\tilde{v}_{2} \}$ on the one hand, and its complement $\mathbb T_{\tilde{v}_{2}}$ on the other hand, is through the triplet
\[(i_{\mathcal{H}},Z_{\mathcal{H}})(\tilde{v}_{3},\tilde{v}_{2}),(i_{\mathcal{H}},Z_{\mathcal{H}})(\tilde{v}_{2},\tilde{v}_{3}) ,w(\tilde{v}_{3},\tilde{v}_{2}). \] 
From this we deduce that conditionally on the triplet
\[(i_{\mathcal{H}},Z_{\mathcal{H}})(\tilde{v}_{3},\tilde{v}_{2}),(i_{\mathcal{H}},Z_{\mathcal{H}})(\tilde{v}_{2},\tilde{v}_{3}) ,w(\tilde{v}_{3},\tilde{v}_{2}), \]
 the two decorated subtrees \[\left(\mathbb{T}_{\tilde{v}_{2}},\tilde{v}_{2},(i_{\mathcal{H}},Z_{\mathcal{H}})(u,v)_{(u,v)\in \overset{\rightarrow}{E}(\mathbb{T}_{\tilde{v}_{2}})}\right),\left(\mathbb{T}_{\tilde{v}_{3}},\tilde{v}_{3},(i_{\mathcal{H}},Z_{\mathcal{H}})(u,v)_{(u,v)\in \overset{\rightarrow}{E}(\mathbb{T}_{\tilde{v}_{3}})}\right) \] are independent. Furthermore, according to Equation~\eqref{eq:infinitepath}, conditionally on $\tilde{A_0}\cap \bigcap_{p'<0}A_{2p'}$, $\tilde{A}_{2}$ happens almost surely, which contains the event that $\tilde{v}_2 \neq \tilde{v}_3$.
 
As a consequence, we have that the law of
$\left(\mathbb{T}_{\tilde{v}_{3}},\tilde{v}_{3},(i_{\mathcal{H}},Z_{\mathcal{H}})(u,v)_{(u,v)\in \overset{\rightarrow}{E}(\mathbb{T}_{\tilde{v}_{3}})}\right)$
conditionally on $\tilde A_0 \cap \bigcap_{p' < 0}A_{2p'}$, which is measurable with respect to the other tree, can be condensed through the conditional distribution $\mathcal{P}$ of the triplet  \[(i_{\mathcal{H}},Z_{\mathcal{H}})(\tilde{v}_{3},\tilde{v}_{2}),(i_{\mathcal{H}},Z_{\mathcal{H}})(\tilde{v}_{2},\tilde{v}_{3}) ,w(\tilde{v}_{3},\tilde{v}_{2}) \] under $\tilde A_0 \cap \bigcap_{p' < 0}A_{2p'}$.

We will then show that no matter the values of this triplet, the event $\tilde A_{2}$ will not happen with probability $1$. We thus write that
\begin{align*}
    &\mathbb{P}\left(\overline{\tilde{A_{2}}} \bigg| \tilde A_0 \cap \bigcap_{p' < 0} A_{2p'}\right) \\
    &=\iiint \mathbb{P}\left(\overline{\tilde{A}_{2}} \bigg| \bigcap_{p' <0} A_{2p'}\cap \tilde{A}_0, (i_\mathcal{H},Z_{\mathcal{H}})(\tilde{v}_{2},\tilde{v}_{3})=(a,b), (i_\mathcal{H},Z_{\mathcal{H}})(\tilde{v}_{3},\tilde{v}_{2})=(a',b'), w(\tilde{v}_{2},\tilde{v}_{3})=\alpha  \right)\\
    &\qquad \qquad \qquad \mathrm{d}\mathcal{P}((a,b),(a',b'),\alpha) \\
    &=\iiint \mathbb{P}\left(\overline{\tilde{A}_{2}} \bigg| \tilde{v}_2 \neq \tilde{v}_3, (i_\mathcal{H},Z_{\mathcal{H}})(\tilde{v}_{2},\tilde{v}_{3})=(a,b), (i_\mathcal{H},Z_{\mathcal{H}})(\tilde{v}_{3},\tilde{v}_{2})=(a',b'), w(\tilde{v}_{2},\tilde{v}_{3})=\alpha  \right)\\
    &\qquad \qquad \qquad \mathrm{d}\mathcal{P}((a,b),(a',b'),\alpha )
\end{align*}
where $\mathcal{P}$ is the conditional law of the triplet $(i_{\mathcal{H}},Z_{\mathcal{H}})(\tilde{v}_{3},\tilde{v}_{2}),(i_{\mathcal{H}},Z_{\mathcal{H}})(\tilde{v}_{2},\tilde{v}_{3}) ,w(\tilde{v}_{3},\tilde{v}_{2})$ under $\tilde{A}_0\cap\bigcap_{p' < 0} A_{2p'}$. By definition, we must have that the support of $\mathcal{P}$ is included in the space $(a,b)+(a',b') \overset{\lex}{\geq }(k,\alpha)$ since $(\tilde{v}_{3},\tilde{v}_{2}) \notin \mathbb{M}_h^s$. As equality almost surely does not occur, we almost surely have $(a,b)+(a',b') \overset{\lex}{> }(k,\alpha)$ . The next lemma concludes the proof by showing that being in this space ensures that this conditional density is always positive.
\begin{lemma}\label{lem:Mhstop}
    For every $(a,b),(a',b') \in \mathrm{supp}(\zeta_{h}')$, $ \alpha \in \mathrm{supp} (\omega)$ such that $(a,b)+(a',b')\overset{\lex}{>}(k,\alpha)$, \[ \mathbb{P} \left( \overline{\tilde A_{2}} \middle| \tilde{v}_2 \neq \tilde{v}_3 , \,(i_\mathcal{H},Z_{\mathcal{H}})(\tilde{v}_{2},\tilde{v}_{3})=(a,b), (i_\mathcal{H},Z_{\mathcal{H}})(\tilde{v}_{3},\tilde{v}_{2})=(a',b'), w(\tilde{v}_{2},\tilde{v}_{3})=\alpha \right)    >0   . \]
\end{lemma}
As a conclusion, this shows a contradiction with the assumption that  \[  \mathbb{P}\left(  \bigcap_{p'<0}A_{2p'} \cap \tilde{A_0}\cap \tilde{A_2} \right)>0.\]
We deduce that this probability has to be zero, but since we showed that this probability is bigger than $\mathbb{P}(A)$, we get that $\mathbb{P}(A)=0$, which translates to
\[ \mathbb{P}(n(o)=n_{\mathcal{H}}(o))=1.  \]
By unimodularity, we deduce that almost surely, for all $v \in V(\mathbb{T})$, $n(v)=n_{\mathcal{H}}(v)$, hence $\mathbb{M}=\mathbb{M}_{\mathcal{H}}$.

\end{proof}

\bigskip

We now turn to the proof of our two Lemmas.

\begin{proof}[Proof of Lemma \ref{lem:doublyinfinitepath}]
   Refer to Figure~\ref{fig:uniquenesstransport} for an illustration.

   \begin{figure}
       \centering
       \includegraphics[width=0.6\linewidth]{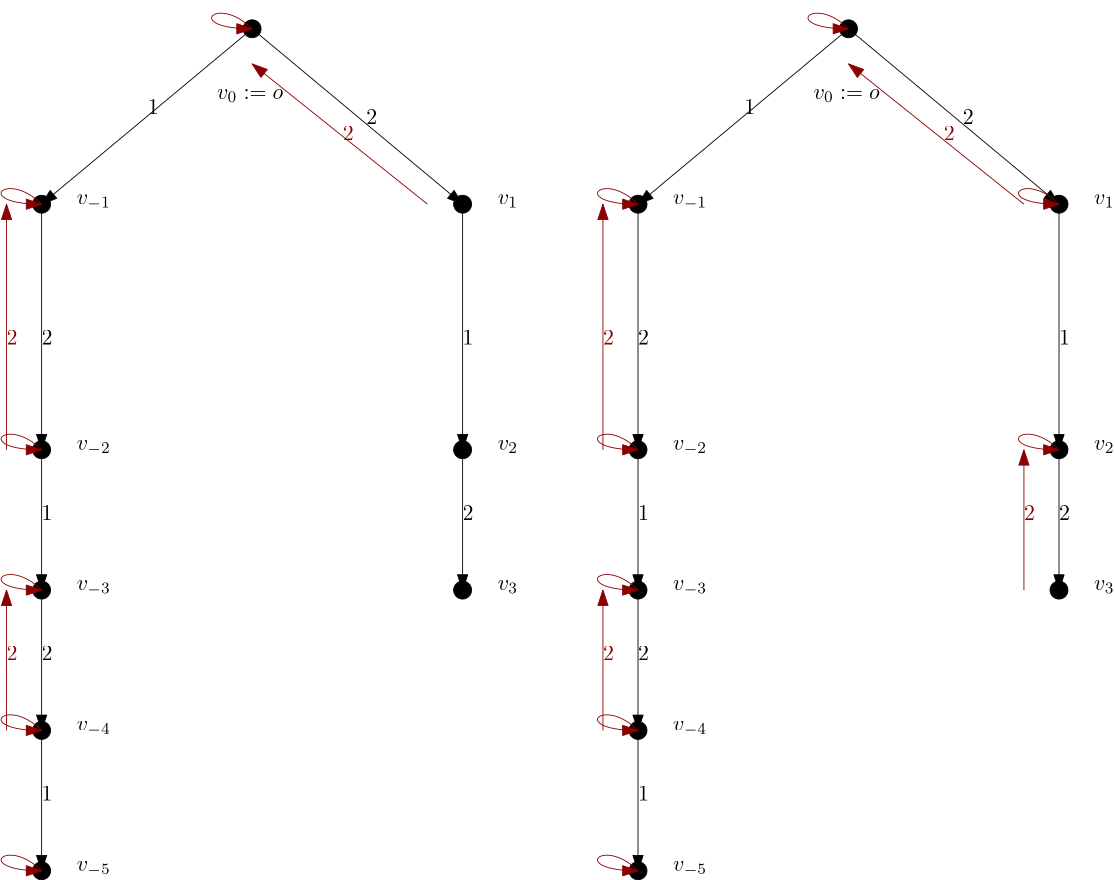}
       \caption{The events on the left (drawn in red) down to depth $2H+2$ have the same probability as the events on the right up to depth $2H$ by rerooting from $\tilde{v}_0$ to $\tilde{v}_2$.}
       \label{fig:uniquenesstransport}
   \end{figure}
    Define the alternating path starting from a vertex $u$ as:
    \begin{align*}
     u_{1}(u)&:=\argmaxxlex_{y \sim u}((j_{\mathcal{H}}^s,w_{\mathcal{H}}^s)(u,y)-(i^s_{\mathcal{H}},Z^s_{\mathcal{H}})(u,y)) \text{ if } u \text{ is not isolated} , \\
     u_{-1}(u)&:=\argmaxlex_{y \sim u}((j_{\mathcal{H}}^s,w_{\mathcal{H}}^s)(u,y)-(i^s_{\mathcal{H}},Z^s_{\mathcal{H}})(u,y)), \\
     u_{-2}(u)&:=u_1(u_{-1}(u))=\argmaxxlex_{y \sim u_{-1}(u)}((j_{\mathcal{H}}^s,w_{\mathcal{H}}^s)(u_{-1}(u),y)-(i^s_{\mathcal{H}},Z^s_{\mathcal{H}})(u_{-1}(u),y)),
     \end{align*}
    and for $n \geq 1,$
    \begin{align*}
    u_{-2n-1}(u)&:=u_{-1}(u_{-2n}(u))=\argmaxlex_{y \sim u_{-2n}(u)}((j_{\mathcal{H}}^s,w_{\mathcal{H}}^s)(u_{-2n}(u),y)-(i^s_{\mathcal{H}},Z^s_{\mathcal{H}})(u_{-2n}(u),y)), \\ 
    u_{-2n-2}(u)&:=u_1(u_{-2n-1}(u))=\argmaxxlex_{y \sim u_{-2n-1}(u)}((j_{\mathcal{H}}^s,w_{\mathcal{H}}^s)(u_{-2n-1}(u),y)-(i^s_{\mathcal{H}},Z^s_{\mathcal{H}})(u_{-2n-1}(u),y)).
    \end{align*}
    By convention, set $u_0=\text{Id}_{V}$, in other words, $u_0(u)=u$.
    For any vertex $u$, define the event \[A(u) := \left\{u=\argmaxxlex_{y \sim u_{1}(u)}((j_{\mathcal{H}}^s,w_{\mathcal{H}}^s)(u_{1}(u),y)-(i^s_{\mathcal{H}},Z^s_{\mathcal{H}})(u_{1}(u),y)), u \text{ is not isolated}, u_1(u) \neq u, u_{-1}(u)\neq u \right\}.
      \]
    In particular, we have that $A_{-2p}=A(u_{-2p}(o))$ for any $p>1$.
    
    For an outcome of decorated tree $T$ of $(\mathbb{T},(i_\mathcal{H},Z_\mathcal{H}))$ , $H\in \mathbb{N}$ and $a,b \in V(T)$, define the event:
    \[ \tilde{C}_H(T,(i^s_{\mathcal{H}},Z^s_{\mathcal{H}}),a,b) :=  \left(  \bigcap_{0 \geq n \geq 2H} A(u_{-n}(b)) \right) \cap \left( \bigcap_{0 \geq n \geq 2H} A(u_{-n}(a)) \right) .\]
    Let us define the mass transport function 
    \[f_{H}(T,M,(i^s_{\mathcal{H}},Z^s_{\mathcal{H}}),a,b):=\mathbbm{1}_{\tilde{C}_H(T,(i^s_{\mathcal{H}},Z^s_{\mathcal{H}}),a,b) \cap \{b=u_{-2}(a)\}  }\].
    
    On the one hand, a simple computation yields
    \begin{align*}
     \mathbb{E}\left[\sum_{v \in V}f_{H}(T,M,(i^s_{\mathcal{H}},Z^s_{\mathcal{H}}),o,v)\right] &=\mathbb{P}\left( \bigcap_{0 \leq n \leq 2H} A(u_{-n}(o)) \cap  \bigcap_{0 \leq n \leq 2H} A(u_{-n}(u_{-2}(o)))  \right) \\ 
     &= \mathbb{P}\left( \bigcap_{0 \leq n \leq 2H+2 }A(u_{-n}(o))  \right)\\
     &= \mathbb{P}\left(C_{-2H-2} \cap \tilde{A}_0\right).
     \end{align*}
     
     On the other hand, the other expectation is more delicate:
    \begin{align*}
     \mathbb{E}\left[\sum_{v \in V}f_{H}(T,M,(i^s_{\mathcal{H}},Z^s_{\mathcal{H}}),v,o)\right] = &\mathbb{E}\left[ \sum_{v': u_{-2}(v')=o} \mathbbm{1}_{ \bigcap_{0 \leq n \leq 2H} A(u_{-n}(v')) \cap \bigcap_{0 \leq n \leq 2H} A(u_{-n}( u_{-2}(v')))} \right] \\
     \\
     =&\mathbb{E}\left[ \sum_{v': u_{-2}(v')=o} \mathbbm{1}_{ \bigcap_{0 \leq n \leq 2H} A(u_{-n}(v')) \cap \bigcap_{0 \leq n \leq 2H} A(u_{-n}( o))} \right] \\
     =&\mathbb{E}\left[ \sum_{v': u_{-2}(v')=o} \mathbbm{1}_{ A(v') \cap A(u_{-1}(v')) \cap \bigcap_{0 \leq n \leq 2H} A(u_{-n}( o))} \right].   
     \end{align*}
     It is not \emph{a priori} obvious that there exists a unique $v'\in V$ such that $u_{-2}(v')=o$, in fact, this is not true in general.
     Nevertheless, we will show that being in $A(o)\cap A(u_{-1}(v'))$ will automatically imply that $v'=u_{-1}(u_{1}(o)))=\tilde{v}_2$.
    To proceed, since the event $\bigcap_{0 \leq n \leq 2H} A(u_{-n}( o))$ appearing in the sum does not depend on the choice of $v'$, we rewrite using a conditional expectation:
    \begin{align*}
        &\mathbb{E}\left[ \sum_{v': u_{-2}(v')=o} \mathbbm{1}_{ A(v') \cap A(u_{-1}(v')) \cap \bigcap_{0 \leq n \leq 2H} A(u_{-n}( o))} \right]\\
        =&\mathbb{E}\left[ \sum_{v': u_{-2}(v')=o} \mathbbm{1}_{ A(v') \cap A(u_{-1}(v')) } \bigg| \bigcap_{0 \leq n \leq 2H} A(u_{-n}( o)) \right] \mathbb{P}\left(\bigcap_{0 \leq n \leq 2H} A(u_{-n}( o))\right)
    \end{align*}
    The conditioning event contains $A(o)$, under which $\tilde{v}_2$ exists and $u_{-2}(\tilde{v}_2)=\tilde{v}_0$. This guarantees that the summation set of $v'$ is not empty. By definition, we have that $\tilde{v}_1=u_1(o)$.
    Take $v'$ such that $u_{-2}(v')=o$, we want to show that only $v'=\tilde{v}_2$ contributes. We have that 
    \begin{align*}
    A(u_{-1}(v'))=\bigg\{&u_{-1}(v')=\argmaxxlex_{y \sim u_{1}(u_{-1}(v'))}((j_{\mathcal{H}}^s,w_{\mathcal{H}}^s)(u_{1}(u_{-1}(v')),y)-(i^s_{\mathcal{H}},Z^s_{\mathcal{H}})(u_{1}(u_{-1}(v')),y)),\\
    &u_{1}(u_{-1}(v')) \neq u_{-1}(v'), v' \neq u_{-1}(v') \bigg\} \\
    =\bigg\{&u_{-1}(v')=\argmaxxlex_{y \sim o}((j_{\mathcal{H}}^s,w_{\mathcal{H}}^s)(o,y)-(i^s_{\mathcal{H}},Z^s_{\mathcal{H}})(o,y)), o \neq u_{-1}(v') ,v'\neq \tilde{v}_{-1}(v') \bigg\} \\
    \subseteq\bigg\{&u_{-1}(v')=u_1(o)=\tilde{v}_1  \bigg\} \\
    =\bigg\{&v'=u_{-1}(\tilde{v}_1)=\tilde{v}_2\bigg\}.
    \end{align*}
    where we simply injected the expression of $o$ and used that $u_{-1}$ is an involution.
    This shows that any other $v'\neq \tilde{v}_2$ cannot satisfy the condition, hence:
    \begin{align*}
    \mathbb{E}\left[ \sum_{v': u_{-2}(v')=o} \mathbbm{1}_{ A(v') \cap A(u_{-1}(v')) } \bigg| \bigcap_{0 \leq n \leq 2H} A(u_{-n}( o)) \right] 
    &= \mathbb{E} \left[    \mathbbm{1}_{ A(\tilde{v}_2) \cap A(\tilde{v}_1) } \bigg| \bigcap_{0 \leq n \leq 2H} A(u_{-n}( o))     \right] \\
    &= \mathbb{P} \left( A(\tilde{v}_2)\cap A(\tilde{v}_1) \bigg|  \bigcap_{0 \leq n \leq 2H} A(u_{-n}( o))      \right).
    \end{align*}
    Putting everything together, we have shown that:
    \begin{align*}
    \mathbb{E}\left[\sum_{v \in V}f_{H}(T,M,(i^s_{\mathcal{H}},Z^s_{\mathcal{H}}),o,v)\right] &=\mathbb{P} \left( A(\tilde{v}_2)\cap A(\tilde{v}_1) \bigg|  \bigcap_{0 \leq n \leq 2H} A(u_{-n}( o))      \right)\mathbb{P} \left(   \bigcap_{0 \leq n \leq 2H} A(u_{-n}( o)) \right) \\
    &=\mathbb{P} \left( A(\tilde{v}_2)\cap A(\tilde{v}_1) \cap  \bigcap_{0 \leq n \leq 2H} A(u_{-n}( o))      \right) \\
    &= \mathbb{P}( \tilde{A}_2 \cap \tilde{A}_0 \cap C_{-2H-2}).
    \end{align*}
    Applying mass-transport principle to $f_{H}$ yields equality between the two expectations, hence the result.
\end{proof}

\begin{proof}[Proof of Lemma \ref{lem:Mhstop}]
Let $N_1:=\deg(\tilde{v}_{3})-1$, write $u_j$ to be the neighbours of $\tilde{v}_{3}$ in the tree that are not $\tilde{v}_{2}$, $(i_j, Z_j)$ to be $(i_{\mathcal{H}},Z_{\mathcal{H}})(\tilde{v}_{3},u_j)$ and $w_j=w(\tilde{v}_{3},u_j)$. For an illustration of the notations and the overall idea of this proof, see Figure~\ref{fig:aldouserie}.
\begin{figure}
    \centering
    \includegraphics[width=0.6\linewidth]{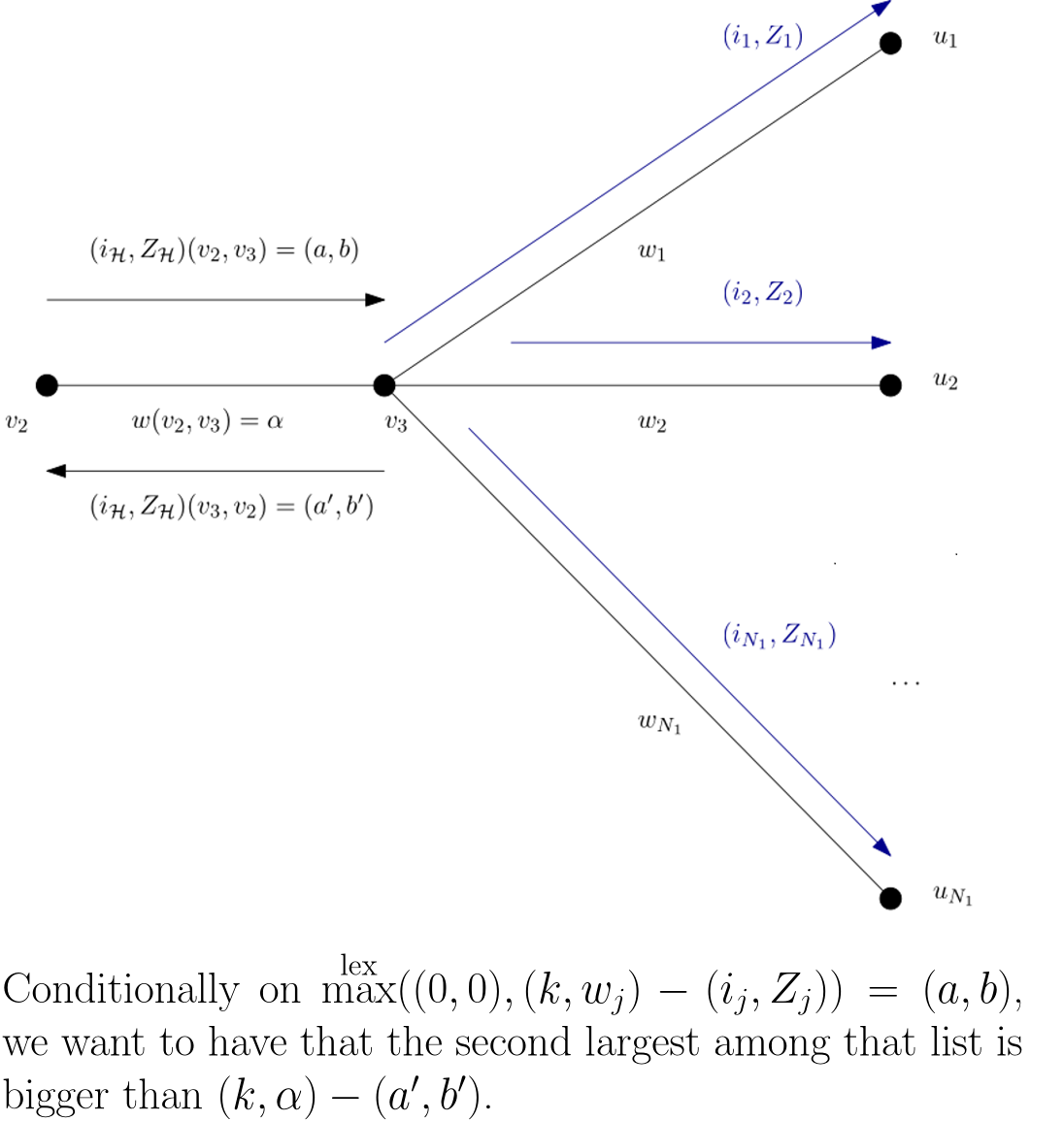}
    \caption{The event $\overline{\tilde{A}_2}$ is equivalent to the condition on the bottom.}
    \label{fig:aldouserie}
\end{figure}

On the event
\[ \tilde{v}_2\neq \tilde{v}_3, (i_{\mathcal{H}},Z_{\mathcal{H}})(\tilde{v}_{2},\tilde{v}_{3})=(a,b),(i_{\mathcal{H}},Z_{\mathcal{H}})(\tilde{v}_{3},\tilde{v}_{2})=(a',b'),w(\tilde{v}_{2},\tilde{v}_{3})=\alpha ,  \]
the event $ \overline{\tilde{A}_2}$ can be rewritten as
\[ \tilde{v}_{2} \neq u_1(\tilde{v}_{3})= \argmaxxlex_{y \sim \tilde{v}_{3}} \left( (j_{\mathcal{H}}^s,w_{\mathcal{H}}^s)(\tilde{v}_{3},j)-(i_{\mathcal{H}}^s,Z_{\mathcal{H}}^s)(\tilde{v}_{3},j) \right).    \]

From the recursion, the conditioning $(i_{\mathcal{H}},Z_{\mathcal{H}})(\tilde{v}_{2},\tilde{v}_{3})=(a,b)$ is equivalent to
\[ \maxlex( (0,0), \maxlex_{1\leq j \leq N_1} (k,w_j)-(i_j,Z_j))=(a,b)   .\]
Let us write $\maxxlex$ to be the operator that returns the second largest value in lexicographic order. Then the event $\overline{\tilde{A}_2}$ can be rewritten as \[ \maxxlex_{1 \leq j \leq N_1} (k, w_j)-(i_j,Z_j) \overset{\lex}{>} (k,\alpha)-(a',b') \text{ or } (k,\alpha)-(a',b') \overset{\lex}{<}(0,0)\overset{\lex}{<}(a,b).\]
Indeed, if $(k,\alpha)-(a',b')\overset{\lex}{>}(0,0)$, then it is clear. The event $(k,\alpha)-(a',b')=(0,0)$ happens with probability zero so it is irrelevant. Finally, if $(k,\alpha)-(a',b')\overset{\lex}{<}(0,0)\overset{\lex}{<}(a,b)$, then either $\maxxlex_{1 \leq j \leq N_1} (k, w_j)-(i_j,Z_j)\overset{\lex}{<}(0,0)$ and $u_1(\tilde{v}_3)=\tilde{v}_3$ or $\maxxlex_{1 \leq j \leq N_1} (k, w_j)-(i_j,Z_j)\overset{\lex}{>}(0,0)$ and $u_1(\tilde{v}_3)=\argmaxxlex_{1 \leq j \leq N_1} (k, w_j)-(i_j,Z_j)$.

Additionally, $(k,\alpha)-(a,b)\overset{\lex}{<} (0,0) \overset{\lex}{<} (a',b')=\max_{u \sim \tilde{v}_2}((j_{\mathcal{H}}^s,w_{\mathcal{H}}^s)(\tilde{v}_2,u)-(i_{\mathcal{H}}^s,Z_{\mathcal{H}}^s)(\tilde{v_2},u)$ would imply that $n_1(\tilde{v}_2)=\tilde{v}_2$, so it is incompatible with the given conditioning.

Putting everything together, the event \[\overline{\tilde{A}_2} \cap  \big\{\tilde{v}_2\neq \tilde{v}_3, (i_{\mathcal{H}},Z_{\mathcal{H}})(\tilde{v}_{2},\tilde{v}_{3})=(a,b),(i_{\mathcal{H}},Z_{\mathcal{H}})(\tilde{v}_{3},\tilde{v}_{2})=(a',b'),w(\tilde{v}_{2},\tilde{v}_{3})=\alpha \bigg\}  \]  can be rewritten as
\begin{align*}
\left\{ \maxxlex_{1 \leq j \leq N_1} (k, w_j)-(i_j,Z_j) \overset{\lex}{>} (k,\alpha)-(a',b') \right\} \cap
\left\{ (a,b)= \maxlex( (0,0), \maxlex_{1\leq j \leq N_1}\left( (k,w_j)-(i_j,Z_j) \right)
\right\}
\end{align*}
where $(i_j,Z_j),w_j,N_1$ are independent variables, each $(i_j,Z_j)$ has distribution $\zeta_h'$, each $w_j$ has distribution $\omega$ and $N_1$ has distribution $\hat{\pi}$.

To end the proof, we just need to check that the support of $(k,w_1)-(i_1, Z_1)$ intersects the lexicographic interval $\bigg((k,\alpha)-(a',b'),(a,b) \bigg)_{\lex}$. The support of $(k,w_1)-(i_1,Z_1)$ is precisely the set of differences between elements in the support of $\{0,\dots,k\} \times \mathrm{supp}(\omega)$ and of the support of $\zeta_{\mathcal{H}}'$. The quantity $(k,\alpha)-(a',b')$ also lies in the same support. We deduce that it is sufficient to show that the support of $\zeta_h'$ does not contain holes, in the sense that there does not exist pairs $(c,d)\overset{\lex}{<}(c',d')$ in $\mathrm{supp}(\zeta_h')$ such that $\bigg( (c,d),(c',d')\bigg)_{\lex} \cap \mathrm{supp}(\zeta_h') = \emptyset$. By focusing on the second component, if $c\neq 0$ or $k$, this can be seen from iterating the recursive equation which shows that the second component of the support of $\zeta_h'$ is additively stable with the span of alternating sums of the form $w^{\star}_1-w_2^{\star}+w_3^{\star}-w_4^{\star} + \cdots$ where each $w^{\star} \in \mathrm{supp}(\omega)$. A similar argument holds for $c=0$ but the support is restricted to $\mathbb{R}_+$ in this case. Since we chose $\mathrm{supp}(\omega)$ to contain some open interval, these alternating sums span $\mathbb{R}$, which concludes.
\end{proof}

\subsection{Proofs of Theorem~\ref{maintheorem} and of Theorem~\ref{maintheorem2}}\label{sec:mainresults}

In this section, we combine previous results in order to prove Theorems~\ref{maintheorem} and~\ref{maintheorem2}.
\begin{proof}[Proof of Theorem~\ref{maintheorem}]
The Theorem follows from directly combining Proposition~\ref{prop:optimality} and Proposition~\ref{prop:uniqueness} which respectively states that the matching distribution constructed from the messages is optimal, then that sampling the messages make the optimal matching almost surely unique.
\end{proof}

\begin{proof}[Proof of Theorem~\ref{maintheorem2}]
Applying \cite[Lemma~4.2]{enriquez2024optimalunimodularmatching},
we know that the sequence $(G_n,o_n,\mathbb{M}_{\mathrm{opt}})$ is tight for local convergence.
Take any subsequence $k_n$ such that $(G_{k_n},o_{k_n},\mathbb{M}_{\mathrm{opt}}(G_{k_n}))$ converges locally to some unimodular matched graph $(\mathbb{T},o,\mathbb{M})$, since size and performance are all expectations of $1-$local functions, we have that:
\begin{align*}
    \lim_{n \rightarrow \infty} \mathbb{P}(o_n \in \mathbb{M}_{\mathrm{opt}}(G_{k_n})))&=\mathbb{P}(o\in \mathbb{M}), \\
    \forall x>0, \lim_{n \rightarrow \infty} \mathbb{E}[w_n(o_n)\mathbbm{1}_{w_n(o_n)\leq x, o_n \in M_{\mathrm{opt}(G_n)}}] &= \mathbb{E}[w(o)\mathbbm{1}_{w(o)\leq x, o \in M_\mathrm{opt}(G_{k_n})}]. \\    
\end{align*}
Since $\perfE(\mathbb{T},\mathbb{M}_{\mathcal{H}})$ is lexicographically optimal, we have that the vector
\[  \left(\mathbb{P}(o \in \mathbb{M}), \mathbb{E}[w(o)\mathbbm{1}_{ o \in \mathbb{M})}] \right) \]
is lexicographically bounded by $\perfE(\mathbb{T},\mathbb{M}_{\mathcal{H}})$.
By monotone convergence, we have 
\begin{align*}
 \lim_{x \rightarrow +\infty} \mathbb{E}\left[w_n(o_n)\mathbbm{1}_{w_n(o_n)\leq x, o_n \in M_{\mathrm{opt}(G_n)}}\right] &=\mathbb{E}\left[(w_n(o_n)\mathbbm{1}_{o_n \in M_{\mathrm{opt}}(G_n)}\right] \\
 \lim_{x \rightarrow +\infty} \mathbb{E}\left[w(o)\mathbbm{1}_{w(o)\leq x, o_n \in \mathbb{M}}\right], &=\mathbb{E}\left[(w(o_n)\mathbbm{1}_{o \in \mathbb{M}}\right]. 
\end{align*}
Therefore, to conclude, we just need to justify that
\[ \lim_{x \rightarrow +\infty}\lim_{n \rightarrow \infty} \mathbb{E}[w_n(o_n)\mathbbm{1}_{w_n(o_n)\leq x, o_n \in M_{\mathrm{opt}(G_n)}}] =\lim_{n \rightarrow +\infty}\lim_{x \rightarrow \infty} \mathbb{E}[w_n(o_n)\mathbbm{1}_{w_n(o_n)\leq x, o_n \in M_{\mathrm{opt}(G_n)}}] \]
To this end, we bound the random variable
$w_n(o_n)\mathbbm{1}_{w_n(o_n)\leq x, o_n \in M_{\mathrm{opt}(G_n)}}$ by $w_n(o_n)$, which is uniformly integrable by hypothesis, and the definition of uniformly integrability allows us to switch the order of the limits.

Finally, to find a subsequence that converges locally to $\mathbb{M}_{\mathcal{H}}$, we apply the proof of \cite[Section~4]{enriquez2024optimalunimodularmatching}  and the corresponding matching law $\mathbb{M}_{\mathcal{H}}$ on $\mathbb{T}$. We will build approximations of $\mathbb{M}_{\mathcal{H}}$ on the finite graph $G_{n}$ with increasingly small error and arbitrarily close size and performance of $\mathbb{M}_{\mathcal{H}}$.
We use the same proof as \cite[Proposition~4.2]{enriquez2024optimalunimodularmatching}, except that we apply \cite[Proposition~4.10]{enriquez2024optimalunimodularmatching} local functions 
\begin{align*}
    f(T,M)&=\mathbbm{1}_{o \in M} ,\\
    g(T,M)&=w(o)\mathbbm{1}_{o \in M}
\end{align*}
that ensure asymptotic convergence of both size and performance.
This allows us to construct matchings $\tilde{M}_n$ on $G_n$ that have asymptotically optimal size and weight. But since there is a unique matching $\mathbb{M}_{\mathcal{H}}$ on the limit, that maximises first the size and then the weights, we deduce that the only Benjamini-Schramm subsequent limit of $(G_n,\tilde{M}_n)$ is $(\mathbb{T},o,w,\mathbb{M}_{\mathcal{H}})$, hence convergence.
\end{proof}

\section{The law of the messages}\label{sec:marginali}
In this section, we prove that under some technical conditions the distribution of the macroscopic marginal $i$ is explicit, and that the distribution of $(i,Z)$ is unique. We start by stating the theorem. Recall that $\{0,\dots,k\}$ is the support of $i$.
\begin{theorem}\label{uniqueness}
    Consider the map 
    \[
    F_\pi: x \in [0,1] \mapsto \phi(1-x)+\phi(1-\hat{\phi}(1-x))+\phi'(1)x\hat{\phi}(1-x)
    \]
    and suppose that $\argmax F_\pi$ has at most two elements.
    Then the distribution of $i$ is described by:
    \begin{enumerate}
        \item If $\argmax F_\pi$ is reduced to a single point $\gamma$, then $k=1$ and the distribution of $i$ is $\gamma \delta_0 + (1-\gamma) \delta_1$.
        \item If $\argmax F_\pi$ has two distinct points $0<\underline{\gamma}< \overline{\gamma}<1$, then $k=2$, and  the distribution of $i$ is $\underline{\gamma}\delta_0+(\overline{\gamma}-\underline{\gamma})\delta_1+(1-\overline{\gamma})\delta_2$.
        \item If $\argmax F_\pi$ is not as in one the previous two items, then it is equal to $\{0,1\}$, in which case $k=0$ and $i=0$ almost surely.
    \end{enumerate}
    Furthermore, let $b=\sup( \mathrm{supp}(\omega))$, assume that there exists $a<b$ such that the interval $(a,b)$ is included in $\mathrm{supp}(\omega)$, then there is a unique distribution $\zeta'$ solution to the system of equations~\eqref{eq:systemzoomh0singular}-\eqref{eq:systemzoomh1/2singular}.
\end{theorem}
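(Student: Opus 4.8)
## Plan of proof

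The plan is to prove the two claims separately. \textbf{First}, the identification of the law of $i$. The key observation is that the macroscopic components $(h_0, \ldots, h_k)$ evaluated at $\pm\infty$ satisfy a closed finite system: from \eqref{eq:systemzoomh} and \eqref{eq:systemzoomhzero}, writing $a_j = \lim_{t\to-\infty} h_j(t)$ and $b_j = \lim_{t\to+\infty} h_j(t)$, the ``no atoms'' condition \eqref{eq:systemzoomhnoatoms} gives $b_j = a_{j+1}$, so the full list of limit values is a single increasing sequence $0 = a_0 \le b_0 = a_1 \le \cdots \le b_{k-1} = a_k \le b_k = 1$ (here $a_0 = 0$ since $\hat\phi(0) > 0$). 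Taking $t \to \pm\infty$ in \eqref{eq:systemzoomh} forces $b_j = \hat\phi(1 - a_{k-j})$ and $a_j = \hat\phi(1-b_{k-j})$, so each $b_j$ is a fixed point of $t \mapsto \hat\phi(1-\hat\phi(1-t))$ and the pairing $b_j \leftrightarrow b_{k-j-1}$ is by the involution $t \mapsto \hat\phi(1-t)$. The weights of the atoms of $i$ are exactly the increments $a_{j+1} - a_j$ by \eqref{eq:zeta'translation}. So the law of $i$ is a measure on $\{0,\ldots,k\}$ with atom masses given by consecutive differences of a selected increasing chain of fixed points, symmetric under $t \mapsto \hat\phi(1-t)$.

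\textbf{Second}, I must connect this combinatorial freedom to $\argmax F_\pi$ and show uniqueness. The idea is to compute the performance $\perfV(\mathbb{M}_\mathcal{H})$ — or rather its first (size) coordinate — explicitly in terms of the atoms, mimicking the Karp–Sipser computation of Section~\ref{sec:heuristic}. That computation shows (compare \eqref{eq:ERtailleatomzero}, generalised) that the asymptotic matched fraction can be written as a telescoping integral that collapses to an expression depending only on the chosen chain of fixed points, and that this expression equals $2 - F_\pi(\cdot)$ evaluated at the relevant endpoint(s); more precisely one recognises $F_\pi$ as $x \mapsto \phi(1-x) + \phi(1-\hat\phi(1-x)) + \phi'(1)x\hat\phi(1-x)$ which is precisely the Bordenave–Lelarge–Salez functional from Remark~\ref{rem:BLS}. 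Since by Theorem~\ref{maintheorem} (via Proposition~\ref{prop:optimality}) the optimal matching \emph{maximises} the matched fraction, the correct chain of fixed points is the one maximising this functional, i.e.\ the one(s) in $\argmax F_\pi$. The three cases correspond exactly to: a single interior maximiser $\gamma$ (one jump, $k=1$, atoms at $0,1$ with masses $\gamma, 1-\gamma$); two interior maximisers $\underline\gamma < \overline\gamma$ (two jumps, $k=2$, using the chain $0 < \underline\gamma < \overline\gamma < 1$ — note $\underline\gamma, \overline\gamma$ must be swapped by $t\mapsto\hat\phi(1-t)$, which is automatic since $F_\pi$ is invariant under this involution); and $\argmax F_\pi = \{0,1\}$ (degenerate, $k=0$, $i\equiv 0$, meaning the matching is asymptotically perfect). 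The analytic input needed is that $F_\pi(x) = F_\pi(\hat\phi(1-x))$ and that critical points of $F_\pi$ in $(0,1)$ are fixed points of $t\mapsto\hat\phi(1-\hat\phi(1-t))$, both of which are direct differentiations.

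\textbf{Third}, for the uniqueness of $\zeta'$: once $k$ and the atom weights $a_{j+1}-a_j$ of $i$ are pinned down by the first part, it remains to show the microscopic profiles $h_j$ are themselves unique. The plan is to use Proposition~\ref{prop:uniqueness}: the optimal matching is unique in law, so the law of the decorations $(i_\mathcal{H}, Z_\mathcal{H})$ on the tree is determined by the matching up to what the matching cannot see. One runs the reverse direction: from a unimodular optimal matched tree, reconstruct the messages via the recursion \eqref{eq:systemrecursionzoombis} read backwards — the flexibility/self-loop variables $(i^s_\mathcal{H}, Z^s_\mathcal{H})(u,u)$ are matching-measurable (they encode the marginal gain of using vertex $u$), and along alternating paths one recovers all $Z$ values; the open-interval-support hypothesis on $\omega$ (with $b = \sup \mathrm{supp}(\omega)$ and $(a,b) \subset \mathrm{supp}(\omega)$) is what prevents degeneracies and guarantees the support of $\zeta'$ has no gaps (as already used in Lemma~\ref{lem:Mhstop}), so the decision rule \eqref{eq:decisionrule} actually determines $(i,Z)$ pointwise a.s. and hence in law. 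Then $h_j = $ the cdf-increments of $Z$ conditioned on $i = j$ are forced.

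\textbf{The main obstacle} I expect is the second step: rigorously carrying out the generalised Karp–Sipser integral computation in full generality (arbitrary $k$, arbitrary chain of fixed points, with the boundary atom at $0$ of weight $\beta = h_0(0)$ requiring separate bookkeeping as in \eqref{eq:ERtailleatomzero}) and identifying the resulting closed form with $2 - \max F_\pi$, rather than just in the Poisson case treated heuristically. The telescoping is clean when the substitution $u = h_j(t)$ is made, but one must be careful that the pieces glue correctly at the fixed points, that the $t \ge 0$ restriction in \eqref{eq:systemzoomhzero} contributes exactly the $-\beta\ln\beta/c$-type correction, and — crucially — that among all admissible chains the performance is genuinely maximised at the global maximiser of $F_\pi$ and not merely at a critical point; this last point is where the ``at most two global maxima'' hypothesis enters, ruling out the degenerate intermediate configurations and forcing $k \le 2$.
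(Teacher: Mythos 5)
There is a genuine gap at the heart of your second step, the identification of the jump locations with $\argmax F_\pi$. Your argument is that the optimal matching maximises the matched fraction, hence ``the correct chain of fixed points is the one maximising $F_\pi$''. But the size formula goes the other way: for any solution $\mathcal{H}$ one gets $S_E(\pi)=\bigl(2-F_\pi(l_{\mathcal{H},1})\bigr)/\phi'(1)$, so a chain sitting at a point of $\argmax F_\pi$ yields the \emph{smallest} candidate density, not the largest. Optimality among unimodular matchings therefore cannot, by itself, select the argmax chain; what is needed is the opposite inequality, namely that no unimodular matching on the tree can have density exceeding $\bigl(2-\max_{[0,1]}F_\pi\bigr)/\phi'(1)$. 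The paper obtains this by combining Theorem~\ref{maintheorem2} with the Bordenave--Lelarge--Salez formula for the asymptotic size of maximum matchings on finite graphs \cite{salez2011cavity}; you mention Remark~\ref{rem:BLS} in passing but never invoke this finite-graph upper bound, and without it the step does not close. A second, related omission: to conclude $k\le 2$ and the stated atom masses you need \emph{all} boundary values $l_{\mathcal{H},j}$ (not only the extreme ones entering the size formula) to lie in $\argmax F_\pi$. The paper proves that $F_\pi(l_{\mathcal{H},j})$ is independent of $j$ via the conservation identities $I_j=I_{k-j}$ obtained from the mixing probabilities $\mathbb{P}(Z_j+Z'_{k-j}<W)$ (Proposition~\ref{prop:bordsgeneral} and Lemma~\ref{lem:atomzero}); your telescoping computation of the total matched fraction only involves the endpoint and does not supply this.

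Your third step also has a gap. You claim that, from the unimodular optimal matched weighted tree, the messages $(i_{\mathcal{H}},Z_{\mathcal{H}})$ can be reconstructed pointwise almost surely (via matching-measurable flexibilities and alternating paths), so that uniqueness of the matching law forces uniqueness of $\zeta'$. This reconstruction is not established and is not obvious: on the infinite tree the flexibility has no direct interpretation as a difference of optimal values, and the decision rule only yields inequalities between messages and weights, not their values. The paper's actual mechanism is different: using uniqueness in law of $\mathbb{M}_{\mathrm{opt}}$ (Theorem~\ref{maintheorem}), it conditions on long simple paths with weights constrained to $(a,b)$ and computes probabilities of explicit local events ($v_H$ unmatched, then $(v_{H-1},v_H)\notin\mathbb{M}$); varying the weight sequences and differentiating these invariants recovers first $h_2$ and $h_0$, and then $h_1$ through a convolution/characteristic-function argument (with a separate device when $\hat{\pi}_1=0$). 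If you want to keep your reconstruction route, you would need an actual proof that the matched weighted tree determines the messages a.s., which is a substantial missing piece.
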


\subsection{Proof of the shape of the macroscopic messages in Theorem~\ref{uniqueness}}
In this subsection we prove that every solution to the system~\eqref{eq:system} is of the form stated by Theorem~\ref{uniqueness}.

Let $\mathcal{H}(k,h_{0},....,h_{k})$ be any solution to the system~\eqref{eq:system} and define  for $0<j\leq k$ \[l_{\mathcal{H},j} := \mathbb{P}(i_{\mathcal{H}}<j)=\lim_{t \rightarrow -\infty}h_j(t) .\]
Note that $\hat{\phi}(1-l_{\mathcal{H},k-j+1})=l_{\mathcal{H},j}$. We will show the following proposition in the next subsection, which states that for every $j$, the value of $F_\pi(l_{\mathcal{H},j})$ does not change:

\begin{prop}\label{prop:bordsgeneral}
The quantity $F_\pi(l_{\mathcal{H},j})$ does not depend on $j$.
\end{prop}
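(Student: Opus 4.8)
\textbf{Proof plan for Proposition~\ref{prop:bordsgeneral}.}

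The plan is to extract an identity relating $F_\pi$ evaluated at consecutive "boundary levels" $l_{\mathcal{H},j}$ by running the mass-transport principle on the edge-rooted UBGW tree equipped with the message decoration $(i_\mathcal{H},Z_\mathcal{H})$, and then to observe that this identity forces $F_\pi(l_{\mathcal{H},j})$ to be constant in $j$. First I would recall that $F_\pi(x) = \phi(1-x)+\phi(1-\hat\phi(1-x))+\phi'(1)\,x\,\hat\phi(1-x)$ is, up to the multiplicative normalisation $\phi'(1)$ and a constant, exactly the (vertex) performance functional $2 - F_\pi$ that appears in Remark~\ref{rem:BLS}: it is the expected "gain" at the root of a matching whose law of matched/unmatched status is governed by the macroscopic component being concentrated at the $l$'s. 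The key point is that for a message distribution $\zeta'_\mathcal{H}$ with macroscopic marginal supported on $\{0,\dots,k\}$, the event $\{i_\mathcal{H}(u,v) < j\}$ has probability $l_{\mathcal{H},j} = \hat\phi(1-l_{\mathcal{H},k-j+1})$, and the complementary relation $l_{\mathcal{H},k-j+1} = \hat\phi(1 - l_{\mathcal{H},j})$ holds by~\eqref{eq:systemzoomh}--\eqref{eq:systemzoomhnoatoms}, i.e.\ the pair $(l_{\mathcal{H},j}, l_{\mathcal{H},k-j+1})$ is a fixed point of the two-variable system $x=\hat\phi(1-y),\ y=\hat\phi(1-x)$.

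The heart of the argument is to compute, in two different ways, a probability of the form "the root edge is, at the macroscopic level, a tie between level $j$ and level $k-j$" — concretely, $\mathbb{P}\big(i_\mathcal{H}(o_-,o_+) + i_\mathcal{H}(o_+,o_-) = k,\ i_\mathcal{H}(o_-,o_+) = j\big)$ and related quantities — and to relate these to the atoms of the vertex-rooted matching law via Proposition~\ref{prop:chgmtpdv} and the decision rule~\eqref{eq:decisionrule}. I would set up a mass-transport function $f$ on doubly-rooted decorated trees that transports unit mass along the edge of $\mathbb{M}_\mathcal{H}$ restricted to the sub-event selecting a given macroscopic level, exactly mimicking the Karp--Sipser computation in Subsection~\ref{sec:heuristic}: there the substitution $u=h_i(t)$ collapses the integrals $\int \mathbb{E}[h_{k-j}(W-t)]\,\mathrm{d}h_j(t)$ into integrals of $-\ln u / c$ (in the Poisson case) whose values depend only on the endpoints $l_{\mathcal{H},j}$. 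For general $\phi$ the substitution replaces $-\ln(u)/c$ by $\hat\phi^{-1}$-type expressions, but the structural feature persists: every integral $\int \mathbb{E}[h_{k-j}(W-t)]\,\mathrm{d}h_j(t)$ equals a function of $(l_{\mathcal{H},j}, l_{\mathcal{H},j+1})$ alone, because $\mathbb{E}[h_{k-j}(W-t)]$ is a deterministic function of $h_j(t)$ through~\eqref{eq:systemzoomh}. Telescoping these across $j$, and matching the resulting alternating sum against the performance functional, yields $F_\pi(l_{\mathcal{H},j}) = F_\pi(l_{\mathcal{H},j+1})$ for each $j$, which is the claim.

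Concretely, I would (i) write $p_j := \mathbb{P}(i_\mathcal{H}(o_-,o_+)=j)$ and express the "edge-matched at macroscopic level $j$" probability as $\int \mathbb{E}[h_{k-j}(W-t)]\,\mathrm{d}h_j(t)$, with the boundary term correction at $t=0$ for $j=0$ exactly as in~\eqref{eq:ERtaillecalcul}; (ii) apply mass-transport to re-express the same quantity by conditioning on the \emph{other} endpoint, producing an identity of the type displayed just before~\eqref{eq:ERtailleatomzero} relating the $j$-th and $(k-j)$-th contributions; (iii) perform the change of variables $u=h_j(t)$ and use~\eqref{eq:systemzoomh} to rewrite $\mathbb{E}[h_{k-j}(W-t)] = \hat\phi^{-1}\big(h_j(t)\big)$-type expressions, reducing everything to closed-form integrals in $u$ over $[l_{\mathcal{H},j}, l_{\mathcal{H},j+1}]$; (iv) recognise, after summation, the combination $\phi(1-l)+\phi(1-\hat\phi(1-l))+\phi'(1)\,l\,\hat\phi(1-l)$ evaluated at the endpoints. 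The main obstacle will be step (iii)--(iv): keeping track of the boundary atoms (the atom of $h_0$ at $0$, and a possible atom at $\tfrac12$ when $k$ is even, cf.\ the two cases at the end of the proof of Lemma~\ref{prop:distributionzoom}) and verifying that the telescoping of the integrated quantities reproduces exactly $F_\pi$ and not some other antiderivative — in particular checking that the primitive of $\hat\phi^{-1}$ assembles into the terms $\phi(1-\hat\phi(1-x))$ and $\phi'(1)x\hat\phi(1-x)$ requires an integration-by-parts identity of the form $\int_a^b \hat\phi^{-1}(u)\,\mathrm{d}u = [u\,\hat\phi^{-1}(u)]_a^b - \int \hat\phi(v)\,\mathrm{d}v$ combined with $\phi' = \phi'(1)\hat\phi$. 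Once that bookkeeping is in place, constancy of $F_\pi(l_{\mathcal{H},j})$ is immediate.
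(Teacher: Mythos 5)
Your plan is essentially the paper's own argument: the paper also introduces the mixing quantity $I_j=\mathbb{P}(Z_j+Z'_{k-j}<W)$, evaluates it two ways (integrating against $\mathrm{d}h_j$ or $\mathrm{d}h_{k-j}$, keeping the cross terms $l_{\mathcal{H},j}l_{\mathcal{H},k-j+1}$ from the mass at $-\infty$), performs the substitution $u=h_j(t)$ with $\mathbb{E}[h_{k-j}(W-t)]=1-\hat\phi^{-1}(h_j(t))$, and then uses the integral-of-inverse-function identity with the primitive $-\phi(1-\cdot)/\phi'(1)$ together with $l_{\mathcal{H},j}=\hat\phi(1-l_{\mathcal{H},k-j+1})$ to recognise $F_\pi$ at consecutive levels and conclude by induction. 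The only cosmetic difference is that you invoke the mass-transport principle where the paper needs only the exchangeability of the two independent copies ($I_j=I_{k-j}$), so your approach is correct and matches the paper's.
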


Next, we will show that any $F_\pi$ is maximal at the points $l_{\mathcal{H},j}$.  To this end, we will show the following Proposition~\ref{coro:formulasize}:

\begin{restatable}{prop}{formulasize}[Fixed point size formula]\label{coro:formulasize}
    We have the following formula linking the size of optimal maximal matchings $S_E(\pi)$ to $l_{\mathcal{H},1}$ and $l_{\mathcal{H},k}$:
    \begin{equation}\label{eq:formulasize}
    \begin{aligned}
        S_E(\pi)&=\frac{2-F_\pi(l_{\mathcal{H},1})}{\phi'(1)}.
    \end{aligned}
    \end{equation}
    \end{restatable}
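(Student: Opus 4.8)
The plan is to establish the "fixed point size formula" \eqref{eq:formulasize} by computing the asymptotic edge-density of the optimal matching, $S_E(\pi) := \mathbb{P}((o_-,o_+) \in \mathbb{M}_{\mathrm{opt}})$, directly from the renormalised message variables, exactly mimicking the heuristic computation carried out in Section~\ref{sec:heuristic} for the Poisson case but now for a general offspring law and a general solution $\mathcal{H} = (k, h_0, \dots, h_k)$ of the system~\eqref{eq:system}. Recall that the root edge is matched iff $(i_\mathcal{H}, Z_\mathcal{H})(o_-,o_+) + (i_\mathcal{H}, Z_\mathcal{H})(o_+,o_-) \overset{\lex}{<} (k, w(o_-,o_+))$, where the two messages are independent of law $\zeta'_\mathcal{H}$ and $w$ is an independent weight of law $\omega$. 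Writing the messages in the list representation $(Z_0, \dots, Z_k)$ with associated càdlàg functions $(h_0, \dots, h_k)$, the matching event decomposes as $\bigcup_{j=0}^k \{Z_{a,j} + Z_{b,k-j} < w\}$ (disjoint up to a null set since $\omega$ is atomless), so
\[
S_E(\pi) = \sum_{j=0}^{k} \int \mathbb{E}\bigl[h_{k-j}(W - t)\bigr]\, \mathrm{d}h_j(t),
\]
where $\mathrm{d}h_j$ denotes the (sub-probability) measure associated with the $j$-th marginal, the $j=0$ term being integrated over $[0,\infty)$ because of the indicator in \eqref{eq:systemzoomhzero}.

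Next I would substitute the system relations. From \eqref{eq:systemzoomh} and \eqref{eq:systemzoomhzero}, $\mathbb{E}[h_{k-j}(W-t)] = 1 - \hat\phi^{-1}(h_j(t))$ for $1 \le j \le k$ and $\mathbb{E}[h_k(W-t)] = 1 - \hat\phi^{-1}(h_0(t))$ on $[0,\infty)$, where $\hat\phi^{-1}$ is the inverse of $\hat\phi$ restricted to the relevant range (this is legitimate since $\hat\phi$ is strictly increasing on $(0,1)$ when $\pi$ is non-degenerate). Plugging in and then performing the change of variables $u = h_j(t)$ in each integral collapses the sum into a single integral of $u \mapsto 1 - \hat\phi^{-1}(u)$ over $[\,\ell_{\mathcal{H},0}', \dots\,]$ — more precisely, because of the "no atoms" gluing condition \eqref{eq:systemzoomhnoatoms} the successive ranges $[\,h_j(-\infty), h_j(+\infty)\,]$ tile an interval, and all terms telescope. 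The only subtlety, exactly as in \eqref{eq:ERtailleatomzero}, is the atom of $h_0$ at $0$ of mass $\beta := h_0(0) = \hat\phi(0) > 0$ (here I use the leaves assumption $\hat\phi(0)>0$); its contribution must be added separately as $\beta \cdot \mathbb{E}[h_k(W-0^-)]$-type boundary term. After the dust settles one obtains an expression of the form $S_E(\pi) = \tfrac{1}{\phi'(1)}\bigl(c_1 - c_2 \beta + (\text{terms in } \ell_{\mathcal{H},1})\bigr)$, and one then recognises, using $\hat\phi = \phi'/\phi'(1)$ and the identities $\hat\phi(1-\ell_{\mathcal{H},k-j+1}) = \ell_{\mathcal{H},j}$ together with the definition of $F_\pi$, that the whole thing equals $\tfrac{2 - F_\pi(\ell_{\mathcal{H},1})}{\phi'(1)}$. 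The identification of $\beta$ in terms of the boundary fixed points — done in the heuristic by a second mass-transport-style computation of $\mathbb{P}(Z_{a,0} + Z_{b,k} < W)$ — generalises verbatim: conditioning on $Z_{b,k}$ versus $Z_{a,0}$ yields $\int_0^\infty \mathbb{E}[h_k(W-t)]\mathrm{d}h_0(t) = \mathbb{P}(Z_{b,k} = -\infty, Z_{a,0} < \infty) + \int_{-\infty}^0 \mathbb{E}[h_0(W-t)]\mathrm{d}h_k(t)$, and the same substitution-and-change-of-variables trick extracts $\beta$ as an algebraic function of $\ell_{\mathcal{H},1}, \ell_{\mathcal{H},k}$ and the fixed-point data.

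The main obstacle I anticipate is bookkeeping rather than conceptual: keeping track of exactly which integrals run over $\mathbb{R}$ versus $[0,\infty)$, correctly handling the atom at $0$ (and, if $\hat\phi(0)=0$, the alternative system~\eqref{eq:systemzoomh1/2singular}, which the paper sidesteps by assuming leaves), and making sure the telescoping of the $k{+}1$ terms genuinely closes up using \eqref{eq:systemzoomhnoatoms} without leftover boundary terms. A secondary technical point is justifying the change of variables $u = h_j(t)$ when $h_j$ has jumps: one should argue via the pushforward of $\mathrm{d}h_j$ under $h_j$ itself, noting that atoms of $\mathrm{d}h_j$ correspond to flat stretches of the integrand's argument and contribute cleanly. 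Once \eqref{eq:formulasize} is in hand, combining it with Proposition~\ref{prop:bordsgeneral} (which says $F_\pi(\ell_{\mathcal{H},j})$ is independent of $j$) and the fact that $S_E(\pi)$ is an intrinsic quantity (it does not depend on the chosen solution $\mathcal{H}$, since by Theorem~\ref{maintheorem} the optimal matching is unique in law) forces every $\ell_{\mathcal{H},j}$ to be a global maximiser of $F_\pi$; feeding this into the hypothesis that $\argmax F_\pi$ has at most two points then pins down $k$ and the law of $i$ as claimed, completing the proof of the shape statement in Theorem~\ref{uniqueness}.
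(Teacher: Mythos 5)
Your plan is essentially the paper's own proof: the displayed sum of integrals is exactly the decomposition used to prove Lemma~\ref{lem:taille}, the identity obtained by computing $\mathbb{P}(Z_{a,0}+Z_{b,k}<W)$ by conditioning on either factor is the conservation equation~\eqref{eq:atomzerobords} of Lemma~\ref{lem:atomzero}, and the final substitution $u=h_j(t)$ followed by the integral-of-inverse-function algebra with $\mathcal{F}(u)=\hat{\phi}(1-u)$ and primitive $-\phi(1-u)/\phi'(1)$ is precisely how the paper deduces $S_E(\pi)=\bigl(2-F_\pi(l_{\mathcal{H},1})\bigr)/\phi'(1)$. So in substance you are reproducing the paper's argument, and the outline is sound.

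Two corrections. First, the parenthetical ``disjoint up to a null set since $\omega$ is atomless'' is wrong: the events $\{Z_{a,j}+Z_{b,k-j}<w\}$, $0\le j\le k$, are not pairwise disjoint (for instance when $i_a+i_b<k-1$ several of them occur simultaneously, because coordinates below the index $i$ equal $-\infty$), and atomlessness of $\omega$ is irrelevant here. The correct decomposition --- which is in fact what your displayed formula encodes, since $\mathrm{d}h_j$ carries only the finite part of the $j$-th marginal --- is according to the value of $i_a$, i.e.\ according to which coordinate of the list is the non-degenerate one; this is exactly the $I_j$ versus $I_j-l_{\mathcal{H},j}l_{\mathcal{H},k-j+1}$ subtlety the paper points out in the proof of Lemma~\ref{lem:taille}, and you should justify the decomposition that way rather than by disjointness, or you risk the double-counting trap the paper warns against. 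Second, in your closing paragraph, knowing that $S_E(\pi)$ is intrinsic (independent of $\mathcal H$) and that $F_\pi(l_{\mathcal{H},j})$ is constant in $j$ does not by itself force the $l_{\mathcal{H},j}$ to be global maximisers of $F_\pi$; one needs the actual value of $S_E(\pi)$, which the paper imports from Bordenave--Lelarge--Salez, namely $S_E(\pi)=\bigl(2-\max_{[0,1]}F_\pi\bigr)/\phi'(1)$, combined with Theorem~\ref{maintheorem2}. That point lies outside the statement under review, but it matters for how formula~\eqref{eq:formulasize} is subsequently used in Theorem~\ref{uniqueness}.
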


Combining our Theorem~\ref{maintheorem2} and Theorem 1 of C. Bordenave, M. Lelarge and J. Salez, \cite{salez2011cavity}, we know that

\[
S_E(\pi) = \frac{2-\max_{[0,1]}F_\pi}{\phi'(1)}.
\]
Therefore, for all $0<j\leq k$ we have $l_{\mathcal{H},j} \in \argmax F_\pi$ giving the first part of Theorem~\ref{uniqueness}.

\subsection{Proofs of Proposition~\ref{prop:bordsgeneral} and Proposition~\ref{prop:chgmtpdv}}
In this section we prove Propositions~\ref{prop:bordsgeneral} and~\ref{coro:formulasize}. We start with three useful lemmas.

\begin{restatable}{lemma}{lemmetaille}[Formula for the size of maximum matchings]\label{lem:taille}
    For  any solution ${\mathcal{H}}$ to the system~\eqref{eq:system}, set $\beta_{\mathcal{H}}:=\mathbb{P}_{(i_{\mathcal{H}},Z_{\mathcal{H}})\sim \zeta_{\mathcal{H}}'}((i_{\mathcal{H}},Z_{\mathcal{H}})=(0,0))$.
 Then $\beta:=\beta_{\mathcal{H}}$ does not depend on the choice of $\mathcal{H}$ and is linked to the size of optimal unimodular matchings on $\mathbb{T}$ by:
    \begin{equation}\label{eq:sizezeroatom}
        S_E(\pi):= \mathbb{P}(\overset{\rightarrow}{o}\in \mathbb{M}_{\mathrm{opt}}) = \frac{1-\phi(\hat{\phi}^{-1}(\beta))}{\hat{\phi}'(1)}= \beta(1-\hat{\phi}^{-1}(\beta)) + \int_{\beta}^{1}(1-\hat{\phi}^{-1}(u))\mathrm{d}u.
    \end{equation}
    In particular, $\beta=0$ corresponds to perfect matchings and can only occur when $\hat{\phi}(0)=0$, which is the case where the tree has no leaves.
\end{restatable}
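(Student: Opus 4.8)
The plan is to express the probability $S_E(\pi)=\mathbb P(\overset{\rightarrow}{o}\in\mathbb M_{\mathrm{opt}})$ in terms of the message distribution $\zeta'_{\mathcal H}$ and then integrate out, ultimately reducing everything to the single atom $\beta_{\mathcal H}=\mathbb P((i_{\mathcal H},Z_{\mathcal H})=(0,0))$. Concretely, starting from the decision rule \eqref{eq:decisionrule}, the root edge $(o_-,o_+)$ is in $\mathbb M_{\mathrm{opt}}$ iff $(i_{\mathcal H},Z_{\mathcal H})(o_-,o_+)+(i_{\mathcal H},Z_{\mathcal H})(o_+,o_-)\overset{\lex}{<}(k,w(o_-,o_+))$, with the two messages independent of law $\zeta'_{\mathcal H}$ and of $w\sim\omega$. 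First I would write this probability as a sum over the "level" $i$ of one message: using the representation $\mathbf Z_{\mathcal H}=(Z_{\mathcal H,0},\dots,Z_{\mathcal H,k})$, the event splits according to which pair $(i,k-i)$ of non-degenerate coordinates is active, giving an expression of the shape
\[
S_E(\pi)=\sum_{i=0}^{k}\int_{\mathbb R}\mathbb E\big[\,h_{k-i}(W-t)\,\big]\,\mathrm d h_i(t),
\]
exactly as in the Karp–Sipser computation of Subsection~\ref{subsec:heuristic} (the integral for $i=0$ being restricted to $[0,+\infty)$ because of the indicator in \eqref{eq:systemzoomhzero}).

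Next I would substitute the system \eqref{eq:system} itself: equation \eqref{eq:systemzoomh}–\eqref{eq:systemzoomhzero} says $\mathbb E[h_{k-i}(W-t)]=1-\hat\phi^{-1}(h_i(t))$ on the relevant range (with the extra indicator $\mathbbm 1_{t\ge 0}$ for $i=0$), so each summand becomes $\int (1-\hat\phi^{-1}(h_i(t)))\,\mathrm d h_i(t)$. Then the change of variables $u=h_i(t)$ turns the $i$-th term into $\int_{c_i}^{c_{i+1}}(1-\hat\phi^{-1}(u))\,\mathrm d u$ where $c_i=\lim_{t\to-\infty}h_i(t)$ and $c_{i+1}=\lim_{t\to+\infty}h_i(t)=\lim_{t\to-\infty}h_{i+1}(t)$ by the no-atom condition \eqref{eq:systemzoomhnoatoms}; for $i=0$ one must separately account for the atom of $h_0$ at $0$ of mass $\beta_{\mathcal H}=h_0(0)-\lim_{t\to-\infty}h_0(t)=h_0(0)$ (since $\lim_{t\to-\infty}h_0=0$ when $\hat\phi(0)>0$), contributing $\beta_{\mathcal H}(1-\hat\phi^{-1}(\beta_{\mathcal H}))$. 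Because the $c_i$ form a partition of $[\beta_{\mathcal H},1]$ along the range of the $h_i$'s, the telescoping sum collapses to
\[
S_E(\pi)=\beta_{\mathcal H}\big(1-\hat\phi^{-1}(\beta_{\mathcal H})\big)+\int_{\beta_{\mathcal H}}^{1}\big(1-\hat\phi^{-1}(u)\big)\,\mathrm d u,
\]
and an integration by parts (or the substitution $v=\hat\phi^{-1}(u)$, using $\phi'(1)=\hat\phi'(1)\cdot$, careful bookkeeping of $\phi$ vs $\hat\phi$) rewrites the right-hand side as $\tfrac{1-\phi(\hat\phi^{-1}(\beta_{\mathcal H}))}{\hat\phi'(1)}$, which is the claimed formula. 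Since $S_E(\pi)$ is the size of the optimal matching — already known to be a well-defined quantity independent of $\mathcal H$ by Proposition~\ref{prop:optimality} (all optimal matchings share the same unmatched vertices) — the displayed identity forces $\beta_{\mathcal H}$ to be constant in $\mathcal H$, because $u\mapsto 1-\phi(\hat\phi^{-1}(u))$ is strictly monotone; this gives the "$\beta$ does not depend on $\mathcal H$" claim. Finally $\beta=0$ iff $\hat\phi^{-1}(0)$ is well defined and $\phi(\hat\phi^{-1}(0))=1-S_E(\pi)\hat\phi'(1)$ forces a perfect matching, and $\beta\ge\hat\phi(0)$ from \eqref{eq:systemzoomhzero} evaluated near $t=0^+$, so $\beta=0$ is only possible when $\hat\phi(0)=0$, i.e.\ the leafless case.

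The main obstacle I anticipate is purely bookkeeping rather than conceptual: getting the two boundary/indicator subtleties exactly right — namely the $\mathbbm 1_{t\ge0}$ truncation in the $i=0$ integral (which is what produces the isolated $\beta_{\mathcal H}$ atom term rather than letting it be swallowed by the telescoping) and the correct placement of $\phi$ versus $\hat\phi$ and of the normalising constant $\hat\phi'(1)=\phi'(1)$ when converting between the "edge density" $S_E$ and the integral $\int_\beta^1(1-\hat\phi^{-1})$. A secondary point needing care is justifying the change of variables $u=h_i(t)$ when $h_i$ is only càdlàg nondecreasing (possible flat pieces and jumps): one handles this with the standard Lebesgue–Stieltjes substitution, noting that flat pieces of $h_i$ contribute zero to $\mathrm d h_i$ and jumps of $h_i$ land exactly on fixed points of $\hat\phi(1-\hat\phi(1-\cdot))$ which, by the no-atom condition \eqref{eq:systemzoomhnoatoms} built into the system, are matched across consecutive indices and therefore cause no double counting.
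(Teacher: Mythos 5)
Your proposal is correct and follows essentially the same route as the paper's proof: there, one likewise decomposes $\mathbb{P}(\overset{\rightarrow}{o}\in\mathbb{M}_{\mathrm{opt}})$ according to the active index through the mixed probabilities $I_j=\mathbb{P}(Z_j+Z'_{k-j}<W)$, uses the functional equations to rewrite the inner expectation as $1-\hat{\phi}^{-1}(h_j(t))$, pushes forward by $h_j$ (the atom of $h_0$ at $0$ giving the term $\beta(1-\hat{\phi}^{-1}(\beta))$), and telescopes over $j$, with the independence of $\beta$ in $\mathcal{H}$ coming, exactly as you say, from the uniqueness of the optimal matching together with strict monotonicity in $\beta$. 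The only slight inaccuracy is your justification of the change of variables at jumps: the reason there are no interior atoms is that $\omega$ is atomless, so each $h_j$ is continuous (and $h_0$ is continuous away from $0$), rather than any matching of jumps across consecutive indices — a harmless point that does not affect the argument (and note the denominator in the middle expression should be read as $\phi'(1)$, as in Proposition~\ref{coro:formulasize}).
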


\begin{restatable}{lemma}{atomzero}[Conservation equations]\label{lem:atomzero}

    For any solution $\mathcal{H}=(k,(h_0,h_{1},....,h_{k-1},h_k))$ to the system~\eqref{eq:system},  recall the boundary condition $l_{\mathcal{H},j}$ of $h_j$:
    \begin{align*}
        \forall 0 < j \leq k, \quad l_{\mathcal{H},j}:= \lim_{t \rightarrow -\infty} h_j(t)=\mathbb{P}(i_\mathcal{H}<j).
    \end{align*}
    Then the boundary conditions $l_{\mathcal{H},j}$ satisfy the following conservation equations:
    \begin{align}
    \beta (1-\hat{\phi}^{-1}(\beta))+\int_{\beta}^{l_{\mathcal{H},1}}(1-\hat{\phi}^{-1}(u))\mathrm{d}u &=l_{\mathcal{H},k}l_{\mathcal{H},1} + \int_{l_{\mathcal{H},k}}^{1}(1-\hat{\phi}^{-1}(u))\mathrm{d}u,\label{eq:atomzerobords} \\
        l_{\mathcal{H},j}l_{\mathcal{H},k-j+1}+ \int_{l_{\mathcal{H},j}}^{l_{\mathcal{H},j+1}}(1-\hat{\phi}^{-1}(u))\mathrm{d}u &= l_{\mathcal{H},j+1}l_{\mathcal{H},k-j} + \int_{l_{\mathcal{H},k-j}}^{l_{\mathcal{H},k-j+1}}(1-\hat{\phi}^{-1}(u))\mathrm{d}u. \label{eq:energysave}
    \end{align}
\end{restatable}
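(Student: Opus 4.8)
\textbf{Proof plan for Lemma~\ref{lem:atomzero} (Conservation equations).}

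The plan is to prove each identity by computing a single probability in two different ways, using the mass-transport principle (unimodularity of the tree with messages) exactly as in the Karp--Sipser computation of Section~\ref{sec:heuristic}, but now in full generality. The ``substitution trick'' there — rewriting $\mathbb{E}[h_{j'}(W-t)]$ using the system~\eqref{eq:system} and then substituting $u = h_j(t)$ in the resulting Stieltjes integral — is the engine of the whole argument, and I will reuse it verbatim.

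First, for the general identity~\eqref{eq:energysave}: fix $0 < j < k$ and consider the probability that, for a uniformly rooted directed edge $(o_-,o_+)$ of the decorated tree, one has $(i_\mathcal{H},Z_\mathcal{H})(o_-,o_+) + (i_\mathcal{H},Z_\mathcal{H})(o_+,o_-) \overset{\lex}{<} (k,w(o_-,o_+))$ \emph{restricted to the sub-event where the macroscopic components split as $(j,k-j)$ versus $(k-j,j)$}. More concretely, I will consider the event $\{i_\mathcal{H}(o_-,o_+) = j-1\} \cap \{Z_\mathcal{H}(o_-,o_+) + Z_\mathcal{H}(o_+,o_-) < w(o_-,o_+)\} \cap \{i_\mathcal{H}(o_+,o_-) \le k-j\}$ or the analogous slice — the point is to isolate one ``level'' of the macroscopic messages. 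Conditioning on $Z_\mathcal{H}(o_-,o_+)=t$ and integrating against $\mathrm{d}h_j$ gives one side; conditioning on $Z_\mathcal{H}(o_+,o_-)=t$ and integrating against $\mathrm{d}h_{k-j}$ gives the other. Because the law of $(i_\mathcal{H},Z_\mathcal{H})$ on the two half-edges of the root, conditionally on the macroscopic levels, is that of two independent copies of (the relevant conditioned) $\zeta'_\mathcal{H}$ — which is exactly what the recursion~\eqref{eq:Zlawbis} and the construction in Proposition~\ref{prop:Zconstruction} provide — these two expressions are equal. Plugging in $\mathbb{E}[h_{k-j}(W-t)]$ and $\mathbb{E}[h_{j}(W-t)]$ via~\eqref{eq:systemzoomh}, substituting $u = h_j(t)$ (resp. $u=h_{k-j}(t)$), and using $h_j(-\infty)=l_{\mathcal{H},j}$, $h_j(+\infty)=l_{\mathcal{H},j+1}$ together with $\hat\phi(1-l_{\mathcal{H},k-j+1}) = l_{\mathcal{H},j}$ yields precisely~\eqref{eq:energysave}. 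The identity~\eqref{eq:atomzerobords} is the ``$j=0$'' boundary case: here $h_0$ satisfies~\eqref{eq:systemzoomhzero} with the indicator $\mathbbm{1}_{t\ge 0}$, so the integral against $\mathrm{d}h_0$ only runs over $[0,+\infty)$ and picks up an extra atom $\beta = h_0(0)$ at $0$ (contributing the $\beta(1-\hat\phi^{-1}(\beta))$ term, after using $\mathbb{E}[h_k(W-t)]$ at $t=0$ to identify $\hat\phi^{-1}(\beta)$), while the other side is the ``$h_k$ vs.\ nothing'' term giving $l_{\mathcal{H},k} l_{\mathcal{H},1} + \int_{l_{\mathcal{H},k}}^1(1-\hat\phi^{-1}(u))\,\mathrm{d}u$; this is the exact analogue of the identity $\frac{\beta\ln\beta}{c} - \int_\beta^{\underline\gamma}\frac{\ln u}{c} = \overline\gamma\underline\gamma - \int_{\overline\gamma}^1\frac{\ln u}{c}\,\mathrm{d}u$ obtained in the Poisson heuristic.

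I will have to be careful about two technical points. First, the change of variables $u = h_j(t)$ is only licit where $h_j$ is continuous and strictly increasing; since $h_j$ is càdlàg and may have jumps, I will phrase everything in terms of Stieltjes integrals $\int \Psi(h_j(t))\,\mathrm{d}h_j(t)$ and invoke the standard change-of-variables formula for monotone functions, checking that the atoms of $\mathrm{d}h_j$ contribute correctly (the atom of $\mathrm{d}h_0$ at $t=0$ is the only one that survives in the limit, by condition~\eqref{eq:systemzoomhnoatoms}, which is why it appears explicitly). Second, I must ensure the integrals converge: this follows because $1-\hat\phi^{-1}(u)$ is bounded on $[\hat\phi(0),1]$ and the $h_j$ have total variation at most $1$; integrability of $w$ (hence of the relevant tails of $Z_\mathcal{H}$ in expectation) is what makes the two-way computation of the probability legitimate, and here I can cite the same domination argument used in the proof of Theorem~\ref{maintheorem2}.

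The main obstacle I anticipate is \textbf{bookkeeping the conditioning correctly} — making sure that the ``slice'' of the matching-probability event I pick out on one side (conditioning on the macroscopic level of the outgoing message) genuinely matches the slice on the other side after re-rooting, so that the mass-transport identity produces~\eqref{eq:energysave} with the right indices $j \leftrightarrow k-j$ and not some shifted version. Getting the boundary term $\beta(1-\hat\phi^{-1}(\beta))$ to come out with the right sign and the integral over $[\beta, l_{\mathcal{H},1}]$ rather than $[0, l_{\mathcal{H},1}]$ requires tracking the indicator $\mathbbm{1}_{t\ge0}$ in~\eqref{eq:systemzoomhzero} through the substitution, which is the subtle part of~\eqref{eq:atomzerobords}. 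Everything else is a routine adaptation of the Karp--Sipser-style computation already carried out in the heuristics section, now justified rigorously by Proposition~\ref{prop:Zconstruction}.
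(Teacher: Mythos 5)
Your plan is essentially the paper's own proof: the paper introduces, for two independent copies $Z,Z'\sim\zeta_{\mathcal{H}}$ and $W\sim\omega$, the quantities $I_j=\mathbb{P}(Z_j+Z'_{k-j}<W)$, uses the symmetry $I_j=I_{k-j}$ (your ``condition on either root message'' step), and evaluates each side via the functional equations of the system and the change of variables $u=h_j(t)$, with the $\pm\infty$ atoms producing the product terms $l_{\mathcal{H},j}l_{\mathcal{H},k-j+1}$ and the atom $\beta$ of $h_0$ at $0$ producing $\beta(1-\hat{\phi}^{-1}(\beta))$. The only cosmetic difference is that no mass-transport or unimodularity argument is needed: the two messages at the root edge are independent with law $\zeta'_{\mathcal{H}}$ by construction, so the exchange symmetry you invoke is immediate and the whole computation can be carried out purely at the level of the recursive distributional equation, as the paper does.
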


\begin{proof}[Proof of Lemmas~\ref{lem:taille}, \ref{lem:atomzero}]
    The idea behind all three lemmas is to introduce a mixing integral between $h_{j}$ and $h_{k-j}$ and use the functional equation to express the integral either with the boundary conditions of $h_{j}$ or the boundary conditions of $h_{k-j}$.

    Let $Z=(Z_0,....Z_k) \sim \zeta_{\mathcal{H}}$ and $Z'=(Z_0',....,Z_k') \sim \zeta_{\mathcal{H}}$ such that $Z$ and $Z'$ are independent, let $W \sim \omega$ that is independent from $(Z,Z')$.
    Define for $0\leq j \leq k$ the following probability:
    \begin{equation}\label{eq:integralemagique}
        I_j=\mathbb{P}(Z_j+Z_{k-j}'<W)
    \end{equation}
    with the convention $+\infty + (-\infty)=+\infty$.
    Clearly, we have that $I_j=I_{k-j}$. Now, let us express $I_j$ with respect to $\lim_{t \rightarrow \pm \infty } h_j(t) $.
   We start with $I_0$ and $I_k$, and integrate with respect to the value of $Z_0$:
    \begin{align*}
     I_0&= \int_{\mathbb{R}}\mathbb{P}(Z_{k}'<W-x)\mathrm{d}\mathbb{P}_{Z_0}(x).   
    \end{align*}
    From Equation~\eqref{eq:systemzoomhzero}, we see that 
    \begin{align*}
    \mathbb{P}(Z_0\leq x)&=\mathbbm{1}_{x \geq 0} \sum_{l=0}^{+\infty}\hat{\pi}_l \mathbb{P}(W-Z_k\leq x)^{l} \\
    &=\mathbbm{1}_{x \geq 0} \hat{\phi}(\mathbb{P}(Z_k \geq W-x)) \\
    &=\mathbbm{1}_{x \geq 0} \hat{\phi}(1-\mathbb{P}(Z_k < W-x)).
    \end{align*}
    Hence \[1-\mathbb{P}(Z_{k}'<W-x)= \hat{\phi}^{-1}(\mathbb{P}(Z_0 \leq x)).\]
    Define $\mu_0$ the push-forward measure of $\mathbb{P}_{Z_0}$ by $\mathbb{P}(Z_0 \leq  \cdot)=h_0$, for any measurable subset $A$ of $\mathbb{R}$. $\mu_0(A)=\mathbb{P}(Z_0 \in \{x: \, \mathbb{P}(Z_0 \leq  x) \in A\})$ . We deduce that:
    \begin{align*}
        I_0&=\int_{\mathbb{R}}(1-\hat{\phi}^{-1})(\mathbb{P}(Z_0\leq x))\mathrm{d}\mathbb{P}_{Z_0}(x) \\
        &= \int_{\mathbb{R}}(1-\hat{\phi}^{-1})(x)\mathrm{d}\mu_0(x).
    \end{align*}
    Everywhere where $\mathbb{P}_{Z_0}$ is continuous, we simply get $\mu_0(\mathrm{d}x)=\mathrm{d}x$. Since there is an atom of mass $\beta$ at $0$, we end up with:
    \begin{align*}
        I_0=\int_{\beta}^{l_{\mathcal{H},1}}(1-\hat{\phi}^{-1})(x)\mathrm{d}x + \beta(1-\hat{\phi}^{-1}(\beta)).
    \end{align*}

    Now let us do a similar computation for $I_k$ where we integrate with respect to the value of $Z_k$:
    \begin{align*}
        I_k=\int_{\mathbb{R}}\mathbb{P}(Z_0'<W-x)\mathrm{d}\mathbb{P}_{Z_k}(x) + \mathbb{P}(Z_k=-\infty, Z_0< \infty) .
    \end{align*}
    Once again from Equation~\eqref{eq:systemzoomh}, we see that:
    \begin{align*}
        \mathbb{P}(Z_k<x)&= \sum_{m=0}^{+\infty}\hat{\pi}_l\mathbb{P}(Z_0'<W-x)^m =\hat{\phi}(1-\mathbb{P}(Z_0'<W-x)).
    \end{align*}
    Hence:
    \begin{align*}
    1-\mathbb{P}(Z_0'<W-x)=\hat{\phi}^{-1}(\mathbb{P}(Z_k \leq x)).
    \end{align*}
    Define $\mu_k$ the push-forward measure of $\mathbb{P}_{Z_k}$ by $\mathbb{P}(Z_k\leq \cdot)= h_k$, then:
    \begin{align*}
        I_k&=\int_{\mathbb{R}}(1-\hat{\phi}^{-1})(\mathbb{P}(Z_k\leq x))\mathrm{d}\mathbb{P}_{Z_k}(x) + \mathbb{P}(Z_k=-\infty)\mathbb{P}(Z_0<\infty) \\
        &=\int_{\mathbb{R}}(1-\hat{\phi}^{-1})(x)d\mu_k(x) + l_{\mathcal{H},k}l_{\mathcal{H},1}.
    \end{align*}
    Similarly as above, $\mathrm{d}\mu_k(x)=\mathrm{d}x \mathbbm{1}_{l_{\mathcal{H},k}<x<1}$, hence:
    \begin{align*}
        I_k=\int_{l_{\mathcal{H},k}}^{1}(1-\hat{\phi}^{-1})(x)\mathrm{d}x + l_{\mathcal{H},1}l_{\mathcal{H},k}.
    \end{align*}
    The equality $I_0=I_k$ which is true by symmetry yields Lemma~\ref{lem:atomzero}.

\bigskip
    
    Now let us do the same calculations for $I_j$ and $I_{k-j}$ for $0<j<k$.
    We have that:
    \begin{align*}
        I_j&=\int_{\mathbb{R}}\mathbb{P}(Z_{k-j}'<W-x)\mathrm{d}\mathbb{P}_{Z_j}(x)+\mathbb{P}(Z_j=-\infty,Z_{k-j}<\infty) \\
        &=\int_{l_{\mathcal{H},j}}^{l_{\mathcal{H},j+1}}(1-\hat{\phi}^{-1})(x)\mathrm{d}x + l_{\mathcal{H},j}l_{\mathcal{H},k-j+1},
    \end{align*}
    by repeating the same arguments as before. Writing the equality $I_j=I_{k-j}$ yields:
    \begin{align*}
        \int_{l_{\mathcal{H},j}}^{l_{\mathcal{H},j+1}}(1-\hat{\phi}^{-1})(x)\mathrm{d}x + l_{\mathcal{H},j}l_{\mathcal{H},k-j+1}=\int_{l_{\mathcal{H},k-j}}^{l_{\mathcal{H},k-j+1}}(1-\hat{\phi}^{-1})(x)\mathrm{d}x + l_{\mathcal{H},k-j}l_{\mathcal{H},j+1}
    \end{align*}
    which is exactly Equation~\eqref{eq:energysave}.

\bigskip
    
    Finally, the size of the matching can be expressed as follows:
    \begin{align*}
    S_E(\pi,{\mathcal{H}})=\mathbb{P}(\exists  j\in\llbracket 0, k\rrbracket: \, Z_j+Z_{k-j}' < W )  
    \end{align*}
    with the convention $+\infty + (-\infty)=+\infty$.
    We will compute this probability by decomposing with respect to the index of $Z_j$:
    \begin{align*}
        S_E(\pi,{\mathcal{H}})&=\mathbb{P}\left( \bigcup_{0 \leq j \leq k} \left\{ Z_j+Z_{k-j}'<W , -\infty < Z_j < +\infty   \right\}  \right) \\
        &=\sum_{j=0}^{k} \mathbb{P}\left( Z_j+Z_{k-j}'<W, -\infty < Z_j < +\infty  \right).
    \end{align*}
    If one does not pay attention, one could be tempted to write $S_E(\pi,{\mathcal{H}})=\underset{j=0}{\overset{k}{\sum}} I_j$, but in $I_j$ we do not forbid $Z_i=-\infty$, so the computation will be slightly different.
    In particular, $I_j=\mathbb{P}(Z_j+Z_{k-j}'<W, Z_j<+\infty)$ so 
    \begin{align*}
        I_j=\mathbb{P}\left( Z_i+Z_{k-i}'<W, -\infty < Z_i < +\infty  \right) +\mathbb{P}(Z_i=-\infty, Z_{k-i}' < \infty).
    \end{align*}
    Since $\mathbb{P}(Z_j=-\infty,Z_{k-j}'<\infty)=l_j l_{k-j+1}$ if $i\neq 0$ and $0$ if $j=0$ injecting the expression previously found yields:
    \begin{align*}
        S_E(\pi,{\mathcal{H}})&=\sum_{i=j}^{k}(I_j-l_j l_{k-j+1}) + I_0 \\
        &= \sum_{j=1}^{k}\int_{l_{\mathcal{H},j}}^{l_{\mathcal{H},j+1}}(1-\hat{\phi}^{-1})(x)\mathrm{d}x +  \int_{\beta}^{l_{\mathcal{H},1}}(1-\hat{\phi}^{-1})(x)\mathrm{d}x + \beta(1-\hat{\phi}^{-1}(\beta)) \\
        &=\beta(1-\hat{\phi}^{-1}(\beta)) + \int_{\beta}^{1}(1-\hat{\phi}^{-1}(x)\mathrm{d}x .
    \end{align*}
    This is precisely the result of Lemma~\ref{lem:taille}.
\end{proof}
\begin{proof}[Proof of Proposition~\ref{coro:formulasize}]
Combining the first Equation~\ref{eq:sizezeroatom} and Equation~\ref{eq:atomzerobords} yields:

\begin{align*}
S_E(\pi)&=\beta(1-\hat{\phi}^{-1}(\beta)) + \int_{\beta}^{1}(1-\hat{\phi}^{-1}(u))\mathrm{d}u \\
&= l_{\mathcal{H},k}l_{\mathcal{H},1} + \int_{l_{\mathcal{H},k}}^{1}(1-\hat{\phi}^{-1}(u))\mathrm{d}u.+\int_{l_{\mathcal{H},1}}^1 (1-\hat{\phi}^{-1}(u))\mathrm{d}u.
\end{align*}
Define $\mathcal{F}: u \mapsto \hat{\phi}(1-u)$. We have that $1-\hat{\phi}^{-1}(u)=\mathcal{F}^{-1}(u)$ and that $l_{\mathcal{H},1}=\mathcal{F}(l_{\mathcal{H},k})=\mathcal{F}^{-1}(l_{\mathcal{H},k})$.
Using integral of reciprocal functions, we get that
\begin{align*}
    S_E(\pi)&=l_{\mathcal{H},k}l_{\mathcal{H},1} + \int_{\mathcal{F}(l_{\mathcal{H},1})}^{\mathcal{F}(0)} \mathcal{F}^{-1}(u)\mathrm{d}u+\int_{\mathcal{F}(l_{\mathcal{H},k})}^{\mathcal{F}(0)} \mathcal{F}^{-1}(u)\mathrm{d}u \\
    &= l_{\mathcal{H},k}l_{\mathcal{H},1}  - \int_{l_{\mathcal{H},1}}^0 \mathcal{F}(u)\mathrm{d}u - \int_{l_{\mathcal{H},k}}^{0}\mathcal{F}(u)\mathrm{d}u-l_{\mathcal{H},1}\mathcal{F}(l_{\mathcal{H},1})-l_{\mathcal{H},k}\mathcal{F}(l_{\mathcal{H},k}).
\end{align*}
As $\mathcal{F}(u)=\frac{\phi'(1-u)}{\phi'(1)}$ , a primitive of $\mathcal{F}$ is $\mathcal{G}(u)=-\frac{\phi(1-u)}{\phi'(1)}$, hence
\begin{align*}
    S_E(\pi)&= -l_{\mathcal{H},k}l_{\mathcal{H},1} -2\mathcal{G}(0)+\mathcal{G}(l_{\mathcal{H},1})+\mathcal{G}(l_{\mathcal{H},k}) \\
    &=\frac{2-\phi(1-l_{\mathcal{H},1})-\phi(1-l_{\mathcal{H},k})}{\phi'(1)}-l_{\mathcal{H},1}l_{\mathcal{H},k} \\
    &= \frac{2-F_\pi(l_{\mathcal{H},1})}{\phi'(1)}
\end{align*}
giving the Proposition.
\end{proof}

\begin{proof}[Proof of Proposition~\ref{prop:bordsgeneral}]
    Recall Equation~\eqref{eq:energysave}. For all $j=1,..., k$, 
     \begin{equation*}
        l_{\mathcal{H},j}l_{\mathcal{H},k-j+1}+ \int_{l_{\mathcal{H},j}}^{l_{\mathcal{H},j+1}}(1-\hat{\phi}^{-1}(u))\mathrm{d}u = l_{\mathcal{H},j+1}l_{\mathcal{H},k-j} + \int_{l_{\mathcal{H},k-j}}^{l_{\mathcal{H},k-j+1}}(1-\hat{\phi}^{-1}(u))\mathrm{d}u. 
    \end{equation*}
    Similarly as before, recall that for any $ 0 <j\leq k$, $l_{\mathcal{H},j}=\mathcal{F}(l_{\mathcal{H},k-j+1})=\mathcal{F}^{-1}(l_{\mathcal{H},k-j+1})$. Using integral of reciprocal functions, we get that:
    \begin{align*}
    \int_{l_{\mathcal{H},j}}^{l_{\mathcal{H},j+1}}\mathcal{F}^{-1}(u)\mathrm{d}u &= \int_{\mathcal{F}(l_{\mathcal{H},k-j+1})}^{\mathcal{F}(l_{\mathcal{H},k-j})} \mathcal{F}^{-1}(u)\mathrm{d}u \\
    &=-\int_{l_{\mathcal{H},k-j+1}}^{l_{\mathcal{H},k-j}}\mathcal{F}(u)\mathrm{d}u+l_{\mathcal{H},j+1}l_{\mathcal{H},k-j}-l_{\mathcal{H},j}l_{\mathcal{H},k-j+1}.
    \end{align*}
    Recall that a primitive of $\mathcal{F}$ is $\mathcal{G}(u)=-\frac{\phi(1-u)}{\phi'(1)}$ , so we have that:
    \begin{align*}
    \int_{l_{\mathcal{H},j}}^{l_{\mathcal{H},j+1}}\mathcal{F}^{-1}(u)\mathrm{d}u &= \left[ \frac{\phi(1-u)}{\phi'(1)} \right]_{l_{\mathcal{H},k-j+1}}^{l_{\mathcal{H},k-j}} +l_{\mathcal{H},j+1}l_{\mathcal{H},k-j}-l_{\mathcal{H},j}l_{\mathcal{H},k-j+1} \\
    &= \frac{\phi(1-l_{\mathcal{H},k-j})-\phi(1-l_{\mathcal{H},k-j+1})}{\phi'(1)}+l_{\mathcal{H},j+1}l_{\mathcal{H},k-j}-l_{\mathcal{H},j}l_{\mathcal{H},k-j+1}. 
    \end{align*}
Evaluating this equality at $j$ and $k-j$ and injecting this into the first expression, we obtain:
\begin{align*}
    \frac{\phi(1-l_{\mathcal{H},k-j})-\phi(1-l_{\mathcal{H},k-j+1})}{\phi'(1)} + l_{\mathcal{H},j+1}l_{\mathcal{H},k-j} = \frac{\phi(1-l_{\mathcal{H},j})-\phi(1-l_{\mathcal{H},j+1})}{\phi'(1)} +l_{\mathcal{H},k-j+1}l_{\mathcal{H},j}.
\end{align*}
Hence,
\begin{align*}
    \frac{\phi(1-l_{\mathcal{H},k-j})+\phi(1-l_{\mathcal{H},j+1})}{\phi'(1)} + l_{\mathcal{H},j+1}l_{\mathcal{H},k-j} = \frac{\phi(1-l_{\mathcal{H},j})+\phi(1-l_{\mathcal{H},k-j+1})}{\phi'(1)} +l_{\mathcal{H},k-j+1}l_{\mathcal{H},j}.
\end{align*}
This is precisely the statement that the desired quantity is preserved when changing $j$ by $j+1$. The conclusion follows by induction.
\end{proof}

\subsection{Proof of uniqueness in Theorem~\ref{uniqueness}}

    We will treat the case $k=2$, we will leave out the case $k=1,0$ as the proof is nearly identical.
    In the remainder of the proof, we will thus assume that all solutions ${\mathcal{H}}$ are of the form $(2,(h_0,h_1,h_2))$. 
    
    Our aim is to find a family of statistics on the matchings that will suffice to reconstruct the law $\zeta_{\mathcal{H}}$ from $\mathbb{M}_{\mathcal{H}}$. We will then use the uniqueness of $\mathbb{M}_{\mathcal{H}}$  given by Theorem~\ref{maintheorem} to conclude. 
    The first part of the proof follows nearly verbatim from the proof of Theorem~2 in \cite{enriquez2024optimalunimodularmatching}, we will reproduce the main arguments of that proof here for sake of completeness.
    
    Assume $\hat{\pi}_1>0$ for simplicity (the case $\hat{\pi}_1=0$ will be discussed later). Let $b=\mathrm{supp}(\omega)$ and fix $a$ such that $(a,b) \subset \mathrm{supp}(\omega)$. We will condition on the event that the weight of the root edge is $w_0\in (a,b)$ and that the $+$ side of the root is a \textbf{simple path} $v_1,v_2,....,v_H$ of length $H$ and with weights $w_1,....,w_H \in (a,b)$.
    We will compute the probability that $v_H$ is unmatched in the matching $\mathbb{M}_{\mathcal{H}}$. The total event of conditioning on the sequence of weights and $v_H$ being unmatched is measurable with respect to the matching and graph. Figure \ref{fig:huniqueness} gives a depiction of the situation.

    \begin{figure}
        \centering
        \includegraphics[width=0.7\linewidth]{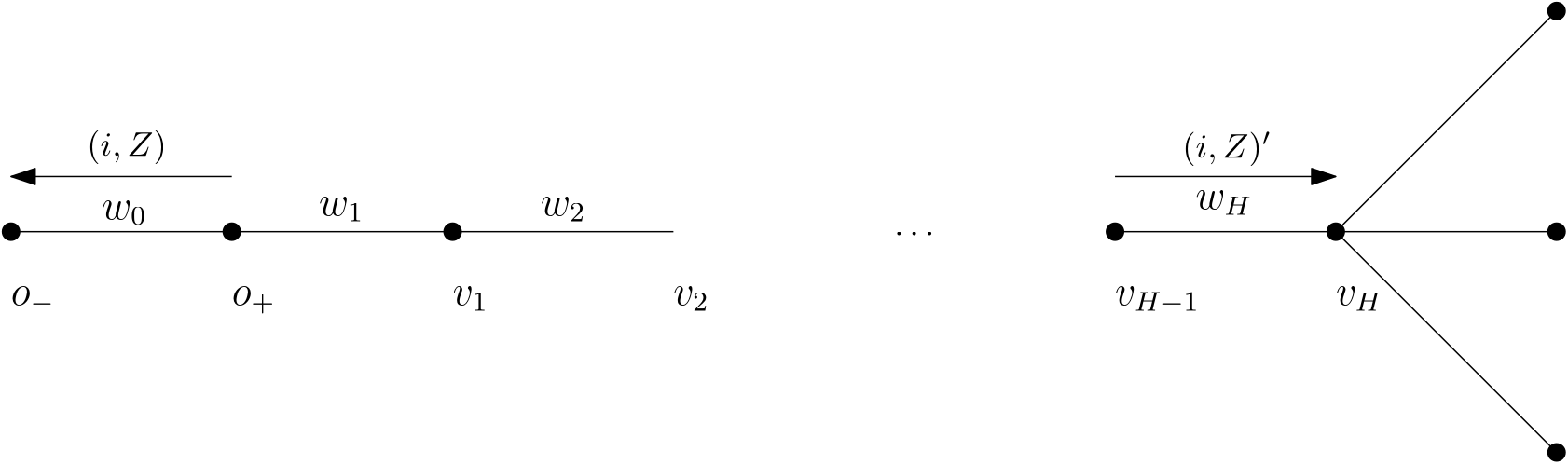}
        \caption{Illustration of the notations and conditioning in the proof. The first event considered is that $v_{H}$ is unmatched. The second event considered is that $v_{H}$ is not matched to $v_{H-1}$.}
        \label{fig:huniqueness}
    \end{figure}
    
    Now writing $(i,Z)=(i_{\mathcal{H}},Z_{\mathcal{H}})(o_+,o_-)$ and $(i,Z)'=(i_{\mathcal{H}},Z_{\mathcal{H}})(v_{H-1},v_H)$, $v_H$ being unmatched is equivalent to:
    \begin{align*}
        &\left\{ \maxlex_{u \sim v_H}((2,w)(v_H,u)-(i_{\mathcal{H}},Z_{\mathcal{H}})(v_H,u)) \overset{\lex}{\leq} (0,0)         \right\} \\
        & \qquad =\left\{ (i_{\mathcal{H}},Z_{\mathcal{H}})'=\maxlex\left( (0,0),\maxlex_{u \sim v_H, u \neq v_{H-1}}((2,w(v_H,u))-(i_{\mathcal{H}},Z_{\mathcal{H}})(v_H,u))\right)=(0,0) , \right. \\
        & \qquad \qquad \left.  (2,w(v_H,v_{H-1}))-(i_{\mathcal{H}},Z_{\mathcal{H}})(v_H,v_{H-1}) \overset{\lex}{\leq} (0,0)  \vphantom{\maxlex(0,\max_{u \sim v_H, u \neq v_{H-1}}} \right\} \\
        & \qquad =\left\{ (i,Z)'=(0,0), (2,w_H) \overset{\lex}{\leq} (i_{\mathcal{H}},Z_{\mathcal{H}})(v_H,v_{H-1})   \right\}.
    \end{align*}
    The variables $(i,Z)$ and $(i,Z)(v_{H},v_{H-1})$ are independent. Furthermore we already know from Lemma~\ref{lem:taille} that $\mathbb{P}((i_{\mathcal{H}},Z_{\mathcal{H}})=(0,0))$ does not depend on the solution ${\mathcal{H}}$ to \eqref{eq:system}. We deduce that the following quantity as a function of $w_0,...,w_H$ is invariant over ${\mathcal{H}}$ solutions to \eqref{eq:system}:
    \begin{align*}
        p(w_0,...,w_H) &:=\mathbb{P}((2,w_H) \overset{\lex}{\leq} (i_{\mathcal{H}},Z_{\mathcal{H}})(v_H,v_{H-1})) \\
        &=\mathbb{P}\left( (2,w_H) \overset{\lex}{\leq} \maxlex\left((0,0),(2,w_{H-1})-\maxlex\left((0,0),(2,w_{H-2})- \vphantom{\maxlex} \right. \right. \right.\\
        &\left.\left.\left. \qquad \cdots -\maxlex\left((0,0),(2,w_1)-\maxlex\left((0,0),(2,w_0)-(i,Z)\right) \right)... \right)  \right)    \right).
    \end{align*}

Fix $x_0 \in \mathbb{R}_+$, we will only consider $H$ even.
Fix a sequence $(w_0,\ldots,w_H) \in (a,b)^{H+1}$ such that the sequence $\underset{l=0}{\overset{2j}{\sum}}(-1)^l w_l$ is decreasing in $j$.
Furthermore we can force $\underset{l=0}{\overset{H}{\sum}}(-1)^l w_{H}=x_0$ and that for every $j<H$, $\underset{j=l}{\overset{H}{\sum}}(-1)^l w_l \neq 0$. For a proof of this fact, see \cite[Lemma 5.1.]{enriquez2024optimalunimodularmatching}.

Let us discuss what happens to the nested maximums depending on the values of $(i,Z)$:

\begin{enumerate}
    \item If $i=0$, since $H$ is even, then we end up with a condition of the form $(2,w_H) \overset{\lex}{\leq} (0,u)$ for $u$ some function of $w_0,...,w_H,Z$ which cannot be satisfied.
    \item If $i=1$, then the successive indexes stay at $1$ so the inequality cannot be satisfied.
    \item Finally, if $i=2$, then we end up with a condition of the form $(2,w_H) \overset{\lex}{\leq} (2,u)$ for $u$ some function of $w_0,.....,w_H,Z$ which may or may not be satisfied.
\end{enumerate}

In the third case, the nested maximums will never evaluate to $(0,0)$ on the odd terms since it will be of the form $\maxlex((0,0),(2,x))$ for some $x \in \mathbb{R}$. 
Let us consider the function
\begin{align*}
    f(w_0,.....,w_H,y)=\mathbbm{1}_{w_H \leq w_{H-1}-\max(0, w_{H-2}-w_{H-3}-\max(0,... \max(0,w_4-w_3+ \max(0,w_2 -w_1+\max(0,w_0-y)))...))}
\end{align*}
such that from the previous discussion, the quantity $p$ rewrites as:
\begin{align*}
    p(w_0,...,w_H)=\int_{\mathbb{R}} f(w_0,...,w_H,u) \mathrm{d}h_2(u),
\end{align*}
indeed, the conditioning does not affect the law of $Z$ at the root here.

We will now show three facts:
\begin{enumerate}
    \item If $u\leq  \underset{l=0}{\overset{H}{\sum}}(-1)^l w_l$, then $f(w_0,...,w_H,u)=1$.
    \item If $\underset{l=0}{\overset{H}{\sum}}(-1)^l w_l \leq  u < \underset{l=0}{\overset{H-2}{\sum}}(-1)^l w_l$, then $f(w_0,...,w_H,u)=0$.
    \item On every other interval, if we modify $w_H$ by $\eps< \underset{0<j<H}{\min}(\underset{l=j}{\overset{H}{\sum}}(-1)^l w_l)$, then the value of $f(w_0,....,w_H,u)$ does not change.
\end{enumerate}
For both 1. and 2. , we have that for every $0\leq j \leq \frac{H}{2}-1$, 
\begin{align*}
    u \leq \underset{l=0}{\overset{2j}{\sum}}(-1)^l w_l.
\end{align*}
For $j=0$ this is exactly when the first maximum evaluates to $w_0-u$. Repeating this argument, we find that the nested maximum evaluates to 
\begin{align*}
    \underset{l=0}{\overset{H-2}{\sum}}(-1)^l w_l-u
\end{align*}
such that $f(w_0,...,w_H,u)=1$ becomes equivalent to:
\begin{align*}
    \left\{ w_H \leq w_{H-1}-\underset{l=0}{\overset{H-2}{\sum}}(-1)^l w_l+u \right\}
    \Leftrightarrow  \left\{ u \geq \underset{l=0}{\overset{H}{\sum}}(-1)^l w_l \right\}.
\end{align*}
    The opposite $f(w_0,...,w_H,u)=0$ just reverses this condition. So $1.$ and $2.$ are verified.

    If $u$ does not verify these inequalities, the condition for $f=1$ is of the form
\begin{align*}
    w_H<w_{H-1} - \sum_{l=l_0}^{H-2}(-1)^lw_l 
\end{align*}
    where $l_0$ depends on $u,w_0,.....,w_{H-2}$ but not on $w_H$. In any case, we do not modify this inequality by modifying $w_H$ by the prescribed $\eps$ so 3. is also verified.
    
    Combining everything, we get the following equality:
\begin{align*}
    p(w_0,....,w_{H-1},w_H)-p(w_0,....,w_{H-1},w_H+\eps) = \mathbb{P}\left( (2,x_0) \overset{\lex}{\leq} (i,Z) \overset{\lex}{<} (2,x_0+\eps) \right).
\end{align*}
Since the quantity above must be independent of ${\mathcal{H}}$, the function $h_2$ is unique. Since $h_0$ can be expressed as a function of $h_2$, the function $h_0$ is also unique.

To get access to $i=1$, we will now reuse the conditioning above and compute the probability that $(v_{H-1},v_H) \notin \mathbb{M}_{\mathcal{H}}$ instead. Let us assume $H$ even for the sake of simplicity and note that $l=\underset{t \rightarrow \infty}{h_0(t)}$, $l'=\underset{t \rightarrow -\infty}{h_2(t)}$  are invariant over ${\mathcal{H}}$. This yields the following invariant over ${\mathcal{H}}$:
\begin{align*}
    p'(w_0,....,w_H) &:=\mathbb{P}\left( (2,w_H)\overset{\lex}{\leq} (i_{\mathcal{H}},Z_{\mathcal{H}})(v_H,v_{H-1})+(i_{\mathcal{H}},Z_{\mathcal{H}})(v_{H-1},v_H) \right) \\
    &=\mathbb{P}\left( (2,w_H) \overset{\lex}{\leq} \maxlex\left((0,0),(2,w_{H-1})-\maxlex\left((0,0),(2,w_{H-2})- \vphantom{\maxlex} \right. \right. \right.\\
        & \quad \left.\left.\left. \cdots -\maxlex\left((0,0),(2,w_1)-\maxlex\left((0,0),(2,w_0)-(i,Z)\right) \right)... \right)  \right)  +(i,Z)'  \right).
\end{align*}
We will decompose this probability with respect to the value of $(i,Z)'$. For $(x,y) \in \mathbb{R}^2$, let us define 
\begin{align*}
F(x,y) & := \mathbb{P}\left( (x,y) \overset{\lex}{\leq} \maxlex\left((0,0),(2,w_{H-1})-\maxlex\left((0,0),(2,w_{H-2})- \vphantom{\maxlex} \right. \right. \right.\\
        & \qquad \qquad \left.\left.\left. \cdots -\maxlex\left((0,0),(2,w_1)-\maxlex\left((0,0),(2,w_0)-(i,Z)\right) \right)... \right)  \right)\right).
\end{align*}
The function $F$ implicitly depends on $w_0,...,w_{H-1}$. First decomposing with respect to the value of $i'$ where $(i,Z)'=(i',Z')$ we get
\begin{align*}
    p'(w_0,...,w_H)&= \int_{\mathbb{R}}F(2,w_H-y) \mathrm{d}h_0(y)
    +\int_{\mathbb{R}}F(1,w_H-y)\mathrm{d}h_1(y)
    +\int_{\mathbb{R}}F(0,w_H-y)\mathrm{d}h_2(y).
\end{align*}
Now let us generalise the previously defined function $f$ with three functions $f_1$,$f_2$ and $f_1$ that depend implicitly on $w_0,....,w_{H-1}$ :
\begin{align*}
    f_2(x,y)&=\mathbbm{1}_{x \leq w_{H-1}-\max(0, w_{H-2}-w_{H-3}+\max(0,... \max(0,w_4-w_3+ \max(0,w_2 -w_1+\max(0,w_0-y)))...))} ,\\
    f_0(x,y)&=\mathbbm{1}_{x \leq \max(0,w_{H-1}-w_{H-2}-\max(0,w_{H-3}-w_{H-4}-\max(0,... - \max(0,w_3-w_2-\max(0, w_1-w_0+y))...))}, \\
    f_1(x,y)&= \mathbbm{1}_{x \leq \sum_{l=1}^{H-1}(-1)^lw_{H-l}+y}.
\end{align*}
such that $f(y)=f_2(w_H,y)$.
Now, in $F(x,y)$, we can distinguish three cases:
\begin{enumerate}
    \item If $x=0$, then we must have $i=0$ and we do a similar decomposition as previously with $f_2(w_H-y,\cdot)$ instead of $f(\cdot)$.
    \item If $x=1$ and $i=2$, the inequality is automatically satisfied since it will be of the form $(1,y) \overset{\lex}{\leq}(2,a)$ for some $a$.
    \item If $x=1$ and $i=1$, the nested maximums will never evaluate to $(0,0)$ and the condition is equivalent to $f_1(w_H-y,Z)=1$.
    \item If $x=2$, then the inequality is automatically satisfied for $i=1,2$, and a similar argument as the first case will hold with $f_0(w_H-y,\cdot)$ instead.
\end{enumerate}
As a result, we can decompose the values of $F$ as integrals, denoting $l=\lim_{t \rightarrow +\infty}h_0(t)$ and $l'=\hat{\phi}(1-l)=\lim_{t \rightarrow -\infty}h_2(t)$ :
\begin{align*}
    p'(w_0,....,w_H)&= \int_{\mathbb{R}}\int_{\mathbb{R}}\left(f_2(w_H-y,u)\mathrm{d}h_0(u)\right)\mathrm{d}h_2(y) \\
    & \qquad + \int_{\mathbb{R}}\int_{\mathbb{R}}\left(f_1(w_H-y,u)\mathrm{d}h_1(u) + (1-l') \right)\mathrm{d}h_1(y) \\
    & \qquad + \int_{\mathbb{R}}\int_{\mathbb{R}}\left(f_0(w_H-y,u)\mathrm{d}h_0(u)+ (1-l)   \vphantom{h_\frac{1}{2}} \right)\mathrm{d}h_2(y).
\end{align*}
We know that $h_0$ and $h_2$ are invariants over ${\mathcal{H}}$. Using this, we have that
\begin{align*}
    p''(w_0,....,w_H) =\int_{\mathbb{R}}\int_{\mathbb{R}}f_1(w_H-y,u)\mathrm{d}h_1(u)\mathrm{d}h_1(y) 
\end{align*}
is an invariant for $h_1$.

Over all $H$ and all sequences $(w_i)\in (a,b)^{\mathbb{N}}$, the range of $\sum_{j=0}^{H}(-1)^lw_l$ is $\mathbb{R}$. Differentiating $p''$ with respect to $w_H$ yields that:
\begin{align*}
    \mathbb{P}_{((i,Z),(i',Z')) \sim \zeta_{\mathcal{H}}'\otimes \zeta_{\mathcal{H}}'}\left( Z+Z'=x , i=i'=1 \right)
\end{align*}
is an invariant over ${\mathcal{H}}$.
Taking characteristic functions, this implies that $(i,Z) | i=1$ have the same law for $(i,Z) \sim \zeta_{\mathcal{H}}$. Hence $h_1$ does not depend on $\mathcal H$, which concludes the proof when $\hat{\pi}_1 \neq 0$.

\bigskip

Finally if $\hat{\pi}_1=0$, pick any $l>0$ such that $\hat{\pi}_l>0$ and condition on every vertex on the right hand side up to depth $H$ of the root being of degree $l+1$ instead of having a simple path. Further condition on the weights being the same on every path with the same matching events at the end of every path. A similar proof with these invariants holds.
\subsection{Proof of Theorem~\ref{thm:convergenceZ}}
In this subsection we combine the previous results in order to prove Theorem~\ref{thm:convergenceZ}.

From \cite[Theorem 1]{enriquez2024optimalunimodularmatching}, we know that the optimal matching distribution $(\mathbb{T},o,\mathbb{M}_{\max}^{(\eps)})$ is constructed as a deterministic function of a decorated tree $(\mathbb{T},o,\mathbf{Z}^{(\eps)})$ whose distribution is characterised by:
\begin{itemize}
    \item The random messages $(\mathbf{Z}^{\eps}(u,v))_{(u,v) \in \overset{\rightarrow}{T}}$ verify
    \begin{equation}
        \forall (u,v) \in \overset{\rightarrow}{E}(\mathbb{T}), \, \mathbf{Z}^{\eps} (u,v)= \max(0, \max_{\substack{u' \sim v \\ u' \neq u}} (1+\eps w(v,u')-\mathbf{Z}^{\eps}(v,u')).
    \end{equation}
    \item For any finite $H>0$, the outgoing messages on the boundary of the ball $B_H(\mathbb{T},o)$ are i.i.d.~with stationary distribution $\zeta^{(\eps)}$ verifying the corresponding recursive distributional equation.
\end{itemize}
Let us treat the case $k=2$ for the sake of simplicity, the others being nearly identical.
We will prove that \[\left(\mathbb{T},o, \left(\frac{\mathbf{Z}^{\eps}}{\eps},\frac{\mathbf{Z}^{\eps}-\frac{1}{2}}{\eps},\frac{\mathbf{Z}^{\eps}-1}{\eps} \right) \right)\] converges in law to $(\mathbb{T},o, (Z_0,Z_1,Z_2))$. Since the matchings $\mathbb{M}_{\max}^{(\eps)}$
 can be represented as a deterministic function of the first variables and $\mathbb{M}_{\mathrm{opt}}$ as well as a deterministic function of the last variables, we obtain the convergence of the matchings as a by-product.

 We thus begin with the following lemma
\begin{lemma}\label{lem:Zconvergerenorm}
    Let $\mathbf{Z}^{\eps}$ a random variable of distribution $\zeta^{(\eps)}$.
    Consider $\tilde{\zeta}^{(\eps)}$  the distribution on $\mathbb{R}^3$ defined by the random variable \[\left(\frac{\mathbf{Z}^{\eps}}{\eps},\frac{\mathbf{Z}^{\eps}-\frac{1}{2}}{\eps},\frac{\mathbf{Z}^{\eps}-1}{\eps} \right) .\]
    Then $\tilde{\zeta}^{(\eps)}$ converges weakly to $\zeta_{\mathcal{H}}$ as $\eps \to 0$.
\end{lemma}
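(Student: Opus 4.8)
I would first recast the statement in terms of cumulative distribution functions. Writing $h^{\eps}$ for the c.d.f.\ of $\mathbf Z^{(\eps)}$ (which solves~\eqref{eq:hequation}) and $(h_0,h_1,h_2)$ for the functions of the \emph{unique} (by Theorem~\ref{uniqueness}) solution $\mathcal H$ of System~\eqref{eq:system} with $k=2$, the lemma is equivalent to the pointwise convergence at continuity points of the three renormalised c.d.f.'s $h^{0,\eps}(t):=h^{\eps}(\eps t)$, $h^{1/2,\eps}(t):=h^{\eps}(\tfrac12+\eps t)$, $h^{1,\eps}(t):=h^{\eps}(1+\eps t)$ toward $h_0,h_1,h_2$. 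This equivalence is pure bookkeeping: for finite $a$ and small $\eps$ one has $\{\mathbf Z^{\eps}/\eps\le a\}=\{\mathbf Z^{\eps}\le\eps a\}$ and $\eps a$ is the smallest of the thresholds $\eps a,\tfrac12+\eps b,1+\eps c$, so the joint c.d.f.\ of $\tilde\zeta^{(\eps)}$ at $(a,b,c)$ is asymptotically one of $h^{0,\eps}(a),h^{1/2,\eps}(b),h^{1,\eps}(c)$ or $1$ according to which of $a,b,c$ are finite, exactly matching the corresponding value for $\zeta_{\mathcal H}$ (in which precisely one coordinate is finite). I would run the argument for $k=2$; the cases $k=0,1$ are identical.

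Next comes compactness and the passage to the limit. Since the $h^{i,\eps}$ are uniformly bounded and non-decreasing, Helly's selection theorem lets one, along any subsequence, extract a further subsequence along which $h^{0,\eps}\to\tilde h_0$, $h^{1/2,\eps}\to\tilde h_1$, $h^{1,\eps}\to\tilde h_2$ pointwise at continuity points, and also $h^{\eps}\to\ell$ for some non-decreasing $\ell$ on $\mathbb R$. Passing to the limit in the renormalised recursion~\eqref{eq:systemezoomavantlimiteER} — using $\eps W\to0$, dominated convergence with dominating constant $1$, the atomlessness of $\omega$ (so that $W-t$ is a.s.\ a continuity point of the limiting functions), and $\mathbbm 1_{t\ge-1/(2\eps)},\mathbbm 1_{t\ge-1/\eps}\to1$ — shows that $(\tilde h_0,\tilde h_1,\tilde h_2)$ satisfies the functional relations~\eqref{eq:systemzoomh}--\eqref{eq:systemzoomhzero}. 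The same passage applied to~\eqref{eq:hequation} at a fixed $t$ gives $\ell(t)=\mathbbm 1_{t\ge0}\,\hat\phi\bigl(1-\ell(1-t)\bigr)$ at continuity points; hence $\ell\equiv0$ on $(-\infty,0)$, $\ell\equiv1$ on $(1,\infty)$, and for $t\in(0,1)$ one gets $\ell(t)=\hat\phi(1-\hat\phi(1-\ell(t)))$, so $\ell(t)$ is a fixed point of $t\mapsto\hat\phi(1-\hat\phi(1-t))$. As these are finitely many (standing hypothesis of Theorem~\ref{maintheorem}) and $\ell$ is monotone, $\ell$ is a step function on $(0,1)$ with finitely many jumps.

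The crux, and the step I expect to be the main obstacle, is upgrading $(\tilde h_0,\tilde h_1,\tilde h_2)$ to a genuine solution of~\eqref{eq:system}, i.e.\ checking the non-degeneracy~\eqref{eq:systemzoomnondegenerate} and the gluing~\eqref{eq:systemzoomhnoatoms} — equivalently, ruling out that mass of $\mathbf Z^{(\eps)}$ escapes to $(0,\tfrac12)\cup(\tfrac12,1)\cup(1,\infty)$. Non-degeneracy of $\tilde h_0$ is immediate: $\tilde h_0\equiv0$ on $(-\infty,0)$ by the indicator in~\eqref{eq:systemzoomhzero}, while $\tilde h_0(0)=\lim_\eps h^{\eps}(0)=\beta\ge\hat\phi(0)>0$ in the leaves case, and $\beta$ is the same for every solution by Lemma~\ref{lem:taille}. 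For the no-escape statement I would exploit the structure of $\ell$ together with the construction in the proof of Lemma~\ref{prop:distributionzoom}: renormalising $h^{\eps}$ around a jump point $\xi$ of $\ell$ produces, via~\eqref{eq:hequation}, a window whose functional equation couples it to the window around $1-\xi$, and a degeneracy argument forces $\ell$ to jump at $1-\xi$ as well; so the jumps of $\ell$ occur in reflected pairs $\{\xi,1-\xi\}$, together with $\{0,1\}$ and possibly the self-paired $\{\tfrac12\}$, and renormalising around all of them yields a solution of~\eqref{eq:system} whose number of windows equals the number of jumps of $\ell$. By the uniqueness part of Theorem~\ref{uniqueness}, under our hypothesis this solution has $k=2$, so $\ell$ jumps only at $0,\tfrac12,1$, and by items~(1)--(2) of that theorem its values are $\beta$ at $0$, then $\underline\gamma$ on $(0,\tfrac12)$ and $\overline\gamma$ on $(\tfrac12,1)$. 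Therefore $\mathbb P(\mathbf Z^{(\eps)}\in K)\to0$ for every compact $K$ inside those intervals, the three windows capture all the mass, \eqref{eq:systemzoomhnoatoms} holds, and $(\tilde h_0,\tilde h_1,\tilde h_2)$ is a genuine solution of~\eqref{eq:system}. By uniqueness once more it equals $(h_0,h_1,h_2)$, independently of the chosen subsequence, so the renormalised c.d.f.'s converge along the full sequence.

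Finally, since $\mathbb M^{(\eps)}_{\max}$ is a deterministic decision-rule function of $\mathbf Z^{(\eps)}$, $\mathbb M_{\mathrm{opt}}$ a deterministic decision-rule function of $(i_{\mathcal H},Z_{\mathcal H})$, and the messages satisfy their recursion on every finite ball, the weak convergence $\tilde\zeta^{(\eps)}\to\zeta_{\mathcal H}$ just established propagates to the convergence $(\mathbb T,o,w,\mathbf Z^{(\eps)})\to(\mathbb T,o,w,(i_{\mathcal H},Z_{\mathcal H}))$ and hence of the matchings, which is what Theorem~\ref{thm:convergenceZ} then needs. The genuinely delicate input is the identification of the macroscopic profile $\ell$ in the third paragraph; the rest is compactness and passing to the limit in the defining recursions.
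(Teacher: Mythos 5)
Your overall strategy --- Helly compactness for the three renormalised c.d.f.'s, passing to the limit in the recursions~\eqref{eq:systemezoomavantlimiteER}, and then invoking the uniqueness statement of Theorem~\ref{uniqueness} to identify every subsequential limit and upgrade to convergence along the full family --- is exactly the route the paper takes: its proof of Lemma~\ref{lem:Zconvergerenorm} simply points back to the construction in Lemma~\ref{prop:distributionzoom} and to Theorem~\ref{uniqueness}. The gap is at the step you yourself flag as the crux, and your sketch does not close it. The chain ``jumps of $\ell$ come in reflected pairs $\Rightarrow$ renormalising around all of them yields a solution of~\eqref{eq:system} with as many windows as jumps $\Rightarrow$ by uniqueness $k=2$ and $\ell$ jumps only at $0,\tfrac12,1$ $\Rightarrow$ therefore the three windows capture all the mass'' is circular at the second arrow and unjustified at the last one. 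Renormalising at the \emph{fixed} jump locations of $\ell$ at scale $\eps$ need not produce a genuine solution: mass of $\mathbf Z^{(\eps)}$ sitting at a mesoscopic distance from an atom, say an amount $\delta>0$ near $\tfrac12-\sqrt{\eps}$, is invisible to the macroscopic profile $\ell$ (it only contributes to the jump of $\ell$ at $\tfrac12$) and to every fixed compact $K\subset(0,\tfrac12)$, yet it is missed by the $\eps$-window at $0$ and absorbed into $\lim_{t\to-\infty}$ of the window at $\tfrac12$, so the gluing condition~\eqref{eq:systemzoomhnoatoms} fails for your triple $(\tilde h_0,\tilde h_1,\tilde h_2)$. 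In that scenario the object you feed into Theorem~\ref{uniqueness} is not a solution of~\eqref{eq:system}, and the final ``$\mathbb P(\mathbf Z^{(\eps)}\in K)\to0$ for compacts, hence the windows capture all the mass'' does not follow: the statement about compacts inside the open intervals is strictly weaker than the absence of mass at scales intermediate between $\eps$ and $1$.

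The paper's construction avoids this precisely by never renormalising the middle window at a fixed location: in Lemma~\ref{prop:distributionzoom} the centre is the adaptive quantile $\alpha_n=(h^{\eps_n})^{-1}\bigl(\tfrac{x_{l_0}+x_{l_0+1}}{2}\bigr)$, for which gluing is automatic, since the $-\infty$ limit of the new window is a fixed point of $t\mapsto\hat\phi(1-\hat\phi(1-t))$ squeezed between $x_{l_0}$ and the quantile level, hence equal to $x_{l_0}$; only afterwards does one argue that in the symmetric case $\alpha_n=\tfrac12+O(\eps)$, so that replacing $\alpha_n$ by the fixed location $\tfrac12$ shifts the window by a bounded amount and yields the same limit --- and that identification is exactly the exclusion of mesoscopic mass that your argument is missing. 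To repair your proof, either run the quantile-based construction of Lemma~\ref{prop:distributionzoom} along an arbitrary subsequence (so that every subsequential limit is a bona fide solution, then conclude by Theorem~\ref{uniqueness} as you do), or supply an independent argument ruling out mass at intermediate scales near each atom. The rest of your write-up --- the c.d.f.\ bookkeeping for $\tilde\zeta^{(\eps)}$, the limit passage using atomlessness of $\omega$ and dominated convergence, the non-degeneracy of $\tilde h_0$ via $\beta\ge\hat\phi(0)>0$, and the final appeal to uniqueness --- is sound and matches the paper.
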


\begin{proof}[Proof of Lemma~\ref{lem:Zconvergerenorm}]
    In the proof of Lemma~\ref{prop:distributionzoom}, the distribution of $Z_0$ was obtained precisely as a subsequent distribution of $\frac{\mathbf{Z}^{\eps}}{\eps}$ and similarly for $Z_2$. We are also in the condition where the last renormalisation occurs around $\frac{1}{2}$, in which case we picked the distribution of $Z_1$ to be a subsequent limiting distribution of $\frac{\mathbf{Z}^{(\eps)}-\frac{1}{2}}{\eps}$.

    As Theorem~\ref{uniqueness} implies that there is only one subsequent limit, we obtain that full convergence holds.
\end{proof}

\begin{proof}[Proof of Theorem~\ref{thm:convergenceZ}]
    We start by taking $(\mathbb{T},o,\mathbb{M})$ to be a subsequent local limit of $(\mathbb{T},o,
    \mathbb{M}_{\max}^{(\eps)})$ as $\eps \to 0$.
    Let us write \[ \mathbf{Z}_0^{(\eps)},\mathbf{Z}_1^{(\eps)},\mathbf{Z}_2^{(\eps)}:=  \left(\frac{\mathbf{Z}^{\eps}}{\eps},\frac{\mathbf{Z}^{\eps}-\frac{1}{2}}{\eps},\frac{\mathbf{Z}^{\eps}-1}{\eps} \right)\]
    Fix $H>0$, the distribution of $B_H(\mathbb{T},o, \mathbb{M}_{\max}^{(\eps)} )$ is characterised by \[ B_H\left(\mathbb{T}, \left(\mathbf{Z}_0^{(\eps)},\mathbf{Z}_1^{(\eps)},\mathbf{Z}_2^{(\eps)} \right) ,o\right)   \]
    through
    \begin{align*}
     (u,v) \in \mathbb{M}_{\max}^{(\eps)} &\iff \mathbf{Z}^{(\eps)}(u,v)+\mathbf{Z}^{(\eps)}(v,u)<1+ \eps w(u,v)  \\
     &\iff \frac{\mathbf{Z}^{(\eps)}(u,v)+\mathbf{Z}^{(\eps)}(v,u)-1}{\eps}<  w(u,v) \\
     &\iff \exists i \in \{0,1,2\}, \mathbf{Z}_i^{(\eps)}(u,v)+\mathbf{Z}_{2-i}^{(\eps)}(v,u)<w(u,v).
     \end{align*}

     By construction, as they are the exterior variables on the boundary of $\partial B_H(\mathbb{T},o)$, the family of messages $\left(\left(\mathbf{Z}_0^{(\eps)},\mathbf{Z}_1^{(\eps)},\mathbf{Z}_2^{(\eps)} \right)(u,v)\right)_{(u,v) \in \partial B_H (\mathbb{T},o)}$ are i.i.d. We also showed in Lemma~\ref{lem:Zconvergerenorm} that they each converge in law to a random variable of distribution $\zeta_{\mathcal{H}}$. We deduce that they converge to a i.i.d. family $((Z_0,Z_1,Z_2)(u,v))_{(u,v) \in \partial B_H (\mathbb{T},o)}$ of distribution $\zeta_{\mathcal{H}}$.
    
     Next, we check that the family inside the ball must also converge, and they verify Recursion~\eqref{eq:recursionZlist2} which is the equivalent list form of Recursion~\eqref{eq:systemrecursionzoombis} in the case $k=2$.

     For any $\eps>0$, the family verifies, for any $(u,v)$ in the ball,
     \begin{equation}
         \begin{aligned}
             \mathbf{Z}_0^{(\eps)}(u,v)&=\max\left(0, \max_{\substack{u' \sim v \\ u' \neq u}} (w(v,u')-\mathbf{Z}_2^{(\eps)}(v,u')) \right), \\
             \mathbf{Z}_1^{(\eps)}(u,v)&=\max\left(\frac{-1}{2\eps}, \max_{\substack{u' \sim v \\ u' \neq u}} (w(v,u')-\mathbf{Z}_1^{(\eps)}(v,u')) \right), \\
             \mathbf{Z}_2^{(\eps)}(u,v)&=\max\left(\frac{-1}{\eps}, \max_{\substack{u' \sim v \\ u' \neq u}} (w(v,u')-\mathbf{Z}_0^{(\eps)}(v,u')) \right).
         \end{aligned}
     \end{equation}
     Taking $\eps \to 0$ in this system yields:
     \begin{equation}
         \begin{aligned}
             Z_0(u,v)&=\max\left(0, \max_{\substack{u' \sim v \\ u' \neq u}} (w(v,u')-Z_2^{}(v,u')) \right), \\
             Z_1(u,v)&=\max\left(-\infty, \max_{\substack{u' \sim v \\ u' \neq u}} (w(v,u')-Z_1(v,u')) \right), \\
             Z_2(u,v)&=\max\left(-\infty, \max_{\substack{u' \sim v \\ u' \neq u}} (w(v,u')-Z_0(v,u')) \right).
         \end{aligned}
     \end{equation}
    We can now iterate this recursion inside the ball to both convergence and that this equation is verified inside.
    We have thus shown that the limit $B_H(\mathbb{T},o,(Z_0,Z_1,Z_2))$ is characterised by 
     \begin{itemize}
         \item A family of exterior variables on the boundary of i.i.d. distribution $\zeta_{\mathcal{H}}$.
         \item The variables verify Equation~\eqref{eq:recursionZlist2} inside the ball.
     \end{itemize}
    We have thus obtained the convergence of the tree decorated with the messages.
    To conclude, we need to show that the same holds for the matching as a function of the messages.

    By construction, for any $\eps >0$, we have that:
    \[ \{u,v\} \in \mathbb{M}_{\max}^{(\eps)}\iff \exists i \in \{0,1,2\}, \, \mathbf{Z}_i^{(\eps)}(u,v)+\mathbf{Z}_{2-i}^{(\eps)}(v,u) < w(u,v) .\]
    Taking $\eps \to 0$, we have that
    \[ \{u,v\} \in \mathbb{M}\implies  \exists i \in \{0,1,2\}, \, Z_i(u,v)+ Z_{2-i}(v,u)\leq w(u,v).   \]
    Since the vector $(Z_0,Z_1,Z_2)$ has no atoms, we can almost surely replace the inequality by a strict one.
    Hence:
    \[ \{u,v\} \in \mathbb{M} \implies  \exists i \in \{0,1,2\}, \, Z_i(u,v)+ Z_{2-i}(v,u)\ < w(u,v).   \]
    The reverse implication clearly holds:
    \begin{align*}
     &\exists i \in \{0,1,2\},\,\,  Z_i(u,v)+ Z_{2-i}(v,u)\ < w(u,v)  \implies \\
     &\exists \eps_0>0, \, \forall \eps < \eps_0,  \, \exists i \in \{0,1,2\}, \,\, \mathbf{Z}_i^{(\eps)}(u,v)+\mathbf{Z}_{2-i}^{(\eps)}(v,u) < w(u,v).  \end{align*}
     Hence,
    \[  \{u,v\} \in \mathbb{M} \iff  \exists  \in \{0,1,2\}, \, Z_i(u,v)+ Z_{2-i}(v,u)\ < w(u,v). \]
    So, $\mathbb{M}$ agrees with $\mathbb{M}_{\mathrm{opt}}$. As $\mathbb{M}$ was chosen as any subsequent limit of $\mathbb{M}_{\max}^{(\eps)}$, we have thus obtained that
    \[ \left( \mathbb{T},o,\mathbb{M}_{\max}^{(\eps)}\right) \underset{\eps \to 0}{\longrightarrow} \left( \mathbb{T},o,\mathbb{M}_{\mathrm{opt}} \right)   \]
    as required.
\end{proof}

\section{The subcritical regimes}
\label{sec:subcritical}

Recall that a reproduction law  is subcritical for optimal matchings if it satisfies, for every random variable $X$ with values in $[0,1]$,
\begin{equation}
      \mathbb{E}\left[ \hat{\phi}'(1-X)\right] \hat{\phi}'\left(1-\mathbb{E}\left[\hat{\phi}(1-X)\right]\right) < 1    \tag{\ref{eq:subcriticalcond}}.
\end{equation}
In the first subsection, under this sub-criticality condition, we strengthen the approximation in law of optimal matchings for the local topology obtained in Theorem~\ref{maintheorem2} to a convergence in total variation of {\it finite} optimal matchings. On the way we prove that, in this regime, the messages and the state of edges in the optimal matching have uniform exponential \emph{point to set} correlation decay.

Similarly, we say that a reproduction law is subcritical for mandatory and blocking edges if it satisfies
\begin{equation}
    \exists!t \in (0,1): t=\hat{\phi}(1-\hat\phi(1-t)) \tag{\ref{eq:uniquefixedpoint}}.
\end{equation}
In the second subsection, we prove that in this regime, there is local convergence in total variation of mandatory and blocking edges along with local convergence in total variation of a sequence of random graphs to the corresponding UBGW tree.

Note that in both regimes, we must have a unique fixed point $\gamma= \hat{\phi}\left( 1- \hat{\phi}(1-\gamma)\right)$ that also verifies $\gamma = \hat{\phi}(1-\gamma)$, hence $k=1$ and $i \sim \gamma \delta_0 + (1-\gamma)\delta_1$.

\bigskip

The main arguments in both settings can be broken down as follows:
\begin{enumerate}
    \item First, for a given graph $G$ and $v \in V(G)$ such that $B_H(G_n,v)$ is a tree, we can represent the influence of the optimal matching outside the ball $B_H(G_n,v)$ by a well-chosen boundary condition on the messages on the ball. Namely, take a boundary vertex $u \in \partial B_H(G,v)$ and denote by $u'$ its unique neighbour in $B_H(G,v)$. If the vertex $u$ is matched to an exterior vertex, we can set $(i,Z)(u',u) = (1,  + \infty)$, if on the contrary $u$ is not matched to the exterior of $B_H(G,v)$, we can set $(i,Z)(u',u) = (0,0)$. Then, the restriction of the optimal matching to $B_H(G,v)$ coincides with the matching induced by the messages inside $B_H(G,v)$ defined by these boundary conditions.
    \item Another general fact is that inside $B_H(G,v)$, the messages around the root vertex $v$ are deterministically increasing with respect to the boundary conditions when $H$ is even, and decreasing when $H$ is odd. So as $H \rightarrow \infty$, we get that the messages are almost surely bounded by the limit of two extremal families built by considering the $(0,0)$ boundary condition and $(1, +\infty)$ boundary condition. However, in general, these two extremal families do not converge to the same limit so we are unable to say anything about the limit for a mixed boundary condition that is built from the first point.
    \item This motivates the search for a regime where these two extremal families converge almost surely to the same limit, we will call this regime subcritical. For Theorems~\ref{th:reciproquegamarnik} and~\ref{th:decay}, Condition~\ref{eq:subcriticalcond} is a sufficient condition for sub-criticality. To prove Theorem~\ref{thm:mandatory}, we only need a subcritical regime for the macroscopic component $i$ and Condition~\ref{eq:uniquefixedpoint} is a necessary and sufficient condition for sub-criticality on the macroscopic messages. In both cases, we get that the correlation between the messages around a vertex $v$ and $G_n \setminus B_H(G_n,v)$ decay exponentially and uniformly in $H$. 
\end{enumerate}

\subsection{Proof of Theorem~\ref{th:reciproquegamarnik}, Theorem~\ref{th:decay} and Corollary~\ref{coro:convergenceepsilonn} }
In this section, we pick a sequence of weighted random graphs $(G_n,o_n)$ such that:
\begin{enumerate}[label=\roman*)]
\item For any $\eps, H >0$, there exists for $n$ large enough a coupling $(\tilde{G}_n,\tilde{o}_n,\tilde{ \mathbb T},\tilde{o})$ such that
\begin{equation*}
    \mathbb{P}\left( B_H(\tilde{G}_n,\tilde{o}_n)=B_H( \tilde{\mathbb{T}},\tilde{o}) \right) > 1-\eps.
\end{equation*}
\item The reproduction law of $(\mathbb{T},o)$ is subcritical in the sense that it satisfies condition~\eqref{eq:subcriticalcond}.
\end{enumerate}

Let us give a slightly more precise presentation of our arguments in this setting.
For every $H,\varepsilon >0$, our goal is to prove that if $M_{\mathrm{opt}}(G_n)$ is an optimal matching on $G_n$, then there exists a rank $N_{\eps,H}'>0$ such that for every $n \geq N'_{\eps,H}$, there exists a coupling $(\tilde{G}_n,\tilde{o_n},\tilde{M}_{\mathrm{opt}}(G_n),\tilde{T},\tilde{o},\tilde{\mathbb{M}}_{\mathrm{opt}})$ of $(G_n,o_n,M_{\mathrm{opt}}(G_n))$ and $(\mathbb{T},o,\mathbb{M}_{\mathrm{opt}})$ such that:
\begin{equation}
    \mathbb{P}\left( B_H(\tilde{G}_n,\tilde{o_n},\tilde{M}_{\mathrm{opt}}(G_n))=B_H(\tilde{T},\tilde{o},\tilde{\mathbb{M}}_{\mathrm{opt}})  \right)>1-\eps.
\end{equation}

The main idea is that under condition~\eqref{eq:subcriticalcond}, the messages behave analogously to the subcritical regime of a Gibbs measure in the following sense:

For any $H_0,H>0$, one can consider the family of messages in $B_{H_0+H}(\mathbb{T},o, (i,Z))$ conditionally on a boundary condition $\partial_{H_0+H} Z$ of outgoing messages on the boundary of the ball. When condition~\ref{eq:subcriticalcond} is satisfied, we will show that the messages in the fixed smaller ball $B_{H_0} (\mathbb{T},o, (i,Z))$ become independent of the boundary conditions as $H \rightarrow \infty$ and converge in total variation to the stationary distribution corresponding to $\mathcal{H}$ from the previous sections. This is proved by using two facts:
\begin{itemize}
    \item Consider two families of extremal messages which are constructed by setting boundary messages to be $(0,0)$ and $(1,+\infty)$. Then these two families bound any family of messages within the ball deterministically.
    \item In this subcritical regime given by condition~\eqref{eq:subcriticalcond}, the two extremal families of messages are almost surely monotonous when incrementing $H$ by $2$ and converge in total variation to the same random variable.
\end{itemize}
This is analogous to the uniqueness of extremal Gibbs measures in the sense that we use families of extremal variables. However, it is different in the sense that we use a deterministic monotonicity of extremal measures with respect to the depth. Note that this also implies that there almost surely exists a unique family of messages $(i,Z)$ on $\mathbb{T}$ verifying the recursive Equation~\eqref{eq:systemrecursionzoombis}.

Next, using i), outside of an event of probability less than $\eps$, $B_{H_0+H}(\tilde{G}_n,\tilde{o}_n)$ is isomorphic to $B_{H_0+H}(\tilde{\mathbb{T}},\tilde{o})$. We will then show that the influence of $M_{\mathrm{opt}}(G_n)$ outside of this ball can be condensed into a boundary condition taking values in $(i,Z)=(0,0)$ or $(1,+\infty)$ and that $M_{\mathrm{opt}}$ in $B_{H_0+H}(\tilde{G}_n,\tilde{o}_n)$ agrees with the matching given by a family of messages $(i,Z)$ with the boundary condition given by $M_{\mathrm{opt}}(G_n)$. From the previous paragraph, between the boundaries at $H_0+H$ and $H_0$, the messages forget this boundary condition and 
converge uniformly to the stationary distribution. This implies that, with arbitrary small error, the matching $\tilde{M}_{\mathrm{opt}}(G_n)$ coincides on $B_{H_0}(\tilde{G}_n,\tilde{o}_n)$ with a matching given by a family of messages that is arbitrarily close to the stationary distribution, which concludes the argument. See Figure~\ref{fig:gamarnik} for an illustration of the situation.

\begin{figure}[t!]
    \centering
    \includegraphics[width=0.7\linewidth]{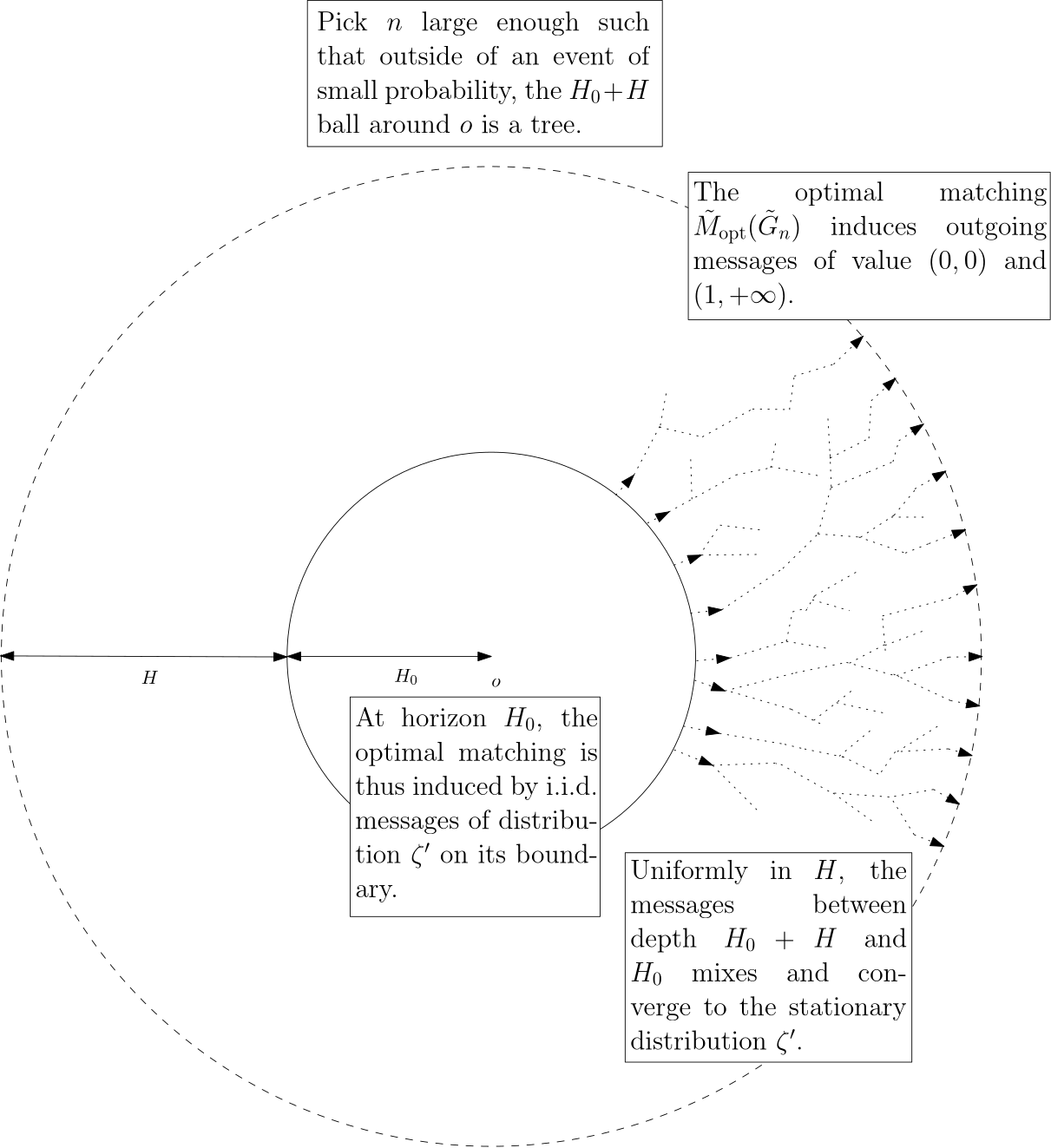}
    \caption{Illustration of the situation in Theorem~\ref{th:reciproquegamarnik}. The text is to be read clockwise.}
    \label{fig:gamarnik}
\end{figure}

\bigskip

To proceed, let us start with the following lemma that treats the special case $H_0 = 1$:
\begin{lemma}\label{lem:subcritical}
    Define the families of truncated decorated trees $\left(\mathbb{T}_H,o,(i_{-}^{H},Z_{-}^H),(i_{+}^{H},Z_{+}^H)\right)_{H \geq 0}$ as follows:
    Sample $(\mathbb{T},o)$, set $\mathbb{T}_H=B_H(\mathbb{T},o)$ and   $(i_{-}^{H},Z_{-}^H)(u,v)=(0,0)$ for every edge of $\mathbb{T}_H$ pointing towards the $H-$boundary. Then use Recursion~\eqref{eq:systemrecursionzoombis} to define $(i_{-}^{H},Z_{-}^H)$ on the remainder of $\mathbb{T}_H$. We proceed similarly for $(i_{+}^{H},Z_{+}^H)$ by setting $(i_{+}^{H},Z_{+}^H)=(1,+\infty)$ on the boundary instead of $(0,0)$.

    Then under condition~\eqref{eq:subcriticalcond}, both families $((i_{+}^{H},Z_{+}^H)(o,v))_{v \sim o}$ and $((i_{-}^{H},Z_{-}^H)(o,v))_{v \sim o}$ converge almost surely, as $H$ goes to infinity, to the same family $(i_l,Z_l)_{1 \leq l \leq N}$ where $(i_l,Z_l)$ have i.i.d distribution $\zeta'$ and are independent of $N \sim \pi$.
\end{lemma}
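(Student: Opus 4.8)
## Proof plan for Lemma 5.2

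\textbf{Setup and monotonicity.} The plan is to first establish a deterministic monotonicity principle for the recursion~\eqref{eq:systemrecursionzoombis}. I would define a partial order on message values $(i,Z) \in \{0,\dots,k\}\times\mathbb R$ by the lexicographic order (here $k=1$, so $(i,Z) \in \{0,1\}\times \mathbb R$ with $(0,0)$ the unique value of interest when $i=0$ up to the microscopic coordinate). The key observation is that the map $(i,Z) \mapsto \maxlex\left((0,0), \maxlex_{u'\sim v, u'\neq u}\left((k,w(v,u')) - (i,Z)(v,u')\right)\right)$ is \emph{decreasing} in each input message $(i,Z)(v,u')$ and the operands are combined through $\maxlex$, which is increasing. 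Hence one application of the recursion reverses monotonicity and two applications preserve it. Since $(i_{-}^H,Z_{-}^H)$ uses the pointwise-smallest admissible boundary value $(0,0)$ and $(i_{+}^H,Z_{+}^H)$ uses the pointwise-largest $(1,+\infty)$, an immediate induction on the distance to the boundary gives that for \emph{every} family of messages $(i,Z)$ satisfying the recursion on $B_H(\mathbb T,o)$ with \emph{any} admissible boundary condition, one has, at every vertex $v$ at even distance from the boundary,
\[
(i_{-}^{H},Z_{-}^{H})(\cdot) \overset{\lex}{\le} (i,Z)(\cdot) \overset{\lex}{\le} (i_{+}^{H},Z_{+}^{H})(\cdot),
\]
with the inequalities reversed at odd distance. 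Specializing to the root and comparing $B_{H}$ with $B_{H+2}$ (which only adds two more layers, hence does not flip the parity at the root), this shows the sequences $((i_{-}^{2m},Z_{-}^{2m})(o,v))_m$ and $((i_{+}^{2m},Z_{+}^{2m})(o,v))_m$ are respectively monotone nondecreasing and nonincreasing in the lexicographic order, and sandwich each other; I would handle odd $H$ by the same argument with shifted parity, or simply restrict attention to even $H$ and note the odd case follows by one more recursion step.

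\textbf{Convergence of the sandwich to a common limit: the distributional estimate.} The heart of the proof is showing that the gap between the two extremal families vanishes. I would track two scalar statistics of the law of the messages pointing toward the boundary: the probability $p_H := \mathbb P\big((i_{+}^{H},Z_{+}^{H})(u,v) \text{ has macroscopic part } 1\big)$ versus the analogous quantity for the minus family, and a coupling distance between the microscopic parts conditioned on the macroscopic parts agreeing. Writing $x_H$ for the (random, in the general setting; deterministic here by the recursive distributional structure) probability that a boundary-directed message is macroscopically ``large'', the recursion~\eqref{eq:systemrecursionzoombis} at the level of laws contracts: the new ``large'' probability is $\hat\phi(1-\mathbb E[\hat\phi(1-x)])$-type expression, and the derivative of the two-step map is controlled exactly by $\mathbb E[\hat\phi'(1-X)]\,\hat\phi'(1-\mathbb E[\hat\phi(1-X)])$, which by condition~\eqref{eq:subcriticalcond} is bounded by $\rho_\pi < 1$ uniformly over the choice of $X \in [0,1]$. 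Thus the difference $p_H^{+} - p_H^{-}$ between the extremal macroscopic marginals decays geometrically, like $\rho_\pi^{H}$; once the macroscopic parts of the two extremal families agree with probability $1$ in the limit, a second (easier) contraction argument on the microscopic coordinate — now a genuine max-plus recursion with i.i.d.\ continuous weights of finite mean, for which the uniqueness result Theorem~\ref{th:firstpaper}/Theorem~3 of \cite{enriquez2024optimalunimodularmatching} already gives a unique stationary law — forces the microscopic parts to coincide in the limit as well. Combining, the common limiting law of a boundary-directed message is the stationary distribution $\zeta'$ identified in Theorem~\ref{uniqueness}, and since $k=1$ here that distribution has $i \sim \gamma\delta_0 + (1-\gamma)\delta_1$ with $\gamma$ the unique fixed point of $t\mapsto\hat\phi(1-t)$.

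\textbf{From marginal convergence to almost sure convergence of the whole family at the root.} Monotonicity upgrades the in-law / total-variation convergence to almost sure convergence: the sandwiched sequences $(i_{-}^{2m},Z_{-}^{2m})(o,v)$ and $(i_{+}^{2m},Z_{+}^{2m})(o,v)$ are monotone in $m$, hence converge almost surely to limiting random variables $(i_v^{-},Z_v^{-}) \overset{\lex}{\le} (i_v^{+},Z_v^{+})$; the marginal-law computation above shows these two limits have the same law, and being ordered with equal law they are almost surely equal. Applying Recursion~\eqref{eq:systemrecursionzoombis} once more at the root and using that the first-generation subtrees of $o$ are independent UBGW-type subtrees with independent weights, the joint limit $((i_l,Z_l))_{1\le l \le N}$ (indexed by the $N \sim \pi$ children of $o$) is an i.i.d.\ family of law $\zeta'$ independent of $N$; the fact that these are independent across children follows because the extremal families are built by a deterministic local procedure on disjoint subtrees. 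I expect the main obstacle to be the microscopic coordinate $Z$: unlike the bounded macroscopic part $i$, the variables $Z_{\pm}^H$ are not a priori bounded, so ``convergence in law plus monotone ordering'' must be supplemented by an integrability or tightness argument (using the finite mean of $\omega$) to rule out escape to $+\infty$, and the two-step contraction must be run carefully in a metric (e.g.\ a Wasserstein-type distance on the conditional law of $Z$ given $i$) that is finite and for which~\eqref{eq:subcriticalcond} genuinely yields a strict contraction factor $\rho_\pi$ — this is precisely where one needs the supremum over all $[0,1]$-valued $X$ in the definition of $\rho_\pi$, to get a bound uniform along the whole sequence of conditional laws that arise.
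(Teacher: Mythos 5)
Your monotone-sandwich setup (boundary conditions $(0,0)$ and $(1,+\infty)$, two-step monotonicity, even/odd limits, and ``ordered with equal law implies a.s.\ equal'') matches the paper's proof. The genuine gap is in the core step where you make the two extremal limits have the same law. Your plan is a two-stage reduction: first contract the macroscopic marginal, then, ``once the macroscopic parts agree'', treat the microscopic coordinate as ``a genuine max-plus recursion with i.i.d.\ continuous weights'' whose stationary law is unique by Theorem~\ref{th:firstpaper} (Theorem~3 of \cite{enriquez2024optimalunimodularmatching}). This does not work as stated. The microscopic components do not satisfy that one-type RDE: in the list representation they satisfy the \emph{cross-coupled} system $Z_0(u,v)=\max(0,\max_{u'}(w-Z_1(v,u')))$, $Z_1(u,v)=\max_{u'}(w-Z_0(v,u'))$ with defective (sub-probability) marginals of masses $\gamma$ and $1-\gamma$, so the earlier uniqueness theorem is not applicable. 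More importantly, uniqueness of the \emph{stationary} law is the wrong target: the even and odd limits of the extremal families are, at the level of laws, a $2$-cycle of the one-step distributional operator $F$, i.e.\ fixed points of $F^{\circ 2}$. Uniqueness of fixed points of $F$ (which is what Theorem~\ref{uniqueness} gives, under separate hypotheses) does not exclude a nontrivial period-two orbit; ruling that out is exactly what condition~\eqref{eq:subcriticalcond} is for, and it must be done for the full two-dimensional law, not only for the macroscopic marginal (whose contraction needs only deterministic $X$, i.e.\ essentially \eqref{eq:uniquefixedpoint}).

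The paper closes this step by contracting the whole pair of cumulative functions $(f_0,f_1)$ (macroscopic and microscopic information together) under $F^{\circ 2}$ in the sup-norm $d_{TV}$: $F^{\circ 2}$ decouples coordinatewise, and a mean-value estimate gives
\[
\bigl|F^{\circ 2}_0(f_0,f_1)(t)-F^{\circ 2}_0(f_0',f_1')(t)\bigr|
\;\le\; \eta\,\sup_{|u|\le \eta}\,\mathbb{E}\!\left[\hat{\phi}'(1-X+u)\right]\hat{\phi}'\!\left(1-\mathbb{E}\!\left[\hat{\phi}(1-X+u)\right]\right),
\qquad X=\mathbb{E}_{W'}[f_0(W'-W+t)],
\]
so the random variable $X\in[0,1]$ over which \eqref{eq:subcriticalcond} takes a supremum arises precisely from the \emph{microscopic} law integrated against the weight; local contraction plus convexity of the function space yields a global contraction, hence a unique fixed point of $F^{\circ 2}$, and both even/odd extremal limits must have that law. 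Your closing caveat correctly senses that the random-$X$ form of \eqref{eq:subcriticalcond} must enter through the microscopic coordinate, but the metric you suggest there (a Wasserstein-type distance on the conditional law of $Z$ given $i$) is unlikely to give a strict contraction, since the max operations are only $1$-Lipschitz for such couplings; it is the Kolmogorov/sup-distance on the (defective) distribution functions for which composing with $\hat\phi(1-\mathbb{E}[\cdot])$ produces the derivative factor bounded by $\rho_\pi<1$. To repair your proof, replace the two-stage reduction and the appeal to Theorem~\ref{th:firstpaper} by a contraction of $F^{\circ 2}$ on the pair $(f_0,f_1)$ in this sup-distance.
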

\begin{remark}
Note that $(\mathbb{T}_H,o,(i_{-}^{H},Z_{-}^H),(i_{+}^{H},Z_{+}^H))$ does not define a consistent family in $H$. 
\end{remark}
\begin{proof}[Proof of Lemma~\ref{lem:subcritical}]
We will use the fact that the random variables $(i_-^{2H},Z_-^{2H}) (o,v)$  (resp. $(i_+^{2H},Z_+^{2H}) (o,v))$ are deterministically increasing (resp. decreasing) as $H$ increases.
Indeed, for a fixed family of weights $(w_l)_{l \geq 0}$, if we pick two families $(a_l,b_l)$ and $(a_l',b_l')$ such that for every $l$, $(a_l,b_l) \overset{\lex}{\leq} (a_l',b_l')$, then for any integer $N \geq 0$:
\[ \maxlex\left( (0,0) ,\maxlex_{l \leq N} (1,w_l)-(a_l,b_l)\right) \overset{\lex}{\geq} \maxlex\left( (0,0), \maxlex_{l \leq N} (1,w_l)-(a_l',b_l')\right). \]

In terms of random variables, the following almost sure limits thus exist when looking at even depths:
\begin{align*}
    (i_{-}^{\text{even}},Z_{-}^{\text{even}}) &= \lim_{H \rightarrow +\infty} (i_{-}^{2H},Z_{-}^{2H}) ,\\
    (i_{+}^{\text{even}},Z_{+}^{\text{even}}) &= \lim_{H \rightarrow +\infty} (i_{+}^{2H},Z_{+}^{2H}).
\end{align*}

For the odd depths, the analogous limits also exist:
\begin{align*}
    (i_{-}^{
    \text{odd}},Z_{-}^{
    \text{odd}}) &= \lim_{H \rightarrow +\infty} (i_-^{2H+1},Z_{-}^{2H+1}), \\
    (i_{+}^{
    \text{odd}},Z_{+}^{\text{odd}}) &= \lim_{H \rightarrow +\infty} (i_{+}^{2H+1}, Z_{+}^{2H+1}).
\end{align*}

Furthermore, we end up with the following almost sure  chain of inequalities
\begin{align*} 
&(i_-^{2H},Z_{-}^{2H}) \overset{\lex}{\leq} (i_{+}^{2H+1},Z_{+}^{2H+1}) \overset{\lex}{\leq } (i_-^{2H+2},Z_{-}^{2H+2}) \\ \overset{\lex}{\leq} &(i_+^{2H+2},Z_{+}^{2H+2}) \overset{\lex}{\leq}(i_-^{2H+1},Z_{-}^{2H+1}) \overset{\lex}{\leq}(i_+^{2H},Z_{+}^{2H}).    
\end{align*}
Going from the outer edge to the centre, the first inequalities are obtained by looking at the boundary conditions of both variables at depth $H$, the next inequalities by looking at their boundary conditions at depth $H+1$, and the centre inequality by looking at their boundary conditions at depth $H+2$. Taking $H$ to infinity implies that:
\[  (i_{-}^{\text{even}},Z_{-}^{\text{even}}) \overset{\lex}{\leq} (i_{+}^{
    \text{odd}},Z_{+}^{\text{odd}})\overset{\lex}{\leq}  (i_{-}^{\text{even}},Z_{-}^{\text{even}}) \overset{\lex}{\leq} (i_{+}^{\text{even}},Z_{+}^{\text{even}})\overset{\lex}{\leq} (i_{-}^{
    \text{odd}},Z_{-}^{
    \text{odd}})\overset{\lex}{\leq} (i_{+}^{\text{even}},Z_{+}^{\text{even}}) .  \]
which condenses into
\[ (i_{-}^{\text{even}},Z_{-}^{\text{even}}) = (i_{+}^{
    \text{odd}},Z_{+}^{\text{odd}})\overset{\lex}{\leq} (i_{+}^{\text{even}},Z_{+}^{\text{even}})=(i_{-}^{
    \text{odd}},Z_{-}^{
    \text{odd}}).   \]
To show the conclusion, we only need to show that the centre inequality is in fact an equality. For this purpose, it is sufficient to show that the distribution of the two variables on the left is the same as the distribution of the two variables on the right.
We will use a contraction argument for the distributions of those variables for the total variation distance.
Let $\mathcal X$ be the space of pairs of c\`adl\`ag increasing functions $(f_0,f_1)$ taking values in $[0,1]$ such that:
    \begin{align*}
        \lim_{t \rightarrow +\infty} f_0(t)&= \lim_{t \rightarrow -\infty} f_1(t).
    \end{align*}

We interpret $(f_0,f_1)$ as a random variable taking values in $\{0,1\} \times \mathbb{R}$ through the list representation.
Let us consider the operator $F: \mathcal X \mapsto \mathcal X$ such that if one writes $F(f_0,f_1)=(F_0(f_0,f_1),F_0(f_0,f_1))$, we have:
\begin{align*}
    \forall t \in \mathbb{R}, F_1(f_0,f_1)(t)&= \mathbbm{1}_{t \geq 0} \hat{\phi}\left(1-\mathbb{E}_{W \sim \omega}[f_1(W-t)]\right),  \\
    \forall t \in \mathbb{R}, F_2(f_0,f_1)(t)&= \hat{\phi}\left(1-\mathbb{E}_{W\sim \omega}[f_0(W-t)] \right).
\end{align*}
The first observation is then that the distribution of $(i_{-}^{H},Z_{-}^H)(o,v)$ for any $v \sim o$ is exactly $F^{\circ H}(\mathbbm{1}_{[0,+\infty[},1)$ (where $1$ is to be understood at the constant function equal to $1$) and similarly, the distribution of $(i_{+}^{H},Z_{+}^H)(o,v)$
is $F^{\circ H}(1,1)$.
The second observation is that $F$ is stochastically decreasing, in the sense that for pairs of functions $(f_0,f_1)$ and $(f_0',f_1')$ one has 
\[
\left( f_0 \leq f_0' , f_1 \leq f_1' \right)
\Longrightarrow 
\left( F_0(f_0,f_1) \geq F^{\circ(2)}_0(f_0',f_1'), F^{\circ(2)}_1(f_0,f_1) \geq F_1(f_0',f_1')\right).
\]
This can be deduced either directly from the expression of $F$.
In either case, we obtain that $F^{\circ 2}$ is stochastically increasing, namely
\[f_0 \leq f_0' , f_1 \leq f_1' \Longrightarrow F^{\circ(2)}_0(f_0,f_1) \leq F^{\circ(2)}_0(f_0',f_1'), F^{\circ(2)}_1(f_0,f_1) \leq F^{\circ(2)}_1(f_0',f_1'). \]
We will shorten $(f_0,f_1) \leq (f_0',f_1')$ to mean that $f_0 \leq f_0'$ and $f_1 \leq f_1'$.
    
In terms of the operator $F$, it thus suffices to show that $F^{\circ 2}$ admits a unique fixed point to obtain the conclusion, since the distributions of $(i_{+}^{\text{odd}},Z_+^{\text{odd}})$ and the distribution of $(i_{+}^{\text{even}},Z_+^{\text{even}})$ are both fixed points of $F^{\circ 2}$.

To this end, we equip $X$ with the total variation distance defined as 
\begin{equation}\label{def:TVfunc}
    d_{TV}((f_0,f_1),(f_0',f_1'))=\max( \sup_\mathbb{R} |f_0-f_0'|,\sup_\mathbb{R}|f_1-f_1'|  ).
\end{equation}
We will then show that $F^{\circ 2}$ is a local contraction of $(X,d_{TV})$ in the sense that there exists $K<1$ and $\eps>0$ such that for any pair $(f_0,f_1),(f_0',f_1') \in X$,
\[ d_{TV}((f_0,f_1),(f_0',f_1'))< \eps \Longrightarrow d_{TV}(F^{\circ 2}(f_0,f_1),F^{\circ 2}(f_0',f_1'))<K d_{TV}((f_0,f_1),(f_0',f_1')). \]
Since $X$ is convex, this will imply that $F^{\circ 2}$ is a global contraction.
First, we notice that $F^{\circ 2}$ can be reduced to a single dimensional mapping, as \[F^{\circ 2}_0(f_0,f_1)(t)=\mathbbm{1}_{t \geq 0} \hat{\phi}(1-\mathbb{E}_{W\sim \omega}[\hat{\phi}(1-\mathbb{E}_{W' \sim \omega}[f_0(W'-W+t])]\] and similarly for $F^{\circ 2}_1$.
We will thus only show that $F^{\circ 2}_0$ is a contraction on $f_0$, the $f_1$ case being nearly identical.
We thus write that
\begin{align*}
&|F^{\circ 2}_0(f_0,f_1)(t)-F^{\circ 2}_0(f_0',f_1')(t)|   = \\
&\bigg|\hat{\phi}(1-\mathbb{E}_{W\sim \omega}[\hat{\phi}\left(1-\mathbb{E}_{W' \sim \omega}\left[f_0(W'-W+t\right)])]\right) - \\ &\hat{\phi}\left(1-\mathbb{E}_{W\sim \omega}[\hat{\phi}(1-\mathbb{E}_{W' \sim \omega}\left[f_0'(W'-W+t])\right]\right)\bigg|
\end{align*}
We can upper-bound the difference by the case where $f_0'-f_0=\pm \sup |f_0'-f_0|$. Let us write $\eta=\sup |f_0'-f_0|$, using a mean value inequality, we get that:
\begin{align*}
    &|F^{\circ 2}_0(f_0,f_1)(t)-F^{\circ 2}_0(f_0',f_1')(t)| \\
    \leq &\eta \sup_{u \in [-\eta,\eta]} \mathbb{E}_W\left[\hat{\phi}'(1-\mathbb{E}_{W'}[f_0(W'-W+t)+u]\right]\hat{\phi}'\left(1-\mathbb{E}_W[\hat{\phi}(1-\mathbb{E}_{W'}[f_0(W'-W+t)+u])]\right).
\end{align*}
Let us denote by $X(W)$ the random variable $\mathbb{E}_{W'}[f_0(W'-W+t)]$, then the upper bound rewrites as
\[ \eta \sup_{u \in [-\eta,\eta]} \mathbb{E}_X \left[\hat{\phi}'(1-X+u) \right]\hat{\phi'}\left(1-\mathbb{E}_X \left[ \hat{\phi}(1-X+u)\right]\right). \]
Dominated convergence theorem shows that $u \mapsto \mathbb{E}_X \left[\hat{\phi}'(1-X+u) \right]\hat{\phi'}\left(1-\mathbb{E}_X \left[ \hat{\phi}(1-X+u)\right]\right)$ is continuous, and condition~\ref{eq:subcriticalcond} states that evaluating at $u=0$ yields a number less than $1$.  We deduce that there exists $\eps>0$ such that
\[ \sup_{u \in [-\eps,\eps]} \mathbb{E}_X[\hat{\phi}'(1-X+u)]\hat{\phi}'\left( 1-\mathbb{E}_X\left[\hat{\phi}(1-X+u)\right]\right)  <1.\]
Set $K=\sup_{u \in [-\eps,\eps]} \mathbb{E}_X[\hat{\phi}'(1-X+u)]\hat{\phi}'\left( 1-\mathbb{E}_X\left[\hat{\phi}(1-X+u)\right]\right)$, we thus deduce that for $\sup |f_0-f_1|= \eta < \eps$, we have:
\begin{align*}
    |F^{\circ 2}_0(f_0,f_1)(t)-F^{\circ 2}_0(f_0',f_1')(t)| 
    \leq K\eta =K\sup|f_0-f_1|.
\end{align*}
Taking the supremum over $t$ yields the desired result.
\end{proof}
\begin{remark}
    In the previous proof, we can always impose that $X+u \in [0,1]$, such that the constant $K$ appearing in the proof does not depend on the value of $X$. We deduce that the contraction happens with coefficient 
    \[ \rho= 
      \sup_{X \text{ r.v. in} [0,1]} \mathbb{E}\left[ \hat{\phi}'(1-X)\right] \hat{\phi}'\left(1-\mathbb{E}\left[\hat{\phi}(1-X)\right]\right)  .\]
    on $(\mathcal{X},\mathrm{d}_{\mathrm{TV}})$.
\end{remark}
The next argument, also for the special case $H_0=1$, is the following:

\begin{lemma}\label{lem:matchingbord}
    Let $(G,o,w)$ be a finite weighted graph with a unique optimal matching $M_{\mathrm{opt}}$. Let $H>0$ and assume that $G^H:=B_H(G,o)$ is a tree. 
    Define the decorated graph $(G^H,o, (i_M,Z_M))$ as follows:
    \begin{enumerate}
        \item Set $(i_M,Z_M)(u,v)=(0,0)$ if $v$ is on the boundary and not matched by $M_{\mathrm{opt}}$ to the exterior of $G^H$. In other words, $(i_M,Z_M)(u,v)=(0,0)$ if $d(v,o)=H+1$ and $(v,u') \notin M_{\mathrm{opt}}$ for $u' \sim v, u' \neq u$.
        \item Set $(i_M,Z_M)(u,v)=(1,+\infty)$  if $v$ is on the boundary and matched by $M_{\mathrm{opt}}$ to the exterior of $G^H$.
        \item Use Recursion~\eqref{eq:systemrecursionzoombis} to define $(i_M,Z_M)$ on the remainder of $(G^H,o)$.
    \end{enumerate}
    Then an edge $(u,v)\in G^H$ is in $M_{\mathrm{opt}}$ if and only if
     \begin{align*}
         (i_M,Z_M)(u,v)+(i_M,Z_M)(v,u)\overset{\lex}{<} (1,w(u,v)).
     \end{align*}
\end{lemma}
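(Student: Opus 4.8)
The plan is to reduce the statement to the characterization of the optimal matching via message variables that is already available on finite trees, combined with the "marginal gain" interpretation of $Z$ and a localization argument that shows all information from the exterior of $B_H(G,o)$ is faithfully captured by the two boundary values $(0,0)$ and $(1,+\infty)$. The first step is to recall the deterministic dynamic programming description of maximum size, maximum weight matchings on finite trees: exactly as in Section~\ref{sec:heuristic}, for a finite weighted tree $T$ with atomless weights, writing $(i,Z)(u,v)$ for the renormalized messages (the two-dimensional analogue of $\mathbf{Z}(u,v)$ built from $OPT$ with the lexicographic weight $(1,w)$), the edge $\{u,v\}$ belongs to $M_{\mathrm{opt}}(T)$ iff $(i,Z)(u,v)+(i,Z)(v,u)\overset{\lex}{<}(k,w(u,v))$ — here $k$ is the integer appearing in Proposition~\ref{prop:Zconstruction}; since we are in the subcritical regime we have already noted $k=1$, so $(k,w)=(1,w)$. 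This is the finite-tree version of the decision rule in Proposition~\ref{prop:Zconstruction}~ii) and of the recursion~\eqref{eq:systemrecursionzoombis}.

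Next I would set up the key observation that makes the boundary values work. Consider the tree $G^H=B_H(G,o)$ sitting inside $G$. Let $u$ be a boundary vertex, $d(v,o)=H$, $u=u'$ its children outside... — more precisely, for each boundary vertex $v$ with $d(v,o)=H+1$ and its unique neighbour $u$ inside $G^H$, the component of $G\setminus\{u\}$ containing $v$ (call it $G_{(u,v)}$) meets the rest of $G$ only through $v$ and its exterior neighbours. I claim the message $(i,Z)(u,v)$ computed in the full graph $G$ depends on that component only through whether $v$ ends up matched to the exterior. Indeed, by the marginal-gain interpretation, $(i,Z)(u,v)=(1,w(v,u'))-(i,Z)(v,u')$ for the lexicographically optimal exterior neighbour $u'$ if $v$ is matched outward, and $(i,Z)(u,v)=(0,0)$ otherwise (this is the vertex rule~\eqref{eq:vertexrulebis}). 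When $v$ is matched outward, what propagates inward is the macroscopic statement "$v$ is used", i.e. $i=1$; the microscopic value $Z(u,v)=w(v,u')-Z(v,u')$ is then irrelevant for the inclusion comparisons strictly inside $G^H$ because any such comparison against it is decided at the macroscopic level — this is exactly the phenomenon exploited in the heuristics where "the values of $x$ do not matter, only their ranking". Hence replacing the true message by the idealized value $(1,+\infty)$ (the lex-maximal value with $i=1$) does not change any of the inclusion decisions for edges of $G^H$, nor the downstream messages on $G^H$, since the recursion~\eqref{eq:systemrecursionzoombis} only ever compares these messages lexicographically and subtracts them from weights of the form $(1,w)$.

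Then the proof assembles as follows. Define $(i_M,Z_M)$ on $G^H$ by the three-step prescription in the statement; by the previous paragraph, at every boundary half-edge $(u,v)$ the idealized value $(i_M,Z_M)(u,v)$ and the true value $(i,Z)(u,v)$ (computed in $G$) induce the same outcome of every lexicographic comparison that the recursion and the decision rule perform inside $G^H$ — more carefully, one shows by downward induction on $d(\cdot,o)$ that for every directed edge $(u,v)$ of $G^H$ pointing towards the root, the true message $(i,Z)(u,v)$ and $(i_M,Z_M)(u,v)$ either coincide, or both have first coordinate $1$ and a second coordinate that is never the minimizing term in any comparison used further up; and similarly for edges pointing away from the root after one reaches the root. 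Consequently, for every internal edge $(u,v)$ of $G^H$ one has $(i,Z)(u,v)+(i,Z)(v,u)\overset{\lex}{<}(1,w(u,v))$ iff $(i_M,Z_M)(u,v)+(i_M,Z_M)(v,u)\overset{\lex}{<}(1,w(u,v))$, and by the finite-tree decision rule the left-hand side is equivalent to $\{u,v\}\in M_{\mathrm{opt}}$. This yields the claimed equivalence. The main obstacle — the step that needs the most care — is making rigorous the "only the ranking of $i$ matters, not the microscopic $Z$ on an $i=1$ boundary edge" claim: one must verify that in the recursion~\eqref{eq:systemrecursionzoombis} a term of the form $(1,w)-(1,Z(v,u'))=(0,\,w-Z(v,u'))$ coming from a boundary edge can never be the $\maxlex$-achieving term unless some other neighbour also has first coordinate $0$, in which case the comparison is again decided by the second coordinates among the $i=0$ terms only, which are genuine messages unaffected by the boundary idealization; handling the vertex $v$'s own degree, the possibility that $v$ has several exterior matches offered, and the interplay with self-loops (should one prefer to phrase things via $\mathbb{T}^s$) is where the bookkeeping is delicate but, given Proposition~\ref{prop:Zconstruction} and the heuristics, entirely routine.
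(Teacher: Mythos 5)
There is a genuine gap, and it sits exactly at the step you describe as the "key observation". Your argument is built on "the message $(i,Z)(u,v)$ computed in the full graph $G$", but no such object is available: the recursion \eqref{eq:systemrecursionzoombis}, the marginal-gain interpretation via $OPT$ of the component cut off by an edge, and the exactness of the decision rule are only established on \emph{trees}, whereas $G$ is an arbitrary finite graph of which only the ball $B_H(G,o)$ is assumed to be a tree. In particular your decoupling claim — that the component of $G\setminus\{u\}$ containing a boundary vertex $v$ meets the rest of $G$ only through $v$ — is false in general: outside the ball the graph can reconnect distinct boundary vertices, so this component may contain a large part of $B_H(G,o)$ itself, and there is no well-defined marginal-gain message "hanging off $v$" to compare with the prescribed boundary value. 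Moreover, even if $G$ were globally a tree so that true messages exist, your identification of the boundary data with the true messages is incorrect: if $v$ is a boundary vertex that is matched \emph{inward} (or unmatched) by $M_{\mathrm{opt}}$ but still has an exterior neighbour $u'$ left unmatched, then the true inward message $\maxlex\big((0,0),\maxlex_{u''\sim v,\,u''\neq u}\,(1,w(v,u''))-(i,Z)(v,u'')\big)$ has first coordinate $1$, while the lemma prescribes $(0,0)$. So already at the base case of your downward induction the two families disagree in the \emph{macroscopic} coordinate (not merely in an irrelevant microscopic one), and the discrepancy propagates one level up, where $(1,w(u,v))-(1,z)$ and $(1,w(u,v))-(0,0)$ again differ in the first coordinate. "Not matched outward" does not mean "no lex-positive outward option once $u$ is excluded", which is what $(0,0)$ would require; the claimed dichotomy (coincide, or both have $i=1$ with an innocuous second coordinate) therefore fails, and the step you call routine is in fact the false one.

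The agreement asserted by the lemma holds at the level of the induced matching, not of the messages, and is obtained by a different mechanism. First, by a standard exchange argument, the restriction of $M_{\mathrm{opt}}$ to the ball is the unique optimal (maximum size, then maximum weight) matching of the finite tree $G^{H,\star}$ obtained from $G^H$ by deleting every boundary vertex that $M_{\mathrm{opt}}$ matches to the exterior: otherwise one could splice a better matching of $G^{H,\star}$ into $M_{\mathrm{opt}}$ and improve it lexicographically. Second, the two boundary values are chosen to emulate exactly this surgery inside the message formalism on the finite tree $G^{H,\star}$: the value $(0,0)$ is precisely the message emitted by a leaf, so a boundary vertex not matched outward behaves as if its exterior were deleted; the value $(1,+\infty)$ forbids the corresponding boundary edge, since $(1,+\infty)\overset{\lex}{>}(1,w(u,v))$, and is inert further inside because it contributes $(1,w(u,v))-(1,+\infty)=(0,-\infty)\overset{\lex}{<}(0,0)$ to the next application of \eqref{eq:systemrecursionzoombis}, i.e.\ it acts as if the matched boundary vertex had been removed. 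Exactness of the decision rule on finite trees (the deterministic analogue of Proposition~\ref{prop:Zconstruction}~ii)) applied to $G^{H,\star}$ then gives the equivalence claimed in Lemma~\ref{lem:matchingbord}. If you want to salvage your plan, you should abandon the comparison with "true messages on $G$" altogether and argue through this subproblem-optimality reduction.
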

\begin{proof}[Proof of Lemma~\ref{lem:matchingbord}]
The idea is that the optimal matching in $G^H$ behaves as if every vertex on the boundary matched to the exterior in $M_{\mathrm{opt}}$ is removed and untouched otherwise. Setting $(i_M,Z_M)(u,v)=(0,0)$ has the same effect in $G^H$ as if $v$ was a leaf. Similarly, setting $(i_M,Z_M)(u,v)=(1,+\infty)$ forbids $(u,v)$ to be in the matching. Indeed, we always have  $(1,+\infty)\overset{\lex}{>} (1,w(u,v))$ and this removes any effect that $(u,v)$ has in the rest of $G^H$ since the value appearing in Recursion~\ref{eq:systemrecursionzoombis} in the next generation's from $(u,v)$ is $(1,w(u,v))-(1,+\infty)=(0,-\infty)$ which is always smaller than $(0,0)$. We know that the $(i,Z)$ message formalism is exact on finite trees so the rule \[(i_M,Z_M)(u,v)+(i_M,Z_M)(v,u) \overset{\lex}{<} (1,w(u,v)) \] constructs optimal matchings on $G^{H,\star}:=G^H \setminus \{v, \exists u \in G \setminus G^H, (u,v)\in M_{\mathrm{opt}}\}$. 
\end{proof}
We can now combine the two lemmas to obtain Theorem~\ref{th:reciproquegamarnik}.
\begin{proof}[Proof of Theorem~\ref{th:reciproquegamarnik}.]
Fix $\eps>0$ and $H_0>0$.
For any $\eps'>0$, use Lemma~\ref{lem:subcritical} to find $H>0$ such that \[d_{TV}((i_-^{H},Z_-^{H})(o,v)_{v \sim o}, (i_+^{H},Z_+^{H})(o,v)_{v \sim o})<1-\eps'.\]
Next, use condition i) and find a rank $N_{\eps',H_0+H}$ such that for any $n \geq N_{\eps',H_0+H}$, there exists a coupling of $(\tilde{G}_n,\tilde{o}_n,\tilde{\mathbb{T}},\tilde{o})$ of $(G_n,o_n)$ and $(\mathbb{T},o)$ such that $B_{H_0+H}(\tilde{G}_n,\tilde{o}_n)=B_{H_0+H}(\tilde{\mathbb{T}},\tilde{o})$ outside of an event of probability at most $\eps'$.
Take any optimal matching $M_{\mathrm{opt}}(G_n)$ on $G_n$ and its corresponding matching $\tilde{M}_{\mathrm{opt}}(G_n)$ on $\tilde{G}_n$. Using Lemma~\ref{lem:matchingbord}, outside of an event of probability at most $\eps'$, we know that $B_{H_0+H}(\tilde{G}_n,o)$ is a tree with i.i.d weights so there is a unique matching on \[\tilde{G}_n^{H_0+H,\star}:=B_{H_0+H}(\tilde{G}_n,\tilde{o}_n) \setminus \{v, \exists u \in \tilde{G}_n \setminus B_{H_0+H}(\tilde{G}_n,\tilde{o}_n), (u,v) \in \tilde{M}_n(\tilde{G}_n))\] so the lemma says that this optimal matching agrees with a family of messages $(i_M,Z_M)$ constructed with boundary conditions specified inside the lemma. 
Define the set of directed edges on the $H_0-$boundary to be $\partial_{H_0}(\tilde{G}_n,\tilde{o}):= \{(u,v), d(u,\tilde{o})=H_0, d(v,\tilde{o})=H_0+1\}$.
For every $(u,v) \in \partial_{H_0}(\tilde{G}_n,\tilde{o}_n)$, define the subtree $T_{u,v}^{H}$ to be the subtree between depth $H_0$ and $H_0+H$ that $(u,v)$ is pointing towards. The family $(T_{u,v}^{H},v)_{(u,v) \in \partial_{H_0}(\tilde{G}_n,\tilde{o})}$ forms a disjoint family of truncated trees. 
We can define on each of these trees a family $(i_-^{H,(u,v)},Z_-^{H,(u,v)})$ and $(i_+^{H,(u,v)},Z_+^{H,(u,v)})$ with the same procedure as in Lemma~\ref{lem:subcritical}.
Recall the almost sure messages are decreasing with respect to the boundary conditions at odd depth and increasing at even depth with \textbf{fixed} weights. 

Any mixed boundary condition is lower bounded by the pure $(0,0)$ boundary condition and upper bounded by the pure $(1,+\infty)$ boundary condition. Furthermore, the families 
\begin{align*}
&\left((i_-^{H,(u,v)},Z_-^{(u,v)})(u',v')_{(u',v') \in T_{u,v}^{H}}\right), \\ &\left( (i_+^{H,(u,v)},Z_+^{(u,v)})(u',v')_{(u',v') \in T_{u,v}^{H}} \right)  \text{ and}\\ &\left((i_M,Z_M)(u',v')_{ (u',v')  \in T_{(u,v)}^{H}} \right) 
\end{align*} use the same weights for their recursions. Thus we get that 
each $(i_M,Z_M)(u,v)$ for $(u,v) \in \partial_{H_0}(\tilde{G}_n,\tilde{o}_n)$ lies between
\begin{align*}
    &\maxlex \left( (0,0),\maxlex_{u' \sim v, u' \in T_{u,v}^{H}} (1,w(v,u')-(i_-^{H,(u,v)},Z_-^{H,(u,v)})(v,u')  )\right)\\ \text{ and } \\
    &\maxlex \left( (0,0),\maxlex_{u' \sim v, u' \in T_{u,v}^{H}} (1,w(v,u')-(i_+^{H,(u,v)},Z_+^{H,(u,v)})(v,u') ) \right),
\end{align*}
where the ordering of the upper and lower bounds depend on whether $H$ is even or odd.
But Lemma~\ref{lem:subcritical} states that the families \[\left((i_-^{H,(u,v)},Z_-^{H,(u,v)})(v,u')\right)_{u' \sim v} \text{and }\left((i_+^{H,(u,v)},Z_+^{H,(u,v)})(v,u')\right)_{u' \sim v}\] are $\eps'$ close in total variation to a i.i.d family of random variables of distribution $\zeta'$.
This shows that $(i_M,Z_M)(u,v)$ is $\eps'$ close to 
\[\maxlex \left( (0,0),\maxlex_{u' \sim v, u' \in T_{u,v}^{H}} ((1,w(v,u'))- (i_{u'},Z_{u'} ) \right) \]
where $(i_{u'},Z_{u'})$ are i.i.d of law $\zeta'$. By stationarity of $\zeta'$ we deduce that each $(i_M,Z_M)(u,v)$ is also $\eps'$ close to a random variable of law $\zeta'$.

Since each random family $\left((i_-^{H,(u,v)},Z_-^{H,(u,v)})(v,u')\right)_{u' \sim v}$ and $\left((i_+^{H,(u,v)},Z_+^{H,(u,v)})(v,u')\right)_{u' \sim v}$ does not depend on the graph outside of the corresponding tree $(T_{u,v}^{H},v)$, which are independent in $\mathbb{T}$, we furthermore deduce that the family $(i_M,Z_M)(u,v)_{(u,v) \in \partial_H(\tilde{G}_n, \tilde{o}_n)}$ is $ |\partial_{H_0}(\tilde{G}_n,\tilde{o}_n) | \eps'$ close in total variation to a i.i.d family of distribution $\zeta'$. 

In conclusion, we showed that outside of an event of probability $\eps'$, $\tilde{M}_{\mathrm{opt}}(\tilde{G}_n)$ agrees on $B_{H_0}(\tilde{G}_n, \tilde{o}_n)$ with a matching generated by:
\begin{enumerate}
    \item Outside variables $(i_M,Z_M)(u,v)_{(u,v) \in \partial_{H_0}(\tilde{G}_n,\tilde{o}_n)}$ that are $|\partial_{H_0}(\tilde{G}_n,\tilde{o}_n) | \eps'$ close in total variation to an i.i.d family of variables of law $\zeta'$.
    \item The family $(i_M,Z_M)$ inside $B_{H_0}(\tilde{G}_n,\tilde{o}_n)$ generated by Recursion~\eqref{eq:systemrecursionzoombis}.
\end{enumerate}
Since ${H_0}$ is finite, one can pick $\eps'$ small enough so that this implies that the entire family $(B_{H_0}(\tilde{G}_n),\tilde{o}_n,(i_M,Z_M)_{(u,v) \in B_{H_0}(\tilde{G}_n,\tilde{o}_n)})$ agrees with $B_{H_0}(\mathbb{T},o,(i,Z)(u,v)_{(u,v) \in \overset{\rightarrow}{E}})$ outside of an event of probability $1-\tfrac{\eps}{2}$. Forcing $\eps'<\tfrac{\eps}{2}$ thus implies that we have found a rank $N_{\eps',H_0}$ for which $(\tilde{G}_n,\tilde{o}_n,\tilde{M}_{\mathrm{opt}}(\tilde{G}_n))$ agrees with $(\tilde{\mathbb{T}},\tilde{o},\tilde{\mathbb{M}}_{\mathrm{opt}})$ outside of an event of probability $\eps$, proving Theorem~\ref{th:reciproquegamarnik}.
\end{proof}

\bigskip

The proofs of Theorem~\ref{th:decay} and of Corollary~\ref{coro:indepasympt} are basically contained in the previous proofs of this section:

\begin{proof}[Proof of Theorem~\ref{th:decay}]
Theorem~\ref{th:decay} is essentially a quantified version of Lemma~\ref{lem:subcritical}. We have shown in the proof of Lemma~\ref{lem:subcritical} that the family \[ (i_M,Z_M)(o,v)_{(o,v)\in \mathbb{T}_{H})} \] converges geometrically fast to the stationary distribution with $H$, and that the contraction coefficient is precisely $\rho_\pi$.
\end{proof}
\begin{proof}[Proof of Corollary~\ref{coro:indepasympt}]
If the distance between two uniformly chosen vertices does not go almost surely to infinity, then it is clear that $G_n$ seen from two independently chosen uniform vertices cannot be independent copies of $(\mathbb{T},o)$. We thus deduce that the distance between two uniformly chosen vertices must go to infinity.

We can then redo the same proof with two $H_0$-balls around $o_n$ and $o_n'$.
Theorem~\ref{th:decay} states that the influence of the matching on one ball towards the other decay geometrically fast in their distance. The conclusion follows by combining this with the fact that the distance between two uniformly chosen vertices (and hence the two balls) goes to infinity.
\end{proof}

\begin{proof}[Proof of Corollary~\ref{coro:convergenceepsilonn}]
    The proof is essentially the same, with the added fact that the contraction argument holds simultaneously for every $(G_n,o_n,M_{\max}^{(\eps)}(G_n))$ over every $\eps>0$ with the same contraction coefficient given by condition~\eqref{eq:subcriticalcond}. We can write the same proof while considering the entire family of matchings parametrised by $\eps$.
\end{proof}

\bigskip

We end this section with a quick calculation to show that Poisson distribution with parameter $c$ for $c<e$, satisfies condition~\eqref{eq:subcriticalcond}.
We want to show that for any $[0,1]-$valued random variable, we have that
\begin{align*}
    \mathbb{E}\left[\hat{\phi}'(1-X)\right]\hat{\phi}'\left(1-\mathbb{E}\left[\hat{\phi}(1-X)\right]\right) <1
\end{align*}
where $\hat{\phi}(1-t)=e^{-ct}$.
This rewrites as
\begin{align*}
    c^2\mathbb{E}[e^{-cX}]e^{-c\mathbb{E}[e^{-cX}]}.
\end{align*}
If we write $y=\mathbb{E}[e^{-cX}] \in [e^{-c},1]$, then the quantity rewrites as $c^2ye^{-cy}$.
The map $t \mapsto te^{-ct}$ reaches its maximum for $y=\frac{1}{c}$. We thus have that the quantity of interest is bounded by
$\frac{c}{e}$
which is smaller than $1$ if and only if $c<e$, as announced.
\subsection{Proof of Theorem~\ref{thm:mandatory}}
We now turn to the proof of Theorem~\ref{thm:mandatory}. The overarching idea behind this theorem is that the hypothesis on $t \mapsto \hat{\phi}(1-\hat{\phi}(1-t))$ having a unique fixed point is sufficient to prove the subcritical behaviour for messages as stated in Lemma~\ref{lem:subcritical}, but only for the first marginals $i_-,i_+$. It thus states that the messages $i_M$ forget the boundary uniformly fast over all possible boundary conditions. We then write that the set of maximum size matchings induces a set of possible boundary conditions. Combining these facts gives that the messages forget the set of all maximal matchings outside the larger ball, and thus behaves like the set of matchings on a finite tree in the fixed neighbourhood. The final ingredient that we use is that the support of maximum weight maximum size matchings on a finite graph over the set of all possible edge weights is precisely the set of maximal matchings. We can then show that if an edge $e$ satisfies $i_M(\overset{\rightarrow}{e})+i_M(\overset{\leftarrow}{e})=1$, then it is possible to locally modify the weights to find a maximum weight maximum size matching that includes the edge $e$ and another that does not.

We thus start with the modified version of Lemma~\ref{lem:subcritical}. Under assumption~\eqref{eq:uniquefixedpoint}, denote by $\gamma$ the unique fixed point of $t \mapsto \hat{\phi} (1- \hat{\phi}(1-t))$.
\begin{lemma}\label{lem:subcriticalproj}
    Define the families of truncated decorated trees $\left(\mathbb{T}_H,o,i_{-}^{H},i_{+}^{H}\right)_{H \geq 0}$ as follows:
    Sample $(\mathbb{T},o)$ an unweighted Bienaymé-Galton-Watson tree of reproduction law $\pi$, set $\mathbb{T}_H=B_H(\mathbb{T},o)$ and   $(i_{-}^{H})(u,v)=0$ for every edge of $\mathbb{T}_H$ pointing towards the $H-$boundary. Then use Recursion~\eqref{eq:systemrecursionzoombis} to define $i_{-}^{H}$ on the remainder of $\mathbb{T}_H$. Proceed similarly for $i_{+}^{H}$ by setting $i_{+}^{H}=1$ on the boundary instead of $0$.

    Then under assuming that $t \mapsto \hat{\phi}(1-\hat{\phi}(1-t))$ possesses a unique fixed point, both families $(i_{+}^{H}(o,v))_{v \sim o}$ and $(i_{-}^{H}(o,v))_{v \sim o}$ converges almost surely to a family $(i_l)_{l \leq N}$ where $(i_l)$ have i.i.d distribution $\gamma\delta_0+(1-\gamma)\delta_1$ and independent of $N$ of distribution $\pi$.
\end{lemma}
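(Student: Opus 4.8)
The plan is to mirror the proof of Lemma~\ref{lem:subcritical}, using the crucial simplification that in the regime \eqref{eq:uniquefixedpoint} we have $k=1$, so every message $i^H_\pm(u,v)$ takes values in $\{0,1\}$ and its law is captured by the single number $p=\mathbb P(i=0)$. First I would record that the first–coordinate part of Recursion~\ref{eq:systemrecursionzoombis} degenerates to the Boolean rule ``$i(u,v)=0$ iff $v$ has no child $\neq u$, or every such child $u'$ satisfies $i(v,u')=1$''. Hence, if the incoming messages at one level are i.i.d.\ Bernoulli with $\mathbb P(i=0)=p$ and the offspring count is $\hat N\sim\hat\pi$ independent of them, the outgoing message satisfies $\mathbb P(i=0)=\hat\phi(1-p)$. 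So the scalar transfer map is $G_0\colon p\mapsto\hat\phi(1-p)$ and the two–step map is $g:=G_0\circ G_0\colon p\mapsto\hat\phi(1-\hat\phi(1-p))$, whose fixed points are exactly those appearing in \eqref{eq:uniquefixedpoint}; moreover $\gamma$ is also a fixed point of $G_0$, since the excerpt already records $\gamma=\hat\phi(1-\gamma)$.

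Next I would reproduce verbatim the deterministic monotonicity argument of Lemma~\ref{lem:subcritical}: because $(0,0)\overset{\lex}{\le}(1,+\infty)$ and the recursion is monotone in the boundary data (anti-monotone over one level, monotone over two), the families $i^{2H}_-(o,v)$ are non-decreasing and $i^{2H}_+(o,v)$ non-increasing in $H$, with the roles exchanged at odd depths, giving the chain $i^{2H}_-\overset{\lex}{\le}i^{2H+1}_+\overset{\lex}{\le}i^{2H+2}_-\overset{\lex}{\le}i^{2H+2}_+\overset{\lex}{\le}i^{2H+1}_-\overset{\lex}{\le}i^{2H}_+$. Letting $H\to\infty$ yields a.s.\ limits $i^{\mathrm{even}}_\pm,i^{\mathrm{odd}}_\pm$ with $i^{\mathrm{even}}_-=i^{\mathrm{odd}}_+\overset{\lex}{\le}i^{\mathrm{even}}_+=i^{\mathrm{odd}}_-$ a.s. As in Lemma~\ref{lem:subcritical}, to collapse the middle inequality it suffices to check that its two (a.s.\ ordered) sides have the same law, and since both are $\{0,1\}$-valued this reduces to $\mathbb P(i^{\mathrm{even}}_-=0)=\mathbb P(i^{\mathrm{even}}_+=0)$.

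This identity is the heart of the argument and is where \eqref{eq:uniquefixedpoint} is used. The law of $i^H_-(o,v)$ (resp.\ $i^H_+(o,v)$) is Bernoulli with $\mathbb P(i=0)=G_0^{\circ(H-1)}(1)$ (resp.\ $G_0^{\circ(H-1)}(0)$), because $(o,v)$ is reached from the depth-$H$ boundary by $H-1$ applications of the transfer map along the $\hat\pi$-subtree hanging at $v$. I would then study the scalar dynamics of $g$ on $[0,1]$: it is continuous and non-decreasing (a composition of two non-increasing maps), it satisfies $g(0)=\hat\phi(0)\in(0,\gamma)$ and $g(1)=\hat\phi(1-\hat\phi(0))<1$, and by \eqref{eq:uniquefixedpoint} $\gamma$ is its only fixed point in $(0,1)$; hence $g(t)>t$ on $[0,\gamma)$ and $g(t)<t$ on $(\gamma,1]$, so every orbit of $g$ in $[0,1]$ converges monotonically to $\gamma$. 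In particular $G_0^{\circ H}(0)$ and $G_0^{\circ H}(1)$ both tend to $\gamma$ along all $H$, which gives the distributional equality above, hence $i^{\mathrm{even}}_-=i^{\mathrm{even}}_+$ a.s.; and since any boundary assignment is lex-sandwiched between the all-$(0,0)$ and all-$(1,+\infty)$ ones, it produces the same a.s.\ limit $(i_l)_{l\le N}$. Independence across $v\sim o$ and $N\sim\pi$ follow from the branching independence of the $\hat\pi$-BGW subtrees at the children of $o$ together with the fact that $o$ has $\pi$-many children, while the marginal $\gamma\delta_0+(1-\gamma)\delta_1$ is exactly the computation just made.

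The step I expect to be delicate is precisely locating $g$ with respect to the diagonal: the claim that every orbit of $g$ converges to $\gamma$ rests on $g(0)>0$, i.e.\ on $\hat\phi(0)>0$ (the tree has leaves). If $\hat\phi(0)=0$, then $g$ fixes both $0$ and $1$ and $\gamma$ may be repelling, so the two extremal families can converge to the distinct constants $0$ and $1$ and the squeeze fails; this degenerate case must be excluded, and I would state and prove the lemma under the standing assumption $\hat\phi(0)>0$ (consistent with Remark~\ref{rem:BLS}, where for such exceptional $\pi$ maximal matchings are asymptotically perfect and handled separately). A minor bookkeeping point, in contrast with Lemma~\ref{lem:subcritical}, is that here $g'(\gamma)$ may equal $1$, so there need be no geometric rate; the monotone-orbit argument nonetheless yields the asserted almost sure convergence without any rate.
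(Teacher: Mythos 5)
Your proposal is correct and follows essentially the paper's own route: parametrise the Bernoulli laws by $p=\mathbb{P}(i=0)$, observe that the transfer operator is $F(p)=\hat{\phi}(1-p)$, use the deterministic monotonicity in the boundary data to get almost sure monotone limits along even/odd depths, and identify both limits via the convergence of the iterates of the increasing map $F^{\circ 2}$ to its unique fixed point $\gamma$. Your caveat about $\hat{\phi}(0)>0$ is well taken: the paper's proof asserts that $F^{\circ 2}$ has a unique fixed point in $[0,1]$ while condition~\eqref{eq:uniquefixedpoint} only gives uniqueness in $(0,1)$, and ruling out $0$ and $1$ as fixed points (hence your sign analysis of $g-\mathrm{id}$ and the convergence of the extremal orbits) does rely on the standing "tree has leaves" assumption made earlier in the paper.
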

\begin{proof}[Proof of Lemma~\ref{lem:subcriticalproj}]
    The proof is in fact simpler than the two dimensional one. We skip the discussion about almost sure monotonicity and go straight into the proof of the contraction. The space of laws $\mu$ taking values in $\{0,1\}$ can be parametrised by a real $p\in [0,1]$ such that $\mathbb{P}_{Y \sim \mu}(Y=0)=p$ (such that $\mu \sim B(1-p)$). The corresponding operator is then $F:[0,1]\mapsto [0,1]$ with
    \[ F(p)=\hat{\phi}(1-p). \]
    By hypothesis, $F^{\circ 2}$ possesses a unique fixed point in $[0,1]$, and $F^{\circ 2}$ is increasing, we thus deduce that $F^{\circ 2H}$ converge uniformly to the unique fixed point $\gamma$ of $F^{\circ 2}$. We deduce that both $F^{\circ 2H}$ and $F^{\circ (2H+1)}$ converge uniformly to the constant $\gamma$ as $H \rightarrow +\infty$. We then observe as before that the distribution of $i^{H}_-(o,v)$ for any $v \sim o $ is exactly $F^{\circ H}(1)$ and the distribution of $i_+^H(o,v)$ is $F^{\circ H}(0)$. So they must almost surely converge to random variables whose distribution is the unique fixed point of $F^{\circ 2}$, which is exactly $\gamma \delta_0 + (1-\gamma) \delta_1$.
\end{proof}

An important, and immediate, consequence of Lemma~\ref{lem:subcriticalproj} and its proof is the following.

\begin{prop} \label{prop:ilocal}
    Under condition~\ref{eq:uniquefixedpoint}, for any edge weight distribution, the macroscopic marginal $i$ in $\left(\mathbb{T},o,w,(i,Z)(u,v)_{(u,v)\in \overset{\rightarrow}{\mathbb{T}}}\right)$ from Proposition~\ref{prop:Zconstruction} is measurable in the weightless tree $(\mathbb{T},o)$.  In addition, $(\mathbb{T},o)$-\emph{a.e.}, there exists a unique family $(i(u,v))_{(u,v)\in\overset{\rightarrow}{E}(\mathbb{T})}$ verifying
    \begin{equation}\label{eq:Irecursion}
        \forall (u,v) \in \overset{\rightarrow}{E}(\mathbb{T}) , \, i(u,v)=\max\left(0,\max_{\substack{u' \sim v \\ u'\neq u }} (1-i(v,u')) \right).
    \end{equation}
\end{prop}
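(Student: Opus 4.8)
The plan is to obtain existence almost for free from Proposition~\ref{prop:Zconstruction}, and then to derive uniqueness and $(\mathbb{T},o)$-measurability simultaneously by squeezing every solution of \eqref{eq:Irecursion} between the two extremal families of Lemma~\ref{lem:subcriticalproj}.

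First I would record that, under \eqref{eq:uniquefixedpoint}, one has $k=1$, so that taking the first coordinate in Recursion~\eqref{eq:systemrecursionzoombis} and using that the first coordinate of a lexicographic maximum of a finite list is the maximum of the first coordinates, the marginal $(i_{\mathcal{H}}(u,v))_{(u,v)\in\overset{\rightarrow}{E}(\mathbb{T})}$ produced by Proposition~\ref{prop:Zconstruction} satisfies \eqref{eq:Irecursion} almost surely. This already exhibits, for $(\mathbb{T},o)$-a.e.\ realisation, at least one solution of \eqref{eq:Irecursion}, and this reduced recursion does not involve the weights at all.

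Next I would set up the sandwiching. Any solution $(\tilde\jmath(u,v))$ of \eqref{eq:Irecursion} is automatically $[0,1]$-valued, since $\tilde\jmath(u,v)\ge 0$ forces $1-\tilde\jmath(v,u')\le 1$ and hence $\tilde\jmath(u,v)\le 1$. The map implementing one step of \eqref{eq:Irecursion}, from the children messages up to $i(u,v)$, is antitone, so two steps are monotone; moreover the constant boundary values $0$ and $1$ bound any $[0,1]$-valued boundary configuration. Consequently, fixing an oriented edge $(u,v)$, truncating the subtree hanging from it at depth $H$, and iterating \eqref{eq:Irecursion} back up to $(u,v)$ from the true boundary values (which are in $[0,1]$) yields a number lying between the values obtained from the all-$0$ and the all-$1$ boundaries; these are exactly $i_-^{H}(u,v)$ and $i_+^{H}(u,v)$ from Lemma~\ref{lem:subcriticalproj}, re-rooted at $u$. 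Thus $\tilde\jmath(u,v)$ lies, for every $H$, in the interval delimited by $i_-^{H}(u,v)$ and $i_+^{H}(u,v)$, the precise ordering of these two alternating with the parity of $H$.

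Finally I would conclude. Lemma~\ref{lem:subcriticalproj} asserts that $i_-^{H}$ and $i_+^{H}$ converge almost surely, as $H\to\infty$, to one and the same limit; by unimodularity this holds simultaneously at every oriented edge on a single event of full $(\mathbb{T},o)$-probability. On that event the common limit $i_\infty(u,v)$ is a measurable function of the unweighted rooted tree, it solves \eqref{eq:Irecursion} (pass to the limit, each step being a finite maximum), and by the sandwiching above every solution of \eqref{eq:Irecursion}, in particular $i_{\mathcal{H}}$, coincides with $i_\infty$. This yields uniqueness of the solution and the asserted measurability of $i$ in $(\mathbb{T},o)$. The only point needing some care is the parity bookkeeping in the sandwiching together with the transfer of the almost sure convergence of Lemma~\ref{lem:subcriticalproj} from the root to all oriented edges; both are carried out exactly as in the proof of Theorem~\ref{th:reciproquegamarnik} via unimodularity, and I expect this — rather than any new idea — to be the main, and modest, obstacle.
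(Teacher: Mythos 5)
Your proposal is correct and follows essentially the same route as the paper: the paper's own proof is the one-line observation that, by the contraction established in Lemma~\ref{lem:subcriticalproj}, there is almost surely a unique family satisfying the macroscopic recursion~\eqref{eq:Irecursion}, which is exactly the sandwich between the extremal boundary conditions that you spell out (together with the projection of Recursion~\eqref{eq:systemrecursionzoombis} onto its first coordinate for existence). The parity bookkeeping and the transfer of the almost sure convergence from the root to all oriented edges via unimodularity, which you flag, are indeed the only details the paper leaves implicit, and they go through exactly as you describe.
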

\begin{proof}
    The proof is immediate as there is almost surely a unique family of macroscopic marginals verifying the macroscopic recursion~\eqref{eq:Irecursion}.
\end{proof}

\bigskip

The next ingredient we require is the fact that the span of maximum weight maximum size matchings over all possible weights is precisely the set of all maximal matchings.
\begin{lemma}\label{lem:iidmax}
    Let $G$ be a finite graph. Consider $M_{\mathrm{opt}}$ to be the maximum weight maximum size matching on $G$ when we pick a family $w(e)_{e \in E(G)}$ of weights. Formally, define $W^{\mathrm{bad}}\subseteq \mathbb{R}_+^{E(G)}$ to be the set of weights such that there are ties, and $\mathcal{M}_{\mathrm{max}}$ to be the set of maximal matchings on $G$. Then $M_{\mathrm{opt}}$ is the map defined as
    \begin{align*}
        \mathbb{R}_+^{E(G)} \setminus W^{\mathrm{bad}} &\to \mathcal{M}_{\max} \\
        w(e)_{e \in E(G)} &\mapsto M_{\mathrm{opt}}(w(e)_{e \in E(G)}).
    \end{align*}
     The range of $M_{\mathrm{opt}}$ is precisely $\mathcal{M}_{\max}$.
\end{lemma}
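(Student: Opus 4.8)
The plan is to prove the two inclusions separately. For the easy inclusion $\mathrm{range}(M_{\mathrm{opt}}) \subseteq \mathcal{M}_{\max}$, note that by definition $M_{\mathrm{opt}}(w)$ maximizes the cardinality among all matchings of $G$ (and then, as a tie-breaker, the total weight), so it is a maximum size matching; hence it lies in $\mathcal{M}_{\max}$. This requires nothing beyond the definition of $M_{\mathrm{opt}}$.

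For the harder inclusion $\mathcal{M}_{\max} \subseteq \mathrm{range}(M_{\mathrm{opt}})$, fix an arbitrary maximal (maximum size) matching $M_0 \in \mathcal{M}_{\max}$. I would exhibit an explicit weight vector $w \in \mathbb{R}_+^{E(G)} \setminus W^{\mathrm{bad}}$ such that $M_{\mathrm{opt}}(w) = M_0$. The natural choice is to give all edges of $M_0$ a large weight and all other edges a tiny weight: set $w(e) = 2$ for $e \in M_0$ and $w(e) = \epsilon_e$ for $e \notin M_0$, where the $\epsilon_e$ are small, positive, and generic (e.g.\ rationally independent) so that $w \notin W^{\mathrm{bad}}$. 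Since $|M_0| = \nu(G)$ is the maximum matching size, $M_{\mathrm{opt}}(w)$ is some maximum size matching $M^\star$; it remains to argue $M^\star = M_0$. The total weight of $M_0$ is at least $2|M_0|$ (in fact $2\nu(G)$ plus possibly some $\epsilon$-terms, but here exactly $2\nu(G)$ since $M_0$ contains no $\epsilon$-edges), whereas any maximum size matching $M \neq M_0$ containing $j < \nu(G)$ edges of $M_0$ has weight at most $2j + \sum_{e \notin M_0}\epsilon_e < 2\nu(G)$ once $\sum_{e\notin M_0}\epsilon_e < 2$. Hence $M_0$ strictly maximizes the weight among maximum size matchings, so $M_{\mathrm{opt}}(w) = M_0$.

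The main point requiring a little care — and what I expect to be the only genuine obstacle — is the interplay between the two optimization criteria: one must be sure that the weight comparison is carried out \emph{only} among maximum size matchings, not all matchings. This is exactly how $M_{\mathrm{opt}}$ is defined (lexicographic: size first, then weight), so the argument goes through, but I would state it carefully: choose the $\epsilon_e$ small enough that $\sum_{e \notin M_0} \epsilon_e < 2$, which guarantees no matching can compensate for missing even a single edge of $M_0$ by accumulating small-weight edges. One should also double-check the genericity requirement $w \notin W^{\mathrm{bad}}$: the set $W^{\mathrm{bad}}$ is a finite union of hyperplanes (ties correspond to linear equations among the finitely many matchings' weights), so one can perturb the $\epsilon_e$ within any open box to avoid it while keeping $\sum \epsilon_e < 2$; alternatively, a Lebesgue-measure argument shows a valid choice exists. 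This completes the proof that the range of $M_{\mathrm{opt}}$ is exactly $\mathcal{M}_{\max}$.
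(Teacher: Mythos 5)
Your proof is correct and follows essentially the same approach as the paper: assign large weights to the chosen maximum size matching and small generic weights elsewhere, so the lexicographic rule (size first, then weight) selects exactly that matching, with a perturbation/measure argument to avoid $W^{\mathrm{bad}}$. The paper's version is just more terse, leaving the quantitative comparison and the genericity check implicit.
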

\begin{remark}
    It is important to stress that this statement holds for \textbf{finite} graphs, and that there is no randomness involved.
\end{remark}
\begin{proof}
The proof is immediate. For a given maximal matching $M$, simply assign large weights on $M$ and small weights elsewhere. The set $W^{\mathrm{bad}}$ does not cause any issue as it is included in the subset of $\mathbb{Z}-$dependent weights that is negligible.
\end{proof}

\bigskip

We are now ready to prove our last main result.

\begin{proof}[Proof of Theorem~\ref{thm:mandatory}]
    Fix $\eps,H_0>0$. The goal is to find an integer $N$ such that for every $n > N$, there exists a coupling between    \[ B_{H_0}\left((G_n, \mathbbm{1}_{e \in \bigcap_{M \in \mathcal{M}_{n,\max}}M},\mathbbm{1}_{e \in \bigcap_{M \in \mathcal{M}_{n,\max}} M^{\complement}} ),o_n\right),   \]
    and
    \[ B_{H_0}\left((\mathbb{T}, \mathbbm{1}_{i(\overset{\rightarrow}{e})=i(\overset{\leftarrow}{e})=0}, \mathbbm{1}_{i(\overset{\rightarrow}{e})= i(\overset{\leftarrow}{e})=1}),o\right) \]
    where $(\mathbb T,o,i)$ is defined in Proposition~\ref{prop:ilocal},
    that differ on an event of probability at most $\eps$.
    As before, pick $\eps'$ and $H$ from which the total variation distance given in Lemma~\ref{lem:subcriticalproj} between the two families $i_+^{H}(o,v)_{v \sim o}$ and $i_-^{H}(o,v)_{v \sim o}$ is bounded by $\eps'$.
   
    Using local convergence in total variation, we know that it is possible to find $N$ such that for every $n \geq N$, we can couple $(G_n,o_n)$ and $(\mathbb{T},o)$ such that
    \[ \mathbb{P}\left(B_{H_0+H}(G_n,o_n) = B_{H_0+H}(\mathbb{T},o) \right)>1-\eps'. \]

    Let us write $\Omega^{\mathrm{good}}$ to be the event where the previous equality holds, and $\Omega^{\mathrm{bad}}=(\Omega^{\mathrm{good}})^{\complement}$. Using Lemma~\ref{lem:matchingbord}, we get that in $\Omega^{\mathrm{good}}_G$, for any family of weights that are $\mathbb{Z}-$independent, one can write the optimal matching in $B_H(G_n,o_n)$ as the set of edges such that $(i_M,Z_M)(\overset{\rightarrow}{e})+(i_M,Z_M)(\overset{\leftarrow}{e})<(1,w(e))$ where the family $(i_M,Z_M)$ is specified inside the Lemma.
    Define once again the boundary outward edges by
    \[\partial_{H_0}(G_n,o):= \{(u,v), d(u,o)=H_0, d(v,o)=H_0+1\}.\]
    Use Lemma~\ref{lem:subcriticalproj} and an identical proof as in Theorem~\ref{th:reciproquegamarnik}  to obtain that outside of an event $\Omega^{\mathrm{bad},\star}$ of probability bounded by $|\partial_{H_0}(G_n,o_n)|\eps'$, the family of  random variables $(i_M)(u,v)_{(u,v) \in  \partial_{H_0}(G_n,o)}$ is equal to a family $(i(u,v))_{(u,v) \in \partial_{H_0}(G_n,o)}$ of i.i.d. variables with distribution $\gamma \delta_0 +(1-\gamma)\delta_1$. Such a family depends only on the subtree $(T^{H}_{u,v},v)$ of depth $H$ that $(u,v)$ points to.
    Therefore, outside of $\Omega^{\mathrm{bad}}\cup\Omega^{\mathrm{bad},\star}$, no matter the family of weights, we have that $M_{\mathrm{opt}}$ on $B_{H_0+H}(G_n,o_n)$ agrees with a family of messages $(i_M,Z_M)$ where $i_M$ is equal to an i.i.d family of messages $i$ of distribution $\gamma\delta_0+(1-\gamma)\delta_1$ on the boundary.

    Fix a rooted graph in $\Omega$. Lemma~\ref{lem:iidmax} allows us to state that for every edge $e \in G_n$, $ e \in \bigcap_{M \in \mathcal{M}_{n,\max}M}$ if and only if $e \in M_{\mathrm{opt}}$ for every family of weights picked on $G_n$.
    We will now work on $ \Omega^{\mathrm{very\,good}}:= (\Omega^{\mathrm{bad}}\cup \Omega^{\mathrm{bad},\star})^{\complement}$. We would like to show that for $e \in B_{H_0}(G_n,o_n)$,
    \begin{enumerate}
        \item if $i(\overset{\rightarrow}{e})+i(\overset{\leftarrow}{e})<1$, then deterministically on every family of weights, $e \in M_{\mathrm{opt}}$.
        \item if $i(\overset{\rightarrow}{e})+i(\overset{\leftarrow}{e})=1$, then we can find one family $w(e)_{e \in E(G)}$ such that $e \in M_{\mathrm{opt}}(w(e)_{e \in M(G_n)})$ and another family $w'(e)_{e \in E(G)}$ such that $e \notin M_{\mathrm{opt}}(w'(e)_{e \in E(G_n)})$.
        \item if $i(\overset{\rightarrow}{e})+i(\overset{\leftarrow}{e})>1$, then there is no family of weights such that $e \in M_{\mathrm{opt}}$.
    \end{enumerate}
     The first and last cases are the easy ones. Indeed, on $\Omega^{\mathrm{very\,good}}$, we have that $i_M=i$ no matter the family of weights and $e \in M_{\mathrm{opt}}$ if and only if $(i_M,Z_M)(\overset{\rightarrow}{e})+(i_M,Z_M)(\overset{\leftarrow}{e})\overset{\lex}{<}(1,w(e))$.
     If $i(\overset{\rightarrow}{e})+i(\overset{\leftarrow}{e})=i_M(\overset{\rightarrow}{e})+i_M(\overset{\leftarrow}{e})<1$, then for any family of weights we have that the condition above is satisfied by looking at the first coordinate alone, hence the conclusion. We proceed similarly for the last case.

     The second case is more delicate. The idea is that in this case, there must exist $e_2 \sim e_1$ such that $i(\overset{\rightarrow}{e_2})+i(\overset{\leftarrow}{e_2})=1$, and that fixing $w(e_1)$ and letting $w(e_2)\rightarrow +\infty$ will force $e_2 \in M_{\mathrm{opt}}$ whereas fixing $w(e_2)$ and letting $w(e_1) \rightarrow +\infty$ will force $e_1 \in M_{\mathrm{opt}}$. Doing such modifications may have an impact on the boundary conditions of the messages in $B_{H_0+H}(G_n,o_n)$ that we are not able to control. Nevertheless, we can once again uniformly control the behaviour of the messages at radius $H$ by bounding $(i_M,Z_M)$  with the families $(i_-,Z_-)$ and $(i_+,Z_+)$. Our last argument relies on the following lemma whose proof is postponed to the end of this section.
     \begin{lemma}\label{lem:chirurgieloc}
         Fix a rooted graph in $\Omega^{\mathrm{very\, good}}$ then fix a family of weights $w(e)_{e \in E(G_n)}$ and pick $e_1 \in B_H(G_n,o_n)$.
         Assume that $i(\overset{\rightarrow}{e_1})+i(\overset{\leftarrow}{e_1})=1$.
         Then there exists $e_2 \sim e_1$ such that $i(\overset{\rightarrow}{e_2})+i(\overset{\leftarrow}{e_2})=1$ and:
         \begin{itemize}
             \item If $e_1\in M_{\mathrm{opt}}$ then there exists $a>0$ such that setting $w'(e_2)=a$ and $w'=w$ elsewhere gives $\max(Z_-(\overset{\rightarrow}{e_2}),Z_+(\overset{\rightarrow}{e_2}))+\max(Z_-(\overset{\leftarrow}{e_2}),Z_+(\overset{\leftarrow}{e_2}))<w'(e_2)=a.$
             \item If $e_1 \notin M_{\mathrm{opt}}$, then there exists $b>0$ such that setting $w'(e_1)=b$ and $w'=w$ elsewhere gives $\max(Z_-(\overset{\rightarrow}{e_1}),Z_+(\overset{\rightarrow}{e_1}))+\max(Z_-(\overset{\leftarrow}{e_1}),Z_+(\overset{\leftarrow}{e_1}))<w'(e_1)=b$.
         \end{itemize}
     \end{lemma}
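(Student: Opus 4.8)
## Proof Plan for Lemma~\ref{lem:chirurgieloc}

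The plan is to exploit the combinatorial meaning of the condition $i(\overset{\rightarrow}{e_1})+i(\overset{\leftarrow}{e_1})=1$ together with the recursion~\eqref{eq:Irecursion} for the macroscopic messages. Recall that $i(\overset{\rightarrow}{e_1})+i(\overset{\leftarrow}{e_1})=1$ means exactly that $e_1$ is \emph{undecided} at the macroscopic level: the edge $e_1$ belongs to some maximum size matchings but not all. Writing $e_1=\{u,v\}$ with $i(u,v)=1$ and $i(v,u)=0$ (the two orientations play symmetric roles), the recursion $i(v,u)=\max(0,\max_{u'\sim u,u'\neq v}(1-i(u,u')))=0$ forces $i(u,u')=1$ for every neighbour $u'\neq v$ of $u$. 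On the other hand $i(u,v)=\max(0,\max_{v'\sim v, v'\neq u}(1-i(v,v')))=1$ forces the existence of at least one neighbour $v'\neq u$ of $v$ with $i(v,v')=0$, and by the recursion applied at that edge this in turn gives $i(v',v)=1$. Setting $e_2=\{v,v'\}$ (or, in the other orientation, picking $e_2$ among the edges at $u$), we obtain a neighbouring undecided edge $e_2\sim e_1$ with $i(\overset{\rightarrow}{e_2})+i(\overset{\leftarrow}{e_2})=1$. So the first assertion — existence of such an $e_2$ — is a direct bookkeeping consequence of~\eqref{eq:Irecursion}, and I would carry it out first.

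Next I would handle the weight surgery. Suppose first $e_1\in M_{\mathrm{opt}}$. The point is that $e_2$ is undecided, so by Lemma~\ref{lem:iidmax} there is a maximum size matching of $G_n$ containing $e_2$; since $i(\overset{\rightarrow}{e_2})+i(\overset{\leftarrow}{e_2})=1$ the macroscopic messages alone do not forbid $e_2$, and the only obstruction to $e_2\in M_{\mathrm{opt}}$ is the microscopic tie-break encoded by $(Z_-,Z_+)$. Because we are on $\Omega^{\mathrm{very\,good}}$, the families $(i_-,Z_-)$ and $(i_+,Z_+)$ built from the two extremal boundary conditions at radius $H$ sandwich the true messages $(i_M,Z_M)$ deterministically (this is the monotonicity established in the proof of Lemma~\ref{lem:subcritical} and reused in the proof of Theorem~\ref{th:reciproquegamarnik}), so in particular $Z_M(\overset{\rightarrow}{e_2})\le\max(Z_-(\overset{\rightarrow}{e_2}),Z_+(\overset{\rightarrow}{e_2}))$ and likewise for the reversed orientation, \emph{and these upper bounds do not depend on $w(e_2)$} provided we only change the single weight $w(e_2)$ — the extremal messages on the subtrees hanging off $\partial_{H_0}(G_n,o_n)$ use the other weights. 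Hence it suffices to choose $a$ larger than $\max(Z_-(\overset{\rightarrow}{e_2}),Z_+(\overset{\rightarrow}{e_2}))+\max(Z_-(\overset{\leftarrow}{e_2}),Z_+(\overset{\leftarrow}{e_2}))$; with $w'(e_2)=a$ and $w'=w$ elsewhere, the inclusion rule~\eqref{eq:decisionrule} (in the form $(i_M,Z_M)(\overset{\rightarrow}{e_2})+(i_M,Z_M)(\overset{\leftarrow}{e_2})\overset{\lex}{<}(1,w'(e_2))$) is satisfied at the level of first coordinates because $e_2$ is macroscopically undecided, and at the level of second coordinates by the choice of $a$, so $e_2\in M_{\mathrm{opt}}(w')$. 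The case $e_1\notin M_{\mathrm{opt}}$ is symmetric: here $e_1$ is itself undecided, and we make the same choice of $b$ for $w'(e_1)$, forcing $e_1\in M_{\mathrm{opt}}(w')$; the stated inequality $\max(Z_-(\overset{\rightarrow}{e_1}),Z_+(\overset{\rightarrow}{e_1}))+\max(Z_-(\overset{\leftarrow}{e_1}),Z_+(\overset{\leftarrow}{e_1}))<b$ is exactly what guarantees this via the sandwiching.

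The step I expect to be the main obstacle is making rigorous the claim that changing the single weight $w(e_2)$ (resp. $w(e_1)$) does \emph{not} affect the extremal message bounds $\max(Z_-,Z_+)$ used in the inequality — in other words, that the surgery is genuinely local and one cannot be forced into a situation where raising $w(e_2)$ simultaneously raises the obstruction one is trying to beat. This requires a careful look at which edges the recursion~\eqref{eq:systemrecursionzoombis} for $Z_-(\overset{\rightarrow}{e_2})$ and $Z_+(\overset{\rightarrow}{e_2})$ actually depends on: $Z(\overset{\rightarrow}{e_2})$ with $e_2=\{v,v'\}$ oriented towards $v'$ is a function of the weights and boundary data strictly \emph{beyond} $v'$, hence does not involve $w(e_2)$ itself, and $Z(\overset{\leftarrow}{e_2})$ is a function of weights strictly beyond $v$, again not involving $w(e_2)$. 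So one fixes the bounds first, then chooses $a$, then verifies the inclusion rule — the order matters and should be spelled out. A secondary point to check is that we remain in the regime where Lemma~\ref{lem:matchingbord} applies (the ball is still a tree with $\mathbb{Z}$-independent weights after the perturbation), which holds because we only alter one weight to a generic value and $W^{\mathrm{bad}}$ is negligible. Once these points are dispatched the lemma follows by combining the macroscopic bookkeeping, the sandwiching from Lemma~\ref{lem:subcritical}, and the inclusion rule.
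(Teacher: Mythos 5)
Your proposal is correct and follows essentially the same route as the paper: the same bookkeeping with the macroscopic recursion to produce the undecided neighbour $e_2$, and the same key locality observation that $Z_\pm$ at the two orientations of the modified edge are computed from weights strictly beyond its endpoints and hence do not involve $w(e_2)$ (resp.\ $w(e_1)$), so any sufficiently large new weight works. The only cosmetic difference is that the paper fixes the explicit value $a=b=2\sum_{e}w(e)$ via an alternating-sum bound on the messages, whereas you take any $a$ exceeding the fixed (finite) quantity, which is equally valid since the lemma only asserts existence.
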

 
     We can now apply the inequality $(i_M,Z_M)\overset{\lex}{\leq} \maxlex((i_+,Z_+),(i_-,Z_-))$ along with equality $i_+=i_-=i=i_M$ to obtain that we have found one family of weights such that $(i_M,Z_M)(\overset{\rightarrow}{e}_1)+ (i_M,Z_M)\overset{\leftarrow}({e_1})<w'(e_1)$ so $e_1 \in M_{\mathrm{opt}}$, and another family of weights that induces  $e_2 \in M_{\mathrm{opt}}$. We have found one maximum size matching that includes $e_1$ and another that includes $e_2$ instead. In particular, this implies that the edge $e_1$ is neither mandatory nor blocking.
\end{proof}

\begin{proof}[Proof of Lemma~\ref{lem:chirurgieloc}.]
        Let us first show the existence of $e_2$.
        Write $\overset{\rightarrow}{e_1}=(e_{-,1},e_{+,1})$ and assume without loss of generality that $i(e_{-,1},e_{+,1})=0$ and $i(e_{+,1},e_{-,1})=1$.
        Since $i(e_{+,1},e_{-,1})=1$, this implies that there exists $u \sim e_{-,1}$ such that $i(e_{-,1},u)=0$.
        Now, using the fact that $i(e_{-,1},e_{+,1})=0$ we must have that $i(u,e_{-,1})=1$ so $e_2=\{u,e_{-,1}\}$ satisfies the required condition.
        
        It seems natural that in either case, picking $a=b>\sum_{e \in E(G_n) \setminus e_j}w(e)$ should suffice for $j=1,2$. For simplicity sake, we will take $a=b=2\sum_{e \in E(G_n)} w(e)$.
        Let us treat the first case as the other one is symmetrical. Setting $w'(e_2)=a$ and $w'=w$ elsewhere does not modify the value of $Z_{+}(\overset{\rightarrow}{e_2})$, $Z_{+}(\overset{\leftarrow}{e_2})$, $Z_{-}(\overset{\rightarrow}{e_2})$ and $Z_{-}(\overset{\leftarrow}{e_2})$  since these do not depend on the boundary conditions but only on a subset of the weighted graph that do not include $e_2$.
        The following argument works for all 4 variables in question so we will shorten the notation to $Z_{\pm}(\overset{\leftrightarrow}{e_2})$ to mean any of the variables. Using Recursion~\eqref{eq:systemrecursionzoombis}, we know that there exists a (directed) path $e_2,e_3,\dots,e_{m}$ in $B_H(G_n,o_n)$ from $\overset{\leftrightarrow}{e_2}$ to $\overset{\rightarrow}{e_{m}}$ such that  
        \begin{align*}
            Z_{\pm}(\overset{\leftrightarrow}{e_2})&=\sum_{j=0}^{m-3} (-1)^j w(e_{j+3})  .
        \end{align*}
        This allows us to write the upper bound
        \begin{align}
            Z_{\pm}(\overset{\leftrightarrow}{e_2})\leq \sum_{e\in E(G_n)\setminus e_2}w(e)\leq \sum_{e\in E(G_n)}w(e) = \frac{a}{2} .
        \end{align}
        The same reasoning for all four variables work identically, which gives the required inequality.
    \end{proof}

\section{Discussion and related problems}\label{sec:final}

\subsection{Generalisation to lexicographic weights, min cost max gain  matching}
In this subsection, we introduce the formalism that is the natural generalisation of the previous results to the min cost max gain matching problem. As we will see, the situation is very similar to this article and our methods should apply. We only present the general framework and will not go into a detailed analysis.

The main idea is that we treated the case where we lexicographically maximize the sum of $(1,w(u,v))$ on every edge $(u,v)$, instead one could in general lexicographically maximize $(w_1(u,v),w_2(u,v))$ with $((w_1,w_2)(u,v))_{(u,v) \in E}$ i.i.d and $w_2$ having a continuous distribution. If one picked $w_2$ to be negative variables and interpreted $w_1$ as the gains and $w_2$ as the costs, then this matching minimises the cost under the constraint of maximising the gain, hence the terminology. The corresponding RDE on variables $Z=(Z_1,Z_2)$  becomes:
\begin{equation}\label{eq:generalisationlexico}
    \begin{aligned}
        (Z_1,Z_2)(u,v)=\maxlex \left( (0,0) , \maxlex _{\substack{ u' \sim v \\ u' \neq u}}\left( (w_1,w_2)(v,u')  -(Z_1,Z_2)(v,u') \right) \right)
    \end{aligned}
\end{equation}
By projecting on the first component, the variable $Z_1$ satisfies the recursion of the classic unidimensional RDE when one only seeks to optimize one weight. However, without specifying anything on the distribution $w_1$, one cannot hope to say more on solutions of Equation~\eqref{eq:generalisationlexico}. Furthermore, there are problems when trying to solve the corresponding RDE with existence theorems using compactness since $\mathbb{R}^2$ equipped with the lexicographical topology is not a Polish space. One needs to proceed by studying the maximum matching under weights $w_1+\eps   w_2$ and let $\eps \rightarrow 0$. This crucial step is not straightforward and is not a consequence of the present work. Nevertheless, if one managed to obtain the existence of those limits, the rest of the analysis would be similar with $(i_{\mathcal{H}},Z_{\mathcal{H}})$ replaced by $(Z_{1,{\mathcal{H}}},Z_{2,{\mathcal{H}}})$.

\subsection{Relationship between maximum weight maximum size matching and uniform maximum size matchings}

In this subsection we show that, in a precise sense, maximum weight maximum size matchings and uniform maximum size matchings are fundamentally different.

Let $G_n$ be a sequence of \textbf{unweighted} random graphs that converge in Benjamini-Schramm sense to a unimodular Bienaymé-Galton-Watson tree $(\mathbb{T},o)$ with reproduction law $\pi$.
For any $n$, we can consider $M_{n,\mathcal{U}}$ to be a maximum size matching uniformly picked among all maximum size matchings of $G_n$.

From compactness arguments, the sequence $(G_n,M_{n,\mathcal{U}})$ gives rise to unimodular subsequent limits $(\mathbb{T},o,\mathbb{M}_{\mathcal{U}})$ where $\mathbb{M}_{\mathcal{U}}$ must be a unimodular matching with the same size as the ones we have constructed in this paper.

For $\omega$ with finite expectation, let us write $(\mathbb{T},o,\mathbb{M}^{\omega})$ to be the unique optimal matching distribution when one samples i.i.d. weights with distribution $\omega$ on the edges of $\mathbb{T}$ given by Theorem~\ref{maintheorem}.

A natural question to ask is the relationship between $\mathbb{M}_{\mathcal{U}}$ and the set $\{ \mathbb{M}^{\omega}\}_\omega$ with $\omega$ spanning distributions with finite expectation. The proposition below shows that they are well separated.

\begin{prop}\label{prop:pasuniforme}
    Consider $C_{w}(\pi)$ the convex hull of the distributions of variables of the form
    \[ \bigg\{ (\mathbb{T},o,\mathbb{M}^{\omega}), \omega \text{ with finite expectation} \bigg\}   .\]
    Let $C_{\mathcal{U}}(\pi)$ to be the convex hull of subsequent Benjamini-Schramm limits of $(G_n,M_{n,\mathcal{U}}(G_n))$ where $(G_n)$ is any finite graph sequence converging in Benjamini-Schramm sense to $(\mathbb{T},o)$.
    For two distributions $\mu$ of a variable $(\mathbb{T},o,\mathbb{M})$ and $\mu'$ of a variable $(\mathbb{T},o,\mathbb{M}')$, set the total variation distance \[d_{\mathrm{TV}}(\mu,\mu')=\sup_{ \text{ events }A} | \mu(A)-\mu'(A)| .\]
    Then for any $\pi$ with finite expectation, satisfying $\hat{\pi} \neq \delta_0$ and $\hat{\pi}_0>0$, we have 
    \[ d_{\mathrm{TV}}(C_w(\pi),C_{\mathcal{U}}(\pi))> 0 . \]
\end{prop}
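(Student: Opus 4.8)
## Proof plan for Proposition~\ref{prop:pasuniforme}

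The plan is to exhibit a single local statistic that is bounded away from each other on the two families $C_w(\pi)$ and $C_{\mathcal U}(\pi)$. The natural candidate comes from the structure of optimal matchings forced by the message formalism: whenever $\hat\pi_0>0$ the tree has leaves, and a leaf $v$ with parent $u$ is \emph{always} matched in any optimal matching (indeed $Z_{\mathcal H}(u,v)=(0,0)$ as in the base case of Recursion~\eqref{eq:systemrecursionzoombis}, so the vertex rule~\eqref{eq:vertexrulebis} forces $v$ to be matched to its best available neighbour, which is $u$ if $u$ has no better option; more carefully, a leaf is matched unless its parent is matched to another leaf, but one of two sibling leaves of the same parent is always matched). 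In particular, for $(\mathbb T,o,\mathbb M^\omega)$ the conditional probability that the root is matched \emph{given that it is a leaf} is $1$, uniformly over all $\omega$, hence this is also $1$ on the whole convex hull $C_w(\pi)$. The first step is to make this precise: fix the $1$-local event $L=\{\deg(o)=1\}$ and the $1$-local event $\mathrm{Matched}=\{o\in\mathbb M\}$, and observe that $\mathbb P_{\mu}(\mathrm{Matched}\mid L)=1$ for every $\mu\in C_w(\pi)$.

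The second step is to show that on $C_{\mathcal U}(\pi)$ this conditional probability is strictly bounded away from $1$. This is where the hypotheses $\hat\pi\neq\delta_0$ and $\hat\pi_0>0$ enter. The idea is to produce, on the finite graphs $G_n$, a linear-in-$n$ number of leaves whose parent has at least two leaf-children (a "cherry"), and to argue that for a uniformly chosen maximum matching $M_{n,\mathcal U}$, a positive proportion of the leaves adjacent to such cherries are left unmatched with probability bounded below. The key combinatorial fact is the standard Karp--Sipser reduction: in a cherry (parent $u$ with leaf-children $v_1,\dots,v_r$, $r\ge 2$), exactly one $v_i$ is matched to $u$ in \emph{every} maximum matching and the others are unmatched; a uniform maximum matching therefore matches a uniformly random one of the $r$ leaves, leaving $r-1$ unmatched. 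One then estimates, using local convergence to $(\mathbb T,o)$ and the fact that $\hat\pi\neq\delta_0$ guarantees the root of the UBGW tree has positive probability of being a leaf attached to a parent of degree $\ge 3$ (so a cherry), that $\mathbb P_{\mu}(\mathrm{Matched}\mid L)\le 1-\delta$ for some $\delta=\delta(\pi)>0$ on every subsequential limit $\mu$, hence on $C_{\mathcal U}(\pi)$. Combining the two steps, the event $A=L\cap\mathrm{Matched}^{\complement}$ (which is $1$-local, hence measurable) satisfies $\mu(A)=0$ for $\mu\in C_w(\pi)$ and $\mu(A)\ge \delta\,\mathbb P(L)>0$ for $\mu\in C_{\mathcal U}(\pi)$, so $d_{\mathrm{TV}}(C_w(\pi),C_{\mathcal U}(\pi))\ge \delta\,\mathbb P(\deg(o)=1)>0$.

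The main obstacle I expect is the second step: making rigorous that a uniform maximum matching on $G_n$ leaves a \emph{positive density} of cherry-leaves unmatched, with a bound that survives the passage to the local limit and to the convex hull. The subtlety is that uniformity of $M_{n,\mathcal U}$ is a global property of the finite graph, so one cannot directly read off the marginal law of a cherry from local convergence; instead one should argue via the Karp--Sipser leaf-removal algorithm, which acts \emph{deterministically} on the first round (all cherry-leaves are peeled identically for every maximum matching), and show that the number of round-one cherries is $\Theta(n)$ with high probability using only the convergence $\tfrac{1}{|V_n|}\#\{v:\deg v=1,\ \deg(\text{parent})\ge 3\}\to \mathbb P_{(\mathbb T,o)}(\text{cherry at }o)>0$. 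One must also check that conditioning on $\{o_n$ is a cherry-leaf$\}$ is compatible with the convexity: since the obstruction event $A$ has probability exactly $0$ on \emph{every} extreme point of $C_w(\pi)$, it has probability $0$ on the hull, and since it has probability $\ge\delta\mathbb P(L)$ on every extreme point of $C_{\mathcal U}(\pi)$ it has probability $\ge\delta\mathbb P(L)$ on that hull as well; this part is routine once the extreme-point bounds are established.

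Finally, one should double-check the edge case $\hat\pi=\delta_0$ is genuinely excluded (then $(\mathbb T,o)$ is a star and the maximum matching is trivially a single edge, so the two families coincide and $d_{\mathrm{TV}}=0$), and that $\hat\pi_0>0$ is used precisely to guarantee leaves exist at all; both are exactly the stated hypotheses, so no additional assumptions are needed.
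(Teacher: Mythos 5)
Your separation event does not work: the claim in Step 1 that $\mathbb P_{\mu}(\text{Matched}\mid \deg(o)=1)=1$ for every $\mu\in C_w(\pi)$ is false, and your own parenthetical remark already contains the counterexample. If the root is a leaf whose parent has at least one other leaf-child, then in \emph{every} maximum size matching the parent is matched to exactly one of these sibling leaves, and with i.i.d.\ continuous weights the maximum weight maximum size matching picks the sibling rather than the root with positive probability (e.g.\ probability $1/2$ by exchangeability when there are exactly two leaf-children). Since $\hat\pi_0>0$ and $\hat\pi\neq\delta_0$, the configuration ``root is a leaf, parent has another leaf-child'' occurs with positive $\mathbb T$-probability, so $\mu(L\cap\text{Matched}^{\complement})>0$ for every extreme point of $C_w(\pi)$, not $0$. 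With that, your event $A=L\cap\text{Matched}^{\complement}$ no longer has probability $0$ on one hull and $\geq\delta\,\mathbb P(L)$ on the other, and the convex-hull argument (which needs an affine functional taking uniformly separated values on the two families) collapses. A further difficulty you would face even after fixing this: conditioning only on $\deg(o)=1$ leaves the conditional matching probability depending on messages coming from the whole (random, infinite) tree, so there is no reason it should be the same constant for all $\omega$, which is exactly what you need to control the hull $C_w(\pi)$.

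The paper avoids both problems by conditioning on a finite, completely specified local configuration: the root has degree $p+1$ (with $p=\min\{j>0:\hat\pi_j>0\}$), each neighbour has degree $p+1$, and all further vertices are leaves, so the tree is finite on this event. There the uniform-matching probability that $o$ is matched is an exact count, $\bigl(1+\tfrac{p}{p+1}\bigr)^{-1}$, while for the weighted optimal matching the messages are explicit functions of the finitely many weights and the probability reduces to a ranking of i.i.d.\ variables, giving the distribution-free value $1-\bigl(1-\tfrac{1}{p+1}\bigr)^{p+1}$; since both values are constants on the respective extreme points and differ, the affine functional $\mu\mapsto\mu(A\cap B)$ separates the two convex hulls. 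If you want to salvage your leaf-based idea, you would have to replace ``probability $1$'' by an exact (or uniformly bounded, uniformly in $\omega$) computation of $\mathbb P(\text{matched}\mid\text{local configuration})$ on a finite configuration of this kind, which is essentially the paper's route. Your Step 2 also needs care: a maximum matching need not match the cherry's parent to a leaf at all (it may match it to a non-leaf neighbour), so ``a uniformly random one of the $r$ leaves is matched'' is not quite the right description, and this is another reason the exact finite count is the safer path.
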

As a consequence, we deduce that it is impossible to approach uniform maximum size matchings with maximum weight matchings on i.i.d. weights.
\begin{proof}
    Let $p= \min \{ j>0: \hat{\pi}_j>0 \}$. Define $A$ to be the event that $\deg(o)=p+1$, every neighbour $v_o,....,v_{p} \sim o$ has degree $p+1$ and every neighbour $v_{j,1},\dots,v_{j,p} \sim v_{j}, v_{j,j'}\neq o$ is a leaf. Set $B$ the event that $o$ is matched to one of its neighbours $v_j$. See Figure~\ref{fig:notuniform} for an illustration of this event.
    \begin{figure}
        \centering
        \includegraphics[width=0.5\linewidth]{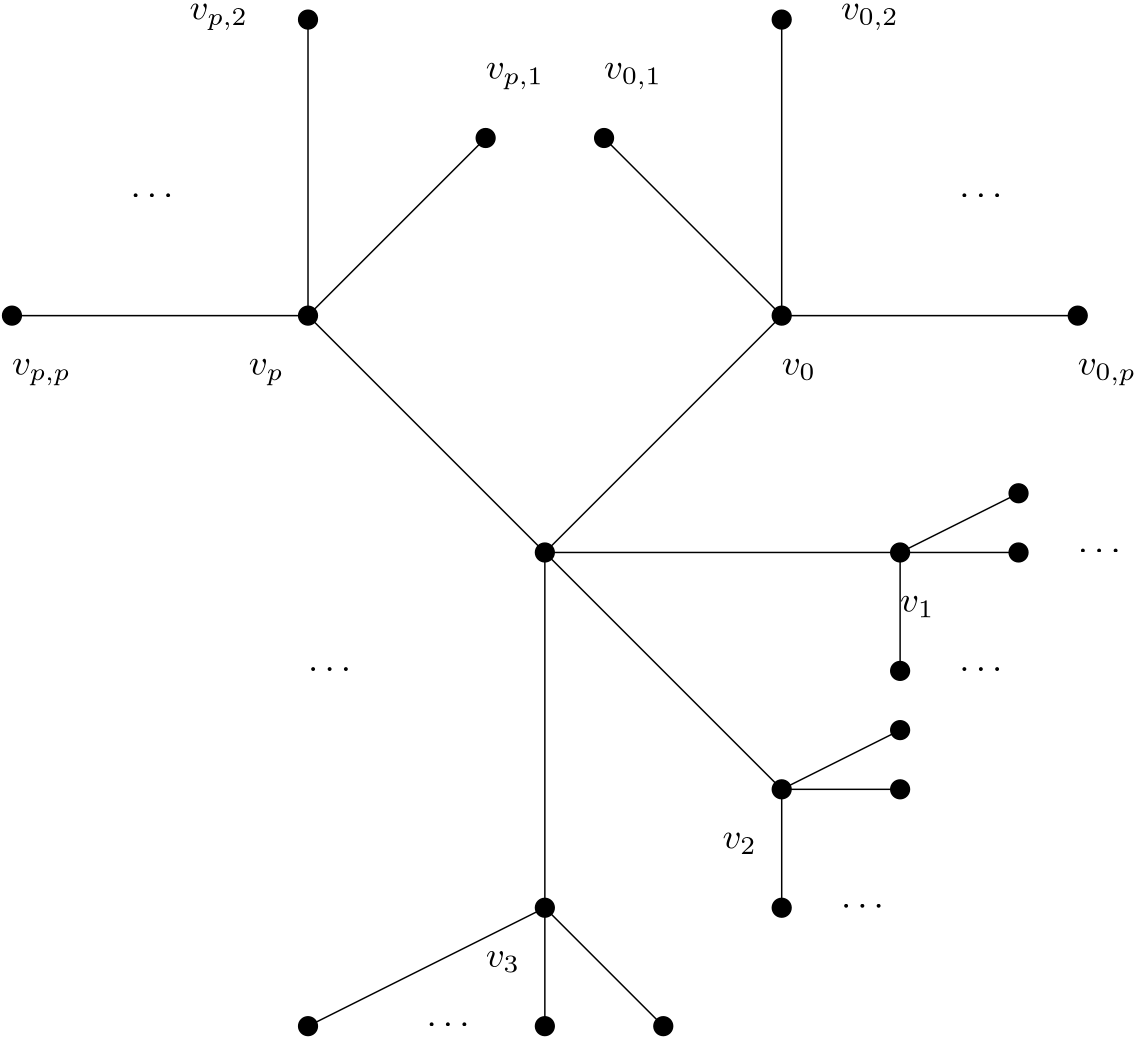}
        \caption{The event $A$}
        \label{fig:notuniform}
    \end{figure}
    
   On the event $A$, where $\mathbb{T}$ is finite, there are $p^{p+1}$ maximum size matchings that leave $o$ unmatched, and $(p+1)p^{p}$ maximum size matchings that matches $o$ with one of its neighbours. Hence,
    \[ \mathbb{P}_{M_{\mathcal{U}}}(B|A)=\frac{(p+1)p^p}{(p+1)p^p+p^{p+1}} = \frac{1}{1+ \frac{p}{p+1} }. \]
    For the weighted matching problem, pick any weight distribution $\omega$ and sample $w_j=w(o,v_j)$ and $w_{j,j'}=w(v_j,v_{j,j'})$ independently according to $\omega$.
    The event $B$ can then be written as the disjoint union
    \[ \bigcup_{j=0}^{p}  \bigg\{ (i,Z)(o,v_j)+(i,Z)(v_j,o) \overset{\lex}{<} (1,w_j) \bigg \} .  \]
    
    It is easy to see that every edge is neither mandatory nor blocking, so the first component sums to $1$.
    By using the recursion, the messages can be expressed as a function of the weights:
    \begin{align*}
        Z(o,v_j)&= \max_{1 \leq j'\leq p} w_{j,j'}  ,\\
        Z(v_j,o) &=  \max \left(0, \max_{j'' \neq j} (w_{j''}-\max_{1\leq j''' \leq p} w_{j'',j'''}) \right). 
    \end{align*}
    The event $B$ then rewrites into
    \[  \bigcup_{j=0}^{p} \bigg\{ \max_{1\leq j'\leq p}(w_{j,j'})+\max(0, \max_{j''\neq j} (w_{j''}-\max_{1\leq j'''\leq p} w_{j'',j'''} ))  < w_j   \bigg \}.  \]
    We thus obtain that
    \begin{align*}
        \mathbb{P}_{\mathbb{M}^{\omega}}(B|A)=(p+1)\mathbb{P}\left(w_0-  \max_{1\leq j'\leq p}(w_{0,j'}) >\max(0, \max_{0<j''<p} (w_{j''}-\max_{1\leq j'''\leq p} w_{j'',j'''} ))\right).
    \end{align*}
    Set 
    \begin{align*}
       \forall \, 0\leq j\leq p, X_j&:= (w_{j}-\max_{1\leq j'''\leq p} w_{j,j'''} ).
    \end{align*}
    The variables $(X_j)_{0 \leq j \leq p}$ are i.i.d. and $\mathbb{P}(X_j>0)$ is exactly the probability that $w_j$ is the largest among $p+1$ i.i.d. variables of distribution $\omega$, and is thus $\tfrac{1}{p+1}$. With this notation, $\mathbb{P}_{\mathbb{M}^{\omega}}(B|A)$ rewrites as
    \begin{align*}
     \mathbb{P}_{\mathbb{M}^{\omega}}(B|A)&=(p+1)\mathbb{P}\left(  X_0 > \max(0,\max_{1\leq j \leq p} X_j) \right) .
     \end{align*}
     We will now decompose with respect to which of the $X_j$ are positive:
     \begin{align*}
     \mathbb{P}_{\mathbb{M}^{\omega}}(B|A)&=(p+1)\mathbb{P}\left(\bigcup_{S \subseteq \{1,\dots,p\}} \left\{ \forall j' \in S, X_{j'}>0, \forall j'' \notin S, X_{j''} \leq 0 , X_0>0, X_0>\max_{j' \in S} X_{j'} \right\} \right) \\
     &=(p+1)\sum_{S \subseteq \{1,\dots,p\}} \mathbb{P}\left(X_0 > \max_{j' \in S} X_{j'} | X_0>0, \forall j' \in S, X_j'>0 \right) \mathbb{P}(X_0>0)^{|S|+1}\mathbb{P}(X_0\leq 0)^{p-|S|} \\
     &= (p+1) \sum_{S \subseteq \{1,\dots,p\}} \left( \frac{1}{|S|+1} \right) \frac{p^{p-|S|}}{(p+1)^{p+1}},
     \end{align*}
    where we used the fact that conditionally on $X_0>0$ and  $X_{j'}>0$ for $j' \in S$, the family $(X_0, (X_{j'})_{j' \in S})$ are i.i.d. so that the probability that $X_0$ is the largest is $\tfrac{1}{|S|+1}$. Remarkably, this quantity does not depend on the distribution $\omega$.
    Finally, decomposing with respect to the cardinal of $S$:
    \begin{align*}
        \mathbb{P}_{\mathbb{M}^{\omega}}(B|A)&= \frac{p^{p}}{(p+1)^{p}}  \sum_{l=0}^{p}  \binom{p}{l}  \frac{1}{l+1} \left(\frac{1}{p}\right)^{l} \\
        &= \frac{p^{p}}{(p+1)^{p}} \frac{1}{p+1}  \frac{(1+\frac{1}{p})^{p+1}-1}{\frac{1}{p}} \\
        &= \frac{(p+1)^{p+1}-p^{p+1}}{(p+1)^{p+1}} \\
        &= 1- \left(1-\frac{1}{p+1}\right)^{p+1}.
    \end{align*}
    Since $A$ is only a conditioning on $\mathbb{T}$, we get that for any $\omega$, \[ \mathbb{P}_{(\mathbb{T},o,\mathbb{M}^{\omega})}(B \cap A) = \left(1- \left(1-\frac{1}{p+1}\right)^{p+1}\right)\mathbb{P}_{\mathbb{T}}(A)\]
    whereas for any subsequent limit $(\mathbb{T},o,\mathbb{M}^{\mathcal{U}})$ of uniform maximum size matchings, we have that:
    \[ \mathbb{P}_{(\mathbb{T},o,\mathbb{M}^{\mathcal{U}})}(B \cap A)=\frac{1}{1+\frac{p}{p+1}}\mathbb{P}_{\mathbb{T}}(A).\]
    From this we deduce that \[ \forall \mu \in C_{w}(\pi), \mu(A\cap B)=\left( 1- \left(1-\frac{1}{p+1}\right)^{p+1}\right)  \mathbb{P}_{\mathbb{T}}(A) \]
    and
    \[ \forall \mu' \in C_{\mathcal{U}}(\pi), \mu'(A\cap B)= \frac{1}{1+\frac{p}{p+1}}\mathbb{P}_{\mathbb{T}}(A).\]
    As a conclusion, as $\mathbb{P}_{\mathbb{T}}(A)=\pi_{p+1}\hat{\pi}_p^{p+1}\hat{\pi}_0^{p(p+1)}$,
    \begin{align*}
        d_{\mathrm{TV}}(C_{w}(\pi),C_{\mathcal{U}}(\pi)) &\geq \inf_{\mu \in C_w(\pi),\mu'\in C_{\mathcal{U}}(\pi)} |\mu(A \cap B)-\mu'(A \cap B)| \\
        &=\left| \frac{1}{1+\frac{p}{p+1}}-1+ \left(1-\frac{1}{p+1}\right)^{p+1}\right| \pi_{p+1}\hat{\pi}_p^{p+1}\hat{\pi}_0^{p(p+1)} >0.
    \end{align*}
    \end{proof}
\begin{remark}
    We find an effective lower bound in this proof. By considering every possible "central symmetric" configuration of finite trees, one could, in theory, further improve this lower bound.
    To remove the condition $\hat{\pi}_0>0$, one would need to condition on infinite trees which would require studying Benjamini-Schramm limits of uniform maximum size matchings.
\end{remark}

\subsection{A conjecture on mandatory and blocking edges}
In the previous subsection, we have shown that uniform maximum size matchings and maximum weight matching maximum size are well separated. Nevertheless, in Theorem~\ref{thm:mandatory}, we gave a result about mandatory and blocking edges that are universal objects to any maximum size matching.
It is natural to ask whether these results hold outside of the subcritical regime.

For Erd\H{o}s-Renyí random graphs, Theorem~\ref{thm:mandatory} reads:
\begin{coro}
    Let $(\mathbb{T},o)$ be a UBGW with Poisson $\mathcal{P}(c)$ reproduction law with $c<e$.
    Let $(G_n,o_n)$ be a sequence of random weighted graphs converging locally in total variation to $(\mathbb{T},o)$. Define $\mathcal{M}_{\max}(G_n)$ to be the set of maximal matchings on $(G_n)$
    We then have the following convergence:
        \begin{equation}
            \begin{aligned}
                \frac{1}{|E(G_n)|} \sum_{e \in E(G_n)} \mathbbm{1}_{\forall \mathbb{M} \in \mathcal{M}_{\max}(G_n), e \in \mathbb{M}} &\overset{\mathbb{P}}{\longrightarrow} \gamma^2, \\
                 \frac{1}{|E(G_n)|} \sum_{e \in E(G_n)} \mathbbm{1}_{\forall \mathbb{M} \in \mathcal{M}_{\max}(G_n), e \notin \mathbb{M}} &\overset{\mathbb{P}}{\longrightarrow} (1-\gamma)^2.
            \end{aligned}
        \end{equation}
\end{coro}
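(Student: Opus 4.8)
The plan is to derive this corollary directly from Theorem~\ref{thm:mandatory} and Corollary~\ref{coro:ERmb} by checking that the Erdős–Rényi setting meets the hypotheses. First I would verify that a Poisson offspring law with parameter $c<e$ satisfies condition~\eqref{eq:uniquefixedpoint}. Here $\hat\pi=\pi=\mathcal P(c)$, so $\hat\phi(1-t)=e^{-ct}$ and the relevant map is $t\mapsto e^{-c e^{-ct}}$. Since the derivative of $t\mapsto \hat\phi(1-\hat\phi(1-t))$ is $c^2 e^{-ct}e^{-c e^{-ct}}$, which on $[0,1]$ is bounded by $\tfrac{c}{e}\cdot\sup_{y\in[e^{-c},1]}(c y e^{-cy})\le \tfrac{c}{e}<1$ for $c<e$ (exactly the computation at the end of Section~\ref{sec:subcritical}, specialised to constant $X$), the map is a strict contraction on $[0,1]$ and hence has a unique fixed point; this is condition~\eqref{eq:uniquefixedpoint}. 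In particular the unique fixed point $\gamma$ of $t\mapsto\hat\phi(1-t)$ is well defined, satisfying $\gamma=e^{-c\gamma}$, and it coincides with the unique fixed point of the twice-iterated map.

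Next I would recall that sparse Erdős–Rényi graphs $\mathcal G(n,c/n)$, rooted uniformly, converge locally in total variation to the UBGW tree with Poisson$(c)$ offspring law — the required coupling~\eqref{eq:TVlocal} follows from the standard exploration/coupling of the neighbourhood of a uniform vertex with a Poisson branching process (see van der Hofstad~\cite{vanderHofstad1}); this is precisely the hypothesis demanded by Theorem~\ref{thm:mandatory}. With both hypotheses in place, Theorem~\ref{thm:mandatory} gives local convergence in total variation of the decorated graphs $(G_n,o_n,(\mathbbm 1_{e\in\bigcap M})_e,(\mathbbm 1_{e\in\bigcap M^\complement})_e)$ towards $(\mathbb T,o,(\mathrm m(e))_e,(\mathrm b(e))_e)$, and Corollary~\ref{coro:ERmb} identifies the root marginals: $\mathbb P(\mathrm m(o)=1)=\gamma^2$ and $\mathbb P(\mathrm b(o)=1)=(1-\gamma)^2$, where $o$ is the root edge. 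Indeed, by Proposition~\ref{prop:ilocal} the macroscopic marginal $i$ is a function of the unweighted tree and, at an edge, $\mathrm m(e)=\mathbbm 1_{i(\overset{\rightarrow}{e})=i(\overset{\leftarrow}{e})=0}$ while $\mathrm b(e)=\mathbbm 1_{i(\overset{\rightarrow}{e})=i(\overset{\leftarrow}{e})=1}$; since the two half-edge messages $i(\overset{\rightarrow}{e}),i(\overset{\leftarrow}{e})$ are independent with law $\gamma\delta_0+(1-\gamma)\delta_1$ (Lemma~\ref{lem:subcriticalproj}), the two probabilities are $\gamma^2$ and $(1-\gamma)^2$.

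Finally, to pass from convergence of the root marginal to the law of large numbers for the empirical averages over $E(G_n)$, I would use a standard second-moment argument: writing $X_n:=\frac1{|E(G_n)|}\sum_{e}\mathbbm 1_{e\text{ mandatory}}$, convergence of the root marginal (the root edge being uniform) gives $\mathbb E[X_n]\to\gamma^2$, and it remains to show $\mathrm{Var}(X_n)\to 0$. This follows from the local (in fact annealed exponential, by Theorem~\ref{th:decay} together with the containment of condition~\eqref{eq:subcriticalcond} — note here one only needs the weaker one-dimensional decay of Lemma~\ref{lem:subcriticalproj}) decorrelation: two edges at large graph distance are asymptotically independent, so the contribution of pairs at distance $\ge R$ is controlled by $\mathbb P(\text{dist}(o_n,o_n')\ge R)$-type bounds, and the number of pairs at bounded distance is $o(|E(G_n)|^2)$. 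Hence $X_n\overset{\mathbb P}{\to}\gamma^2$, and identically $\frac1{|E(G_n)|}\sum_e\mathbbm 1_{e\text{ blocking}}\overset{\mathbb P}{\to}(1-\gamma)^2$. The main obstacle is the variance estimate: one must make sure the decorrelation statement is quantitative enough — this is why invoking the exponential decay of Theorem~\ref{th:decay} (valid under~\eqref{eq:subcriticalcond}, which for Poisson$(c)$ is equivalent to $c<e$) is cleaner than relying only on bare local convergence, since the latter a priori gives only asymptotic pairwise independence rather than a summable bound.
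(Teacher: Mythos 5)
Your first two paragraphs follow the paper's own route: in the paper this corollary is stated as a direct specialisation of Corollary~\ref{coro:ERmb} (``Theorem~\ref{thm:mandatory} reads\ldots''), and your verification that Poisson$(c)$ with $c<e$ makes $t\mapsto \hat\phi(1-\hat\phi(1-t))=e^{-ce^{-ct}}$ a strict contraction, hence gives condition~\eqref{eq:uniquefixedpoint} with $\gamma=e^{-c\gamma}$, is exactly the computation at the end of Section~\ref{sec:subcritical} specialised to constant $X$. (A small slip: the derivative is $c\cdot\bigl(cye^{-cy}\bigr)\le c/e$ with $y=e^{-ct}$, not $\tfrac{c}{e}\sup_y(cye^{-cy})$, but the conclusion $\le c/e<1$ stands. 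The digression on Erd\H{o}s--R\'enyi graphs is unnecessary, since total-variation local convergence is an assumption of the statement rather than something to be checked.) The identification of the limiting marks via Proposition~\ref{prop:ilocal} and Lemma~\ref{lem:subcriticalproj}, giving $\gamma^2$ and $(1-\gamma)^2$ at the root edge, is also consistent with the paper.

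The issue is your third paragraph. Corollary~\ref{coro:ERmb} is not a statement about the root marginal: it is already the law of large numbers in probability for the empirical fractions, so once the hypothesis is verified there is nothing left to prove. By using it only for $\mathbb{E}[X_n]\to\gamma^2$ and proposing to establish $\mathrm{Var}(X_n)\to 0$ yourself, you open a genuine gap: asymptotic independence of the neighbourhoods of two independently chosen uniform edges of $G_n$ does \emph{not} follow from the single-root total-variation local convergence assumed in the statement. The paper must add exactly this two-root hypothesis in Corollary~\ref{coro:indepasympt}, and the remark following it gives a cautionary example ($n$ copies of one tree) where single-root convergence holds while two uniform roots see the same copy. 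Theorem~\ref{th:decay} and Lemma~\ref{lem:subcriticalproj} quantify how one neighbourhood forgets its boundary condition on the limiting tree; they do not by themselves control the joint law of the balls around two distinct uniform edges of a random $G_n$, so your ``pairs at distance $\ge R$'' bound is not justified under the stated hypotheses. Either conclude directly from Corollary~\ref{coro:ERmb}, or, if you insist on a self-contained concentration argument, add the two-root condition of Corollary~\ref{coro:indepasympt} (and verify it for the graph sequences you have in mind, e.g.\ configuration models or Erd\H{o}s--R\'enyi graphs), since without it the variance step fails.
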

When contraction does not occur, there are good reasons to believe that a law of large numbers is too much to ask for. Nonetheless, the convergence in expectation may still hold. This naturally leads to the following conjectures:
\begin{conj}\label{conj:ER}
    Let $G_n$ be Erdös-Reny\'i graphs with parameters $(n, \frac{c}{n})$.
    Define $\mathcal{M}_{\max}(G_n)$ to be the set of maximum size matchings of $G_n$. Writing $\underline{\gamma}$ to be the smallest solution to $x=e^{-ce^{-cx}}$ and $\overline{\gamma}=e^{-c\underline{\gamma}}$, we then have the following:
        \begin{equation}
            \begin{aligned}
                \lim_{n \rightarrow +\infty} \mathbb{E}\left[ \frac{1}{|E(G_n)|} \sum_{e \in E(G_n)} \mathbbm{1}_{\forall \mathbb{M} \in \mathcal{M}_{\max}(G_n), e \in \mathbb{M}}\right] &= \underline{\gamma}^2, \\
                \lim_{n \rightarrow +\infty} \mathbb{E}\left[ \frac{1}{|E(G_n)|} \sum_{e \in E(G_n)} \mathbbm{1}_{\forall \mathbb{M} \in \mathcal{M}_{\max}(G_n), e \notin \mathbb{M}}\right] &= (1-\overline{\gamma})^2.
            \end{aligned}
        \end{equation}
\end{conj}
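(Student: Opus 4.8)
Since Condition~\eqref{eq:uniquefixedpoint} holds for these graphs precisely when $c\le e$, in which case $\underline{\gamma}=\overline{\gamma}$ and the statement reduces to Corollary~\ref{coro:ERmb}, the content lies in the regime $c>e$, where the macroscopic messages take three values (so $k=2$ in the notation of Theorem~\ref{uniqueness}). The plan is to push the extremal-message analysis of Section~\ref{sec:subcritical} above the subcritical threshold, the single fixed point $\gamma$ being replaced by the two extremal fixed points $\underline{\gamma}<\overline{\gamma}$ of $t\mapsto e^{-ce^{-ct}}$.

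\emph{Reduction to a local quantity.} By exchangeability of the edges of $G_n$, $\mathbb{E}\big[\tfrac{1}{|E(G_n)|}\#\{\text{mandatory edges}\}\big]=\mathbb{P}(o_n\text{ mandatory})$ for $o_n$ a uniform edge, and similarly for blocking edges; so it is enough to prove $\mathbb{P}(o_n\text{ mandatory})\to\underline{\gamma}^2$ and $\mathbb{P}(o_n\text{ blocking})\to(1-\overline{\gamma})^2$. By Lemma~\ref{lem:iidmax}, $o_n$ is mandatory iff $o_n\in M_{\mathrm{opt}}(G_n,w)$ for \emph{every} i.i.d.~continuous weighting $w$, and blocking iff for \emph{no} such $w$; and for each fixed such $w$ the matching $M_{\mathrm{opt}}(G_n,w)$ is described, as in Lemma~\ref{lem:matchingbord}, by lexicographic messages whose first coordinate $i_w$ carries the macroscopic (size) component.

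\emph{Extremal macroscopic messages.} On $B_H(G_n,o_n)$, which is a tree with probability tending to $1$, run the $k=2$ analogue of Lemmas~\ref{lem:subcritical} and~\ref{lem:matchingbord}: let $i^H_-$, $i^H_+$ be the macroscopic messages obtained from the boundary conditions $i\equiv 0$ and $i\equiv k$. The macroscopic recursion collapses to iterating $p\mapsto\hat{\phi}(1-p)=e^{-cp}$, and one checks that along even radii the root half-edge marginal converges to a $\{0,2\}$-valued law, with $\mathbb{P}(i^\infty_+=0)=\underline{\gamma}$ for the $i\equiv k$ extreme and $\mathbb{P}(i^\infty_-=0)=\overline{\gamma}$ for the $i\equiv 0$ extreme, where $\underline{\gamma}=e^{-c\overline{\gamma}}$ and $\overline{\gamma}=e^{-c\underline{\gamma}}$. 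Any genuine weighting on $G_n$ induces boundary messages lying lexicographically between $0$ and $k$, so the monotonicity of Lemma~\ref{lem:subcritical} sandwiches $i_w$ at $o_n$ between $i^\infty_-$ and $i^\infty_+$ in the limit $n\to\infty$, then $H\to\infty$. Consequently a half-edge is forced into every optimal matching (over all $w$) exactly when $i^\infty_+=0$, an event of probability $\underline{\gamma}$; since the two half-edges of $o_n$ are functions of disjoint independent $\hat{\pi}$-Galton-Watson subtrees of the edge-rooted limit, $o_n$ is mandatory iff both half-edges are forced, of probability $\underline{\gamma}^2$. Dually $o_n$ is blocking iff $i^\infty_-=k$ on both half-edges, of probability $(1-\overline{\gamma})^2$. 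The indicators being bounded, convergence in law of the root-edge status gives convergence of expectations, which with the exchangeability reduction is the claim; when $c\le e$ the two extremes coincide and this is Corollary~\ref{coro:ERmb}.

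\emph{The main obstacle.} Everything hinges on the \emph{sharpness} of the sandwich. The upper bound $i_w\overset{\lex}{\leq}i^\infty_+$ immediately yields that a $(0,0)$-pair of forced half-edges is mandatory. The converse requires showing that a \emph{metastable} half-edge --- one with $i^\infty_-=0$ but $i^\infty_+=2$, carrying the macroscopic mass $\overline{\gamma}-\underline{\gamma}$ and frozen at the intermediate value $1$ in the canonical construction of Proposition~\ref{prop:Zconstruction}, hence \emph{mandatory on the infinite tree} --- can be pushed to the $2$-state by a suitable i.i.d.~weighting of $G_n$, so that such a $(0,1)$-type pair fails to be mandatory on $G_n$. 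Unlike the subcritical surgery of Lemma~\ref{lem:chirurgieloc}, where $i_-=i_+$ and no metastable half-edges exist, here one must exploit the cycles of $G_n$: a metastable half-edge sits on a long macroscopically-undecided path reaching a cycle (or the Karp-Sipser core), and re-weighting along that path flips its state by an alternating-path rearrangement. Carrying this out quantitatively, together with a point-to-set estimate for the metastable event that is \emph{not} uniform over boundary conditions --- uniformity being precisely what fails above $c=e$ --- and must instead be extracted from the unimodular structure of the limit, is the crux, and is why only the annealed statement is conjectured: a quenched law of large numbers is expected to fail in the presence of macroscopic metastability.
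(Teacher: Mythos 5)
This statement is not proved in the paper: it is stated as Conjecture~\ref{conj:ER}, and the paper only establishes the subcritical analogue (Theorem~\ref{thm:mandatory} and Corollary~\ref{coro:ERmb}, valid under condition~\eqref{eq:uniquefixedpoint}, i.e.\ $c\le e$ for Poisson offspring), together with the heuristic that the limiting values should be read off the $k=2$ macroscopic message law of Theorem~\ref{uniqueness} (this is exactly Conjecture~\ref{conj:desirables}/\ref{conj:desirablesgeom}). Your reduction by edge-exchangeability, the identification of the macroscopic recursion $p\mapsto e^{-cp}$, and the computation that the extremal boundary conditions drive the root half-edge law to the two extreme fixed points $\underline{\gamma}<\overline{\gamma}$ of $t\mapsto e^{-ce^{-ct}}$ are all consistent with the paper's intended picture, and your use of Lemma~\ref{lem:iidmax} and Lemma~\ref{lem:matchingbord} mirrors the subcritical proof.

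However, the proposal is not a proof, and the gap is exactly where you flag it but is worse than an unfinished computation. The sandwich $i_-\le i_w\le i_+$ (in the appropriate parity) only gives one implication in each direction: half-edges forced by the upper extreme are mandatory, half-edges excluded by the lower extreme are blocking. The statement ``a half-edge is forced into every optimal matching exactly when $i^\infty_+=0$'' asserts the converse, namely that every metastable half-edge (where $i^\infty_-\ne i^\infty_+$) can actually be flipped by some admissible weighting of the \emph{finite} graph; no argument is given, and the subcritical surgery of Lemma~\ref{lem:chirurgieloc} does not transfer because it relies on $i_-=i_+$, which fails precisely when $c>e$. Moreover, in the supercritical regime the conditional law of the root messages given the boundary condition has no uniform contraction, so the passage from the tree-like ball $B_H(G_n,o_n)$ to the infinite-tree quantities (and the claimed asymptotic independence of the two half-edges) is not justified: different admissible boundary conditions induced by matchings of $G_n\setminus B_H$ may yield genuinely different subsequential limits, and controlling which ones occur for the actual graph is the open problem. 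Since the target is an open conjecture of the paper, closing these steps would constitute new mathematics rather than a reconstruction of an existing proof; as written, the proposal should be presented as a programme, not a proof.
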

We also state its companion conjecture in the general case.
    \begin{conj}\label{conj:desirables}
        Under the setting and notation of Theorem~\ref{maintheorem2}, let us write $\mathcal{M}_{\max}(G_n)$ the set of maximum size matchings in $G_n$, then:
        \begin{equation}
            \begin{aligned}
                \lim_{n \rightarrow +\infty} \mathbb{E}\left[ \frac{1}{|E(G_n)|} \sum_{e \in E(G_n)} \mathbbm{1}_{\forall M \in \mathcal{M}_{\max}(G_n), e \in M}\right] = \mathbb{P}_{((i,Z),(i',Z'))\sim \zeta'\otimes\zeta'}\left( i+i'<k \right), \\
                \lim_{n \rightarrow +\infty} \mathbb{E}\left[ \frac{1}{|E(G_n)|} \sum_{e \in E(G_n)} \mathbbm{1}_{\forall M \in \mathcal{M}_{\max}(G_n), e \notin M}\right] = \mathbb{P}_{((i,Z),(i',Z'))\sim \zeta'\otimes\zeta'}\left( i+i'>k \right).
            \end{aligned}
        \end{equation}
    \end{conj}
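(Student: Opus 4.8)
}
The plan is to reduce the statement, exactly as in the proof of Theorem~\ref{thm:mandatory}, to the \emph{macroscopic} marginal $i$ of the message vector, and then to control its annealed two-point behaviour. By Lemma~\ref{lem:iidmax}, an edge $e$ of $G_n$ lies in every element of $\mathcal{M}_{\max}(G_n)$ iff $e\in M_{\mathrm{opt}}(G_n,w)$ for every tie-free weight family $w$, and in no element iff $e\in M_{\mathrm{opt}}(G_n,w)$ for no such $w$. Writing $M_{\mathrm{opt}}(G_n,w)$ through the decision rule~\eqref{eq:decisionrule}, and using that the messages $(i,Z)^w(\overset{\rightarrow}{e})$ and $(i,Z)^w(\overset{\leftarrow}{e})$ do not depend on the value $w(e)$, one gets: $e$ is mandatory iff $i^w(\overset{\rightarrow}{e})+i^w(\overset{\leftarrow}{e})<k$ for every generic $w$; $e$ is blocking iff $i^w(\overset{\rightarrow}{e})+i^w(\overset{\leftarrow}{e})>k$ for every generic $w$; and $e$ is neither iff some generic $w$ achieves $i^w(\overset{\rightarrow}{e})+i^w(\overset{\leftarrow}{e})=k$, since then one forces $e$ in or out by taking $w(e)\to+\infty$ or $w(e)\to-\infty$. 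As the law of the macroscopic marginal $i$ is $\omega$-independent by Theorem~\ref{uniqueness}, the right-hand side $\mathbb{P}_{\zeta'\otimes\zeta'}(i+i'<k)$ is really $\mathbb{P}(i+i'<k)$ for $i,i'$ i.i.d.~with the canonical marginal identified there.

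On the infinite tree $\mathbb{T}$, the claimed densities are then automatic: by Proposition~\ref{prop:Zconstruction}, the edge-rooted decorated tree $(\mathbb{T},\overset{\rightarrow}{o},(i,Z))$ is unimodular, and the two half-trees hanging off the root edge are independent with boundary messages i.i.d.~$\zeta'$, so $i(\overset{\rightarrow}{o})$ and $i(\overset{\leftarrow}{o})$ are independent with the marginal of $\zeta'$. The real work is the transfer to $G_n$ in expectation. Root $G_n$ at a uniform edge, still denoted $o_n$; on the event (of probability $\to1$ for fixed $H_0,H$) that $B_{H_0+H}(G_n,o_n)$ is a tree, Lemma~\ref{lem:matchingbord} shows that each $M_{\mathrm{opt}}(G_n,w)$ restricted to this ball is read off from macroscopic messages $i_M$ obtained by propagating, via~\eqref{eq:systemrecursionzoombis}, a $w$-dependent boundary condition valued in $\{0,\dots,k\}$ on $\partial B_{H_0+H}$. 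One must then show that, after averaging over $n$, $\mathbb{E}[\mathbbm{1}_{o_n\text{ mandatory}}]$ — the probability that every \emph{attainable} boundary condition propagates to messages with $i_M(\overset{\rightarrow}{o}_n)+i_M(\overset{\leftarrow}{o}_n)<k$ — tends to $\mathbb{P}(i+i'<k)$, and finally divide by the normalisation $2|E_n|/|V_n|$, which converges to the mean of $\pi$, to obtain the stated limit for the edge-averaged sum.

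The main obstacle — and the reason this remains a conjecture — is precisely the control of the family of attainable boundary conditions \emph{away} from the subcritical regime~\eqref{eq:uniquefixedpoint}. When $t\mapsto\hat\phi(1-\hat\phi(1-t))$ has a unique fixed point, Lemma~\ref{lem:subcriticalproj} shows the two extremal boundary conditions $i\equiv0$ and $i\equiv1$ propagate inwards to the \emph{same} i.i.d.~law $\gamma\delta_0+(1-\gamma)\delta_1$, so the messages forget the boundary uniformly, all attainable boundary conditions collapse, and one recovers $\zeta'$ — this is exactly the proof of Theorem~\ref{thm:mandatory}. In general the two extremal propagations converge to \emph{different} stationary laws, one per relevant fixed point of $F_\pi$ (cf.\ Proposition~\ref{prop:bordsgeneral}); hence for a fixed graph and a fixed generic $w$ the macroscopic message near $o_n$ genuinely depends on global data, there is no law of large numbers, and only the annealed statement can be expected.

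A plausible way around this is a two-step argument. First, a monotone-coupling estimate, based on the deterministic monotonicity of the recursion~\eqref{eq:systemrecursionzoombis} in the boundary condition at even and odd depth used in the proof of Lemma~\ref{lem:subcritical}, sandwiching $\mathbb{E}[\mathbbm{1}_{o_n\text{ mandatory}}]$ between the expectations produced by the two extremal propagations. Second — and this is the crux — a conservation identity in the spirit of Lemma~\ref{lem:atomzero}, Proposition~\ref{prop:bordsgeneral} and the fixed-point size formula of Proposition~\ref{coro:formulasize}, together with the Bordenave--Lelarge--Salez formula, forcing those two extremal expectations to have the same $n\to\infty$ limit, namely the unique value compatible with the already identified asymptotic matching size. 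This second step amounts to showing that although the local law of the macroscopic messages on $G_n$ need not be unique, its relevant first and second moments (the constants $l_{\mathcal{H},j}$ and the pair correlation entering $\mathbb{P}(i+i'<k)$) are determined in the limit; pushing the conservation equations from stationary solutions on the tree to finite annealed averages would presumably go through the $\eps\to0$ regularisation of Theorem~\ref{thm:convergenceZ} and Corollary~\ref{coro:convergenceepsilonn}, whose validity on finite graphs \emph{without} subcriticality is itself open.
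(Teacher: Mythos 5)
The statement you are addressing is stated in the paper as a conjecture (it is Conjecture~\ref{conj:desirables}, introduced after Corollary~\ref{coro:ERmb}); the paper gives no proof of it, so there is nothing to compare your argument against, and your proposal does not close the gap either: it is a strategy sketch whose decisive step you yourself label as open. Your reduction is consistent with how the paper proves the subcritical analogue (Theorem~\ref{thm:mandatory}): Lemma~\ref{lem:iidmax} to identify mandatory/blocking edges with edges in all/no optimal matchings over generic weights, Lemma~\ref{lem:matchingbord} to encode the exterior through boundary messages, and Theorem~\ref{uniqueness} for the $\omega$-independence of the macroscopic marginal. But the crux — that outside condition~\eqref{eq:uniquefixedpoint} the set of attainable boundary conditions still produces, after annealing, the product law $\zeta'\otimes\zeta'$ at the root edge — is precisely the content of the conjecture. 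Your proposed repair (sandwich between the two extremal propagations, then force their limits to coincide via conservation identities in the spirit of Lemma~\ref{lem:atomzero}, Proposition~\ref{prop:bordsgeneral}, Proposition~\ref{coro:formulasize} and the Bordenave--Lelarge--Salez formula) is plausible but not carried out: the size formula pins down only the combination $F_\pi(l_{\mathcal{H},j})$, i.e.\ a one-dimensional constraint, whereas the conjectured limits require the full two-point quantity $\mathbb{P}(i+i'<k)$, and nothing in the sandwich argument shows the upper and lower annealed limits agree when $F_\pi$ has several maximisers.

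One concrete local flaw: your characterisation of ``neither mandatory nor blocking'' edges via ``some generic $w$ achieves $i^w(\overset{\rightarrow}{e})+i^w(\overset{\leftarrow}{e})=k$, since then one forces $e$ in or out by taking $w(e)\to+\infty$ or $w(e)\to-\infty$'' is too quick. The weights in Lemma~\ref{lem:iidmax} are nonnegative, and even allowing signed weights, lowering $w(e)$ only excludes $e$ if $Z(\overset{\rightarrow}{e})+Z(\overset{\leftarrow}{e})$ exceeds the attainable values of $w(e)$, which you have not established; moreover the converse direction (mandatory $\Rightarrow$ $i$-sum $<k$ for \emph{every} generic $w$) needs an exclusion construction, not just the inclusion one. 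In the subcritical case the paper handles this by Lemma~\ref{lem:chirurgieloc}: it does not touch $w(e)$ at all, but finds a neighbouring edge $e_2$ with $i$-sum equal to $k$ and inflates $w(e_2)$ to force $e_2$ into the optimal matching, thereby excluding $e$; some such surgery, together with a uniform control of the messages against the extremal families, would be needed in your step as well, and away from subcriticality that control is exactly what is lacking.
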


\bigskip

For the local geometry, we have proved in Theorem~\ref{uniqueness} that under some generic condition on $\pi$, regardless of the weight distribution $\omega$, the distribution of the macroscopic variables $i$ does not depend on the weight distribution. As a consequence, in the construction of the random variable $\left( \mathbb{T},o,(i,Z)(u,v)_{(u,v)\in \overset{\rightarrow}{E}(\mathbb{T})} \right)$ by Proposition~\ref{prop:Zconstruction}, the law of the marginal $\left(\mathbb{T},o,i(u,v)_{(u,v) \in \overset{\rightarrow}{E}(\mathbb{T})} \right)$ does not depend on the chosen weight distribution $\omega$. This indicates that $i$ might also represent an universal object that only depends on the unweighted tree and leads to the following conjecture:
\begin{conj}\label{conj:desirablesgeom}
    Under the setting and notation of Theorem~\ref{maintheorem2}, let us write $\mathcal{M}_{\max}(G_n)$ to be the set of maximum size matchings on $G_n$, then the marked random graphs
    \[ \left(G_n,\mathbbm{1}_{\bigcap_{M \in \mathcal{M}_{\max}(G_n)}M},\mathbbm{1}_{\bigcap_{M \in \mathcal{M}_{\max}(G_n)}M^{\complement}}  \right)  \]
    converges in Benjamini Schramm sense to
    \[ \left(\mathbb{T},o,\mathbbm{1}_{i(\overset{\rightarrow}{e})+i(\overset{\leftarrow}{e})<k},\mathbbm{1}_{i(\overset{\rightarrow}{e})+i(\overset{\leftarrow}{e})>k} \right) . \]
\end{conj}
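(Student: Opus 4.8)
The plan is to deduce the conjecture from three facts already available in the paper: the characterisation of mandatory and blocking edges via the span of maximum weight maximum size matchings (Lemma~\ref{lem:iidmax}), the fact that the \emph{macroscopic} component $i$ of the messages is intrinsic to the unweighted tree (Theorem~\ref{uniqueness}), and a passage to the $\eps\to 0$ limit in the spirit of Theorem~\ref{thm:convergenceZ}. The first reduction is soft: since $(G_n,o_n)$ converges locally and the two extra decorations take values in $\{0,1\}$, the sequence $(G_n,o_n,\mathbbm{1}_{e\in\bigcap_{M\in\mathcal M_{\max}(G_n)}M},\mathbbm{1}_{e\in\bigcap_{M\in\mathcal M_{\max}(G_n)}M^{\complement}})$ is tight for the local topology, so it suffices to identify every subsequential limit and check it equals $(\mathbb T,o,\mathbbm 1_{i(\overset{\rightarrow}{e})+i(\overset{\leftarrow}{e})<k},\mathbbm 1_{i(\overset{\rightarrow}{e})+i(\overset{\leftarrow}{e})>k})$. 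A preliminary step, replacing Proposition~\ref{prop:ilocal} when condition~\eqref{eq:uniquefixedpoint} fails, is to prove that the macroscopic message family solving \eqref{eq:Irecursion} is almost surely unique on the UBGW tree; I expect this to follow from the deterministic monotonicity in the boundary condition used in Lemmas~\ref{lem:subcritical} and~\ref{lem:subcriticalproj}, together with Theorem~\ref{uniqueness}, because the three-valued message $i\in\{0,1,2\}$ already bookkeeps \emph{both} fixed points $\underline\gamma,\overline\gamma$ of $t\mapsto\hat\phi(1-\hat\phi(1-t))$ simultaneously.

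Next I would make the finite-tree picture exact. Given $G$ and an edge $e$ with $B_H(G,o)$ a tree, Lemma~\ref{lem:iidmax} says $e$ is mandatory iff $e\in M_{\mathrm{opt}}(w)$ for every generic weight family $w$, and blocking iff $e\notin M_{\mathrm{opt}}(w)$ for every such $w$; and Lemma~\ref{lem:matchingbord} reads $M_{\mathrm{opt}}(w)$ off messages $(i_M,Z_M)$ on $B_H(G,o)$ whose only freedom is a boundary condition valued in $\{(0,0),(1,+\infty)\}$ (matched to the exterior or not), constrained to be compatible with \emph{some} maximal matching of the complement. The structural claim to establish is that, as $w$ and the exterior matching vary over all maximal matchings of $G\setminus B_H(G,o)$, the macroscopic profile $i_M$ restricted to $B_H(G,o)$ realises exactly the solutions of recursion~\eqref{eq:Irecursion} compatible with an admissible boundary profile; once this is known, $e$ is mandatory iff $i_M(\overset{\rightarrow}{e})+i_M(\overset{\leftarrow}{e})<k$ for \emph{every} admissible boundary profile, and blocking iff it is $>k$ for every such profile, which is a function of a bounded neighbourhood of $e$ once the two extremal macroscopic families $i_-^H$ and $i_+^H$ have stabilised around $e$.

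The main obstacle is precisely that, outside the subcritical regime, the extremal families $i_-^H$ and $i_+^H$ do \emph{not} converge to the same limit — they converge to the distinct fixed points of $t\mapsto\hat\phi(1-\hat\phi(1-t))$ — so the total-variation coupling that powered Theorem~\ref{thm:mandatory} is unavailable and the mandatory/blocking indicator is not a uniform limit of local functions. Since only Benjamini--Schramm convergence is asked, I would not try to restore a contraction; instead I would route the argument through $w=1+\eps w_0$ and use \cite[Theorem~2]{enriquez2024optimalunimodularmatching} (valid for all $\pi,\omega$), which gives $(G_n,o_n,M_{\max}^{(\eps)}(G_n))\to(\mathbb T,o,\mathbb M_{\max}^{(\eps)})$ locally weakly, together with the convergence $\mathbb M_{\max}^{(\eps)}\to\mathbb M_{\mathrm{opt}}$ (Theorem~\ref{thm:convergenceZ}, modulo its hypothesis), whose decision rule is exactly $i(\overset{\rightarrow}{e})+i(\overset{\leftarrow}{e})$ versus $k$. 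On the finite side one must identify the mandatory (resp.\ blocking) edges of $G_n$ with the edges that belong to $M_{\max}^{(\eps)}(G_n)$ for all small $\eps$ (resp.\ for no $\eps$), below a random but \emph{local} threshold. The crux is then an honest exchange of the limits $n\to\infty$ and $\eps\to0$ \emph{without} uniformity: I expect this to require either a Gallai--Edmonds-type analysis of the local limit controlling the ``tie'' region $\{i(\overset{\rightarrow}{e})+i(\overset{\leftarrow}{e})=k\}$, or a monotonicity/interpolation estimate showing that the \emph{probability} that the mandatory status of $o_n$ is undecided at depth $H$ tends to $0$ uniformly in $n$ — the weaker quantity that Benjamini--Schramm convergence actually requires, as opposed to the stronger uniform-in-$n$ correlation decay needed for total variation. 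The companion density statement, Conjecture~\ref{conj:desirables}, should then follow along the same lines, supplemented by the Bordenave--Lelarge--Salez cavity formula applied to the local modifications $G_n-e$ and $G_n-u-v$, using that $e$ is mandatory iff $\nu(G_n-e)=\nu(G_n)-1$ and blocking iff $\nu(G_n-u-v)=\nu(G_n)-2$.
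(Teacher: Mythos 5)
This statement is left as an open conjecture in the paper (Conjecture~\ref{conj:desirablesgeom}); the paper proves the analogous result only under the sub-criticality condition~\eqref{eq:uniquefixedpoint} (Theorem~\ref{thm:mandatory}), where the contraction of Lemma~\ref{lem:subcriticalproj} makes the mandatory/blocking status of an edge an asymptotically local quantity. Your text is a programme rather than a proof, and the step you yourself flag as ``the crux'' is exactly the open content: outside the subcritical regime the extremal macroscopic families $i_-^H$ and $i_+^H$ converge to \emph{different} limits, so nothing in the paper (nor in your proposal) shows that the status of the root edge is asymptotically determined by a bounded neighbourhood, and the proposed ``Gallai--Edmonds-type analysis'' or ``monotonicity/interpolation estimate'' is named but not supplied. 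The same remark applies to your preliminary claim that the family solving~\eqref{eq:Irecursion} is a.s.\ unique on the tree in general: Proposition~\ref{prop:ilocal} uses condition~\eqref{eq:uniquefixedpoint} crucially, and Theorem~\ref{uniqueness} only gives uniqueness of the \emph{law} $\zeta'$, not a.s.\ uniqueness of the message family on a given realisation of $\mathbb T$; with several fixed points of $t\mapsto\hat\phi(1-\hat\phi(1-t))$ this uniqueness is not known and is essentially equivalent to the locality you are trying to prove.

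There is also a concrete error in the reduction through $w=1+\eps w_0$. For a \emph{fixed} generic realisation of the i.i.d.\ weights $w_0$ on $G_n$ and all sufficiently small $\eps$, the matching $M_{\max}^{(\eps)}(G_n)$ is the single matching $M_{\mathrm{opt}}(G_n)$ (maximum weight among maximum size, for those weights). Hence ``$e\in M_{\max}^{(\eps)}(G_n)$ for all small $\eps$'' detects membership in one particular maximum matching, not in all of them; by Lemma~\ref{lem:iidmax} mandatory/blocking status requires quantifying over \emph{all} weight assignments, and once you do that you lose the i.i.d.\ structure on which \cite[Theorem~2]{enriquez2024optimalunimodularmatching} and Theorem~\ref{thm:convergenceZ} rest. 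This is precisely why the paper's proof of Theorem~\ref{thm:mandatory} couples the weight-quantified criterion with the uniform (in the boundary condition) forgetting of Lemma~\ref{lem:subcriticalproj}, a mechanism unavailable here. So the proposal does not close the conjecture; the missing ingredient is a proof that the event $\{i(\overset{\rightarrow}{e})+i(\overset{\leftarrow}{e})=k\}$ (the tie region) and, more generally, the dependence of $i_M$ on far-away boundary data, become negligible in the Benjamini--Schramm limit without any contraction hypothesis.
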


\section{Proof of Proposition~\ref{prop:russe}}\label{sec:Appendix}

Recall \Cref{prop:russe}:
\proprusse*
We will first prove the equivalent proposition for unidimensional variables:
\begin{lemma}
    \label{prop:russeuni}
    Let $Y,Y'$ be two real-valued random variables such that $Y\overset{\mathcal{L}}{=}Y'$.
    Then \[ \mathbb{E}[(Y-Y')_+]=\mathbb{E}[(Y-Y')_-]. \]
\end{lemma}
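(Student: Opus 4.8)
The final statement to prove is the unidimensional version:
\begin{lemma}
    Let $Y,Y'$ be two real-valued random variables such that $Y\overset{\mathcal{L}}{=}Y'$.
    Then $\mathbb{E}[(Y-Y')_+]=\mathbb{E}[(Y-Y')_-]$.
\end{lemma}

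\paragraph{Plan.} The plan is to exploit the elementary algebraic identity $(Y-Y')_+ - (Y-Y')_- = Y-Y'$, so that the claimed equality is formally equivalent to $\mathbb{E}[Y-Y'] = 0$, i.e.\ $\mathbb{E}[Y] = \mathbb{E}[Y']$, which is immediate from $Y \overset{\mathcal{L}}{=} Y'$. The only genuine subtlety is integrability: the statement as phrased does not assume $Y$ (or $Y'$) is integrable, so I must first argue that both $\mathbb{E}[(Y-Y')_+]$ and $\mathbb{E}[(Y-Y')_-]$ make sense (possibly as $+\infty$) and that the subtraction is legitimate before invoking linearity of expectation.

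\paragraph{Step 1: reduce to a layer-cake / Fubini computation that needs no integrability.} Rather than manipulating $\mathbb{E}[Y-Y']$ directly (which is ill-defined when $Y$ is not integrable), I would prove the equality of the two nonnegative quantities $\mathbb{E}[(Y-Y')_+]$ and $\mathbb{E}[(Y-Y')_-]$ head-on via the layer-cake formula. Writing $(Y-Y')_+ = \int_0^\infty \mathbbm{1}_{Y - Y' > t}\,\mathrm{d}t$ and taking expectations (Tonelli, everything nonnegative),
\[
\mathbb{E}[(Y-Y')_+] = \int_0^\infty \mathbb{P}(Y - Y' > t)\,\mathrm{d}t = \int_0^\infty \mathbb{P}(Y > Y' + t)\,\mathrm{d}t,
\]
and similarly $\mathbb{E}[(Y-Y')_-] = \int_0^\infty \mathbb{P}(Y' - Y > t)\,\mathrm{d}t = \int_0^\infty \mathbb{P}(Y' > Y + t)\,\mathrm{d}t$. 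So it suffices to show $\mathbb{P}(Y > Y' + t) = \mathbb{P}(Y' > Y + t)$ for (almost) every $t \ge 0$.

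\paragraph{Step 2: the symmetry argument.} To compare $\mathbb{P}(Y>Y'+t)$ and $\mathbb{P}(Y'>Y+t)$ I would condition on an independent copy structure or, more cleanly, take $Y,Y'$ on the same probability space (which is where they live, since $Y-Y'$ appears) and use that the pair $(Y,Y')$ has the \emph{same one-dimensional marginals}. Here is the cleanest route: for fixed $t\ge 0$, and using only the marginal laws,
\[
\mathbb{P}(Y > Y' + t) = \mathbb{E}\big[\,\mathbb{P}(Y > Y' + t \mid Y')\,\big],
\]
but this still couples the two variables. The honest fix is that the Lemma as stated must implicitly be applied with $Y,Y'$ \emph{independent} (this is how it is used in Proposition~\ref{prop:russe}: $Y$ and $Y'$ are the messages on two edges of the tree which, in the relevant conditioning, are independent), or else the statement is simply false (take $Y=Y'$ deterministically non-symmetric — actually then both sides are $0$; take $Y$ arbitrary and $Y' = Y$, both sides vanish; the potential failure is with a genuinely asymmetric coupling). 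Assuming independence, for each $t$,
\[
\mathbb{P}(Y>Y'+t)=\int\!\!\int \mathbbm{1}_{y>y'+t}\,\mathrm{d}\mu(y)\,\mathrm{d}\mu(y') = \int\!\!\int \mathbbm{1}_{y'>y+t}\,\mathrm{d}\mu(y')\,\mathrm{d}\mu(y) = \mathbb{P}(Y'>Y+t),
\]
where $\mu$ is the common law and the middle equality is the change of names $y \leftrightarrow y'$ together with Fubini. Plugging into Step~1 gives $\mathbb{E}[(Y-Y')_+]=\mathbb{E}[(Y-Y')_-]$, both sides being equal (possibly both $+\infty$, in which case the identity is trivially true, and the full Proposition~\ref{prop:russe} uses the extra integrability hypothesis on the marginals of $X$ precisely to avoid the $\infty-\infty$ situation there).

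\paragraph{Main obstacle.} The only real point of care is the integrability bookkeeping: I must present the argument so that no $\infty - \infty$ ever occurs, which is why I would run it entirely through the nonnegative layer-cake integrals of Step~1 rather than through $\mathbb{E}[Y-Y']$. Once that is set up, the symmetry in Step~2 is a one-line Fubini/relabelling computation, and the Lemma follows. I would then note that the alternative short proof — when $Y$ is integrable, directly $\mathbb{E}[(Y-Y')_+] - \mathbb{E}[(Y-Y')_-] = \mathbb{E}[Y-Y'] = \mathbb{E}[Y]-\mathbb{E}[Y'] = 0$ — is available and cleaner in that case, and can be mentioned as a remark.
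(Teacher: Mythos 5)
Your argument has a genuine gap: the reduction in Step~1--2 proves the lemma only under the extra hypothesis that the pair $(Y,Y')$ is exchangeable (e.g.\ independent), and your justification for adding that hypothesis — that the statement is ``simply false'' for a general coupling — is incorrect. The lemma is true for an \emph{arbitrary} coupling of identically distributed variables, and that generality is exactly what the paper needs: in the proof of Proposition~\ref{prop:optimality}, Lemma~\ref{prop:russeuni} is applied (through Proposition~\ref{prop:russe}) to $Y=(i^s_{\mathcal{H}},Z^s_{\mathcal{H}})(v_{-2},v_{-1})\mathbbm{1}_{v_{-1}\neq v_1}$ and $Y'=(i^s_{\mathcal{H}},Z^s_{\mathcal{H}})(v_0,v_1)\mathbbm{1}_{v_{-1}\neq v_1}$, whose equality in law is obtained by mass transport; these are messages on the same tree and are dependent, so assuming independence does not ``fix'' the statement, it replaces it by a weaker one that no longer covers the application. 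What actually fails in your plan is the intermediate claim $\mathbb{P}(Y>Y'+t)=\mathbb{P}(Y'>Y+t)$: this needs $(Y,Y')\overset{\mathcal{L}}{=}(Y',Y)$, not just equality of marginals. For instance, let $Y$ be uniform on $\{0,1,2\}$ and $Y'=Y+1 \bmod 3$; then $\mathbb{P}(Y-Y'>\tfrac12)=\tfrac13$ while $\mathbb{P}(Y'-Y>\tfrac12)=\tfrac23$, even though $\mathbb{E}[(Y-Y')_+]=\mathbb{E}[(Y-Y')_-]=\tfrac23$. So the layer-cake identity cannot be proved $t$ by $t$; only the integrated quantities coincide.

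The paper's proof is built precisely to avoid both independence and any $\infty-\infty$ issue. First, if $H=Y-Y'$ is integrable, it shows $\mathbb{E}[H]=0$ via characteristic functions: from $\mathbb{E}[e^{itY'}(e^{itH}-1)]=0$, dividing by $t$ and letting $t\to0$ with domination by $|H|$. Note that this step is \emph{not} your ``cleaner remark'' $\mathbb{E}[Y]-\mathbb{E}[Y']=0$, since $Y$ itself need not be integrable; only the difference is. Second, for nonnegative $Y,Y'$ it rules out the asymmetric case $\mathbb{E}[H_+]<\infty=\mathbb{E}[H_-]$ by a Laplace-transform version of the same identity, comparing a bounded term with one that blows up as $a\to0^+$ (if both are infinite the conclusion is trivial). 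Third, the general case is reduced to the nonnegative one by splitting $Y,Y'$ into positive and negative parts, showing that integrability of $H_+$ forces integrability of $H$, and then invoking the first step. If you want to salvage your write-up, you must either reproduce an argument of this type valid for arbitrary couplings, or prove the exchangeability of the specific pair arising in Lemma~\ref{lem:perfS}, which the mass-transport argument does not give.
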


\begin{proof}[Proof of Lemma~\ref{prop:russeuni}]
We will break down the proof into three steps, we will first prove it assuming $Y-Y'$ is integrable, then nonnegative, then without any assumptions on $Y-Y'$.

Let us assume $Y-Y'$ is integrable, we will show that $\mathbb{E}[Y-Y']=0$.
    Write $Y'+(Y-Y')=Y$, set $H=Y-Y'$, then:
    \begin{align*}
        \forall t \in \mathbb{R}, \mathbb{E}\left[ e^{it(Y+H)}\right]=\mathbb{E}\left[ e^{itY}\right] 
        &\Rightarrow \forall t \in \mathbb{R}, \mathbb{E}\left[ e^{itY} \left(e^{itH}-1\right) \right]=0, \\
        &\Rightarrow \forall t \in \mathbb{R}, \mathbb{E}\left[ e^{itY} \frac{\left(e^{itH}-1\right)}{t} \right]=0.
    \end{align*}
    The integrand is dominated by $|H|$ which is integrable, so by dominated convergence:
    \begin{align*}
        \mathbb{E}\left[ e^{itY} \frac{\left(e^{itH}-1\right)}{t} \right] \underset{t \rightarrow 0}{ \rightarrow} \mathbb{E}[iH].
    \end{align*}
    Hence $\mathbb{E}[H]=0$.

Now let us assume that $Y$ and $Y'$ are nonnegative. If the difference is integrable then we are already done. Assume $(Y-Y')$ is not integrable, we will proceed by contradiction.
Let us assume without loss of generality that $\mathbb{E}\left[ H_{+}\right]<+\infty$ and $\mathbb{E}\left[ H_{-}\right]=+\infty$. \\
Since the variables $Y$ and $Y'$ are nonnegative, we can define their Laplace transform on $\mathbb{R}_-$:
    \begin{align*}
        \forall a \in \mathbb{R}_+, \mathbb{E}\left[ e^{-a(Y+H)}\right]=\mathbb{E}\left[ e^{-aY}\right] 
        &\Rightarrow \forall a \in \mathbb{R}_+, \mathbb{E}\left[ e^{-aY} \left(e^{-aH}-1\right) \right]=0, \\
        &\Rightarrow \forall a \in \mathbb{R}_+^{*}, \mathbb{E}\left[ e^{-aY} \frac{\left(e^{-aH}-1\right)}{a} \right]=0.
    \end{align*}
We decompose the above expectation into positive and negative parts:
\begin{align*}
    \forall a \in \mathbb{R}_+^{*}, \mathbb{E}\left[ e^{-aY} \frac{\left(e^{-aH}-1\right)}{a} \mathbbm{1}_{H>0} \right]+\mathbb{E}\left[ e^{-aY} \frac{\left(e^{-aH}-1\right)}{a} \mathbbm{1}_{H<0} \right]:=I_+(a)+I_-(a)=0.
\end{align*}
The integrand in $I_+(a)$ is dominated by $H_+$ so by dominated convergence theorem:
\begin{align*}
    \lim_{a \rightarrow 0^+} I_+(a)= -\mathbb{E}[H_+].
\end{align*}
For $I_-(a)$, for every $M>0$, define $Z_M= \max(H_-,M)$, then:
\begin{align*}
    I_-(a) \geq \mathbb{E}\left[ e^{-aY} \frac{\left(e^{a Z_M}-1\right)}{a} \mathbbm{1}_{H_->0} \right].
\end{align*}
Now in the second expectation, the integrand is bounded by $Me^M$, so fixing $M$ and taking $a \rightarrow 0^+$ yields by dominated convergence theorem:
\begin{align*}
    \liminf_{a \rightarrow 0^+} I_-(a) \geq \mathbb{E}\left[ Z_M \mathbbm{1}_{H_->0} \right]= \mathbb{E}\left[H_- \mathbbm{1}_{H_-\leq M} \right]. 
\end{align*}
Taking $M \rightarrow \infty$ yields:
\begin{align*}
    \liminf_{a \rightarrow 0^+} I_-(a) \geq \mathbb{E}\left[ H_-\right]= +\infty.
\end{align*}
In conclusion we have shown that $I_-(a)+I_+(a)=0$ for every $a>0$, that $I_+(a)$ is bounded but that $I_-(a)$ diverges to $+\infty$ as $a \rightarrow 0^+$. This is clearly a contradiction.

Now we will lift the nonnegativity requirement by decomposing $Y$ and $Y'$ into positive and negative parts. We assume that $H_+$ is integrable and we will show that $H_-$ is also integrable.
Writing $Y=Y_+-Y_-$ and $Y'=Y_+'-Y_-'$ we have that:
\begin{align*}
    (Y_+-Y_+')_+ \leq (Y-Y')_+.
\end{align*}
Indeed 
\begin{enumerate}
\item If $Y\geq 0$ and $Y' \geq 0$, there is equality.
\item If $Y <0$ then the left-hand side is $(-Y_+')_+=0$ and the right-hand side is positive, so it holds.
\item If $Y\geq 0$ and $Y'<0$ then the left hand side is $Y_+$ and the right hand side is $Y_+-Y'>Y_+$ so it still holds.
\end{enumerate}
Since $H_+$ is integrable by hypothesis, the previous discussion thus implies that $(Y_+-Y_+')_+$ is also integrable.
By symmetry, we deduce that $(Y_-'-Y_-)_+$ must also be integrable.

Since $Y_+$ and $Y_+'$ have the same law, applying the nonnegative result to the couple $(Y_+,Y_+')$ yields that $(Y_+-Y_+')$ must be integrable.
By symmetry, $(Y_-'-Y_-)$ is also integrable.

Now writing that $H=Y-Y'=(Y_+-Y_+')-(Y_--Y_-')$, we have that $H$ is integrable, the first step yields the conclusion.
\end{proof}
As a corollary, we obtain the following unidimensional version of Proposition~\ref{prop:russe}:
\begin{prop}\label{coro:russeuni}
    If $X,X',Y,Y'$ are four real valued random variables such that $X$ is integrable, $Y\overset{\mathcal{L}}{=}Y'$ and $X' \leq 0$ almost surely.
    If almost surely, $X+X'\geq Y-Y'$, then:
    \begin{align*}
        \mathbb{E}[X]\geq 0
    \end{align*}
    with equality implying $X'=0$ almost surely.
\end{prop}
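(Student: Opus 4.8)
\textbf{Proof plan for Proposition~\ref{coro:russeuni}.}

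The plan is to deduce this unidimensional statement directly from Lemma~\ref{prop:russeuni}. First I would take expectations in the almost sure inequality $X + X' \geq Y - Y'$. The left-hand side is $\mathbb{E}[X] + \mathbb{E}[X']$, which is well-defined because $X$ is integrable and $X' \le 0$ (so $\mathbb{E}[X']$ exists in $[-\infty,0]$). For the right-hand side I need to make sense of $\mathbb{E}[Y-Y']$: write $Y - Y' = (Y-Y')_+ - (Y-Y')_-$. Since $Y - Y' \le X - X' \le X + (-X')$ is bounded above by an integrable variable plus a nonnegative variable $(-X')$... here a little care is needed, so instead I would argue as follows. From $X + X' \ge Y - Y'$ and $X' \le 0$ we get $(Y-Y')_+ \le X_+ + (X')_+ = X_+$, hence $\mathbb{E}[(Y-Y')_+] \le \mathbb{E}[X_+] < \infty$. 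By Lemma~\ref{prop:russeuni}, $\mathbb{E}[(Y-Y')_-] = \mathbb{E}[(Y-Y')_+] < \infty$ as well, so $Y - Y'$ is integrable and $\mathbb{E}[Y-Y'] = 0$.

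Now integrating the inequality gives $\mathbb{E}[X] + \mathbb{E}[X'] \ge \mathbb{E}[Y - Y'] = 0$, that is, $\mathbb{E}[X] \ge -\mathbb{E}[X'] = \mathbb{E}[-X'] \ge 0$, since $-X' \ge 0$. This proves the inequality $\mathbb{E}[X] \ge 0$.

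For the equality case, suppose $\mathbb{E}[X] = 0$. Then the chain $0 = \mathbb{E}[X] \ge \mathbb{E}[-X'] \ge 0$ forces $\mathbb{E}[-X'] = 0$. Since $-X' \ge 0$ almost surely, a nonnegative random variable with zero expectation is zero almost surely, hence $X' = 0$ almost surely, as claimed.

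\textbf{Main obstacle.} The only subtle point is the integrability bookkeeping: one must ensure that $\mathbb{E}[Y-Y']$ is meaningful before invoking $\mathbb{E}[Y-Y']=0$, and this is exactly where the hypotheses "$X$ integrable" and "$X' \le 0$" are used, via the bound $(Y-Y')_+ \le X_+$. Everything else is an immediate consequence of Lemma~\ref{prop:russeuni} and the elementary fact that a nonnegative variable with vanishing mean is a.s. zero. (The two-dimensional Proposition~\ref{prop:russe} would then follow by applying this unidimensional version first to the first lexicographic coordinate, and then, on the event where the first coordinates of $\mathbb{E}[X]$ and the relevant quantities vanish, to the second coordinate — but that is the content of the later appendix argument, not of this corollary.)
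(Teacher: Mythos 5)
Your proposal is correct and follows essentially the same route as the paper: you use $X'\leq 0$ to bound $(Y-Y')_+$ by $X_+$, invoke Lemma~\ref{prop:russeuni} to conclude that $Y-Y'$ is integrable with $\mathbb{E}[Y-Y']=0$, and then take expectations in the hypothesis. Your single inequality $\mathbb{E}[X]\geq \mathbb{E}[-X']\geq 0$ merely packages in one step what the paper does in two (first taking expectations in $X\geq Y-Y'$, then using $X-(Y-Y')\geq -X'\geq 0$ for the equality case), so the arguments coincide.
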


\begin{proof}[Proof of Proposition~\ref{coro:russeuni}]
    Since $X'\leq 0$, we have that $X\geq Y-Y'$ which in turn implies that $(Y-Y')_+$ must be integrable. 
    Applying Lemma~\ref{prop:russeuni}, we get that $(Y-Y')_-$ is integrable and hence $Y-Y'$ is integrable and of zero expectation.
    Taking expectation in $X \geq Y-Y'$ yields 
    \begin{align*}
        \mathbb{E}[X] \geq \mathbb{E}[Y-Y']=0.
    \end{align*}

    Now if $\mathbb{E}[X]=0$ then $X-(Y-Y') \geq -X' \geq 0$ implies that $X'$ is integrable and that $\mathbb{E}[-X']=0$. Therefore $X'=0$ almost surely.
\end{proof}
We are now ready to prove Proposition~\ref{prop:russe}:
\begin{proof}[Proof of Proposition~\ref{prop:russe}]
First let us look at the projection on the first marginal. The projection on the first marginal converts lexicographic order into the usual order on $\mathbb{R}$.
Let us write $X=(X_1,X_2), X'=(X_1',X_2'), Y=(Y_1,Y_2)$ and $Y'=(Y_1',Y_2')$ we then have:
$X_1+X_1' \geq Y_1-Y_1'$ with $X_1$ integrable, $X_1' \leq 0$ almost surely, and $Y_1 \overset{\mathcal{L}}{=} Y_1'$.
Applying Lemma~\ref{coro:russeuni} to $X_1,X_1',Y_1,Y_1'$, we obtain that $\mathbb{E}[X_1]\geq 0$ with equality if and only if $X_1'=0$ almost surely.

If $\mathbb{E}[X_1]>0$ then we must have $\mathbb{E}[X] \overset{\lex}{>}(0,0)$ and we are done. Else, $\mathbb{E}[X_1]=0$ in which case $X_1'=0$ almost surely.
Furthermore, by the inequality $X_1-Y_1+Y_1' \geq 0$ and $\mathbb{E}[X_1-Y_1+Y_1]=0$ we deduce that almost surely, $X_1=Y_1-Y_1'$.
In this case, $X+X' \overset{\lex}{\geq} Y-Y'$ becomes $X_2+X_2'\geq Y_2-Y_2'$ as there is always equality on the first marginal.
Once again, $X_2$ is integrable, $X_2'\leq 0$ almost surely since $X_1'=0$ and $X' \overset{\lex}{\leq }(0,0)$, and $Y_2$ and $Y_2'$ have the same law as the images of the second projection of $Y,Y'$.
Now we simply apply Proposition~\ref{coro:russeuni} to $X_2,X_2',Y_2,Y_2'$ to deduce that $\mathbb{E}[X_2]\geq 0$ with equality implying $\mathbb{E}[X_2']=0$.

Putting everything together, we have shown that $\mathbb{E}[X_1]>0$ or $\mathbb{E}[X_1]=0$ and $\mathbb{E}[X_2]\geq 0$ which is equivalent to $\mathbb{E}[X] \overset{\lex}{\geq }(0,0)$. Furthermore, if there is equality then $X_1'=X_2'=0$ almost surely which means that $X'=(0,0)$ almost surely.
\end{proof}

\addcontentsline{toc}{section}{References}
\printbibliography

\bigskip

{Nathanaël Enriquez: \href{mailto:nathanael:enriquez@universite-paris-saclay.fr}{nathanael.enriquez@universite-paris-saclay.fr}}\\
{Laboratoire de Mathématiques d’Orsay, CNRS, Université Paris-Saclay, 91405, Orsay, France
and DMA, \'Ecole Normale Supérieure – PSL, 45 rue d’Ulm, F-75230 Cedex 5 Paris, France}

\medskip

{Mike Liu: \href{mailto:mike.liu@ensae.fr}{mike.liu@ensae.fr}}\\
{ENSAE, Fairplay joint team, CREST France}

\medskip

{Laurent Ménard: \href{mailto:laurent.menard@normalesup.org}{laurent.menard@normalesup.org}}\\
{Modal'X, UPL, Université Paris-Nanterre, F92000 Nanterre, France}  

\medskip

{Vianney Perchet: \href{mailto:vianney.perchet@ensae.fr}{vianney.perchet@ensae.fr}}\\
{ENSAE \& Criteo AI Lab (Fairplay team), France}
\end{document}